\setlist[itemize]{itemindent=0ex,parsep=3pt,itemsep=0pt,leftmargin=\parindent,topsep=5pt,labelwidth=0.8em,labelsep=0.7em}
\setlist[enumerate]{label={\arabic*)},itemindent=0ex,parsep=3pt,itemsep=0pt,leftmargin=\parindent,topsep=5pt,labelwidth=0.9em,labelsep=0.6em}
\tikzset{>=latex}
\DeclarePairedDelimiterX\setv[2]{\{}{\}}{#1 \;\delimsize\vert\; #2}
\newtheorem{theorem}{Theorem}[section]
\newtheorem{fact}{Fact}[section]
\newtheorem{proposition}{Proposition}[section]
\newtheorem{lemma}{Lemma}[section]
\newtheorem{corollary}{Corollary}[section]
\newtheorem{assumption}{Assumption}
\theoremstyle{definition}
\newtheorem{definition}{Definition}[section]
\newtheorem{example}{Example}[section]
\newtheorem{remark}{Remark}[section]
\crefname{equation}{}{}
\crefname{theorem}{Theorem}{Theorems}
\crefname{corollary}{Corollary}{Corollaries}
\crefname{example}{Example}{Examples}
\crefname{assumption}{Assumption}{Assumptions}
\crefname{lemma}{Lemma}{Lemmas}
\crefname{proposition}{Proposition}{Propositions}
\crefname{figure}{Figure}{Figures}
\crefname{table}{Table}{Tables}
\crefname{fact}{Fact}{Facts}
\crefname{conjecture}{Conjecture}{Conjectures}
\crefname{section}{Section}{Sections}
\crefname{appendix}{Appendix}{Appendices}
\Crefname{equation}{}{}
\Crefname{theorem}{Theorem}{Theorems}
\Crefname{corollary}{Corollary}{Corollaries}
\Crefname{example}{Example}{Examples}
\Crefname{lemma}{Lemma}{Lemma}
\Crefname{proposition}{Proposition}{Proposition}
\Crefname{figure}{Figure}{Figures}
\Crefname{table}{Table}{Tables}
\Crefname{section}{Section}{Sections}
\Crefname{appendix}{Appendix}{Appendices}
\newcommand{\tr}{{{\mathsf T}}}
\newcommand{\her}{{{\mathsf H}}}
\newcommand{\mK}{{\mathsf{K}}}
\newcommand{\mZ}{{\mathsf{Z}}}
\newcommand{\ECL}{$\mathtt{ECL}$}
\newcommand{\LQG}{\mathtt{LQG}}
\newcommand{\removelatexerror}{\let\@latex@error\@gobble}
\title{\bf Benign Nonconvex Landscapes in Optimal and Robust Control, Part II: Extended Convex Lifting 
\thanks{The work of Y. Zheng and C. Pai is supported by NSF ECCS-2154650, NSF CMMI-2320697, and NSF CAREER 2340713. The work of Y. Tang is supported by NSFC through grant 72301008.}}
\author[1]{Yang Zheng}
\author[1]{Chih-Fan Pai}
\author[2]{Yujie Tang}
\affil[1]{\small Department of Electrical and Computer Engineering, University of California San Diego}
\affil[2]{\small Department of Industrial Engineering \& Management, Peking University}
\date{ \small \today } 
\begin{document}

\maketitle
\vspace{-5mm}

\begin{abstract}

Many optimal and robust control problems are nonconvex and potentially nonsmooth in their policy optimization forms. In Part II of this paper, we introduce a new and unified Extended Convex Lifting (\ECL) framework to reveal \textit{hidden convexity} in classical optimal and robust control problems from a modern optimization perspective. Our \ECL{} offers a bridge between nonconvex policy optimization and convex reformulations, enabling convex analysis for nonconvex problems.  Despite non-convexity and non-smoothness, the existence of an \ECL{} not only reveals that minimizing the original function is equivalent to a convex problem but also certifies a class of first-order \textit{non-degenerate} stationary points to be globally optimal. Therefore, no spurious stationarity exists in the set of non-degenerate policies. This \ECL{} framework can cover many benchmark control problems, including state feedback linear quadratic regulator (LQR), dynamic output feedback linear quadratic Gaussian (LQG) control, and $\mathcal{H}_\infty$ robust control. \ECL{} can also handle a class of distributed control problems when the notion of quadratic invariance~(QI)~holds. We further show that all static stabilizing policies are non-degenerate for state feedback LQR and  $\mathcal{H}_\infty$ control under standard assumptions. We believe that the new \ECL{} framework may be of independent interest for analyzing nonconvex problems beyond control.

\vspace{-1mm}

\end{abstract}

\tableofcontents

\section{Introduction}

Many optimal and robust control problems, including linear quadratic regulator (LQR), linear quadratic Gaussian (LQG) control, and robust $\mathcal{H}_\infty$  control, have been extensively studied in classical control since the 1960s \cite{kalman1963theory,levine1970determination,doyle1978guaranteed}. It is well-known that almost all these control problems are naturally nonconvex in the space of controller (i.e., policy\footnote{We use the terminologies ``controller'' and ``policy'' interchangeably in this paper. Both of them are certain \textit{functions} that map system outputs to system inputs.}) parameters. Still, many fruitful results, such as optimal solution structures (static or dynamic), stability margin of LQR, the separation principle for LQG, and Riccati-based strategies for $\mathcal{H}_\infty$ control, have been obtained. We refer the interested reader to classical textbooks \cite{zhou1996robust,green2012linear,dullerud2013course} and excellent monographs \cite{scherer2000linear,boyd1994linear}.

Most classical techniques either rely on proper controller re-parameterizations, leading to convex problems in terms of linear matrix inequalities (LMIs) \cite{scherer2000linear,boyd1994linear,dullerud2013course}, or develop suitable Riccati-based strategies to characterize optimal or suboptimal controllers \cite{zhou1996robust,green2012linear}.  All these methods do not optimize over the space of policy parameters directly, and they often require an explicit underlying system model for re-parameterizing the controller space or solving Riccati equations. On the other hand, the optimization landscapes of optimal and robust control problems can also offer fruitful results (see \cite{lewis2007nonsmooth,hu2023toward, Talebi2024geometry} for surveys), in which we consider the control costs as functions of the policy parameters and study their analytical and geometrical properties. Indeed, this optimization perspective is naturally amenable for data-driven design paradigms such as reinforcement learning \citep{recht2019tour}, but it leads to nonconvex and potentially nonsmooth problems. 
Despite the lack of convexity~(and possibly smoothness), in Part I \cite{zheng2023benign}, we have established that in a class of \textit{non-degenerate} policies, all Clarke stationary points are globally optimal for both LQG and $\mathcal{H}_\infty$ control with output measurements. 

In Part II of this paper, we introduce a unified framework to reveal \textit{hidden convexity} in classical optimal and robust control problems from a modern optimization perspective. This framework~covers many iconic control problems, including state feedback LQR, dynamic output feedback LQG and $\mathcal{H}_\infty$ control, and a class of distributed control problems with quadratic invariance \cite{furieri2020learning}. The global optimality results for LQG and $\mathcal{H}_\infty$ control in Part I \cite[Thereoms 4.2 \& 5.2]{zheng2023benign} will be direct~corollaries.

\subsection{Non-convexity and Non-smoothness in Control} \label{subsection:nonconvexity}

For many iconic optimal control problems, nonconvexity naturally arises when we directly search over a suitably parameterized policy space to optimize their control objectives. For example, given a linear system $\dot{x} =  Ax + B u$, it is well-known that the set of static state feedback gains $K$ that stabilize the system via $u=Kx$ is a nonconvex set~\cite{fazel2018global}, meaning that we need to search over a nonconvex policy space to find an optimal feedback gain; see \Cref{fig:non-convexity-illustration}(a) for an illustration of the policy space and \Cref{fig:non-convexity-illustration}(b) for a typical nonconvex LQR landscape. If we consider output feedback controller synthesis such as linear quadratic Gaussian (LQG) control, then the parameterized set of dynamic policies can even be disconnected \cite{zheng2021analysis}. Furthermore, the LQG  objective function may have spurious stationary points \cite{zheng2022escaping}, and there can be uncountably infinitely many globally optimal policies that reside on a nonconvex manifold due to the symmetry induced by \emph{similarity transformations}~\cite{zheng2022escaping,zheng2021analysis,kraisler2024output}. In fact, one fundamental problem in output feedback controller synthesis is to re-parameterize the set of stabilizing dynamic controllers in a convex way, and a classical solution is the celebrated Youla parameterization \cite{youla1976modern} and two recent ones are the system-level parameterization \cite{wang2019system} and input-output parameterization~\cite{furieri2019input,zheng2021equivalence,zheng2022system}.

\begin{figure}
    \centering
    \includegraphics[width=0.26 \textwidth]{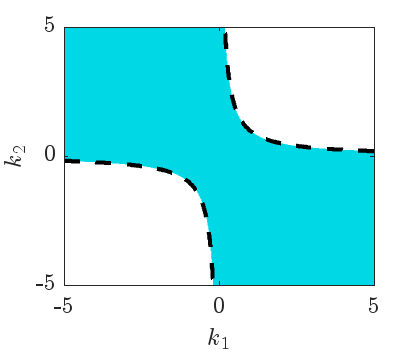} \hspace{5mm}
    \includegraphics[width=0.3 \textwidth]{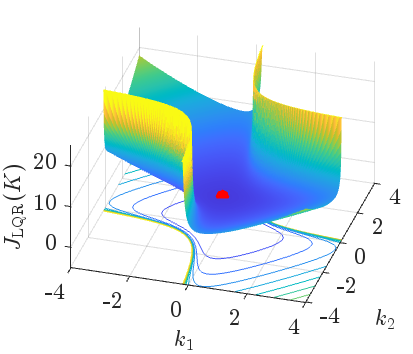} \hspace{5mm}
    \includegraphics[width=0.3 \textwidth]{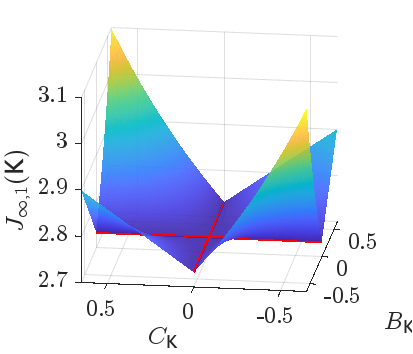}
    \caption{Illustration of nonconvexity and non-smoothness in control: (a) a slice of the set of static feedback gains $K$ such that the closed-loop system $A+BK$ is stable, where $A = 0, B = I_2$; (b) nonconvex and smooth LQR cost; (c) nonsmooth and nonconvex $\mathcal{H}_\infty$ cost. Computational details are given in \Cref{appendix:details-of-figure-1}.}
    \label{fig:non-convexity-illustration}
\end{figure}

In addition to non-convexity, non-smoothness may also arise when we consider robust control problems that address the worst-case performance against uncertainties or adversarial noises. In this case, a typical performance measure is the $\mathcal{H}_\infty$ norm of a certain closed-loop transfer function \cite{zhou1996robust}, which is known to be both \textit{nonconvex} and \textit{nonsmooth} in the policy space~\cite{apkarian2006nonsmooth,lewis2007nonsmooth}. In fact, robust control problems were one of the early motivations and applications for nonsmooth optimization \cite{lewis2007nonsmooth}. \Cref{fig:non-convexity-illustration}(c) illustrates the nonconvex and nonsmooth landscape in an $\mathcal{H}_\infty$ control~instance.

\begin{figure}[tbhp]
    \centering
    \setlength{\abovecaptionskip}{3pt}
        \includegraphics[height=2.8cm]{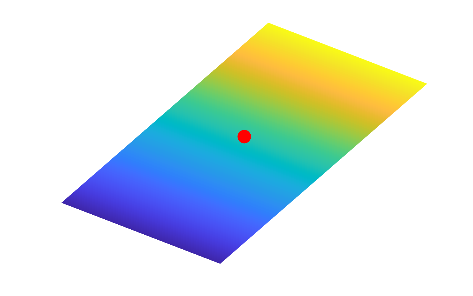} \hspace{-2.5mm}
        \includegraphics[height=2.8cm]{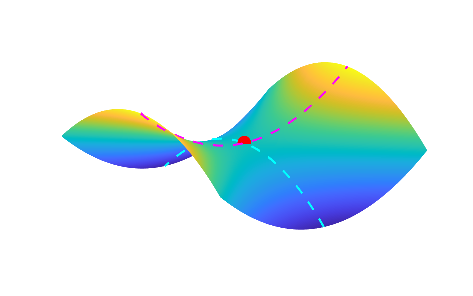} \hspace{1.5mm}
        \includegraphics[height=2.8cm]{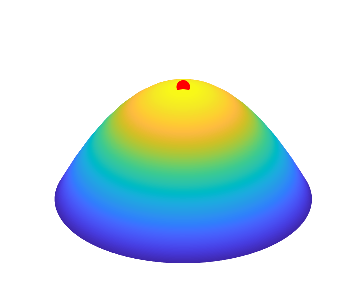} \hspace{1mm}
        \includegraphics[height=2.8cm]{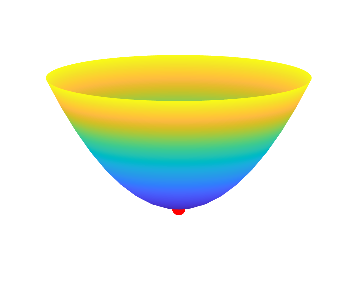} \\ \vspace{-6mm}
        
        {\footnotesize  \flushleft  \hspace{0.6cm } (a) \hspace{3.8cm}  (b) \hspace{3.6cm}  (c) \hspace{3.0cm}  (d)}
        \caption{Local geometry of a smooth function $\phi(x)$:
        (a) a non-stationary point with non-zero gradient $\nabla \phi(x) \neq 0$;
        (b) (strict) saddle; (c) local maximizer; (d) local minimizer. A general nonconvex function~$\phi(x)$ may have all these stationary points. If $\phi(x)$ has \textit{hidden convexity} and is equipped with an \ECL{}, all non-degenerate stationary points are global minimizers (see \Cref{theorem:ECL-guarantee}); no saddle and local maximizers exist. 
        }
    \label{fig:local-geometry}
\end{figure}

 For nonconvex and nonsmooth optimization problems, it is generally very hard to derive theoretical guarantees or certificates for local policy search algorithms. Intuitively, the local geometry of nonconvex problems does not give a global performance guarantee, and there may be spurious local minimizers (including saddles). \Cref{fig:local-geometry} illustrates possible local geometry of a smooth function.     
As summarized in Part I \cite{zheng2023benign}, a series of recent findings have revealed \textit{favorable benign nonconvex landscape properties} in many benchmark control problems, including LQR \cite{fazel2018global,mohammadi2021convergence,fatkhullin2021optimizing}, risk-sensitive control \cite{zhang2021policy}, LQG control \cite{zheng2021analysis,zheng2022escaping,duan2022optimization}, dynamic filtering~\cite{umenberger2022globally,zhang2023learning}, and $\mathcal{H}_\infty$ control \cite{hu2022connectivity,guo2022global,tang2023global}, etc. One goal of our work is to provide a unified framework that can explain the benign nonconvex landspace properties for all these iconic control problems.

\subsection{Our Techniques --- Extended Convex Lifting (\ECL)}

In this paper, we aim to develop a new and unified framework, called \texttt{Extended Convex Lifting (ECL)} (see \Cref{fig:ECL-process} for an illustration), which reconciles the gap between nonconvex policy optimization and convex reformulations. The remarkable (now classical) results in control theory reveal that many optimal and robust control problems admit ``convex reformulations''\footnote{For state feedback LQR and $\mathcal{H}_\infty$ control, convex reformulations are relatively straightforward; see \cite{boyd1994linear}. However, convex reformulations for output feedback control are non-trivial and subtle; see \Cref{remark:convex-reformulation,remark:saddle-points}.} in terms of LMIs, via a suitable change of variables~\cite{scherer2000linear,dullerud2013course,boyd1994linear}. These changes of variables enable us to analyze the nonconvex policy optimization problems through their convex reformulations.

Here we provide a simple example to demonstrate some intuition. Suppose $f(x)$ is a potentially nonconvex function, and there exists a smooth bijection $y = \Phi(x)$ such that $g(y) = f(\Phi^{-1}(y))$ is convex. Then, it is clear that minimizing $f(x)$ is equivalent to minimizing $g(y)$, which is a convex problem. Furthermore, all stationary points of $f(x)$ are globally optimal, and there exist no local maximizers or saddle points.  Interestingly, this simple process is almost sufficient to guarantee that any stationary point is globally optimal for a large class of LQR policy optimization problems (see \Cref{example:LQR_ex} and \Cref{remark:eliminating-lifting-variables}).

For many other control problems, however, such a simple process is insufficient and the bijection $\Phi$ does not exist due to the natural appearance of Lyapunov variables. Instead, 
we develop a generic template for \textit{extended} convex~reparameterization of epigraphs of nonconvex functions $f(x)$, which significantly generalizes the simple setup above. We call this generic template \texttt{Extended Convex Lifting (ECL)}. As illustrated in \Cref{fig:ECL-process}, we first \textit{lift} the (non-)strict epigraph to a higher dimensional set by appending auxiliary variables (which often corresponds to Lyapunov variables) and then assume a smooth bijection $\Phi$ that maps this lifted nonconvex set to a ``partially'' convex set $\mathcal{F}_{\mathrm{cvx}} \times \mathcal{G}_{\mathrm{aux}}$; see \Cref{definition:ECL} for a precise procedure. 
In many control problems, $\mathcal{F}_{\mathrm{cvx}}$ is a convex set represented by an LMI, while  $\mathcal{G}_{\mathrm{aux}}$ can be nonconvex and can be used to account for similarity transformations of {dynamic output feedback}~policies. 

\begin{figure}[t]
    \centering
    \includegraphics[width = 0.9\textwidth]{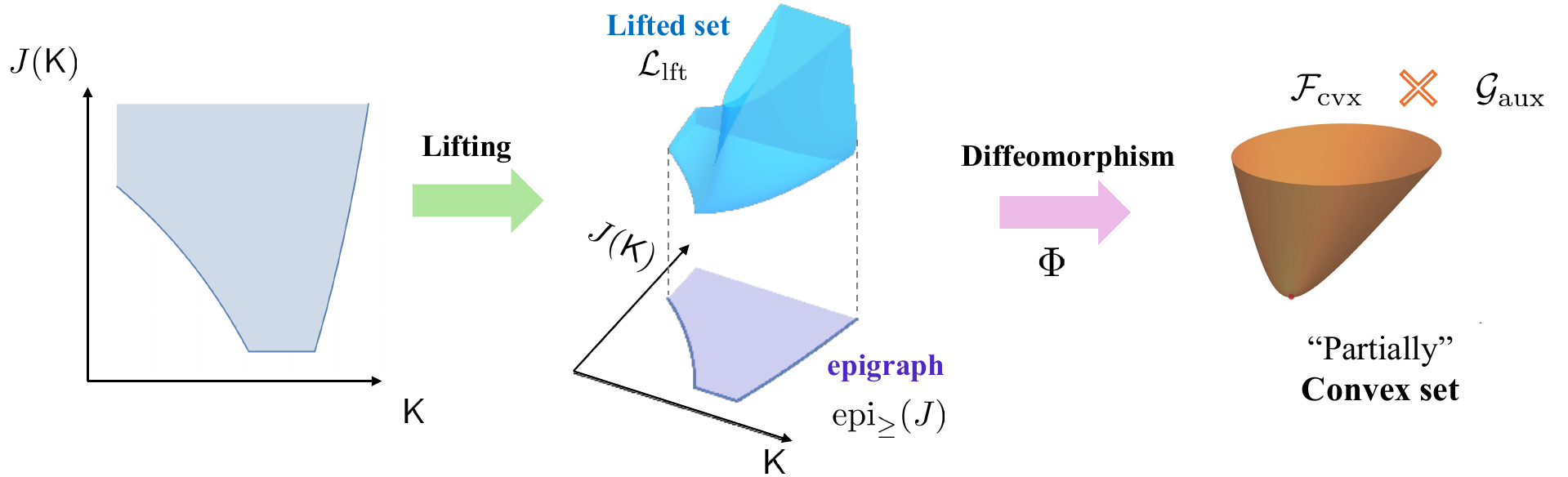}
    \caption{A schematic illustration of \ECL{}. Left figure: We begin with the epigraph $\operatorname{epi}_\geq(J) $ of a potentially \textit{nonconvex} and \textit{nonsmooth} function $J(\mK)$; 
    Middle figure: We lift the epigraph $\operatorname{epi}_\geq(J)$ to a set $\mathcal{L}_{\mathrm{lft}}$ of a higher dimension; Right figure: We identify a smooth and invertible mapping $\Phi$ that maps $\mathcal{L}_{\mathrm{lft}}$ to a ``partially'' convex set $\mathcal{F}_{\mathrm{cvx}} \times \mathcal{G}_{\mathrm{aux}}$. In many control problems, $\mathcal{F}_{\mathrm{cvx}}$ is a convex set represented by LMIs, while  $\mathcal{G}_{\mathrm{aux}}$ can be nonconvex and provides the flexibility of capturing similarity transformations (cf. \Cref{section:applications}). The schematic here serves an illustrative role, and the full \ECL{} details are elaborated in \Cref{section:ECL}.}
    \label{fig:ECL-process}
\end{figure}
 
Despite non-convexity and non-smoothness, the existence of an \ECL{} not only reveals that minimizing the original function is equivalent to a convex problem (\Cref{theorem:convex-equivalency}) but also identifies a class of first-order (\textit{non-degenerate}) stationary points to be globally optimal (\Cref{theorem:ECL-guarantee}). While our \ECL{} constructions and the corresponding guarantees in \Cref{theorem:convex-equivalency,theorem:ECL-guarantee} have a strong geometrical intuition, the details in the construction as well as in the proofs require resolving some subtleties. We thus believe our \ECL{} framework is of independent interest.

Our \texttt{ECL} framework presents a unified template for convex re-parameterizations of many iconic control problems~\cite{scherer2000linear,boyd1994linear,dullerud2013course}. Many recent results on global optimality of (non-degenerate) stationary points, such as LQR in \cite{fazel2018global,mohammadi2021convergence}, LQG in \cite{zheng2021analysis}, state feedback $\mathcal{H}_\infty$ control in \cite{guo2022global}, Kalman filter in \cite{umenberger2022globally}, some distributed control problem \cite{furieri2020learning}, are special cases once the corresponding \texttt{ECL} is constructed. We note that the exact \ECL{} constructions, especially those for output feedback control problems, require special treatments that have not been discussed in existing literature, which will be presented in detail in \Cref{section:applications}.  From our perspective of \ECL{}, standard state feedback and output feedback control problems are \textit{almost convex problems} in disguise; all the non-convexity and non-smoothness in \Cref{subsection:nonconvexity} are benign in this sense. 

We finally highlight that almost all existing convex reformulations \cite{scherer2000linear,boyd1994linear,dullerud2013course} for control problems rely on strict LMIs. They are sufficient for deriving suboptimal policies in control, but typically fail to characterize globally optimal policies. To characterize global optimality, our use of non-strict LMIs in the \ECL{} framework requires resolving non-trivial technical details, especially in the LQG and output feedback $\mathcal{H}_\infty$ cases (see \Cref{lemma:inclusion-projection-LQG,lemma:inclusion-projection-Hinf}).

\subsection{Related Work}

We here summarize some closely related work in two directions: 1) solutions to optimal and robust control, and 2) lifting strategies in analyzing nonconvex problems.\footnote{For other related work on policy optimization in control, please refer to Part I \cite{zheng2023benign} and the references therein}

\vspace{-2mm}

\paragraph{Solutions of optimal LQG control and $\mathcal{H}_\infty$ robust control.}

The LQG (or $\mathcal{H}_2$) optimal~control theory was heavily studied in the 1960s \cite{kalman1963theory}. Many structural properties are now well-understood, such as the existence of the optimal LQG controller and the separation principle of the controller structure. It is known that the globally optimal LQG controller is unique in the frequency domain, and can be found by solving two Riccati equations \cite[Chap. 14]{zhou1996robust}. It is also well-known that the LQG optimal controller has no guaranteed stability margin \cite{doyle1978guaranteed}, and thus robustness does not come free from linear quadratic optimization in the output feedback case. This fact partially motivated the development of $\mathcal{H}_\infty$ robust control, which was heavily studied in the 1980s \cite{zames1981feedback,francis1987course,glover2005state}. One key milestone was reached in the seminal paper \cite{doyle1988state}, where state-space solutions via solving two Riccati equations were derived to get all suboptimal $\mathcal{H}_\infty$ controllers. The suboptimal $\mathcal{H}_\infty$ controller in \cite{doyle1988state} also has a separation structure reminiscent of classical LQG theory. However, unlike the LQG theory, the optimal $\mathcal{H}_\infty$ controllers are not only generally non-unique in the frequency domain \cite[Page 406]{zhou1996robust}, \cite[Section 3.2]{GLOVER201715} (also see \cite[Example 5.4]{zheng2023benign}), but also quite subtle to characterize \cite[Section 5.1]{glover2005state}. Besides Riccati-based solutions, LMI-based approaches for both LQG and $\mathcal{H}_\infty$ control were developed in 1990s \cite{gahinet1994linear,scherer1997multiobjective}.

\vspace{-2mm}
\paragraph{Lifting strategies in analyzing nonconvex problems} 

Recent years have witnessed a surge of interest in analyzing nonconvex problems using convex analysis; see e.g.,  \cite{levin2022effect,guo2022global,umenberger2022globally,sun2021learning,mohammadi2021convergence,zheng2021analysis,tang2023global}. Most of these results rely on a suitable change of variables that makes the original problem simpler to deal with.  For example, a recent study in \cite{levin2022effect} examines landscapes in nonconvex optimization by smoothly parameterizing its domain. The work \cite{fawzi2022lifting} discusses concise descriptions of convex sets by exploiting the concept of lifting as simplified higher-dimensional representations that project onto the original sets. 
As for nonconvex control problems, many of them admit suitable convex reformulations using LMIs, which leads to the desired landscape properties for policy optimization. For instance, the authors of  \cite{mohammadi2021convergence} leveraged such a technique to study global convergence properties of policy optimization for continuous-time LQR. The analysis via convex parameterizations was also studied in \cite{sun2021learning} for state feedback problems with smooth objectives. More recently, it has been shown that for nonsmooth discrete-time static state feedback $\mathcal{H}_\infty$ control, all Clarke stationary points are global minima \cite{guo2022global}, in which the key proof idea is also based on convex reformulations. All these results focused on static state feedback policies.  
A differentiable convex lifting strategy was proposed in \cite{umenberger2022globally}, applicable to dynamic output feedback estimation problems (e.g., Kalman filter). Our previous work \cite{zheng2021analysis} also utilized a classical change of variables to reveal some nice geometry of stabilizing dynamic feedback policies, which is shown to have at most two path-connected components.

Inspired by these recent advances, our \ECL{} framework offers a unified way to bridge the gap between nonconvex policy optimization and classical convex reformulations in control. This \ECL{} framework is more general than  \cite{zheng2021analysis,umenberger2022globally,guo2022global,sun2021learning,mohammadi2021convergence}, in the sense that it can directly handle static state feedback and dynamic output feedback policies, as well as smooth and nonsmooth cost functions. More importantly, our \ECL{} framework naturally classifies degenerate and non-degenerate policies, which reflects the subtleties between strict and non-strict LMIs in control.

\subsection{Paper Outline}

The rest of this paper is organized as follows. \Cref{section:motivating-examples} presents three motivating examples to illustrate hidden convexity via a diffeomorphism. We present the full technical details of \ECL{} in \Cref{section:ECL}, including its construction and theoretical guarantees. In \Cref{section:applications}, we construct appropriate \ECL{}s for nonconvex (and potentially nonsmooth) policy optimization problems in benchmark control problems. We finally conclude the paper in \Cref{section:conclusion}. Some auxiliary results, additional discussions, and technical proofs are provided in \cref{appendix:computational-details,appendix:stationary-point,appendix:state-feedback,appendix:dynamic-policies}.

\paragraph{Notations.} 
The set of $k\times k$ real symmetric matrices is denoted by $\mathbb{S}^k$, and $\mathbb{S}^k_{+}$ (resp. $\mathbb{S}^k_{++}$) denotes the set of $k\times k$ positive semidefinite (resp. positive definite) matrices. We denote the set of $k \times k$ real invertible matrices by $\mathrm{GL}_k=\{T\in\mathbb{R}^{k\times k} \mid \det T\neq 0\}$. Given a matrix $M$, $M^\tr$ denotes its transpose, and $M^\her$ denotes its conjugate transpose. For any $M_1,M_2\in\mathbb{S}^k$, we use $M_1\prec (\preceq) M_2$ and $M_2\succ (\succeq) M_1$ to mean that $M_2-M_1$ is positive (semi)definite. We use $I_n$ (resp. $0_{m\times n}$) to denote the $n\times n$ identity matrix (resp. $m\times n$ zero matrix), and sometimes omit their subscripts when the dimensions can be inferred from the context. For a subset $S$ of a topological space, we let $\operatorname{int} S$ denote its interior and $\operatorname{cl}S$ denote its closure.

\section{Motivating Examples} \label{section:motivating-examples}

To motivate our discussion, we consider three simple examples in \cref{subsection:examples}: one academic example, and two state feedback control instances. All of them are essentially (or equivalent to) convex problems. In \cref{subsection:epigraphs}, we summarize a useful fact on hidden convexity via a diffeomorphism over epigraphs. This section serves as a better motivation and can be skipped if the reader wishes to see the full mathematical theory behind \ECL{}, which is summarized in \Cref{section:ECL}.

\subsection{Three Examples} \label{subsection:examples}

\begin{figure}[t]
    \centering
    \setlength{\abovecaptionskip}{6pt}
    \begin{subfigure}{0.22\textwidth}
        \includegraphics[width=\textwidth]{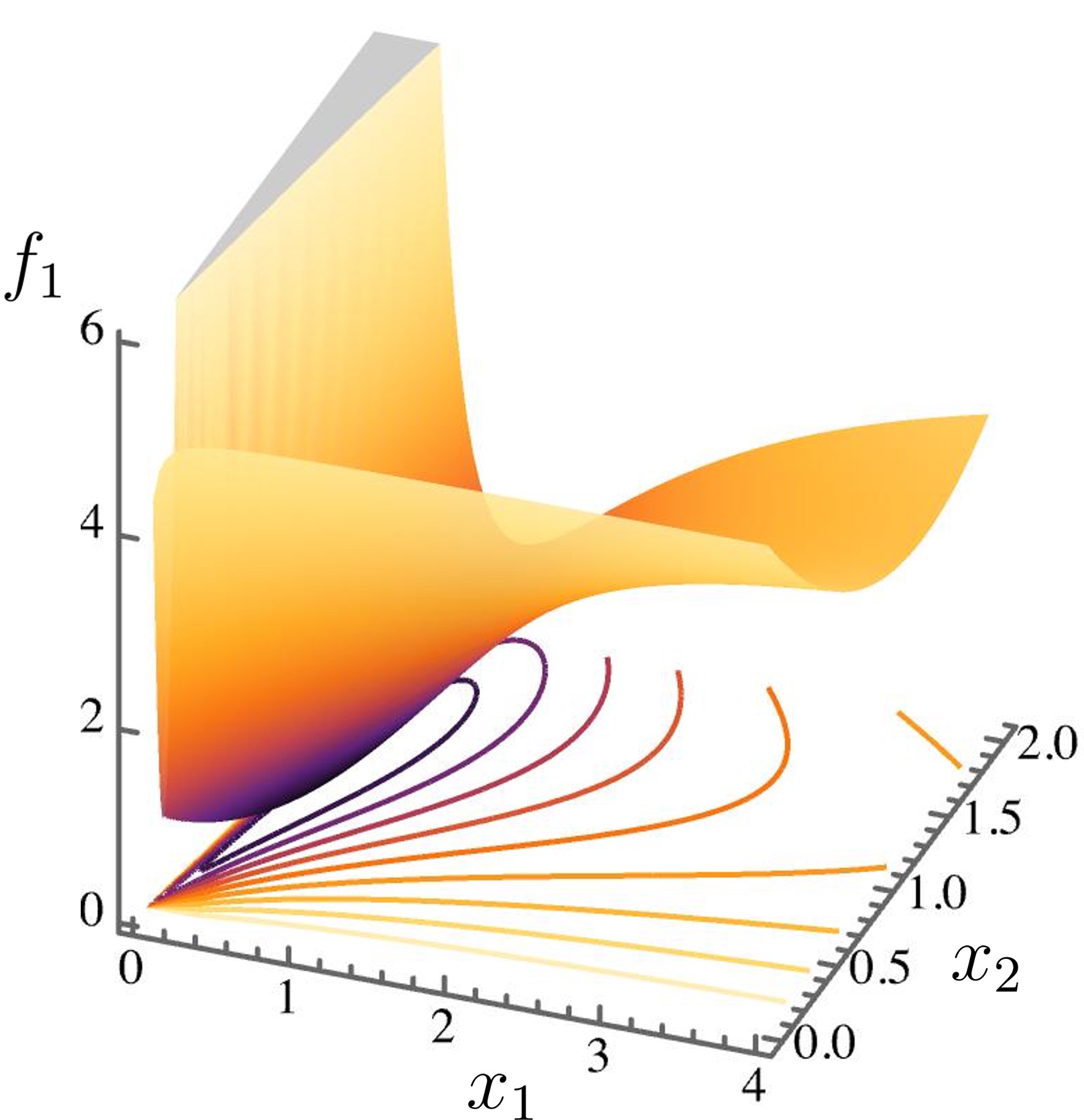}
        \caption{Nonconvex smooth}
        \label{fig:ex-a-1}
    \end{subfigure}
    \hspace{4mm}
    \begin{subfigure}{0.22\textwidth}
        \includegraphics[width=\textwidth]{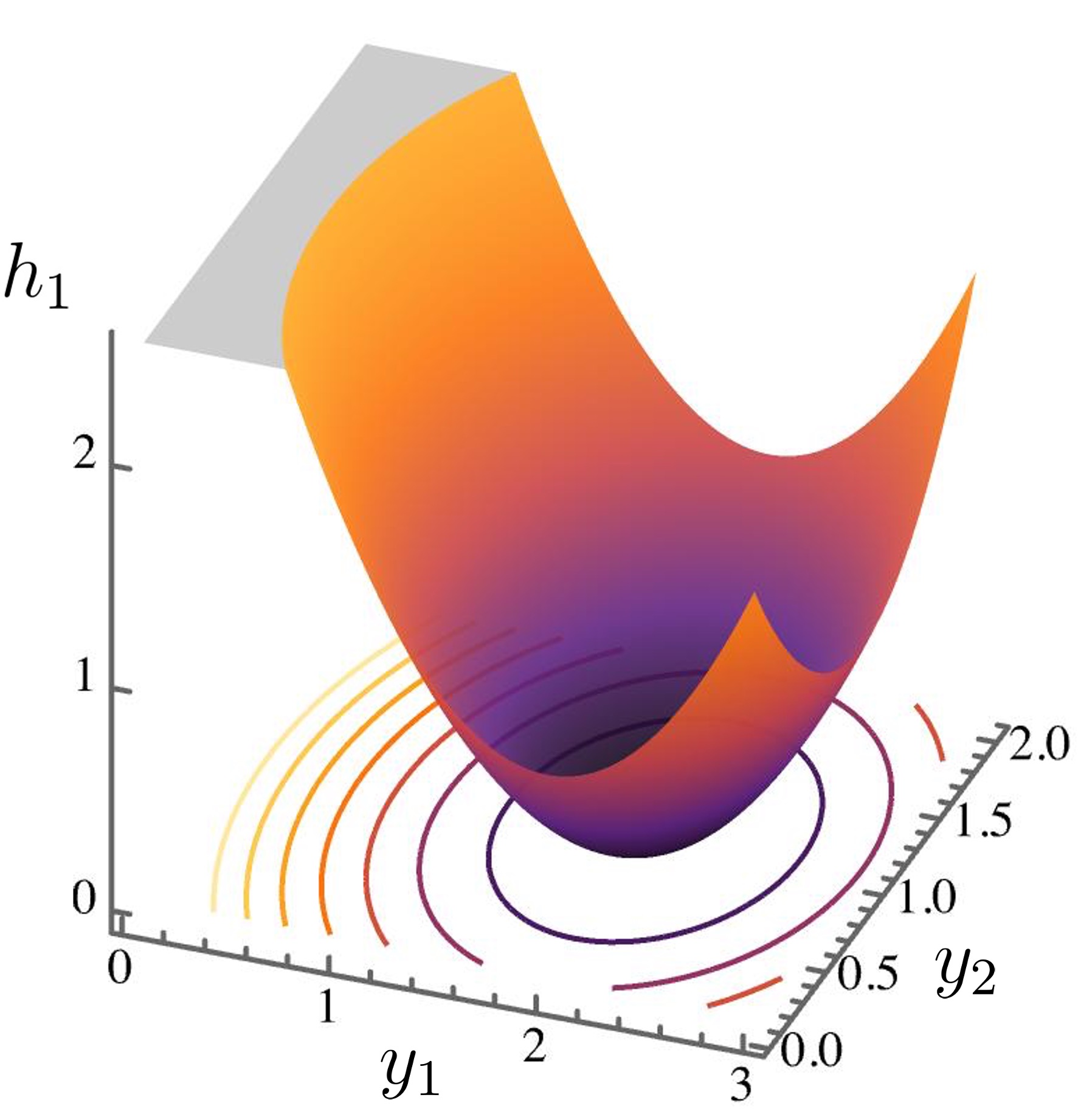}
        \caption{Convex smooth}
        \label{fig:ex-a-2}
    \end{subfigure}
    \hspace{4mm}
    \begin{subfigure}{0.22\textwidth}
        \includegraphics[width=\textwidth]{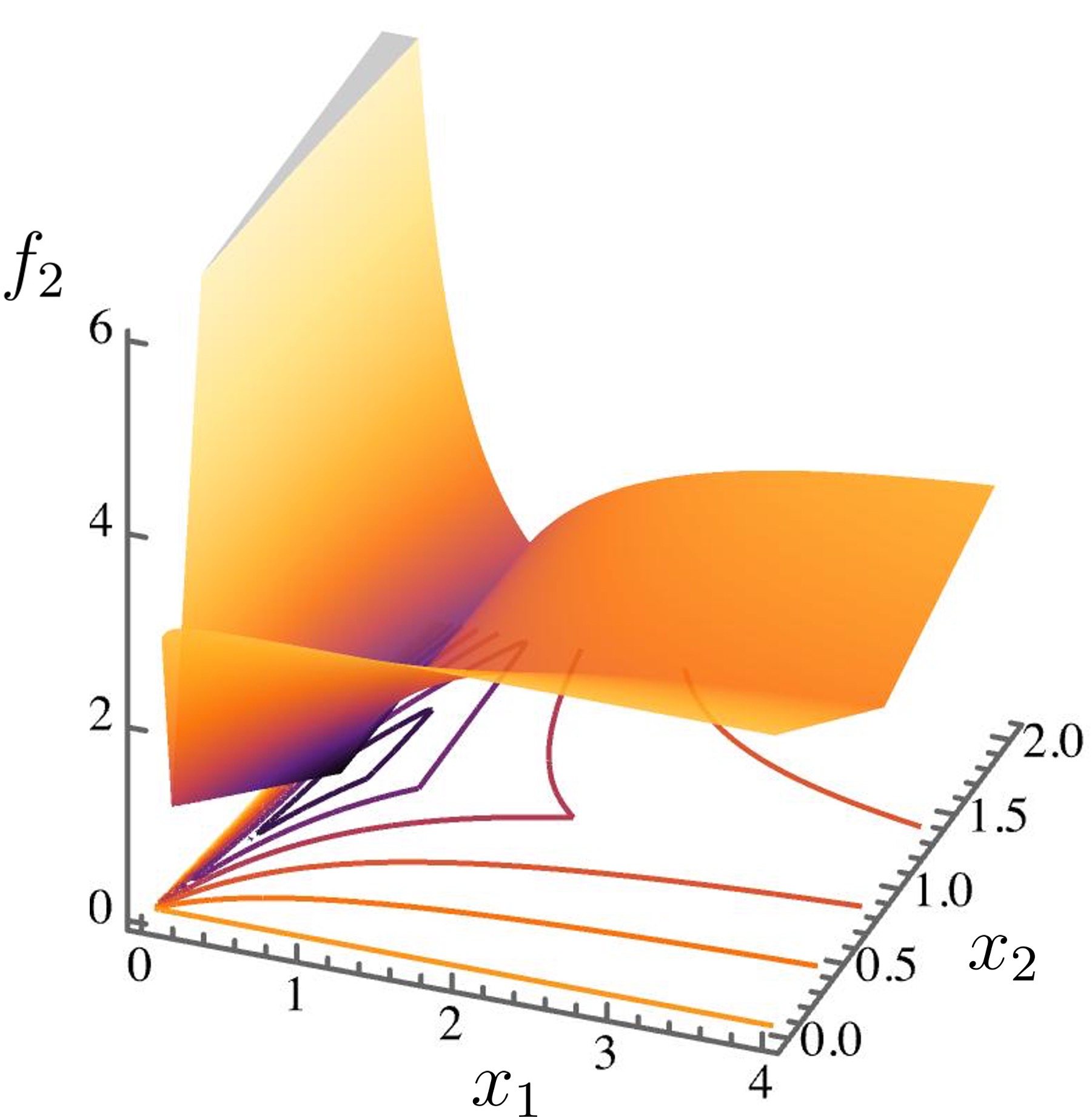}
        \caption{Nonconvex nonsmooth}     \label{fig:ex-b-1}
    \end{subfigure}
    \hspace{4mm}
    \begin{subfigure}{0.22\textwidth}
        \includegraphics[width=\textwidth]{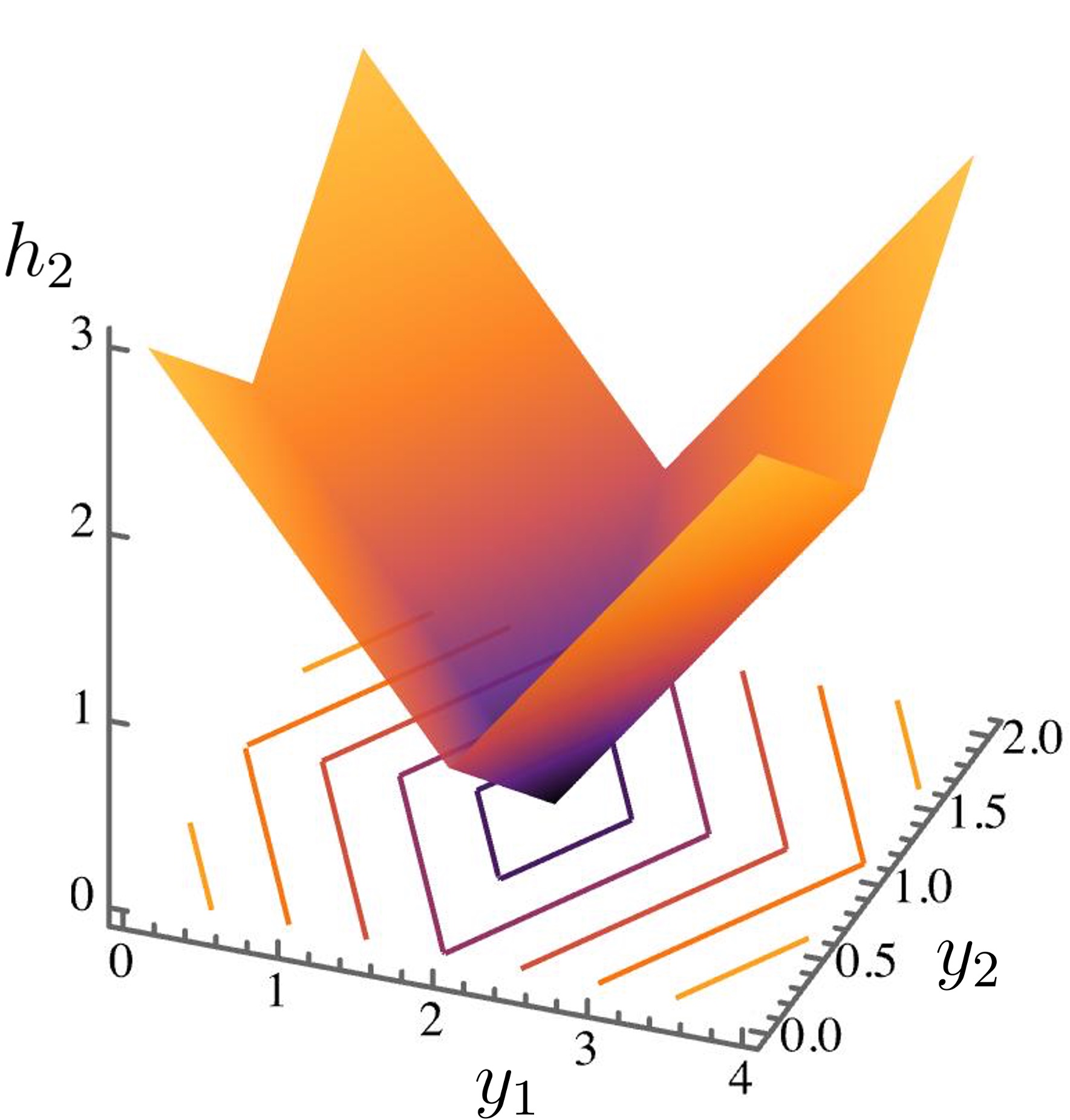}
        \caption{Convex nonsmooth}     \label{fig:ex-b-2}
    \end{subfigure}
    \caption{Illustration of nonconvex functions and their transformation in \cref{example:academic}.  
    }
    \label{fig:academic-example}
\end{figure}

Our first example highlights that a suitable change of variables (i.e., nonlinear coordinate transformation) can render a nonconvex function convex. This type of invertible, nonlinear, and smooth coordinate transformation will be a key component in our \ECL{} framework.    

\begin{example} \label{example:academic}
Consider a bivariate function 
\begin{subequations}
\begin{equation} \label{eq:academic-example}
    f_1(x_1,x_2) = \left(\frac{x_2}{x_1}-2\right)^2 + (x_2-1)^2, \qquad \mathrm{dom}(f_1)=\{x \in \mathbb{R}^2\mid x_1 > 0,x_2>0\}.  
\end{equation}
It is not difficult to see that this function $f_1$ is smooth but nonconvex (see \Cref{fig:ex-a-1}).~It is also clear that its global minimizer is $x^\ast = (0.5,1)$ by inspection. In fact, we can define a diffeomorphism 
\begin{equation}  \label{eq:academic-example-1}
g(x_1,x_2) := (x_2/x_1,x_2), \qquad \forall x_1 > 0,x_2>0,     
\end{equation} 
which leads to   
\begin{equation} \label{eq:academic-example-2}
    h_1(y_1,y_2):=f_1(g^{-1}(y_1,y_2)) = \left(y_1-2\right)^2 + (y_2-1)^2, \qquad \forall y_1 > 0, y_2 >0.  
\end{equation}
\end{subequations}
It is obvious that $h_1$ is strongly convex with the global minimizer as $y^* = (2,1)$, confirming the  global minimizer $x^\ast = (1/2,1)$ for $f_1$ in \cref{eq:academic-example}. We illustrate the shape of $h_1$ in \cref{fig:ex-a-2}.

Consider another nonsmooth and nonconvex bivariate function (see \cref{fig:ex-b-1}) 
\begin{subequations}
\begin{equation} \label{eq:nonsmooth-example-1}
    f_2(x_1,x_2) = \left|\frac{x_2}{x_1}-2\right| + |x_2-1|, \qquad \mathrm{dom}(f_2)=\{x \in \mathbb{R}^2\mid x_1 > 0,x_2>0\}. 
\end{equation}
By the same diffeomorphism \cref{eq:academic-example-1}, we have
\begin{equation} \label{eq:nonsmooth-example-2}
    h_2(y_1,y_2):=f_2(g^{-1}(y_2,y_2)) = \left|y_1-2\right| + |y_2-1|, \qquad \forall y_1 > 0, y_2 >0,  
\end{equation}
\end{subequations}
which becomes convex (but remains nonsmooth); see \cref{fig:ex-b-2}.  
Despite being nonconvex, both $f_1$ and $f_2$ enjoy hidden convexity properties, revealed by the diffeomorphism \cref{eq:academic-example-1}.
\hfill $\square$
\end{example}

While \cref{example:academic} serves an academic purpose and is purely illustrative, it is important to highlight that invertible and smooth coordinate transformations, similar to \cref{eq:academic-example-1}, indeed appear very often in feedback controller synthesis. We will detail several benchmark applications in \cref{section:applications}.

Before presenting those formally, consider a linear time-invariant (LTI) dynamical system 
\begin{equation} \label{eq:LTI-example}
        \dot{x}(t) = Ax(t) + Bu(t) + w(t),
    \end{equation}
    where $x(t) \in \mathbb{R}^n, u(t) \in \mathbb{R}^m, w(t) \in \mathbb{R}^n$ are system state, control input, and disturbance, respectively. 
A central control task is to design a suitable $u(t)$ to regulate the behavior of $x(t)$ in the presence of $w(t)$. For illustration, we next consider two simple benchmark control cases: 1) linear quadratic regulator (LQR) with stochastic noises, and   2) $\mathcal{H}_\infty$ robust control with adversarial noises. 

\begin{example}[A simple LQR instance] \label{example:LQR_ex}
    We here consider an LTI system \cref{eq:LTI-example} with problem data
    $$
    A = \begin{bmatrix}
        -2 & 0 \\ 0 & 1
    \end{bmatrix}, \; B = \begin{bmatrix}
        0 \\ 1
    \end{bmatrix},
    $$
    where $w(t)$ is a white Gaussian noise with an intensity matrix $\mathbb{E}[w(t)w(\tau)]=4I_2\delta(t-\tau)$. We aim to design a stabilizing state feedback policy $u(t) = kx(t)$ where $k = \begin{bmatrix}
        k_1 & k_2
    \end{bmatrix} \in \mathbb{R}^{1 \times 2}$ to minimize an average mean performance $\lim_{T \rightarrow \infty }\mathbb{E} \!\left[\frac{1}{T}\int_{0}^T \left(x_1(t)^2 + x_2(t)^2 + u(t)^2\right)dt\right]$.

    \begin{subequations}
    After simple algebra, we derive the LQR cost function in terms of $(k_1,k_2)$ as 
    \begin{equation} \label{eq:LQR-ex-1}
        J(k_1,k_2) = \frac{1-2k_2+3k_2^2 - 2k_2^3 - 2k_1^2k_2}{k_2^2 - 1}, \qquad \forall k_1\in \mathbb{R}, k_2 < - 1. 
    \end{equation}
    It is not obvious from \cref{eq:LQR-ex-1} to see whether it is convex or admits a convex parameterization\footnote{With some efforts, one can show that \cref{eq:LQR-ex-1} is actually convex.}.  
    However, following a classical change of variables (see \Cref{appendix:examples-of-ECL} for more computational details),
    we define a nonlinear mapping as 
    \begin{equation} \label{eq:LQR-ex-1-mapping}
    \begin{aligned}
  y=(y_1,y_2)=g(k_1,k_2):= \left(\frac{k_1}{1-k_2},  \frac{2k_2 - k_1^2 - 2k_2^2 }{k_2^2 - 1}\right) \qquad \forall k_1 \in \mathbb{R}, k_2 < -1. 
    \end{aligned}
    \end{equation}
This mapping turns out to be invertible, and it can be further verified that 
\begin{equation} \label{eq:LQR-ex-1-y}
    h(y_1,y_2): = J(g^{-1}(y_1,y_2)) = -y_2 -1 + y^\tr \begin{bmatrix}
         1 &  y_1 \\
y_1& -y_2 - 2
    \end{bmatrix}^{-1} y, \qquad \forall \begin{bmatrix}
         1 &  y_1 \\
y_1& -y_2 - 2
    \end{bmatrix} \succ 0. 
\end{equation}
It is now clear that $h$ is convex since its epigraph is convex, as shown below: 
    \begingroup
    \setlength\arraycolsep{2pt}
\def\arraystretch{0.9}
    \begin{equation} \label{eq:LQR-ex-1-cvx}
    \left\{(y,\gamma) \in \mathbb{R}^3 \left| h(y) \leq \gamma \right.\right\} = \left\{(y,\gamma) \in \mathbb{R}^3 \left|   \begin{bmatrix}
        \gamma +y_2 +1 & y^\tr \\
        y & \mathrm{aff}(y)
    \end{bmatrix} \succeq 0, \! 
    \mathrm{aff}(y) \!:=\! \begin{bmatrix}
         1 &  y_1 \\
y_1& -y_2 - 2
    \end{bmatrix} \!\succ\! 0
        \right.\right\},
    \end{equation}
    \endgroup
    \end{subequations}
    where we have applied the Schur complement. 
    Thanks to the smooth and invertible mapping \cref{eq:LQR-ex-1-mapping}, we see that minimizing \cref{eq:LQR-ex-1} is equivalent to a convex problem of minimizing \cref{eq:LQR-ex-1-y}. Consequently, any stationary point of \cref{eq:LQR-ex-1} is globally optimal (in fact, the stationary point of \cref{eq:LQR-ex-1} is unique). \Cref{fig:simple-example-LQR-main-paper} illustrates these two functions \Cref{eq:LQR-ex-1,eq:LQR-ex-1-y}, both of which are indeed convex.
    \hfill $\qed$
\end{example}

    \begin{figure}[t]
    \centering
    \setlength{\abovecaptionskip}{4pt}
    \begin{subfigure}{0.35\textwidth}
        \includegraphics[width=0.8\textwidth]{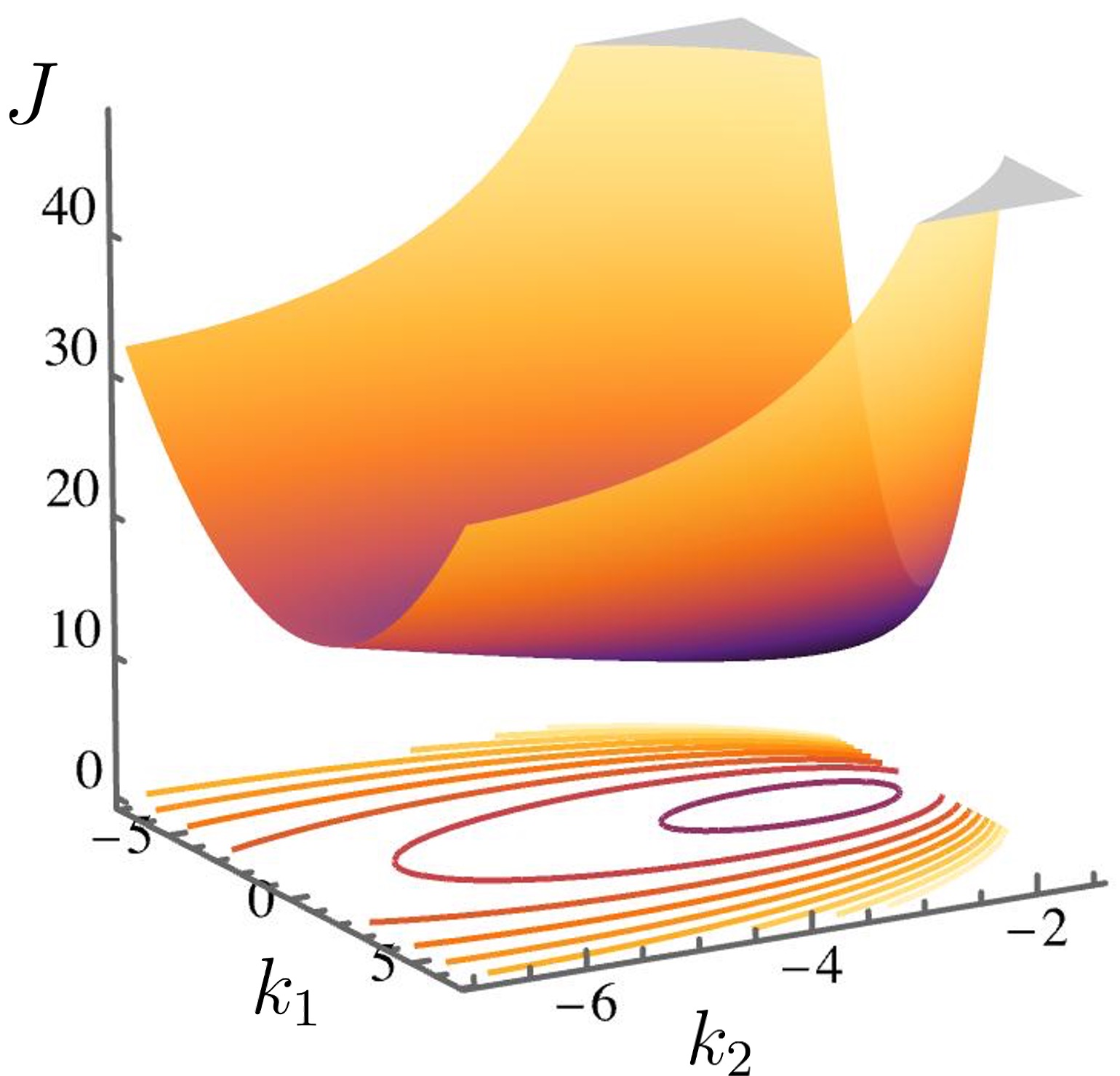}
        \caption{$J(k_1,k_2)$}     \label{fig:LQR-before}
    \end{subfigure}
    \hspace{10mm}
    \begin{subfigure}{0.35\textwidth}
        \includegraphics[width=0.8
\textwidth]{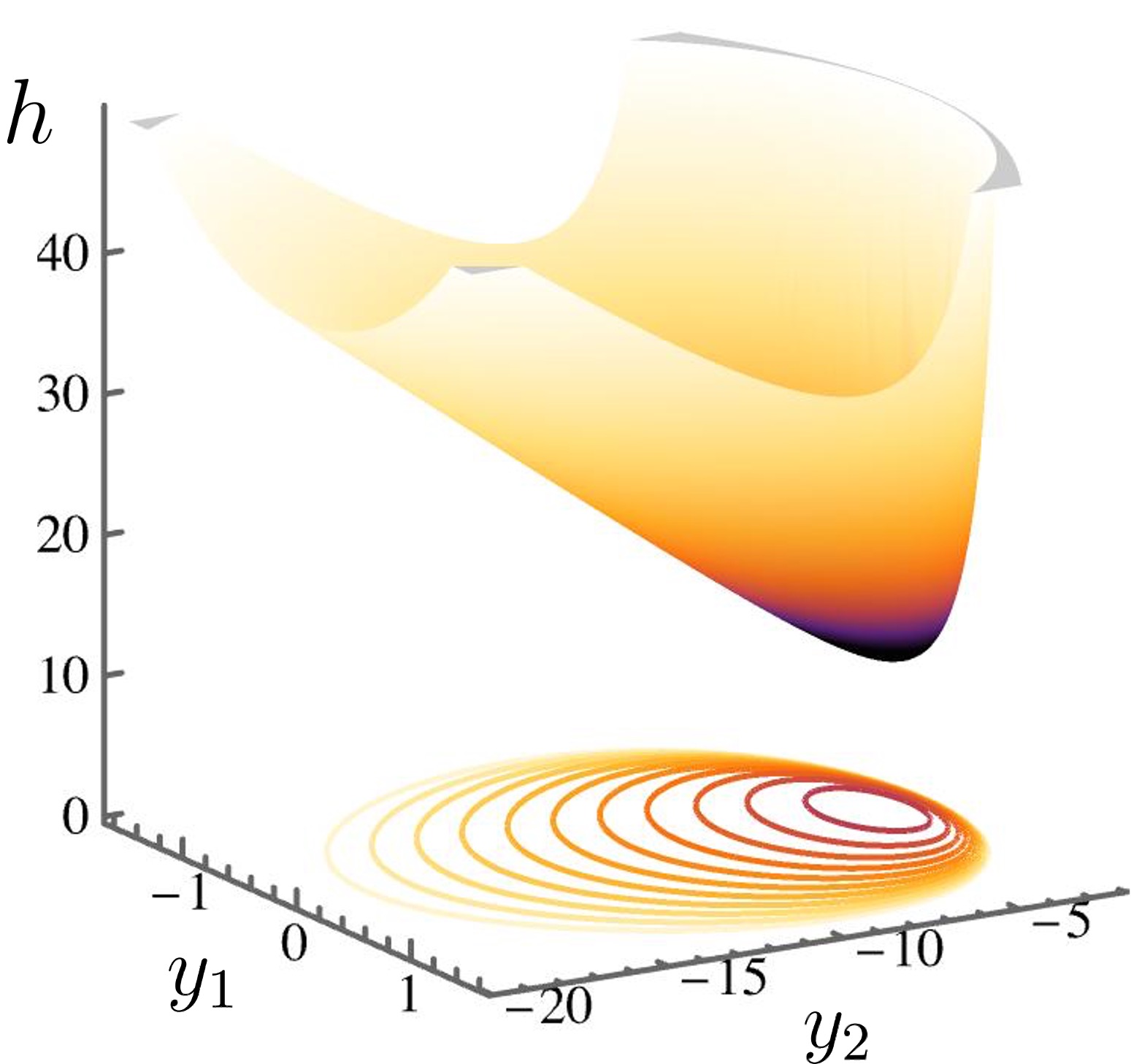}
        \caption{$h(y_1,y_2)$}     \label{fig:LQR-after}
    \end{subfigure}
    \caption{Optimization landscape of the LQR instance in \Cref{example:LQR_ex}: (a) the original LQR cost \cref{eq:LQR-ex-1}; (b) the cost function after convexification \cref{eq:LQR-ex-1-y}.
    }
    \label{fig:simple-example-LQR-main-paper}
\end{figure}

\begin{example}[A simple $\mathcal{H}_\infty$ instance] \label{example:Hinf}
    We consider another LTI system \cref{eq:LTI-example} with problem data
    $$
    A = -1, \; B = 1.
    $$
    Unlike the stochastic noise in \cref{example:LQR_ex}, we here consider an adversarial disturbance $w$ with bounded energy $\|w\|_{\ell_2}^2:=\int_0^\infty \|w(t)\|^2_2 dt \leq 1$. We aim to design a stabilizing state feedback policy $u(t) = kx(t)$ where $k \in \mathbb{R}$ to minimize the worst-case performance, given by the square root of $\max_{\|w\|_{\ell_2} \leq 1}  \int_{0}^\infty \left(0.1x(t)^2 + u(t)^2\right)dt$. From classical control theory, this cost function is the same as the $\mathcal{H}_\infty$ norm of the transfer function from $w$ to $z:= \begin{bmatrix}
        \sqrt{0.1}x \ \ kx
    \end{bmatrix}^\tr$. Some simple calculations lead to an analytical expression of the cost function
    \begin{equation}
\begin{aligned} \label{eq:Hinf-ex-2}
    J_\infty(k)=\|\mathbf{T}_{zw}(k)\|_{\mathcal{H}_\infty}=\frac{\sqrt{0.1+k^2}}{1-k},\quad \forall k<1.
\end{aligned} 
    \end{equation} 
In this case, the non-convexity of $J_\infty$ can be seen from the fact that the second derivative $J_\infty''(k)<0$ for some $k<1$.
Indeed, we can observe from \Cref{fig:Hinf-before} that $J_\infty$ is nonconvex. Note that this $\mathcal{H}_\infty$ cost function $J_\infty$ is also not coercive since $\lim_{k\to -\infty} J_\infty(k) = 1$.

Still, classical control theory \cite{boyd1991linear} has revealed the hidden convex structure in $J_\infty$, i.e., we have
\[
 \min_{k<1}\, J_\infty(k)=\min_{(\gamma, y, x)\in\mathcal{F}_{\infty}} \, \gamma,
 \]
where
 $   \mathcal{F}_{\infty}
=\left\{
(\gamma, y, x):x>0,\ 2\gamma(x-y)-(0.1x^2+y^2+1)\geq0\right\}$ is a convex set.  
The convexity can be seen from the equivalence between the second inequality in $\mathcal{F}_{\infty}$ and the LMI
\[
\begin{bmatrix}
    -2x+2y & 1 & \sqrt{0.1}x & y\\
    1 & -\gamma & 0 & 0\\
    \sqrt{0.1}x & 0 & -\gamma & 0\\
    y & 0 & 0 & -\gamma
\end{bmatrix} \preceq 0.
\]
In fact, the convex set 
$\mathcal{F}_{\infty}$ is derived from a classical change of variables $(p,k)\rightarrow(y,x):$
\begin{equation} \label{eq:Hinf-simple-example-change}
    y=kp^{-1},\quad x=p^{-1}, \qquad \forall p > 0, 
\end{equation}
where $p$ is an extra Lyapunov variable from the bounded real lemma (more details will be provided in \Cref{subsection:hinf-state-feedback}). Thus, we can solve the nonconvex $\mathcal{H}_\infty$ control with \cref{eq:Hinf-ex-2} via convex optimization. The convex set $\mathcal{F}_\infty$ is illustrated in \cref{fig:Hinf-Fcvx}, where the red dot denotes the optimal solution
$
(\gamma^\star,y^\star,x^\star)\approx(0.3015,-0.3015,3.015)
$ 
to the convex problem $\min_{(\gamma, y, x)\in\mathcal{F}_{\infty}} \, \gamma$ and $\gamma^\star\approx0.3015$ corresponds to the optimal value of the original nonconvex problem $\min_{k<1}\, J_\infty(k)$. Furthermore, since the change of variables in \cref{eq:Hinf-simple-example-change} is invertible, one can recover the stationary point $k^\star=-0.1$ of $J_\infty$ from $(y^\star,x^\star)$ by $k^\star=y^\star/x^\star$.
\hfill $\qed$
\end{example}

\begin{figure}[t]
    \centering
    \setlength{\abovecaptionskip}{4pt}
    \begin{subfigure}{0.35\textwidth}
        \includegraphics[width=0.8\textwidth]{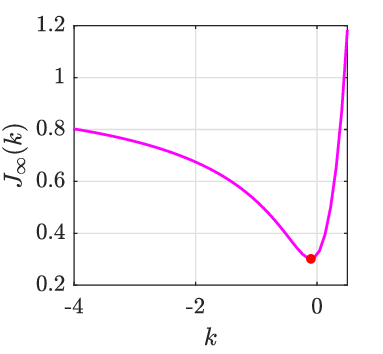}
        \caption{$J_\infty(k)$}     \label{fig:Hinf-before}
    \end{subfigure}
    \hspace{10mm}
    \begin{subfigure}{0.35\textwidth}
        \includegraphics[width=0.9
\textwidth]{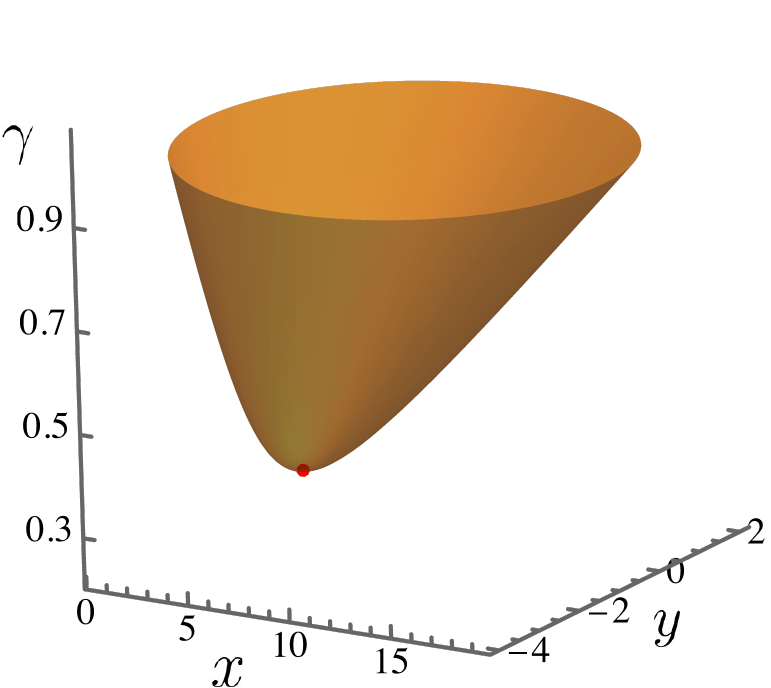}
        \caption{$\mathcal{F}_{\infty}$}     \label{fig:Hinf-Fcvx}
    \end{subfigure}
    \caption{Optimization landscape of the $\mathcal{H}_\infty$ instance in \Cref{example:Hinf}: (a) the $\mathcal{H}_\infty$ cost \cref{eq:Hinf-ex-2}, where the red dot denotes the stationary point $k^\star=-0.1$; (b) the convex set $\mathcal{F}_{\infty}$ after the lifting procedure, where the red dot denotes the optimal solution $(\gamma^\star,y^\star,x^\star)$ of the convex problem $\min_{(\gamma, y, x)\in\mathcal{F}_{\infty}} \, \gamma$.
    }
    \label{fig:simple-example-LQR}
\end{figure}

Unlike the LQR case in \cref{example:LQR_ex}, a direct convex parameterization for $\mathcal{H}_\infty$ cost function \Cref{eq:Hinf-ex-2} in \cref{example:Hinf} is not straightforward, and instead, we have introduced two extra variables ${p}>0$ and $\gamma$ to derive a convex representation for the nonconvex epigraph of \Cref{eq:Hinf-ex-2}. Similar to \cref{example:LQR_ex,example:academic}, the existence of such a lifted convex representation can further imply the global optimality of certain stationary points. More details will be provided in \Cref{section:ECL}.

\subsection{A Simple Fact on Epigraphs} \label{subsection:epigraphs}

All instances in \cref{example:academic,example:LQR_ex,example:Hinf} involve a diffeomorphism, which allows for an explicit and equivalent convex representation in a new set of coordinates. We here present a simple~fact~summarizing the key features in the previous examples from an epigraph perspective.

The fact below guarantees any stationary points are globally optimal for a class of continuous functions when their non-strict epigraphs admit \textit{direct convex reparameterization}. 

\begin{fact} \label{fact:epigraph-lift}
Let $f: \mathcal{D} \rightarrow \mathbb{R}$ be a continuous function.
Denote its non-strict epigraph by
$
\operatorname{epi}_\geq(f) := \{(x,\gamma) \in \mathcal{D}\times\mathbb{R} \mid \gamma \geq f(x)\}. 
$ 
Suppose that 
\begin{itemize}
    \item 
    there exists a convex set $\mathcal{F}_{\mathrm{cvx}} \subset \mathbb{R}^{d}\times\mathbb{R}$ and a diffeomorphism $\Phi$ between $\operatorname{epi}_\geq(f)$ and $\mathcal{F}_{\mathrm{cvx}}$;
    \item $\Phi$ further satisfies $(y,\gamma) = \Phi(x,\gamma), \; \forall (x,\gamma) \in \operatorname{epi}_\geq(f)$ (i.e., the mapping $\Phi$ directly outputs $\gamma$). 
\end{itemize}
Then the following statements hold.
\begin{enumerate}
    \item The minimization of $f(x)$ over $x\in\mathcal{D}$ is equivalent to a convex problem in the sense that
    \[
        \inf_{x\in\mathcal{D}} f(x) = \inf_{(y,\gamma)\in\mathcal{F}_{\mathrm{cvx}}} \gamma.
    \]
    \item {Let $\mathcal{D}$ be an open domain and $f(x)$ be subdifferentially regular.\footnote{This is a very large class of continuous functions, covering all optimal and robust control problems in \Cref{section:applications}. Also, see Part I of this paper \cite[Appendix B]{zheng2023benign} for the background of subdifferential regularity and Clarke stationarity.}} If $x^\ast$ is a Clarke stationary point (i.e., $0 \in \partial f(x^\ast)$), then $x^\ast$ is globally optimal to $f(x)$ over $x\in\mathcal{D}$.
\end{enumerate}
\end{fact}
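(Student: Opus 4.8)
The plan is to prove the two statements essentially by transporting everything through the diffeomorphism $\Phi$. For the first statement, I would argue that $\Phi$ being a bijection between $\operatorname{epi}_\geq(f)$ and $\mathcal{F}_{\mathrm{cvx}}$ that preserves the last coordinate $\gamma$ implies the two infima coincide: indeed, $\inf_{x\in\mathcal{D}} f(x) = \inf\{\gamma : (x,\gamma)\in \operatorname{epi}_\geq(f)\}$ since $f$ is continuous and $\operatorname{epi}_\geq(f)$ is its (closed) epigraph, and applying $\Phi$ this equals $\inf\{\gamma : (y,\gamma)\in\mathcal{F}_{\mathrm{cvx}}\}$ because $\Phi$ is onto and leaves $\gamma$ untouched. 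No convexity is even needed here; it is a bookkeeping argument about the range of the $\gamma$-coordinate.

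For the second statement, the key is to show that the convexity of $\mathcal{F}_{\mathrm{cvx}}$ forces the convexity of $f$ on $\mathcal{D}$, so that Clarke stationarity (which for subdifferentially regular functions coincides with the usual convex subdifferential condition on a convex function) becomes sufficient for global optimality. I would proceed as follows: first observe that $\operatorname{epi}_\geq(f) = \Phi^{-1}(\mathcal{F}_{\mathrm{cvx}})$, so if I can show $\operatorname{epi}_\geq(f)$ is convex, then $f$ is a convex function (a continuous function whose epigraph is convex is convex, and here $\mathcal{D}$ is open so the effective domain is fine). The subtlety is that $\Phi^{-1}$ need not be affine, so convexity of $\mathcal{F}_{\mathrm{cvx}}$ does not a priori transfer to $\operatorname{epi}_\geq(f)$. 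This is the main obstacle. I expect it to be resolved using the second hypothesis: $\Phi(x,\gamma) = (y,\gamma)$, i.e., $\Phi$ acts as $\Phi(x,\gamma) = (\psi(x,\gamma),\gamma)$ for some smooth map $\psi$, and crucially $y$ does not depend on $\gamma$ in the cases of interest — or more precisely, one uses that for fixed $x$ the fiber $\{\gamma : (x,\gamma)\in\operatorname{epi}_\geq(f)\} = [f(x),\infty)$ is mapped to $\{\gamma : (\psi(x,\gamma),\gamma)\in\mathcal{F}_{\mathrm{cvx}}\}$, a ray. I would show that for any $(x_0,\gamma_0),(x_1,\gamma_1)\in\operatorname{epi}_\geq(f)$ and $t\in[0,1]$, writing $(y_i,\gamma_i)=\Phi(x_i,\gamma_i)\in\mathcal{F}_{\mathrm{cvx}}$, convexity of $\mathcal{F}_{\mathrm{cvx}}$ gives $(ty_1+(1-t)y_0,\, t\gamma_1+(1-t)\gamma_0)\in\mathcal{F}_{\mathrm{cvx}}$; since $\mathcal{F}_{\mathrm{cvx}}$ is a (non-strict) epigraph-type set, the point $(ty_1+(1-t)y_0, \gamma)$ stays in $\mathcal{F}_{\mathrm{cvx}}$ for all $\gamma\geq t\gamma_1+(1-t)\gamma_0$, and pulling back by $\Phi^{-1}$ yields $f(x)\leq t\gamma_1+(1-t)\gamma_0$ where $x$ is the $\mathcal{D}$-component of $\Phi^{-1}(ty_1+(1-t)y_0, t\gamma_1+(1-t)\gamma_0)$; the work is to identify that $x$ is exactly the convex-combination point or at least that $x$ ranges over a set that lets one conclude midpoint convexity of $f$, then use continuity.

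Alternatively — and this is the cleaner route I would actually take — I would avoid proving $f$ convex directly and instead argue at the level of the stationary point. Given a Clarke stationary point $x^\ast$ with $0\in\partial f(x^\ast)$, set $\gamma^\ast = f(x^\ast)$ so $(x^\ast,\gamma^\ast)\in\operatorname{epi}_\geq(f)$ lies on the boundary, and let $(y^\ast,\gamma^\ast)=\Phi(x^\ast,\gamma^\ast)$. Subdifferential regularity of $f$ plus the chain rule for the smooth change of coordinates gives a supporting-type relation: $0\in\partial f(x^\ast)$ should translate, via $\Phi$, into the statement that the hyperplane $\{\gamma = \gamma^\ast\}$ supports $\mathcal{F}_{\mathrm{cvx}}$ at $(y^\ast,\gamma^\ast)$ — here I would use that the normal cone to $\operatorname{epi}_\geq(f)$ at $(x^\ast,\gamma^\ast)$ contains a vector of the form $(0,-1)$ precisely when $0\in\partial f(x^\ast)$ (regularity is what makes the Clarke normal cone equal the regular/Fréchet normal cone, so this equivalence is exact), and smooth diffeomorphisms transform normal cones by the inverse transpose of the Jacobian, which by the structure $\Phi(x,\gamma)=(\psi(x,\gamma),\gamma)$ preserves the "$\gamma$-only" direction. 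Then $\{\gamma=\gamma^\ast\}$ supporting the convex set $\mathcal{F}_{\mathrm{cvx}}$ means $\gamma\geq\gamma^\ast$ for all $(y,\gamma)\in\mathcal{F}_{\mathrm{cvx}}$, i.e. $\gamma^\ast = \inf_{(y,\gamma)\in\mathcal{F}_{\mathrm{cvx}}}\gamma = \inf_{x\in\mathcal{D}}f(x)$ by statement 1, so $x^\ast$ is globally optimal. The main obstacle in this route is the careful computation showing the Clarke normal cone of $\operatorname{epi}_\geq(f)$ at a boundary point encodes $\partial f$ correctly and transforms as claimed under $\Phi$; I would lean on the standard result that for a regular function the normal cone to its epigraph at $(x^\ast, f(x^\ast))$ is $\{\lambda(v,-1) : \lambda\geq 0,\, v\in\partial f(x^\ast)\} \cup (\text{horizontal part})$, together with smoothness of $\Phi^{-1}$ to invoke the diffeomorphism-invariance of (regular) normal cones.
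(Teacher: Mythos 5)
Your treatment of statement 1 is the same bookkeeping argument the paper itself gives right after \Cref{fact:epigraph-lift}, so that part matches. For statement 2, your first route (transporting convexity of $\mathcal{F}_{\mathrm{cvx}}$ back through $\Phi^{-1}$ to conclude $f$ is convex) is not merely hard to complete --- it is unsalvageable, since the whole point of the framework is that $\operatorname{epi}_\geq(f)$ is generally \emph{not} convex (e.g.\ the function $f_1$ in \Cref{example:academic} satisfies the hypotheses yet is nonconvex); you were right to abandon it. Your second route is correct, but it is genuinely different from the paper's. The paper proves \Cref{fact:epigraph-lift} as the special case $d_\xi=0$, $\mathcal{G}_{\mathrm{aux}}=\{0\}$, $\pi_{x,\gamma}(\mathcal{L}_{\mathrm{lft}})=\operatorname{epi}_\geq(f)$ of \Cref{theorem:convex-equivalency,theorem:ECL-guarantee}, and the proof of \Cref{theorem:ECL-guarantee} (via \Cref{lemma:ECL-guarantee}) is a primal, contrapositive construction: map $(x^\ast,f(x^\ast))$ and a strictly better point $(x',f(x')+\epsilon)$ into $\mathcal{F}_{\mathrm{cvx}}$, take the line segment there, pull it back through $\Phi^{-1}$ to a $C^2$ curve, project onto $\mathcal{D}$, and extract a direction $v$ with $f^\circ(x^\ast;v)<0$, contradicting $0\in\partial f(x^\ast)$. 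You instead argue dually: regularity lets you read $0\in\partial f(x^\ast)$ as $(0,-1)$ lying in the regular normal cone of the epigraph at $(x^\ast,f(x^\ast))$, the change-of-coordinates rule sends normal cones through $(D\Phi)^{-\tr}$, the block-triangular Jacobian forced by $\Phi(x,\gamma)=(\psi(x,\gamma),\gamma)$ fixes $(0,-1)$, and for the convex set $\mathcal{F}_{\mathrm{cvx}}$ this says $\{\gamma=\gamma^\ast\}$ is a supporting hyperplane, so statement 1 finishes the job. Your route buys a shorter proof in this unlifted, exact-epigraph setting, at the cost of heavier variational-analysis machinery (the epigraphical characterization of $\hat\partial f$, diffeomorphism-invariance of normal cones, and identification of regular with convex normal cones); the paper's curve construction is more elementary and, crucially, is the one that survives in the general ECL setting with lifting variables, the auxiliary set $\mathcal{G}_{\mathrm{aux}}$, and the inclusion chain $\operatorname{epi}_>(f)\subseteq\pi_{x,\gamma}(\mathcal{L}_{\mathrm{lft}})\subseteq\operatorname{cl}\operatorname{epi}_{\geq}(f)$, where a clean normal-cone calculus would be much harder to run.

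One detail you should make explicit: the identity $\Phi(x,\gamma)=(\psi(x,\gamma),\gamma)$ is assumed only on $\operatorname{epi}_\geq(f)$, whereas the Jacobian you need at the boundary point $(x^\ast,f(x^\ast))$ is that of the smooth extension to a neighborhood. The bottom row $(0,\dots,0,1)$ still holds there because the last component of $\Phi$ minus $\gamma$ vanishes identically on the open set $\{(x,\gamma):\gamma>f(x)\}$, whose closure contains $(x^\ast,f(x^\ast))$, and the gradient of that difference is continuous; without this remark the "preserves the $\gamma$-only direction" step is an assertion rather than a proof.
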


The first statement in \Cref{fact:epigraph-lift} is easy to see from the following facts
$$
\begin{aligned}
    \inf_{x\in\mathcal{D}} f(x) = \inf_{(x,\gamma)\in \operatorname{epi}_\geq(f)} \gamma 
    = \inf_{(y,\gamma)\in\mathcal{F}_{\mathrm{cvx}}} \gamma,
\end{aligned}
$$
where the first equality is due to the definition of epigraphs and the second equality is directly from the diffeomorphism $\Phi$. The second statement in \Cref{fact:epigraph-lift} is closely related to the fact that a locally optimal point of a convex optimization problem is also globally optimal. We postpone the rigorous proof of \Cref{fact:epigraph-lift} to \Cref{section:ECL}, as a special case of our general \ECL{} framework.

Note that the epigraph form in \Cref{fact:epigraph-lift} is a standard trick and has been widely used in convex optimization \cite{boyd2004convex}. Also, \Cref{fact:epigraph-lift} works for nonsmooth and nonconvex functions. It is clear that both \cref{example:academic,example:LQR_ex} are special cases of \Cref{fact:epigraph-lift}. For illustration, in \Cref{example:academic}, we have 
    $$
    \Phi(x,\gamma) = (x_2/x_1,x_2,\gamma), \qquad \forall (x,\gamma) \in \operatorname{epi}_\geq(f), 
    $$
while in \Cref{example:LQR_ex}, we have (recall $g(k)$ in \cref{eq:LQR-ex-1-mapping})
    $$
    \Phi(k,\gamma) = (g(k),\gamma), \qquad \forall (k,\gamma) \in \operatorname{epi}_\geq(J).
    $$
As for \Cref{example:Hinf}, the change of variables $y=kp^{-1}$ is also related to a diffeomorphism. However, this case is not covered by \Cref{fact:epigraph-lift} due to the lifting variable $p$ in \Cref{example:Hinf}. Inspired by \Cref{example:Hinf}, we will introduce a much more generalized version of \Cref{fact:epigraph-lift} in \Cref{section:ECL}.

\begin{remark}[LMI/semidefinite representable sets]
In \Cref{example:LQR_ex}, the set $\mathcal{F}_{\mathrm{cvx}}$ shown in \cref{eq:LQR-ex-1-cvx} is not only convex but also represented by an LMI. This class of convex sets is called LMI representable or semidefinite representable sets \cite{helton2010semidefinite}. Thus, the minimization of $\gamma$ over $(y,\gamma)\in\mathcal{F}_{\mathrm{cvx}}$ is an LMI (or semidefinite programs), which is ready to be solved using many existing conic solvers (even in a large scale when special structures appear~\cite{zheng2021chordal}).
\hfill \qed
\end{remark}

\section{Extended Convex Lifting (\ECL) for Benign Non-convexity} \label{section:ECL}

In this section, we present a generic template for extended convex reparameterization of epigraphs of nonconvex functions, which significantly generalizes the setup in \Cref{fact:epigraph-lift}. We call this generic template \texttt{Extended Convex Lifting (ECL)}: We first lift its (non)-strict epigraph to a higher dimensional set by appending auxiliary variables, and then assume a diffeomorphism $\Phi$ that maps this lifted nonconvex set to a ``partially'' convex set (see \Cref{fig:ECL-process} for illustration). 

Despite non-convexity and non-smoothness, the existence of an \ECL{} not only reveals that minimizing the original function is equivalent to a convex problem (\Cref{theorem:convex-equivalency}) but also identifies a class of first-order stationary points to be globally optimal (\Cref{theorem:ECL-guarantee}).

\subsection{A Framework of Extended Convex Lifting (\texttt{ECL})} \label{subsection:ECL}

For a real-valued function $f$ with domain $\mathcal{D} \subseteq \mathbb{R}^d$, we define its strict and non-strict epigraphs by
\begin{align*}
    \operatorname{epi}_>(f) &\coloneqq \{(x,\gamma)\in\mathcal{D}\times\mathbb{R}\mid
    \gamma>f(x)
    \}, \\  
    \operatorname{epi}_\geq(f) &\coloneqq \{(x,\gamma)\in\mathcal{D}\times\mathbb{R}\mid
    \gamma\geq f(x)
    \}. 
\end{align*}

\begin{definition}[Extended Convex Lifting] \label{definition:ECL}
Let $f:\mathcal{D}\rightarrow \mathbb{R}$ be a continuous function, where $\mathcal{D}\subseteq\mathbb{R}^{d}$. We say that the tuple $(\mathcal{L}_{\mathrm{lft}},\mathcal{F}_{\mathrm{cvx}},\mathcal{G}_{\mathrm{aux}},\Phi)$ is an \ECL{} of $f$, if the following conditions hold:
\begin{subequations}
\begin{enumerate}
\item $\mathcal{L}_{\mathrm{lft}} \subseteq \mathbb{R}^{d}\times\mathbb{R}\times\mathbb{R}^{d_\xi}$ is a lifted set with an extra variable $\xi \in \mathbb{R}^{d_\xi}$, such that the canonical projection of $\mathcal{L}_{\mathrm{lft}}$ onto the first $d+1$ coordinates, given by
$ 
    \pi_{x,\gamma}(\mathcal{L}_{\mathrm{lft}})
= \{(x,\gamma):\exists \xi \in\mathbb{R}^{d_{\xi}} \; \text{s.t.}\; (x,\gamma,\xi)\in \mathcal{L}_{\mathrm{lft}}\},
$ 
satisfies
\begin{equation} \label{eq:epi-graph-lifting}
\operatorname{epi}_>(f)
\subseteq \pi_{x,\gamma}(\mathcal{L}_{\mathrm{lft}})\subseteq 
\operatorname{cl}\operatorname{epi}_{\geq} (f).
\end{equation}

\item $\mathcal{F}_{\mathrm{cvx}} \subseteq \mathbb{R}\times \mathbb{R}^{d_1} $ is a convex set, $\mathcal{G}_{\mathrm{aux}} \subseteq \mathbb{R}^{d_2} $ is an auxiliary set, and $\Phi$ is a $C^2$ diffeomorphism from $\mathcal{L}_{\mathrm{lft}}$ to $\mathcal{F}_{\mathrm{cvx}}\times\mathcal{G}_{\mathrm{aux}}$.%
\footnote{Precisely, this means that $\Phi$ can be extended to a $C^2$ function $\tilde{\Phi}$ defined on some open subset $U\subseteq\mathbb{R}^d\times\mathbb{R}\times\mathbb{R}^{d_\xi}$ with $U\supseteq \mathcal{L}_{\mathrm{lft}}$, such that $\tilde{\Phi}$ is a bijection from $U$ to $\tilde{\Phi}(U)$ and its inverse $\tilde{\Phi}^{-1}$ is $C^2$.

The auxiliary set $\mathcal{G}_{\mathrm{aux}}$ can be nonconvex or even disconnected. We also allow $d_2=0$, in which case we adopt the convention that $\mathcal{G}_{\mathrm{aux}} = \mathbb{R}^0 = \{0\}$, and identify the set $\mathcal{F}_{\mathrm{cvx}}\times \{0\}$ with $\mathcal{F}_{\mathrm{cvx}}$.}

\item For any $(x,\gamma,\xi)\in \mathcal{L}_{\mathrm{lft}}$, we have
\begin{equation} \label{eq:mapping-invariant}
\Phi(x,\gamma,\xi) = (\gamma, \zeta_1,\zeta_2)
\quad \text{and} \quad
(\gamma,\zeta_1)\in \mathcal{F}_{\mathrm{cvx}}
\end{equation}
for some $\zeta_1\in\mathbb{R}^{d_1}$ and $\zeta_2\in\mathcal{G}_{\mathrm{aux}}$ (i.e., the mapping $\Phi$ directly outputs $\gamma$ in the first component).
\end{enumerate}
\end{subequations}
\end{definition}

Compared to the basic \Cref{fact:epigraph-lift}, the~notion of \texttt{ECL} gives more flexibility by 1) introducing an extra variable to \emph{lift} the epigraph to a higher dimension, 2) relaxing the projection of the lifted set to sit between the strict epigraph and the closure of the non-strict epigraph (\emph{chain of inclusion}), and 3) adding an auxiliary set $\mathcal{G}_{\mathrm{aux}}$ to extend the convex image under a diffeomorphism (\textit{extended convex parameterization}). The setup in \Cref{fact:epigraph-lift} is a special case of \texttt{ECL} by letting $d_\xi = 0$ (no lifting), requiring $\pi_{x,\gamma}(\mathcal{L}_{\mathrm{lft}}) = \operatorname{epi}_\geq (f)$ (transforming exactly the non-strict epigraph), and letting $\mathcal{G}_{\mathrm{aux}} = \{0\}$ (no auxiliary set). 

The interested reader may wonder why we need such a peculiar chain of inclusion in \cref{eq:epi-graph-lifting}. A simpler and more straightforward requirement for the lifting process might be 
\begin{equation} \label{eq:projection-ECL-alternative}
\pi_{x,\gamma}(\mathcal{L}_{\mathrm{lft}}) = \operatorname{epi}_\geq (f).
\end{equation}
Evidently, \cref{eq:epi-graph-lifting} includes \cref{eq:projection-ECL-alternative} as a special case. Note that \cref{eq:epi-graph-lifting} further allows for the case $\operatorname{epi}_\geq (f) \subseteq \pi_{x,\gamma}(\mathcal{L}_{\mathrm{lft}}) \subseteq \mathrm{cl}\,\operatorname{epi}_\geq (f) $ when the non-strict epigraph $\operatorname{epi}_\geq (f)$ is not closed; see \cref{appendix:closure-strict-epigraph} for conditions when $\operatorname{epi}_\geq (f)$ is closed. 
We will see in \Cref{section:applications} that one indeed needs to employ the more general condition \cref{eq:epi-graph-lifting} for many policy optimization problems in control (e.g., LQG and $\mathcal{H}_\infty$ output-feedback control \cite[Facts 4.1 \& 5.2]{zheng2023benign}).
In fact, one technical difficulty of establishing the main results in Part I of this paper \cite[Theorems 4.2 \& 5.2]{zheng2023benign} lies in the proof of \cref{eq:epi-graph-lifting} for a diffeomorphism arising in the classical change of variables (cf. \Cref{lemma:inclusion-projection-LQG,lemma:inclusion-projection-Hinf}).

An almost immediate benefit from the \ECL{}  $(\mathcal{L}_{\mathrm{lft}},\mathcal{F}_{\mathrm{cvx}},\mathcal{G}_{\mathrm{aux}},\Phi)$ is that we can reformulate the minimization of $f(x)$ over $x\in\mathcal{D}$ as a convex optimization problem.
\begin{theorem} \label{theorem:convex-equivalency}
Let $f:\mathcal{D}\rightarrow\mathbb{R}$ be a continuous function equipped with an \ECL{} $(\mathcal{L}_{\mathrm{lft}},\mathcal{F}_{\mathrm{cvx}},\mathcal{G}_{\mathrm{aux}},\Phi)$. Then, we have 
\begin{equation*}
\inf_{x\in\mathcal{D}} f(x) = \inf_{(\gamma,\zeta_1)\in\mathcal{F}_{\mathrm{cvx}}} \gamma.
\end{equation*}
\end{theorem}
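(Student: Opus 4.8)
The plan is to show the two chains of inequalities
\[
\inf_{x\in\mathcal{D}} f(x)
\;\overset{(i)}{=}\;
\inf_{(x,\gamma)\in\operatorname{epi}_>(f)} \gamma
\;\overset{(ii)}{=}\;
\inf_{(x,\gamma,\xi)\in\mathcal{L}_{\mathrm{lft}}} \gamma
\;\overset{(iii)}{=}\;
\inf_{(\gamma,\zeta_1)\in\mathcal{F}_{\mathrm{cvx}}} \gamma,
\]
where all infima are over the indicated sets. Equality $(i)$ is elementary: for any $x$ and any $\gamma>f(x)$ one has $(x,\gamma)\in\operatorname{epi}_>(f)$, so the right side is $\le \gamma$; letting $\gamma\downarrow f(x)$ and then taking the infimum over $x$ gives ``$\ge$'' on the right, while ``$\le$'' follows since every $(x,\gamma)\in\operatorname{epi}_>(f)$ has $\gamma > f(x) \ge \inf_x f(x)$. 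Equality $(iii)$ is immediate from condition 3 of \Cref{definition:ECL}: the diffeomorphism $\Phi$ sends $\mathcal{L}_{\mathrm{lft}}$ bijectively onto $\mathcal{F}_{\mathrm{cvx}}\times\mathcal{G}_{\mathrm{aux}}$ while preserving the $\gamma$-coordinate, and projecting $\mathcal{F}_{\mathrm{cvx}}\times\mathcal{G}_{\mathrm{aux}}$ onto $\mathcal{F}_{\mathrm{cvx}}$ does not change the range of the $\gamma$-coordinate (here we use that $\mathcal{L}_{\mathrm{lft}}$ is nonempty whenever $\mathcal{D}$ is; if $\mathcal{D}=\emptyset$ the statement is vacuous with the convention $\inf\emptyset = +\infty$). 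So the whole content is in equality $(ii)$.

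For $(ii)$ I would use the chain of inclusions \cref{eq:epi-graph-lifting}. Since $\operatorname{epi}_>(f)\subseteq \pi_{x,\gamma}(\mathcal{L}_{\mathrm{lft}})$, every $(x,\gamma)\in\operatorname{epi}_>(f)$ lifts to some $(x,\gamma,\xi)\in\mathcal{L}_{\mathrm{lft}}$ with the same $\gamma$, giving $\inf_{\mathcal{L}_{\mathrm{lft}}}\gamma \le \inf_{\operatorname{epi}_>(f)}\gamma$. Conversely, since $\pi_{x,\gamma}(\mathcal{L}_{\mathrm{lft}})\subseteq \operatorname{cl}\operatorname{epi}_\geq(f)$, every $(x,\gamma,\xi)\in\mathcal{L}_{\mathrm{lft}}$ projects to a point $(x,\gamma)\in\operatorname{cl}\operatorname{epi}_\geq(f)$; it therefore suffices to note that the infimum of the $\gamma$-coordinate over $\operatorname{cl}\operatorname{epi}_\geq(f)$ equals that over $\operatorname{epi}_\geq(f)$ (closure does not lower the infimum of a continuous linear functional that is bounded below on the set, and it trivially cannot raise it), and that $\inf_{\operatorname{epi}_\geq(f)}\gamma = \inf_{x\in\mathcal{D}}f(x) = \inf_{\operatorname{epi}_>(f)}\gamma$ by $(i)$ and the same elementary argument. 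Combining the two directions closes the loop and yields $(ii)$.

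The only mild subtlety — and the step I expect to need the most care — is the interchange between $\operatorname{epi}_\geq(f)$ and its closure in the ``$\ge$'' direction of $(ii)$: one must check that passing to the closure of the non-strict epigraph does not strictly decrease $\inf \gamma$. This is true because $\inf_{\operatorname{epi}_\geq(f)}\gamma = \inf_{x\in\mathcal{D}} f(x) =: v \in \mathbb{R}\cup\{-\infty\}$, and for any point $(x_0,\gamma_0)$ in the closure there is a sequence $(x_k,\gamma_k)\in\operatorname{epi}_\geq(f)$ converging to it, so $\gamma_0 = \lim_k \gamma_k \ge \lim_k \inf_{\operatorname{epi}_\geq(f)}\gamma = v$; hence $\inf_{\operatorname{cl}\operatorname{epi}_\geq(f)}\gamma \ge v$, and the reverse inequality is trivial since $\operatorname{epi}_\geq(f)\subseteq\operatorname{cl}\operatorname{epi}_\geq(f)$. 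No continuity or regularity of $f$ beyond what is assumed is needed for this theorem; continuity of $f$ is only used to guarantee $\operatorname{epi}_>(f)$ is nonempty exactly when $\mathcal{D}$ is and to make the elementary $\gamma\downarrow f(x)$ limiting argument valid. Assembling $(i)$, $(ii)$, $(iii)$ completes the proof.
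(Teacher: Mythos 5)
Your proposal is correct and uses essentially the same ingredients as the paper's proof: the forward inequality comes from $\operatorname{epi}_>(f)\subseteq\pi_{x,\gamma}(\mathcal{L}_{\mathrm{lft}})$ together with the $\gamma$-preserving property of $\Phi$, and the reverse from $\Phi^{-1}$, the inclusion $\pi_{x,\gamma}(\mathcal{L}_{\mathrm{lft}})\subseteq\operatorname{cl}\operatorname{epi}_\geq(f)$, and the fact that passing to the closure cannot lower $\inf\gamma$ — which is exactly the paper's $\epsilon$-approximation argument in disguise. Your repackaging into the three intermediate equalities over the strict epigraph and the lifted set is a clean reorganization of the same argument rather than a different route.
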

\begin{proof}
Let $x\in\mathcal{D}$ and $\epsilon>0$ be arbitrary. We have $(x,f(x)+\epsilon)\in \operatorname{epi}_>(f)$ by definition. By~\eqref{eq:epi-graph-lifting}, there exists $\xi\in\mathbb{R}^{d_\xi}$ such that $(x,f(x)+\epsilon,\xi)\in \mathcal{L}_{\mathrm{lft}}$. Then by~\eqref{eq:mapping-invariant}, we can find $\zeta_1\in\mathbb{R}^{d_1}$ such that $(f(x)+\epsilon,\zeta_1)\in\mathcal{F}_{\mathrm{cvx}}$, which implies $f(x)+\epsilon\geq \inf_{(\gamma,\zeta_1)\in\mathcal{F}_{\mathrm{cvx}}} \gamma $. By the arbitrariness of $x\in\mathcal{D}$ and $\epsilon>0$, we get
\[
\inf_{x\in\mathcal{D}} f(x) \geq \inf_{(\gamma,\zeta_1)\in\mathcal{F}_{\mathrm{cvx}}} \gamma.
\]
To show the other direction, let $(\gamma,\zeta_1)\in\mathcal{F}_{\mathrm{cvx}}$ be arbitrary, and pick any $\zeta_2\in\mathcal{G}_{\mathrm{aux}}$. Noting that $\Phi$ is a diffeomorphism from $\mathcal{L}_{\mathrm{lft}}$ to $\mathcal{F}_{\mathrm{cvx}}\times\mathcal{G}_{\mathrm{aux}}$. Using~\eqref{eq:mapping-invariant},  we can get $$
\Phi^{-1}(\gamma,\zeta_1,\zeta_2)=(x,\gamma,\xi)\in\mathcal{L}_{\mathrm{lft}}
$$
for some $x\in\mathbb{R}^d$ and $\xi\in\mathbb{R}^{d_{\xi}}$. Then~\eqref{eq:epi-graph-lifting} implies $(x,\gamma)\in \operatorname{cl}\operatorname{epi}_{\geq}(f)$, which means that for any $\epsilon>0$ we can find $x_\epsilon\in\mathcal{D}$ and $\gamma_\epsilon\geq f(x_\epsilon)$ such that $\|x_\epsilon-x\|<\epsilon$ and $|\gamma_\epsilon-\gamma|<\epsilon$. Consequently,
\[
f(x_\epsilon)\leq \gamma_\epsilon \leq |\gamma_\epsilon-\gamma| + \gamma
< \gamma + \epsilon,
\]
which further implies $\gamma+\epsilon\geq \inf_{x\in\mathcal{D}}f(x)$. By the arbitrariness of $\epsilon>0$ and $(\gamma,\zeta_1)\in\mathcal{F}_{\mathrm{cvx}}$, we see that
\[
\inf_{(\gamma,\zeta_1)\in\mathcal{F}_{\mathrm{cvx}}} \gamma\geq \inf_{x\in\mathcal{D}} f(x),
\]
and the proof is complete.
\end{proof}

In \cref{theorem:convex-equivalency}, the function $f$ can be nonsmooth and nonconvex (see \cref{example:academic,example:LQR_ex,example:Hinf}), but the existence of an \ECL{} reveals its hidden convexity, and consequently,  optimizing $f(x)$ over $x\in\mathcal{D}$ is equivalent to a convex problem. In many control applications, the convex set $\mathcal{F}_{\mathrm{cvx}}$ is represented by certain LMIs, and  \Cref{theorem:convex-equivalency} can indeed be considered as the rationale behind many LMI formulations in control \cite{scherer2000linear,boyd1994linear}. In~\Cref{section:applications}, we will provide specific examples of the \ECL{}s corresponding to various control problems.

\begin{remark}[Lifting and auxiliary set in \ECL{}] 
We shall see in \Cref{section:applications} that the flexibility of \ECL{} is crucial for the existence of such a  $C^2$-diffeomorphism in many control problems, especially dynamic output control such as LQG and $\mathcal{H}_\infty$ control. Indeed, we need to use the closure of non-strict epigraphs to account for non-strict  LMIs for $\mathcal{H}_2$ and $\mathcal{H}_\infty$ norms. The additional lifting variable $\xi$ often corresponds to Lyapunov variables, and the auxiliary set $\mathcal{G}_{\mathrm{aux}}$ will represent the set of similarity transformations in dynamic policies. For policy optimization with static state feedback, no auxiliary set is needed (since there exist no similarity transformations in static feedback policies), and in this case, $\mathcal{G}_{\mathrm{aux}}$ will be $\{0\}$. Still, the lifting variable $\xi$ is useful for static policy optimization since Lyapunov variables are naturally involved, as already previewed in \Cref{example:Hinf}.    
\hfill $\square$
\end{remark}

\begin{remark}[When the infimum is achieved]
Note that in an \ECL{}, the sets $\mathcal{D}$ and $\mathcal{F}_{\mathrm{cvx}}$ are not necessarily closed or bounded. No guarantees are provided in \Cref{theorem:convex-equivalency} when their infimums are achieved. In fact,~almost~all existing LMI formulations in controller~synthesis from \cite{scherer2000linear} rely on strict LMIs where the corresponding $\mathcal{F}_{\mathrm{cvx}}$ is not closed and the infimum $\inf_{(\gamma,\zeta_1)\in\mathcal{F}_{\mathrm{cvx}}} \gamma$ typically cannot be achieved. Our control applications in \Cref{section:applications} will focus on non-strict LMIs, as advocated in Part I of this paper \cite{zheng2023benign}. Even in the case of non-strict LMIs, the infimum may not be attained and can only be approached as the variable tends to infinity; we will provide an $\mathcal{H}_\infty$ control instance in \Cref{subsection:non-coercivity} as one such example.
\hfill $\square$
\end{remark}

\subsection{Non-degenerate Stationarity Implies Global Optimality}

In addition to convex re-parameterization in terms of $\mathcal{F}_{\mathrm{cvx}}$ as shown in \Cref{theorem:convex-equivalency}, the existence of an \ECL{} can further reveal \textit{global optimality of certain first-order stationary points} for the original function $f$ that can potentially be nonconvex or nonsmooth. This allows us to optimize $f(x)$ over $x\in\mathcal{D}$ by direct local search without relying on the \ECL{}, since we may only know its existence but not its particular form. This feature is particularly important for learning-based model-free control applications \cite{hu2023toward,Talebi2024geometry}.   

Before proceeding, we here re-emphasize that the chain of inclusion \cref{eq:epi-graph-lifting} is critical to the existence of the diffeomorphism $\Phi$ in many control problems. This might be considered reminiscent of the subtleties 
between strict and non-strict LMIs (see \Cref{lemma:H2norm,lemma:bounded_real} in \cref{section:applications}).
Particularly, this definition allows existence of points $(x,f(x))$ that are not covered by $\pi_{x,\gamma}(\mathcal{L}_{\mathrm{lft}})$, as well as members of $\pi_{x,\gamma}(\mathcal{L}_{\mathrm{lft}})$ that are only accumulation points of $\operatorname{epi}_\geq(f)$. We introduce the notion of \textit{(non-)degeneracy} to characterize the former type of points.

\begin{definition}[Non-degenerate points] \label{definition:degenerate-points}
Let $f:\mathcal{D}\rightarrow \mathbb{R}$ be a continuous function equipped with an \ECL{} $(\mathcal{L}_{\mathrm{lft}},\mathcal{F}_{\mathrm{cvx}},\mathcal{G}_{\mathrm{aux}},\Phi)$. A point $x\in\mathcal{D}$ is called \textit{non-degenerate} if
$
(x,f(x))\in\pi_{x,\gamma}(\mathcal{L}_{\mathrm{lft}}),
$
otherwise \textit{degenerate}. The set of non-degenerate points in $\mathcal{D}$ will be denoted by $\mathcal{D}_{\mathrm{nd}}$.    
\end{definition}

The notion of non-degenerate points depends on a particular choice of \ECL{} $(\mathcal{L}_{\mathrm{lft}},\mathcal{F}_{\mathrm{cvx}},\mathcal{G}_{\mathrm{aux}},\Phi)$. From \Cref{definition:degenerate-points}, we have a quick fact below.
\begin{fact} \label{fact:non-degenerate-point}
    If \cref{eq:projection-ECL-alternative} holds for the \ECL{} $(\mathcal{L}_{\mathrm{lft}},\mathcal{F}_{\mathrm{cvx}},\mathcal{G}_{\mathrm{aux}},\Phi)$, then all points $x \in \mathcal{D}$ are non-degenerate.
\end{fact}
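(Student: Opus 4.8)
The plan is to unwind the two definitions involved and observe that the claimed implication is essentially immediate. Suppose \eqref{eq:projection-ECL-alternative} holds, i.e.\ $\pi_{x,\gamma}(\mathcal{L}_{\mathrm{lft}}) = \operatorname{epi}_\geq(f)$. Fix an arbitrary $x \in \mathcal{D}$. By the definition of the non-strict epigraph, the point $(x, f(x))$ satisfies $f(x) \geq f(x)$, hence $(x,f(x)) \in \operatorname{epi}_\geq(f)$. Combining this with \eqref{eq:projection-ECL-alternative} gives $(x,f(x)) \in \pi_{x,\gamma}(\mathcal{L}_{\mathrm{lft}})$, which is exactly the condition in \Cref{definition:degenerate-points} for $x$ to be non-degenerate. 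Since $x \in \mathcal{D}$ was arbitrary, every point of $\mathcal{D}$ is non-degenerate, i.e.\ $\mathcal{D}_{\mathrm{nd}} = \mathcal{D}$.

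There is no real obstacle here; the only point worth a moment's care is making sure that \eqref{eq:projection-ECL-alternative} is genuinely the hypothesis being assumed for \emph{this particular} \ECL{}, rather than the weaker chain of inclusion \eqref{eq:epi-graph-lifting} in \Cref{definition:ECL} that always holds. The fact is precisely capturing that the "extra room" in \eqref{eq:epi-graph-lifting} --- the possibility that $\operatorname{epi}_>(f) \subseteq \pi_{x,\gamma}(\mathcal{L}_{\mathrm{lft}})$ but $\pi_{x,\gamma}(\mathcal{L}_{\mathrm{lft}})$ misses some boundary points $(x,f(x))$ --- is exactly the source of degenerate points, and that this room is closed off under \eqref{eq:projection-ECL-alternative}. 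So the proof is a one-line set-membership argument, and I would present it as such without invoking \Cref{theorem:convex-equivalency} or the diffeomorphism $\Phi$ at all, since neither plays any role.

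I would write it up as: ``\emph{Proof.} Fix $x\in\mathcal{D}$. Since $f(x)\geq f(x)$, we have $(x,f(x))\in\operatorname{epi}_\geq(f)$, and therefore $(x,f(x))\in\pi_{x,\gamma}(\mathcal{L}_{\mathrm{lft}})$ by \eqref{eq:projection-ECL-alternative}. By \Cref{definition:degenerate-points}, $x$ is non-degenerate. As $x\in\mathcal{D}$ was arbitrary, $\mathcal{D}_{\mathrm{nd}}=\mathcal{D}$. \qed'' That is the entire argument.
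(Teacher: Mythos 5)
Your argument is correct and is exactly the one-line set-membership observation the paper treats as immediate (it states the fact without proof right after \Cref{definition:degenerate-points}): $(x,f(x))\in\operatorname{epi}_\geq(f)=\pi_{x,\gamma}(\mathcal{L}_{\mathrm{lft}})$, hence $x\in\mathcal{D}_{\mathrm{nd}}$. No gaps; your write-up matches the intended reasoning.
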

Thus, it is clear that all points $x \in \mathcal{D}$ for \cref{example:academic,example:LQR_ex} are non-degenerate. Indeed, we will prove in \Cref{subsection:static-policies} that all stabilizing LQR or state-feedback $\mathcal{H}_\infty$ policies are non-degenerate using a standard \ECL{} (cf. \Cref{proposition:LQR-non-degenearte,fact:Hinf-non-degenearte}). 
Moreover, we will see that for LQG and $\mathcal{H}_\infty$ control, the set of non-degenerate dynamic policies  defined in Part I \cite[Defintions 3.1 \& 3.2]{zheng2023benign} indeed corresponds to the set of non-degenerate points in the sense of \Cref{definition:degenerate-points}. 

Despite the generality of \texttt{ECL}, we have almost the same guarantees as \Cref{fact:epigraph-lift}, as summarized in the theorem below, which is one main technical result in this paper. 
The proof idea has a strong geometrical intuition, but the details are technically involved and are postponed to \cref{subsection:proof-ECL}.

\begin{theorem}\label{theorem:ECL-guarantee}
Let $f:\mathcal{D}\rightarrow\mathbb{R}$ be a subdifferentially regular function defined on an open domain $\mathcal{D}\subseteq\mathbb{R}^d$, and let $(\mathcal{L}_{\mathrm{lft}}, \mathcal{F}_{\mathrm{cvx}},\mathcal{G}_{\mathrm{aux}},\Phi)$ be an \ECL{} of $f$. If $x^\ast\in\mathcal{D}_{\mathrm{nd}}$ is a Clarke stationary point, i.e., $0\in\partial f(x^\ast)$, then $x^\ast$ is a global minimizer of $f(x)$ over $\mathcal{D}$.
\end{theorem}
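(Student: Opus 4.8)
The plan is to transport the Clarke stationarity of $f$ at $x^\ast$ through the diffeomorphism $\Phi$ to a convexity statement on the $\mathcal{F}_{\mathrm{cvx}}$ side, where stationarity forces global optimality. First I would set $\gamma^\ast = f(x^\ast)$; since $x^\ast \in \mathcal{D}_{\mathrm{nd}}$, there is some $\xi^\ast$ with $(x^\ast, \gamma^\ast, \xi^\ast) \in \mathcal{L}_{\mathrm{lft}}$, and $\Phi(x^\ast,\gamma^\ast,\xi^\ast) = (\gamma^\ast, \zeta_1^\ast, \zeta_2^\ast)$ with $(\gamma^\ast,\zeta_1^\ast) \in \mathcal{F}_{\mathrm{cvx}}$. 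The goal is to show $(\gamma^\ast,\zeta_1^\ast)$ minimizes $\gamma$ over $\mathcal{F}_{\mathrm{cvx}}$; by \Cref{theorem:convex-equivalency} this immediately gives $f(x^\ast) = \gamma^\ast = \inf_{\mathcal{D}} f$. Suppose for contradiction that there is $(\bar\gamma,\bar\zeta_1) \in \mathcal{F}_{\mathrm{cvx}}$ with $\bar\gamma < \gamma^\ast$. By convexity of $\mathcal{F}_{\mathrm{cvx}}$, the whole segment $(1-t)(\gamma^\ast,\zeta_1^\ast) + t(\bar\gamma,\bar\zeta_1)$, $t \in [0,1]$, lies in $\mathcal{F}_{\mathrm{cvx}}$; pick any fixed $\bar\zeta_2 \in \mathcal{G}_{\mathrm{aux}}$ (any point of $\mathcal{G}_{\mathrm{aux}}$ works, but continuity in the $\mathcal{G}_{\mathrm{aux}}$-component is the reason we must be careful — see below) and pull this curve back through $\Phi^{-1}$ to obtain a $C^1$ curve $t \mapsto (x(t), \gamma(t), \xi(t)) \in \mathcal{L}_{\mathrm{lft}}$ with $(x(0),\gamma(0),\xi(0)) = (x^\ast,\gamma^\ast,\xi^\ast)$ and $\gamma(t) = (1-t)\gamma^\ast + t\bar\gamma$ strictly decreasing.

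The next step is to descend along $x(t)$. By \cref{eq:epi-graph-lifting}, each $(x(t),\gamma(t)) \in \operatorname{cl}\operatorname{epi}_\geq(f)$, so $f(x(t)) \le \gamma(t) + o(1)$ — more precisely, for each $t$ there are points in $\operatorname{epi}_\geq(f)$ arbitrarily close, and since $\mathcal{D}$ is open and $f$ continuous, a short argument gives $f(x(t)) \le \gamma(t)$ for $t$ in a right-neighborhood of $0$ (using that $\gamma(t)$ is strictly below $\gamma^\ast = f(x^\ast)$ for $t>0$, together with continuity of $f$ along $x(\cdot)$ — here one may need $\limsup$ estimates and the closure, not just the epigraph itself). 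Differentiating at $t = 0^+$: write $\dot x(0) = v$. Then $f(x(t)) - f(x^\ast) \le \gamma(t) - \gamma^\ast = t(\bar\gamma - \gamma^\ast) + $ higher order, so the one-sided directional derivative satisfies $f'(x^\ast; v) \le \bar\gamma - \gamma^\ast < 0$. But subdifferential regularity of $f$ at $x^\ast$ means $f'(x^\ast;v) = \max_{g \in \partial f(x^\ast)} \langle g, v\rangle$, and Clarke stationarity $0 \in \partial f(x^\ast)$ forces $f'(x^\ast;v) \ge \langle 0, v\rangle = 0$, a contradiction. Hence no such $(\bar\gamma,\bar\zeta_1)$ exists, $(\gamma^\ast,\zeta_1^\ast)$ is optimal in $\mathcal{F}_{\mathrm{cvx}}$, and $x^\ast$ is a global minimizer of $f$.

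The main obstacle is the transition from "$(x(t),\gamma(t)) \in \operatorname{cl}\operatorname{epi}_\geq(f)$ with $\gamma(t)$ decreasing" to "$f(x(t)) \le \gamma(t)$" (or at least $f(x(t)) - f(x^\ast) \le t(\bar\gamma - \gamma^\ast) + o(t)$), because membership in the \emph{closure} of the epigraph does not directly bound $f$. I would handle this by noting that $v = \dot x(0)$ is a well-defined vector (the curve is $C^1$ since $\Phi^{-1}$ is $C^2$ and the curve in $\mathcal{F}_{\mathrm{cvx}} \times \mathcal{G}_{\mathrm{aux}}$ is $C^1$), and that $x^\ast + tv + o(t) = x(t)$; for the closure membership, for each $t$ and each $\epsilon>0$ there is $(x_\epsilon, \gamma_\epsilon) \in \operatorname{epi}_\geq(f)$ within $\epsilon$ of $(x(t),\gamma(t))$, so $f(x_\epsilon) \le \gamma_\epsilon \le \gamma(t) + \epsilon$; letting $\epsilon \to 0$ and using continuity of $f$ on the open set $\mathcal{D}$ gives $f(x(t)) \le \gamma(t)$. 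Thus $f(x(t)) \le \gamma(t) = \gamma^\ast + t(\bar\gamma-\gamma^\ast)$, and dividing by $t>0$ and letting $t \to 0^+$ yields $f'(x^\ast;v) \le \bar\gamma - \gamma^\ast < 0$, which is the needed contradiction with regularity and $0\in\partial f(x^\ast)$. A secondary subtlety worth flagging is that $\Phi$ is only defined on $\mathcal{L}_{\mathrm{lft}}$ (a possibly non-open set), but \Cref{definition:ECL} supplies a $C^2$ extension $\tilde\Phi$ to an open neighborhood $U$, so $\tilde\Phi^{-1}$ is genuinely $C^2$ near $(\gamma^\ast,\zeta_1^\ast,\zeta_2^\ast)$ and the pulled-back curve is legitimately $C^1$ with values in $\mathcal{L}_{\mathrm{lft}}$ as long as the forward curve stays in $\mathcal{F}_{\mathrm{cvx}}\times\mathcal{G}_{\mathrm{aux}} = \tilde\Phi(\mathcal{L}_{\mathrm{lft}})$, which it does by convexity of $\mathcal{F}_{\mathrm{cvx}}$ and the fixed choice $\bar\zeta_2 = \zeta_2^\ast \in \mathcal{G}_{\mathrm{aux}}$.
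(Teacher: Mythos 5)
Your proposal is correct and follows essentially the same route as the paper's proof via \Cref{lemma:ECL-guarantee}: lift the non-degenerate point, move along a line segment in $\mathcal{F}_{\mathrm{cvx}}$ with the auxiliary component held fixed, pull it back through $\Phi^{-1}$, project onto $\mathcal{D}$, and contradict Clarke stationarity through a negative directional derivative, with your direct closure-plus-openness argument playing the role of the paper's auxiliary function $\tilde{f}$. Two glossed details are worth noting: the segment's $\mathcal{G}_{\mathrm{aux}}$-component must be $\zeta_2^\ast$ itself (your parenthetical claim that ``any point of $\mathcal{G}_{\mathrm{aux}}$ works'' is not right, since otherwise the pulled-back curve would not start at $x^\ast$; your final sentence does make the correct choice), and the passage from $f(x(t))\leq\gamma(t)$ to $f'(x^\ast;v)\leq\bar{\gamma}-\gamma^\ast$ needs the local Lipschitz continuity implied by subdifferential regularity to control $|f(x(t))-f(x^\ast+tv)|=o(t)$, exactly as carried out in Step 4 of the paper's proof.
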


This theorem guarantees that \textit{non-degenerate stationarity implies global optimality} for any subdifferentially regular function with an \ECL{}. We remark that subdifferentially regular functions are a very large class of functions, covering all optimal and robust control problems discussed in \Cref{section:applications} (see \cite[Appendix B]{zheng2023benign} for a review of subdifferential regularity and Clarke stationarity). 

Now it is clear that \Cref{fact:epigraph-lift} becomes a special case of \Cref{theorem:convex-equivalency} and \Cref{theorem:ECL-guarantee} (by choosing $d_\xi = 0$ and $d_2 = 0$, i.e., with no extra variable $\xi$ and no auxiliary set $\mathcal{G}_{\mathrm{aux}}$).
Considering \Cref{fact:non-degenerate-point}, we also have a quick corollary.
\begin{corollary}\label{corollary:ECL-guarantee}
Let $f:\mathcal{D}\rightarrow\mathbb{R}$ be a subdifferentially regular function defined on an open domain $\mathcal{D}\subseteq\mathbb{R}^d$, and let $(\mathcal{L}_{\mathrm{lft}}, \mathcal{F}_{\mathrm{cvx}},\mathcal{G}_{\mathrm{aux}},\Phi)$ be an \ECL{} of $f$. If \cref{eq:projection-ECL-alternative} holds, then any Clarke stationary point is a global minimizer of $f(x)$ over $\mathcal{D}$.
\end{corollary}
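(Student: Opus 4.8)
The plan is to derive the corollary directly from \Cref{theorem:ECL-guarantee} together with \Cref{fact:non-degenerate-point}, rather than re-running any of the stationarity analysis. The only thing the corollary adds beyond \Cref{theorem:ECL-guarantee} is the hypothesis \cref{eq:projection-ECL-alternative}, which upgrades the per-point conclusion of the theorem (that \emph{non-degenerate} Clarke stationary points are global minimizers) into a statement about \emph{all} Clarke stationary points. So the entire argument reduces to showing that, under \cref{eq:projection-ECL-alternative}, the non-degenerate set $\mathcal{D}_{\mathrm{nd}}$ coincides with the full domain $\mathcal{D}$.

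First I would invoke \Cref{fact:non-degenerate-point}: if \cref{eq:projection-ECL-alternative} holds, then every $x\in\mathcal{D}$ is non-degenerate, i.e., $\mathcal{D}_{\mathrm{nd}}=\mathcal{D}$. For completeness I would briefly recall why this is immediate from \Cref{definition:degenerate-points}: since $f(x)\geq f(x)$ trivially, the pair $(x,f(x))$ always lies in $\operatorname{epi}_\geq(f)$; under \cref{eq:projection-ECL-alternative} this epigraph equals $\pi_{x,\gamma}(\mathcal{L}_{\mathrm{lft}})$, so $(x,f(x))\in\pi_{x,\gamma}(\mathcal{L}_{\mathrm{lft}})$, which is exactly the condition for $x$ to be non-degenerate.

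Next I would let $x^\ast\in\mathcal{D}$ be an arbitrary Clarke stationary point, i.e., $0\in\partial f(x^\ast)$. Because $x^\ast\in\mathcal{D}=\mathcal{D}_{\mathrm{nd}}$ by the previous step, the hypothesis $x^\ast\in\mathcal{D}_{\mathrm{nd}}$ of \Cref{theorem:ECL-guarantee} is satisfied; the remaining hypotheses (subdifferential regularity of $f$, openness of $\mathcal{D}$, and the existence of the \ECL{}) are inherited verbatim from the corollary's own assumptions. Applying \Cref{theorem:ECL-guarantee} then yields that $x^\ast$ is a global minimizer of $f$ over $\mathcal{D}$, which is the claim.

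There is essentially no obstacle at the level of the corollary: all the technical depth lives in \Cref{theorem:ECL-guarantee} (whose proof is deferred to \Cref{subsection:proof-ECL}), while \Cref{fact:non-degenerate-point} is a one-line consequence of the definitions. The only thing I would check carefully is that the hypotheses of the two invoked results line up exactly with those of the corollary, which they do, so the statement follows by a direct combination of the two earlier results.
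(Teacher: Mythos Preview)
Your proposal is correct and matches the paper's approach exactly: the paper presents this corollary immediately after \Cref{theorem:ECL-guarantee} with the remark ``Considering \Cref{fact:non-degenerate-point}, we also have a quick corollary,'' which is precisely the combination you spell out. There is nothing to add.
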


\begin{remark}[Degenerate points and saddles] \label{remark:saddle-points}
By definition \cref{eq:epi-graph-lifting}, $\pi_{x,\gamma}(\mathcal{L}_{\mathrm{lft}})$ may not cover the whole non-strict epigraph (i.e., we may have feasible points that $(x, f(x))  \notin \pi_{x,\gamma}(\mathcal{L}_{\mathrm{lft}})$), and thus the convex re-parameterization over $\mathcal{F}_{\mathrm{cvx}}$ may not be able to search over all the feasible points $\mathcal{D}$ (but their infimum are the same as shown in \cref{theorem:convex-equivalency}). Consequently, \cref{theorem:ECL-guarantee} only guarantees the global optimality for \textit{non-degenerate stationary} points. We do not have global optimality guarantees for degenerate stationary points $x \in \mathcal{D}\backslash \mathcal{D}_{\mathrm{nd}}$ since they cannot be covered by convex parameterization.\footnote{In this sense, some classical LMI formulations are not ``equivalent'' convex parameterizations for original control problems, especially in dynamic output feedback cases. This subtle point has been less emphasized in classical literature since most of them focus on suboptimal control policies  \cite{scherer1997multiobjective,scherer2000linear,boyd1994linear}.}
There might exist sub-optimal saddle points for $f$ even when it is equipped with an \ECL{}. Indeed, it has been revealed that there exist strictly sub-optimal saddle points in LQG control \cite[Thereom 5]{zheng2021analysis} \cite[Theorem 2]{zheng2022escaping}, which are all degenerate per \Cref{definition:degenerate-points} (see Part I  \cite{zheng2023benign} for more discussions). These saddle points may even be high-order in the sense that their Hessian are zero and characterizing their local behavior requires high-order (beyond second-order) approximations \cite[Theorem 2]{zheng2022escaping}. We suspect that for policy optimization of many control problems, locally optimal solutions that are not globally optimal \textit{do not exist} even in the set of degenerate points $\mathcal{D}\backslash \mathcal{D}_{\mathrm{nd}}$, but neither a proof nor a counterexample is known yet.
\hfill \qed  
\end{remark}

\begin{remark}[Applications in control] \label{remark:convex-reformulation}
    It is well-known that many state feedback and full-order dynamic feedback synthesis problems are nonconvex in their natural forms, but admit ``convex reparameterization'' in terms of LMIs using a suitable change of variables \cite{gahinet1994linear,scherer2000linear,scherer1997multiobjective,dullerud2013course,boyd1994linear}. We argue that our notion of \texttt{ECL} presents a unified treatment for many of these convex reparameterizations, including LQR, LQG, state-feedback $\mathcal{H}_\infty$ control, and dynamic output-feedback $\mathcal{H}_\infty$ control; the equivalence in \cref{theorem:convex-equivalency} provides the rational behind all of their convex re-parameterizations. Thus, many recent results on global optimality of (non-degenerate) stationary points (such as LQR in \cite{fazel2018global,sun2021learning,mohammadi2021convergence}, LQG in \cite{zheng2021analysis}, state-feedback $\mathcal{H}_\infty$ control in \cite{guo2022global}, Kalman filter in \cite{umenberger2022globally}) are special cases of \cref{theorem:ECL-guarantee} once the corresponding \texttt{ECL} is constructed. However, despite the wide use of convex reparameterization in terms of LMIs in control, exact constructions of \texttt{ECL} require special care, especially in dynamic output feedback. We will present some \texttt{ECL} construction details in \Cref{section:applications}.  \hfill \qed 
\end{remark}

\subsection{Proof of \Cref{theorem:ECL-guarantee}} \label{subsection:proof-ECL}

Basically, we prove \Cref{theorem:ECL-guarantee} by showing its contrapositive: Given $x\in\mathcal{D}_{\mathrm{nd}}$, if there exists $x'\in\mathcal{D}$ such that $f(x')<f(x)$, then $x$ cannot be a Clarke stationary point of $f$ due to the existence of the \ECL{} $(\mathcal{L}_{\mathrm{lft}}, \mathcal{F}_{\mathrm{cvx}},\mathcal{G}_{\mathrm{aux}},\Phi)$. 

On a high level, the proof consists of four steps (see \Cref{fig:proof-illustration} for a graphical illustration):
\begin{itemize}[itemsep=3pt]
\item \textbf{Step 1: Lifting and mapping $(x,f(x))$ and $(x',f(x')+\epsilon)$ into $\mathcal{F}_{\mathrm{cvx}}\times \mathcal{G}_{\mathrm{aux}}$. }
By~the~definition of non-degenerate points, we have $(x,f(x))\in\pi_{x,\gamma} (\mathcal{L}_{\mathrm{lft}})$, while for $\epsilon>0$, we have $(x',f(x')+\epsilon)\in\operatorname{epi}_>(f)\subseteq \pi_{x,\gamma} (\mathcal{L}_{\mathrm{lft}})$. Thus we may lift both of them and then map them into $\mathcal{F}_{\mathrm{cvx}} \times \mathcal{G}_{\mathrm{aux}}$ via the diffeomorphism $\Phi$, to obtain
$(f(x),\zeta_1,\zeta_2)$ and $(f(x')+\epsilon,\zeta_1',\zeta_2')$ that are in $\mathcal{F}_{\mathrm{cvx}} \times \mathcal{G}_{\mathrm{aux}}$.

\item \textbf{Step 2: Constructing a $C^2$ curve in $\mathcal{L}_{\mathrm{lft}}$}. The line segment from $(f(x),\zeta_1,\zeta_2)$ to $(f(x')+\epsilon,\zeta_1',\zeta_2)$ belongs to $\mathcal{F}_{\mathrm{cvx}}\times\mathcal{G}_{\mathrm{aux}}$ thanks to convexity, and also the value from $f(x)$ to $f(x')+\epsilon$ is strictly decreasing as long as $\epsilon$ is sufficiently small (since $f(x)>f(x')$ by assumption). The pre-image of this line segment under $\Phi$ is then a $C^2$ curve in $\mathcal{L}_{\mathrm{lft}}$.

\item \textbf{Step 3: Negative derivative along the projected curve in $\mathcal{D}$.} We project the curve constructed in Step 2 onto $\mathcal{D}$ to obtain a projected curve $\varphi$. Then, thanks to the property \cref{eq:mapping-invariant}, it is not difficult to show that $f(x)$ has a negative derivative along $\varphi$ since the value from $f(x)$ to $f(x')+\epsilon$ is strictly decreasing.

\item \textbf{Step 4: Constructing a decreasing direction.} Assuming the domain is open, we can let $v\in\mathbb{R}^d$ to be the tangent vector of $\varphi$ at $x$. The associated directional derivative $\lim_{t\downarrow 0} (f(x+tv)-f(x))/t$ will then be strictly negative. This would indicate that $x$ cannot be a Clarke stationary point, provided that $f$ is subdifferentially regular. 

\end{itemize}

\begin{figure}
    \centering
    \includegraphics[width = 0.7\textwidth]{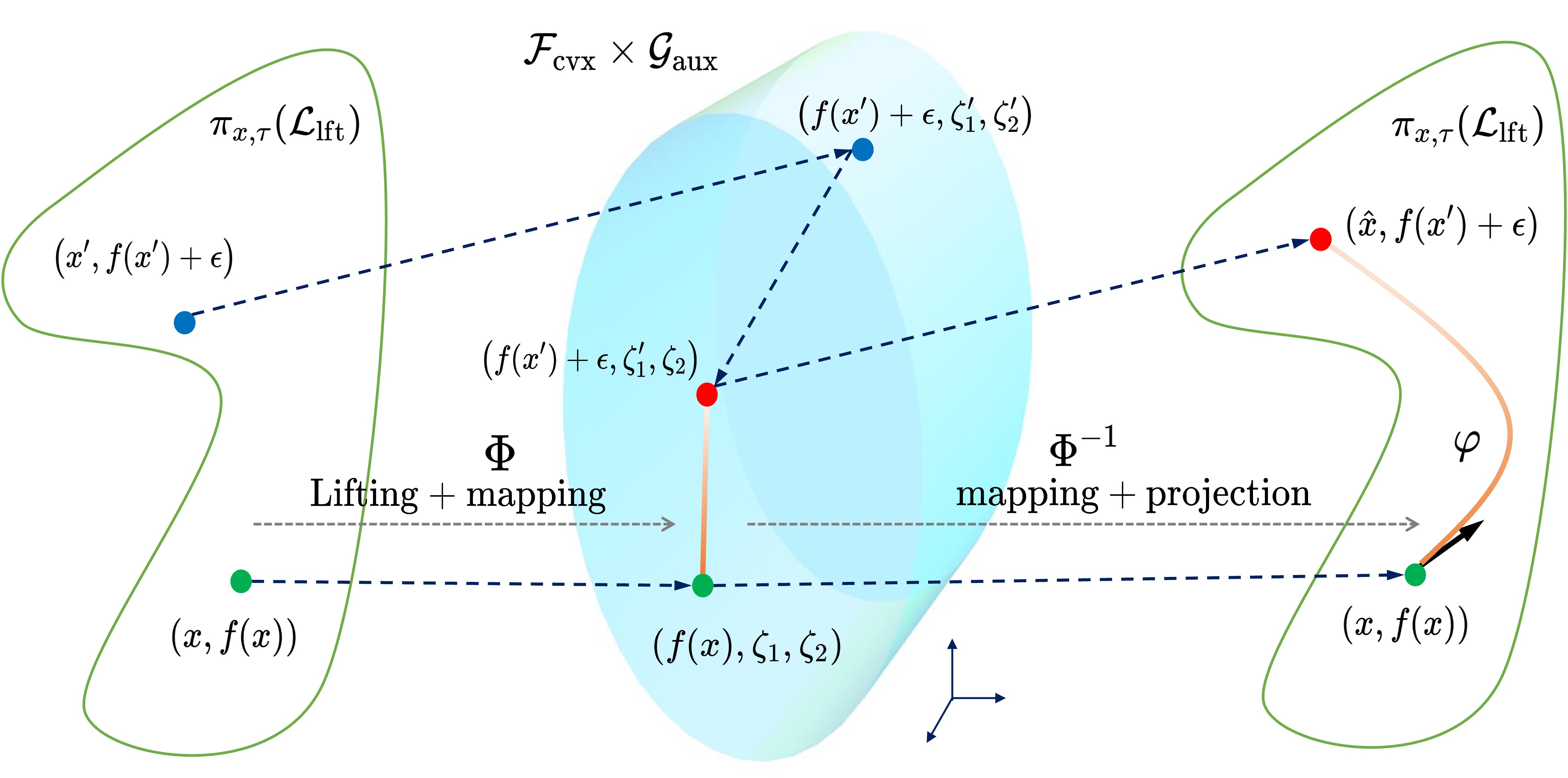}
    \caption{Illustration of the proof of \cref{theorem:ECL-guarantee}. The main proof is via contrapositive: Given $x\in\mathcal{D}_{\mathrm{nd}}$, if there exists $x'\in\mathcal{D}$ such that $f(x')<f(x)$, then we must have $0 \notin \partial f(x)$ due to the \ECL{} $(\mathcal{L}_{\mathrm{lft}}, \mathcal{F}_{\mathrm{cvx}},\mathcal{G}_{\mathrm{aux}},\Phi)$. \textit{Left figure}: we pick two points $(x,f(x))$ and $(x',f(x')+\epsilon)$ in $\pi_{x,\gamma} (\mathcal{L}_{\mathrm{lft}})$, where $\epsilon > 0$ is chosen such that $f(x')+\epsilon < f(x)$. \textit{Middle figure}: We lift and map the two points to $\mathcal{F}_{\mathrm{cvx}}\times\mathcal{G}_{\mathrm{aux}}$ via $\Phi$ and get a line segment from $(f(x),\zeta_1,\zeta_2)$ to $(f(x')+\epsilon,\zeta_1',\zeta_2)$ inside  $\mathcal{F}_{\mathrm{cvx}}\times\mathcal{G}_{\mathrm{aux}}$. \textit{Right figure}: The pre-image of this line segment under $\Phi$ is then a $C^2$ curve in $\mathcal{L}_{\mathrm{lft}}$, and we project this $C^2$ curve onto $\mathcal{D}$; then $f(x)$ has a negative derivative along $\varphi$ since $f(x)> f(x') + \epsilon$, confirming that $x$ cannot be a Clarke stationary point.}
    \label{fig:proof-illustration}
\end{figure}

We now formalize the idea above, and prove the following lemma that provides stronger and more general results. It is not hard to see that \Cref{theorem:ECL-guarantee} is just a direct corollary of the first part of \Cref{lemma:ECL-guarantee}.

\begin{lemma}\label{lemma:ECL-guarantee}
Let $f:\mathcal{D}\rightarrow \mathbb{R}$ be a locally Lipschitz continuous function equipped with the \ECL\ \ 
$(\mathcal{L}_{\mathrm{lft}}, \mathcal{F}_{\mathrm{cvx}},\mathcal{G}_{\mathrm{aux}},\Phi)$.
Suppose $x \in\mathcal{D}_{\mathrm{nd}}$ is not a global minimizer of $f$ on $\mathcal{D}$.
\begin{subequations}
\begin{enumerate}
\item If $\mathcal{D}$ is open, then there exists a non-zero direction $v\in\mathbb{R}^d\backslash\{0\}$ such that
\begin{equation} \label{eq:ECL-guarantee-1}
\limsup_{t\downarrow 0}\frac{f(x+tv)-f(x)}{t}<0,
\end{equation}
and if $f$ is also subdifferentially regular, we have $0\notin \partial f(x)$.

\item If $\mathcal{D}$ is closed, then there exists a $C^2$ curve $\varphi:[0,\delta)\rightarrow\mathcal{D}$ such that $\varphi(0)=x$, $\varphi'(0)\neq 0$ and
\begin{equation}  \label{eq:ECL-guarantee-2}
\limsup_{t\downarrow 0}\frac{f(\varphi(t))-f(x)}{t}<0.
\end{equation}
\end{enumerate}
\end{subequations}
\end{lemma}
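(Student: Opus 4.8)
The plan is to follow the four-step roadmap laid out just before the lemma statement, being careful to treat the two cases (open vs.\ closed $\mathcal{D}$) uniformly by constructing a $C^2$ curve first, and only at the end extracting a tangent direction when $\mathcal{D}$ is open. First I would fix $x\in\mathcal{D}_{\mathrm{nd}}$ with some $x'\in\mathcal{D}$ satisfying $f(x')<f(x)$, and choose $\epsilon>0$ small enough that $f(x')+\epsilon<f(x)$. By \Cref{definition:degenerate-points}, $(x,f(x))\in\pi_{x,\gamma}(\mathcal{L}_{\mathrm{lft}})$, so there is $\xi$ with $(x,f(x),\xi)\in\mathcal{L}_{\mathrm{lft}}$; likewise $(x',f(x')+\epsilon)\in\operatorname{epi}_>(f)\subseteq\pi_{x,\gamma}(\mathcal{L}_{\mathrm{lft}})$ gives $(x',f(x')+\epsilon,\xi')\in\mathcal{L}_{\mathrm{lft}}$. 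Applying $\Phi$ and using \cref{eq:mapping-invariant}, set $\Phi(x,f(x),\xi)=(f(x),\zeta_1,\zeta_2)$ and $\Phi(x',f(x')+\epsilon,\xi')=(f(x')+\epsilon,\zeta_1',\zeta_2')$, with $(f(x),\zeta_1)$ and $(f(x')+\epsilon,\zeta_1')$ both in the convex set $\mathcal{F}_{\mathrm{cvx}}$.

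Next I would build the curve. In $\mathcal{F}_{\mathrm{cvx}}\times\mathcal{G}_{\mathrm{aux}}$ consider the segment $\sigma(t)=\bigl((1-t)f(x)+t(f(x')+\epsilon),(1-t)\zeta_1+t\zeta_1',\zeta_2\bigr)$ for $t\in[0,1]$; its first two components lie in $\mathcal{F}_{\mathrm{cvx}}$ by convexity, and its last component $\zeta_2\in\mathcal{G}_{\mathrm{aux}}$ is held fixed, so $\sigma(t)\in\mathcal{F}_{\mathrm{cvx}}\times\mathcal{G}_{\mathrm{aux}}$ throughout. Pulling back, $\tilde\varphi(t):=\Phi^{-1}(\sigma(t))$ is a $C^2$ curve in $\mathcal{L}_{\mathrm{lft}}$ with $\tilde\varphi(0)=(x,f(x),\xi)$. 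Writing $\tilde\varphi(t)=(\varphi(t),\gamma(t),\xi(t))$, property \cref{eq:mapping-invariant} forces the $\gamma$-component of $\tilde\varphi(t)$ to equal the first component of $\sigma(t)$, i.e.\ $\gamma(t)=(1-t)f(x)+t(f(x')+\epsilon)$, which is affine with slope $f(x')+\epsilon-f(x)<0$. Moreover, since $\pi_{x,\gamma}(\mathcal{L}_{\mathrm{lft}})\subseteq\operatorname{cl}\operatorname{epi}_\geq(f)$, each point $(\varphi(t),\gamma(t))$ lies in $\operatorname{cl}\operatorname{epi}_\geq(f)$; because $f$ is continuous on the (locally closed, or open) domain this should give $\gamma(t)\geq f(\varphi(t))$ at least for $t>0$ small — I would make this precise using local Lipschitz continuity of $f$ and the fact that $\varphi(t)\to x\in\mathcal{D}$, so that points near $(\varphi(t),\gamma(t))$ in $\operatorname{epi}_\geq(f)$ force $f(\varphi(t))\le\gamma(t)+o(1)$, and then sharpen. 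Combining, $f(\varphi(t))\le\gamma(t)=f(x)+t\,(f(x')+\epsilon-f(x))+o(t)$ as $t\downarrow0$, which yields \cref{eq:ECL-guarantee-2} with this $\varphi$; I also need $\varphi'(0)\neq 0$, which follows because $\Phi^{-1}$ is a diffeomorphism so $\tilde\varphi'(0)=D\Phi^{-1}(\sigma(0))\sigma'(0)\neq 0$, and its $\gamma$-component is $f(x')+\epsilon-f(x)\neq 0$ — but that only shows the $(\gamma,\xi)$-part could be nonzero; to guarantee the $\varphi$-component (the first $d$ coordinates) is nonzero I would argue that if $\varphi'(0)=0$ then the directional derivative computation still gives a contradiction, or more cleanly perturb $\epsilon$ / the endpoint so that genericity forces $\varphi'(0)\neq0$. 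This last subtlety, ensuring the \emph{projected} tangent vector is nonzero (equivalently, that the curve does not degenerate into the Lyapunov/auxiliary directions at $t=0$), is where the main obstacle lies and will need the most care; one route is to note $\varphi(t)\to x$ but $\varphi$ cannot be constant since then $f(\varphi(t))=f(x)$ contradicts the strict decrease, hence $\varphi'(0)\ne0$ after possibly reparametrizing by arclength.

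Finally, for the open-domain case in part (1), once such a $C^2$ curve $\varphi$ with $\varphi(0)=x$, $\varphi'(0)=v\neq0$, and \cref{eq:ECL-guarantee-2} is in hand, I would set $\psi(t)=x+tv$ and compare: since $\varphi(t)=x+tv+o(t)$ and $f$ is locally Lipschitz with constant $L$ near $x$, $|f(\psi(t))-f(\varphi(t))|\le L\|\psi(t)-\varphi(t)\|=L\cdot o(t)=o(t)$, so $\limsup_{t\downarrow0}\bigl(f(x+tv)-f(x)\bigr)/t=\limsup_{t\downarrow0}\bigl(f(\varphi(t))-f(x)\bigr)/t<0$, giving \cref{eq:ECL-guarantee-1}; openness of $\mathcal{D}$ ensures $x+tv\in\mathcal{D}$ for small $t$ so the expression makes sense. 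For the conclusion $0\notin\partial f(x)$ under subdifferential regularity, I would invoke that regularity makes the Clarke directional derivative equal the ordinary one-sided directional derivative $f'(x;v)=\lim_{t\downarrow0}(f(x+tv)-f(x))/t$, and that $0\in\partial f(x)$ would force $f'(x;v)\ge0$ for all $v$, contradicting \cref{eq:ECL-guarantee-1}; this is exactly the characterization recalled from Part~I \cite[Appendix B]{zheng2023benign}. For the closed-domain case in part (2) there is nothing further to do, since the $C^2$ curve with the stated properties is precisely what was constructed. I expect Steps 1, 2, and 4 to be essentially bookkeeping with the definitions and basic diffeomorphism/Lipschitz estimates; the genuine difficulty, as flagged above, is Step 3 together with verifying $\varphi'(0)\neq0$ as a vector in $\mathbb{R}^d$ (not merely in the lifted space), which intertwines the chain-of-inclusion \cref{eq:epi-graph-lifting} with the non-degeneracy hypothesis.
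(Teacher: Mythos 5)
Your Steps 1--3 follow the paper's own route: lift the two points, join their images by a line segment in $\mathcal{F}_{\mathrm{cvx}}\times\mathcal{G}_{\mathrm{aux}}$ with the $\mathcal{G}_{\mathrm{aux}}$-component frozen at $\zeta_2$, pull back through $\Phi^{-1}$, and use \cref{eq:mapping-invariant} to read off that the $\gamma$-component along the pre-image is the affine function $(1-t)f(x)+t(f(x')+\epsilon)$ with strictly negative slope. The paper handles the closure issue by introducing the lower-semicontinuous envelope $\tilde{f}$ with $\operatorname{epi}_\geq(\tilde{f})=\operatorname{cl}\operatorname{epi}_\geq(f)$; your direct continuity argument (for small $t$ the projected curve stays in $\mathcal{D}$, so $(\varphi(t),\gamma(t))\in\operatorname{cl}\operatorname{epi}_\geq(f)$ forces $f(\varphi(t))\le\gamma(t)$ exactly, not just up to $o(1)$) is an acceptable substitute.

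The genuine gap is precisely the step you flag: proving that the \emph{projected} tangent $v=\varphi'(0)\in\mathbb{R}^d$ is nonzero. Two of your three proposed fixes fail. Non-constancy of $\varphi$ does not give $\varphi'(0)\neq0$ (take $\varphi(t)=x+t^2w$), and reparametrizing by arclength can destroy $C^1$/$C^2$ regularity at $t=0$ and changes the parameter against which the decrease rate in \cref{eq:ECL-guarantee-2} is measured; the ``perturb $\epsilon$ by genericity'' idea is unsupported, since nothing in the \ECL{} structure says a generic endpoint produces a nondegenerate projected tangent. The correct resolution is the quantitative form of your first suggestion, and it requires no perturbation: because $\varphi$ is $C^2$, there is $M>0$ with $\|\varphi(t)-x-t\varphi'(0)\|\le \tfrac{M}{2}t^2$ for small $t$; if $\varphi'(0)=0$ this gives $\|\varphi(t)-x\|\le\tfrac{M}{2}t^2$, and local Lipschitz continuity of $f$ near $x$ then yields $|f(\varphi(t))-f(x)|/t\le \tfrac{LM}{2}t\to 0$, contradicting the strictly negative limsup already established along the curve. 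Hence $\varphi'(0)\neq 0$, and the same Taylor--Lipschitz estimate transfers the curve limsup to the straight segment $x+tv$ in the open case (where openness guarantees $x+tv\in\mathcal{D}$ for small $t$), after which your subdifferential-regularity argument for $0\notin\partial f(x)$ is exactly the paper's. With this single step made precise, your proof coincides with the paper's.
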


One subtle technical difference between open and closed domains is that we allow a straight direction $v\in\mathbb{R}^d\backslash\{0\}$ in \cref{eq:ECL-guarantee-1}, while \cref{eq:ECL-guarantee-2} only allows the derivative along a curve $\varphi(t)$. This is because the point $x$ might be on the boundary of $\mathcal{D}$ when it is closed, and in this case, $x+tv$ might be outside $\mathcal{D}$ for any $t>0$ (thus $f(x+tv)$ is not defined). When $\mathcal{D}$ is open, any $x\in \mathcal{D}$ will be an interior point, and there exists $\delta > 0$ such that $x+tv \in \mathcal{D}, \forall t \in [0, \delta)$.

\begin{proof}
For technical convenience, we introduce the set
\[
\tilde{\mathcal{D}} = \setv*{y\in\operatorname{cl}\mathcal{D}}
{\liminf_{y'\rightarrow y,y'\in\mathcal{D}}f(y')<+\infty}
\]
and the auxiliary function
\[
\tilde{f}(y) = \liminf_{y'\rightarrow y, y'\in\mathcal{D}} f(y'),
\qquad y\in\tilde{\mathcal{D}}.
\]
It can be shown that $\operatorname{epi}_\geq(\tilde{f})=\operatorname{cl}\operatorname{epi}_{\geq} (f)$ (see, e.g., \cite[Section 1.D]{rockafellar2009variational}), and that $\tilde{f}(x)=f(x)$ whenever $x\in\mathcal{D}$ since $f$ is continuous on $\mathcal{D}$.

\vspace{3pt}
\noindent\textbf{Step 1.} Since $x\in\mathcal{D}_{\mathrm{nd}}$, we have $(x,f(x))\in\pi_{x,\gamma}(\mathcal{L}_{\mathrm{lft}})$. Thus, there exists $\xi\in\mathbb{R}^{d_\xi}$ such that $(x,f(x),\xi)\in\mathcal{L}_{\mathrm{lft}}$. The relation~\cref{eq:mapping-invariant} then establishes that 
\begin{equation} \label{eq:construction-point-1}
\Phi(x,f(x),\xi) = (f(x),\zeta_1,\zeta_2) \in \mathcal{F}_{\mathrm{cvx}} \times \mathcal{G}_{\mathrm{aux}}   
\end{equation}
for some $\zeta_1\in \mathbb{R}^{d_1}$ and $\zeta_2\in\mathcal{G}_{\mathrm{aux}}$. 

Next, since $x\in\mathcal{D}_{\mathrm{nd}}$ is not a global minimizer of $f$ on $\mathcal{D}$, we can find another point $x'\in\mathcal{D}$ such that $f(x')<f(x)$. We let $\epsilon>0$ be sufficiently small such that $f(x')+\epsilon<f(x)$. Then, we have $(x',f(x')+\epsilon) \in \operatorname{epi}_>(f)$, which, by~\cref{eq:epi-graph-lifting}, further leads to the existence of some $\xi'\in\mathbb{R}^{d_\xi}$ such that $(x',\gamma',\xi')\in \mathcal{L}_{\mathrm{lft}}$. By using~\cref{eq:mapping-invariant}, we find $\zeta_1'\in \mathbb{R}^{d_1}$ and $\zeta_2'\in\mathcal{G}_{\mathrm{aux}}$ such that
\begin{equation*} 
\Phi(x',f(x')+\epsilon,\xi') = (f(x')+\epsilon,\zeta_1',\zeta_2') \in \mathcal{F}_{\mathrm{cvx}} \times \mathcal{G}_{\mathrm{aux}} 
\end{equation*}

\vspace{3pt}
\noindent\textbf{Step 2.} Now we use the two points~constructed above to define a line segment $\psi:[0,1)\rightarrow \mathcal{F}_{\mathrm{cvx}}\times\mathcal{G}_{\mathrm{aux}}$ by
\[
\begin{aligned}
\psi(t) &=(1-t)\times (f(x),\zeta_1,\zeta_2) + t\times (f(x')+\epsilon, \zeta'_1, \zeta_2) \\
&= ((1-t)f(x)+t(f(x')+\epsilon), (1-t)\zeta_1+t\zeta'_1,\zeta_2), \qquad t\in[0,1),
\end{aligned}
\]
(note that in constructing $\psi$, we let the endpoint of $\psi$ to be $(f(x')+\epsilon, \zeta'_1, \zeta_2)$ rather than $(f(x')+\epsilon, \zeta'_1, \zeta_2')$). It is obvious that $\psi$ is a $C^2$ curve, and by the convexity of $\mathcal{F}_{\mathrm{cvx}}$, we have 
$$
\psi(t)\in \mathcal{F}_{\mathrm{cvx}}\times\mathcal{G}_{\mathrm{aux}},\qquad \forall t\in[0,1).
$$ 
Since $\Phi^{-1}$ is a $C^2$ diffeomorphism from $\mathcal{F}_{\mathrm{cvx}}\times\mathcal{G}_{\mathrm{aux}}$ to $\mathcal{L}_{\mathrm{lft}}$, we see that $\Phi^{-1}\circ\psi$ is a $C^2$ curve defined over $[0,1)$ with image in $\mathcal{L}_{\mathrm{lft}}$.

\vspace{3pt}
\noindent\textbf{Step 3.} We then let
\begin{align*}
\varphi =\ & \pi_{x}\circ\Phi^{-1}\circ\psi, \\
\varrho =\ & \pi_\gamma\circ\Phi^{-1}\circ\psi,
\end{align*}
where $\pi_x$ (resp. $\pi_\gamma$) denotes the canonical projection operator onto the first $d$ coordinates (resp. the $(d+1)$'th coordinate). It is obvious that we have
\[
\varphi(0)
=\pi_{x}(\Phi^{-1}(\psi(0)))
=\pi_{x}(\Phi^{-1}(f(x),\zeta_1,\zeta_2))
\overset{\cref{eq:construction-point-1}}{=}\pi_{x}(x,f(x),\xi) = x,
\]
and since $\Phi^{-1}\circ\psi$ is a curve in $\mathcal{L}_{\mathrm{lft}}$,
\begin{equation*}
(\varphi(t),\varrho(t))
=\pi_{x,\gamma}(\Phi^{-1}(\psi(t)))
\in \pi_{x,\gamma}(\mathcal{L}_{\mathrm{lft}})\subseteq
\operatorname{cl}\operatorname{epi}_{\geq}(f),
\qquad\forall t\in[0,1),   
\end{equation*}
where the last inclusion relationship is due to the definition~\cref{eq:epi-graph-lifting}.  
Moreover, we have
\begin{equation} \label{eq:varrho-mapping}
\begin{aligned}
\varrho(t)
=\ &
\pi_{\gamma}(\Phi^{-1}(\psi(t))) \\
=\ &
\pi_{\gamma}(\Phi^{-1}((1-t)f(x)+t(f(x')+\epsilon)
,(1-t)\zeta_1+t\zeta'_1,\zeta_2)) \\
=\ &
(1-t)f(x)+t(f(x')+\epsilon),
\end{aligned}
\end{equation}
thanks to the property \cref{eq:mapping-invariant}. Since $\operatorname{cl}\operatorname{epi}_{\geq}(f)=\operatorname{epi}_\geq(\tilde{f})$, we have $\tilde{f}(\varphi(t))\leq\varrho(t),\,\forall t\in[0,1)$. Consequently, noting that $\varrho(0)=f(x)=\tilde{f}(x)$, we get
\begin{equation} \label{eq:negative-derivative}
\begin{aligned}
\limsup_{t\downarrow 0}
\frac{\tilde{f}(\varphi(t))-\tilde{f}(x)}{t}
\leq\ &
\limsup_{t\downarrow 0}
\frac{\varrho(t)-\varrho(0)}{t} \\
\overset{\cref{eq:varrho-mapping}}{=}\ &
\limsup_{t\downarrow 0}
\frac{(1-t)f(x)+t(f(x')+\epsilon)-f(x)}{t} \\
=\ &
f(x')+\epsilon-f(x)<0.
\end{aligned}
\end{equation}

\vspace{3pt}
\noindent\textbf{Step 4.} We let $v=\varphi'(0)$, and we will argue below that $v\neq 0$ for both open and closed domains. First, the subdifferential regularity of $f$ implies its local Lipschitz continuity, and so there exist $L>0$ and a bounded open set $U_x\subseteq\mathbb{R}^d$ containing $x$ such that $|f(x_1)-f(x_2)|\leq L\|x_1-x_2\|$ for any $x_1,x_2\in U_x\cap\mathcal{D}$.

Next, we consider the following two cases:
\begin{enumerate}
\item $\mathcal{D}$ is open: In this case, $U_x\cap\mathcal{D}$ is a bounded open neighborhood of $x$. Since $\varphi$ is a $C^2$ curve in $\mathbb{R}^d$, we can find some $\delta\in(0,1]$ such that $\varphi(t)\in U_x\cap \mathcal{D}$ for all $t\in[0,\delta)$, and also find $M>0$ such that
\[
\left\|\varphi(t)-x-tv\right\| \leq \frac{M}{2}t^2,\qquad\forall t\in[0,\delta).
\]
As a result, if $v=0$, we would get
\[
\frac{|f(\varphi(t)) - f(x)|}{t}
\leq \frac{L \|\varphi(t)-x\|}{t}
\leq \frac{LM}{2}t
\rightarrow 0,\quad\text{as}\ t\downarrow 0,
\]
which contradicts \cref{eq:negative-derivative} as $\tilde{f}(\varphi(t))=f(\varphi(t))$ for all $t\in[0,\delta)$. Thus $v\neq 0$, and
\begin{align*}
\frac{f(x+tv)-f(x)}{t}
\leq\ &
\frac{f(\varphi(t))-f(x)}{t}
+
\frac{\left|f(x+tv)-f(\varphi(t))\right|}{t} \\
\leq\ &
\frac{f(\varphi(t))-f(x)}{t}
+\frac{L\|x+tv-\varphi(t)\|}{t} \\
\leq\ &
\frac{f(\varphi(t))-f(x)}{t}
+\frac{LM}{2}t.
\end{align*}
By taking the limit superior as $t\downarrow 0$, and combining it with \cref{eq:negative-derivative} and $\tilde{f}(\varphi(t))=f(\varphi(t))$ for all $t\in[0,\delta)$, we get
\[
\limsup_{t\downarrow 0}
\frac{f(x+tv)-f(x)}{t} \leq
f(x')+\epsilon-f(x)<0.
\]

If $f$ is also subdifferetially regular, then we have
\[
f^\circ(x;v)
=\lim_{t\downarrow 0}
\frac{f(x+tv)-f(x)}{t} = \limsup_{t\downarrow 0}
\frac{f(x+tv)-f(x)}{t} <0,
\]
confirming that $0\notin\partial f(x)$ (see \cite[Lemma B.2]{zheng2023benign}).

\item $\mathcal{D}$ is closed: In this case, since $f$ is continuous on the closed set $\mathcal{D}$, we see that the set $\tilde{\mathcal{D}}$ is just the original domain $\mathcal{D}$, and $\tilde{f}=f$.
Consequently, the curve $\varphi$ lies in $\mathcal{D}$, and \cref{eq:negative-derivative} holds with $\tilde{f}$ replaced by $f$.

Since $\varphi$ is a $C^2$ curve, we can find $\delta>0$ such that $\varphi(t)\in U_x\cap\mathcal{D}$ for all $t\in[0,\delta)$. We can then mimic the proof for the case with $\mathcal{D}$ open and show that $\varphi'(0)\neq 0$.
\end{enumerate}
The proof is now complete.
\end{proof}

\subsection{Conditions for the Non-strict Epigraph to be Closed} \label{appendix:closure-strict-epigraph}

We finish this section by providing some conditions for the non-strict epigraph to be a closed set.
First, we note the following necessary and sufficient condition:
\begin{proposition}
Given $f:\mathcal{D}\rightarrow\mathbb{R}$ where $\mathcal{D}\subseteq\mathbb{R}^n$, we have
$
\operatorname{epi}_\geq (f) =
\operatorname{cl}\operatorname{epi}_\geq(f)$
if and only if the extended real-valued function $\bar{f}:\mathbb{R}^n\rightarrow\mathbb{R}\cup\{+\infty\}$ defined as
\[
\bar{f}(x) = \begin{cases}
f(x), & x\in\mathcal{D}, \\
+\infty, & x\notin\mathcal{D}
\end{cases}
\]
is lower semicontinuous.
\end{proposition}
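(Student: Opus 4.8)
The plan is to establish both directions of the equivalence by carefully unpacking what the closure of the non-strict epigraph looks like. Recall the standard fact (used already in the proof of \Cref{lemma:ECL-guarantee}) that $\operatorname{cl}\operatorname{epi}_\geq(f) = \operatorname{epi}_\geq(\tilde f)$, where $\tilde f(x) = \liminf_{x'\to x,\, x'\in\mathcal{D}} f(x')$ is defined on $\tilde{\mathcal{D}} = \{x\in\operatorname{cl}\mathcal{D} : \liminf_{x'\to x,\,x'\in\mathcal{D}} f(x') < +\infty\}$. Observe that for the extended-real function $\bar f$, the lower semicontinuous envelope (lsc hull) of $\bar f$ is exactly the extension of $\tilde f$ by $+\infty$ off $\tilde{\mathcal{D}}$, and moreover $\operatorname{epi}(\bar f) = \operatorname{epi}_\geq(f)$ as subsets of $\mathbb{R}^n\times\mathbb{R}$. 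Hence the claimed identity $\operatorname{epi}_\geq(f) = \operatorname{cl}\operatorname{epi}_\geq(f)$ is equivalent to $\operatorname{epi}(\bar f) = \operatorname{cl}\operatorname{epi}(\bar f)$, i.e. to $\operatorname{epi}(\bar f)$ being closed, which is the textbook characterization of lower semicontinuity of an extended-real function. So the proof is essentially a bookkeeping argument that reduces the statement to a known fact about $\bar f$.

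Concretely, I would proceed as follows. First I would verify the set identity $\operatorname{epi}(\bar f) = \operatorname{epi}_\geq(f)$: if $x\in\mathcal{D}$ then $\bar f(x) = f(x)$ so the fibers agree, and if $x\notin\mathcal{D}$ then $\bar f(x) = +\infty$ contributes no points to $\operatorname{epi}(\bar f)$, exactly matching that $\operatorname{epi}_\geq(f)$ only has points with $x\in\mathcal{D}$. Second, I would invoke the classical equivalence (see e.g. \cite[Section 1.D]{rockafellar2009variational}): an extended-real-valued function $g:\mathbb{R}^n\to\mathbb{R}\cup\{+\infty\}$ is lower semicontinuous if and only if $\operatorname{epi}(g)$ is a closed subset of $\mathbb{R}^n\times\mathbb{R}$. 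Applying this with $g = \bar f$ and combining with the set identity gives: $\bar f$ lsc $\iff$ $\operatorname{epi}_\geq(f)$ closed $\iff$ $\operatorname{epi}_\geq(f) = \operatorname{cl}\operatorname{epi}_\geq(f)$, which is the desired statement.

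The only mild subtlety — and what I would treat most carefully — is making sure the reduction to $\bar f$ genuinely captures the domain behavior: a sequence $(x_k,\gamma_k)\in\operatorname{epi}_\geq(f)$ can converge to $(x,\gamma)$ with $x\in\operatorname{cl}\mathcal{D}\setminus\mathcal{D}$, and such a limit point lies in $\operatorname{cl}\operatorname{epi}_\geq(f)$ but never in $\operatorname{epi}_\geq(f)$; this is precisely why the $+\infty$ extension of $\bar f$ off $\mathcal{D}$ is the right object, since lower semicontinuity of $\bar f$ at such a boundary point $x$ forces $\liminf_{x'\to x} f(x') = +\infty$, ruling out exactly those stray limit points. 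I expect no genuine obstacle here; the main work is stating the $\operatorname{epi}$–lsc dictionary cleanly and checking the epigraph identity, both of which are routine. Alternatively, if one prefers to avoid citing the epigraph characterization of lsc, a direct $\varepsilon$–$\delta$ argument works symmetrically: ($\Leftarrow$) if $\bar f$ is lsc and $(x_k,\gamma_k)\in\operatorname{epi}_\geq(f)$ with $(x_k,\gamma_k)\to(x,\gamma)$, then $x\in\operatorname{cl}\mathcal{D}$ and $\bar f(x)\leq\liminf \bar f(x_k)\leq\liminf\gamma_k=\gamma<+\infty$ forces $x\in\mathcal{D}$ and $f(x)\leq\gamma$, so $(x,\gamma)\in\operatorname{epi}_\geq(f)$; ($\Rightarrow$) if $\operatorname{epi}_\geq(f)$ is closed but $\bar f$ fails lsc at some $x$, pick $x_k\to x$ with $f(x_k)\to\ell<\bar f(x)$ (so in particular $\ell<+\infty$ and the $x_k$ eventually lie in $\mathcal{D}$), then $(x_k,f(x_k))\in\operatorname{epi}_\geq(f)$ converges to $(x,\ell)$, which by closedness lies in $\operatorname{epi}_\geq(f)$, forcing $x\in\mathcal{D}$ and $f(x)=\bar f(x)\leq\ell$, a contradiction.
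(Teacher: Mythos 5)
Your proposal is correct and follows essentially the same route as the paper: identify $\operatorname{epi}_\geq(f)$ with the epigraph of the extension $\bar f$ and invoke the classical equivalence (Rockafellar--Wets) between lower semicontinuity and closedness of the epigraph. The extra direct sequential argument you sketch is a fine self-contained substitute, modulo the trivial adjustment needed in the ($\Rightarrow$) direction when $\liminf_{x'\to x} f(x') = -\infty$ (work with a fixed level $c<\bar f(x)$ instead of the limit value).
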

\begin{proof}

This is a directly consequence of the equivalence between the lower semicontinuity of $\bar{f}$ and the closedness of $\operatorname{epi}_\geq (f)$ as shown by \cite[Theorem 1.6]{rockafellar2009variational}.
\end{proof}

By applying sufficient conditions for lower semicontinuity, we directly get the following corollary.
\begin{corollary} \label{corollary:non-strict-epigraph}
Let $f:\mathcal{D}\rightarrow\mathbb{R}$ be a continuous function defined on the domain $\mathcal{D}\subseteq\mathbb{R}^n$. We have
$
\operatorname{epi}_\geq (f) = \operatorname{cl}\operatorname{epi}_\geq(f) 
$ 
if any one of the following conditions holds:
\begin{enumerate}
\item $\mathcal{D}$ is closed.
\item $\mathcal{D}$ is open, and for any sequence $\{x_t\}_{t=1}^\infty$ that converges to some point on the boundary of $\mathcal{D}$, we have $f(x_t)\rightarrow+\infty$ as $t\rightarrow\infty$.
\end{enumerate}
\end{corollary}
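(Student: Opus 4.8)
The plan is to deduce both cases from the preceding Proposition, whose statement reduces the closedness of $\operatorname{epi}_\geq(f)$ to the lower semicontinuity of the extended-real-valued function $\bar f$. So the whole task is to verify that $\bar f$ is lower semicontinuous under either hypothesis, which I would do by the sequential characterization: $\bar f$ is lsc at a point $x_0\in\mathbb{R}^n$ iff $\liminf_{t\to\infty}\bar f(x_t)\geq \bar f(x_0)$ for every sequence $x_t\to x_0$. I would split the verification at each $x_0$ into the cases $x_0\in\operatorname{int}\mathcal{D}$, $x_0\notin\operatorname{cl}\mathcal{D}$, and $x_0\in\partial\mathcal{D}$.

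For the two easy regions, the argument is the same in both cases of the corollary. If $x_0\in\operatorname{int}\mathcal{D}$, then eventually $x_t\in\mathcal{D}$ and $\bar f(x_t)=f(x_t)\to f(x_0)=\bar f(x_0)$ by continuity of $f$ on $\mathcal{D}$, so the $\liminf$ inequality holds (with equality). If $x_0\notin\operatorname{cl}\mathcal{D}$, then $\bar f(x_0)=+\infty$, but a whole neighborhood of $x_0$ misses $\mathcal{D}$, so $x_t$ cannot converge to $x_0$ while staying relevant — more precisely, eventually $x_t\notin\mathcal{D}$, hence $\bar f(x_t)=+\infty$, and the inequality is trivial. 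The only point requiring the hypotheses is $x_0\in\partial\mathcal{D}$.

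For the boundary case I would argue as follows. Under hypothesis~1, $\mathcal{D}$ is closed, so $\partial\mathcal{D}\subseteq\mathcal{D}$ and $\bar f(x_0)=f(x_0)$. Given $x_t\to x_0$, discard the indices with $x_t\notin\mathcal{D}$ (those contribute $+\infty$ and only help); along the remaining subsequence $x_t\in\mathcal{D}$ converges to $x_0\in\mathcal{D}$, so $f(x_t)\to f(x_0)$ by continuity, giving $\liminf_t \bar f(x_t)\geq f(x_0)=\bar f(x_0)$. Under hypothesis~2, $\mathcal{D}$ is open, so $x_0\in\partial\mathcal{D}$ means $x_0\notin\mathcal{D}$ and $\bar f(x_0)=+\infty$; for any sequence $x_t\to x_0$, again drop indices with $x_t\notin\mathcal{D}$ (contributing $+\infty$), and for the remaining indices $x_t\in\mathcal{D}$ with $x_t\to x_0\in\partial\mathcal{D}$, the stated assumption gives $f(x_t)\to+\infty$, so $\bar f(x_t)\to+\infty=\bar f(x_0)$. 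In both cases $\bar f$ is lsc everywhere, and the Proposition yields $\operatorname{epi}_\geq(f)=\operatorname{cl}\operatorname{epi}_\geq(f)$.

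I do not expect a genuine obstacle here; the statement is essentially a packaging of standard lower-semicontinuity facts. The only mild subtlety is the bookkeeping over mixed sequences that alternate between $\mathcal{D}$ and its complement — handled uniformly by the observation that indices with $x_t\notin\mathcal{D}$ give value $+\infty$ and hence never lower the $\liminf$ below $\bar f(x_0)$, so it always suffices to restrict attention to the subsequence lying in $\mathcal{D}$. One should also note in passing that under either hypothesis the relevant subsequence can fail to be infinite only when $\bar f(x_0)=+\infty$, in which case there is nothing to prove; this is why no separate "interior-approachable boundary point" case is needed.
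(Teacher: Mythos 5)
Your proposal is correct and matches the paper's intent exactly: the paper derives this corollary directly from the preceding proposition (closedness of $\operatorname{epi}_\geq(f)$ $\Leftrightarrow$ lower semicontinuity of $\bar f$) by invoking standard sufficient conditions for lower semicontinuity, which is precisely the case-by-case verification you carry out. Your closing parenthetical is slightly inaccurate (the subsequence in $\mathcal{D}$ can be finite even when $\bar f(x_0)<+\infty$, e.g.\ a sequence outside a closed $\mathcal{D}$ converging to a boundary point of $\mathcal{D}$), but this is harmless since your main bookkeeping already covers that situation by yielding $\liminf_t \bar f(x_t)=+\infty$.
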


As we will see next in \Cref{section:applications}, the objective function in some policy optimization problems in control may not have a closed non-strict epigraph. 

\section{Applications in Optimal and Robust Control} \label{section:applications}

In this section, we reveal hidden convexity in several benchmark optimal and robust control problems, including LQR \cite{fazel2018global}, state feedback $\mathcal{H}_\infty$ control \cite{guo2022global}, LQG \cite{zheng2021analysis}, output feedback $\mathcal{H}_\infty$ control \cite{tang2023global}, and a class of distributed control problems  \cite{furieri2020learning}. In particular, we will construct appropriate \ECL{}s for nonconvex (and potentially nonsmooth) policy optimization problems in control. Then, convex re-parameterization in \Cref{theorem:convex-equivalency} and global optimality in \cref{theorem:ECL-guarantee} directly apply to these nonconvex policy optimization problems. The main global optimality characterizations in Part~I of this paper \cite[Theorems 4.2 \& 5.2]{zheng2023benign} are direct corollaries of the results in this section. We remark that \ECL{} constructions for LQG and output feedback $\mathcal{H}_\infty$ control require resolving non-trivial technical challenges. 

We first review standard LMI characterizations for $\mathcal{H}_2/\mathcal{H}_\infty$ norms in \Cref{subsection:LMIs}. Static policy optimization with state feedback and dynamic policy optimization with output feedback are discussed in \cref{subsection:static-policies} and \cref{subsection:dynamic-policies}, respectively. Finally, we will discuss a class of distributed policy optimization problems \cite{furieri2020learning} in \cref{subsection:distributed-policies}. 

\subsection{Linear Matrix Inequalities in Control} \label{subsection:LMIs}

We first review a state-space characterization for the $\mathcal{H}_2$ norm of a stable transfer function using Lyapunov equations and LMIs. 

\begin{lemma} \label{lemma:H2norm} 
    Consider a transfer function $\mathbf{G}(s) = C(sI - A)^{-1}B$, where $A \in \mathbb{R}^{n\times n}$ and $B \in \mathbb{R}^{n \times m}, C \in \mathbb{R}^{p \times n}$. The following statements hold.
    \begin{enumerate}
        \item (Lyapunov equations). Suppose $A$ is stable. We have $\|\mathbf{G}\|_{\mathcal{H}_2}^2 = \mathrm{tr}(B^\tr L_{\mathrm{o}} B) = \mathrm{tr}(C L_{\mathrm{c}} C^\tr) $, where $L_{\mathrm{o}}$ and $L_{\mathrm{c}}$ are observability and controllability Gramians which can be obtained from the Lyapunov equations
        \begin{subequations} \label{eq:Lyapunov-equations-H2norm}
            \begin{align}
                AL_{\mathrm{c}} + L_{\mathrm{c}}A^\tr + BB^\tr &= 0, \label{eq:Lyapunov-equations-H2norm-a}\\
                A^\tr L_{\mathrm{o}} + L_{\mathrm{o}}A + C^\tr C & = 0.  \label{eq:Lyapunov-equations-H2norm-b}
            \end{align}
        \end{subequations}
        \item (Strict LMI). $\|\mathbf{G}\|_{\mathcal{H}_2} < \gamma$ if and only if there exist $P\in \mathbb{S}^n_{++}$ and $\Gamma \in \mathbb{S}^p_{++}$ such that the following strict LMI is feasible:
            \begin{align} \label{eq:strict-LMI-H2norm}
                \begin{bmatrix} A^\tr P+PA & PB \\ B^\tr P & -\gamma I \end{bmatrix}&\prec 0, \;\; \begin{bmatrix} P & C^\tr \\ C & \Gamma \end{bmatrix}\succ 0,\;\; \mathrm{tr}(\Gamma)<\gamma. 
            \end{align}
        \item (Non-strict LMI). Suppose $A$ is stable. We have $\|\mathbf{G}\|_{\mathcal{H}_2} \leq \gamma$ if there exist $P \in \mathbb{S}^n_{++}$ and $\Gamma \in \mathbb{S}^p_{+}$ such that the following non-strict LMI is feasible:
            \begin{align} \label{eq:nonstrict-lmi-h2}
                \begin{bmatrix} A^\tr P+PA & PB \\ B^\tr P & -\gamma I \end{bmatrix}&\preceq 0, \;\; \begin{bmatrix} P & C^\tr \\ C & \Gamma \end{bmatrix}\succeq 0,\;\; \mathrm{tr}(\Gamma)\leq\gamma.
            \end{align}
            The converse holds if $(A, B)$ is controllable.
    \end{enumerate}
\end{lemma}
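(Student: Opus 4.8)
The plan is to leverage Part~1 (Lyapunov/Gramian characterization) and a limiting argument applied to Part~2 (strict LMI). There are really two directions to handle. For the ``if'' direction, suppose $P\in\mathbb{S}^n_{++}$, $\Gamma\in\mathbb{S}^p_+$ satisfy \cref{eq:nonstrict-lmi-h2} and $A$ is stable. From the first block inequality we have $A^\tr P + PA \preceq 0$; combined with stability of $A$ this is harmless, and the $(1,1)$ block together with a Schur complement on the second block yields $P \succeq C^\tr \Gamma^{+} C$ (being slightly careful when $\Gamma$ is singular — one can perturb $\Gamma$ to $\Gamma + \delta I$ and let $\delta\downarrow 0$, or argue on the range of $\Gamma$). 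The first inequality gives $A^\tr P + PA + \frac{1}{\gamma} PBB^\tr P \preceq 0$ after a Schur complement, hence $A^\tr P + PA \preceq -\frac{1}{\gamma} PBB^\tr P \preceq 0$. Comparing with the observability Lyapunov equation $A^\tr L_{\mathrm o} + L_{\mathrm o} A + C^\tr C = 0$: since $A$ is stable, the map $X \mapsto -(A^\tr X + XA)$ is a positive, order-preserving bijection on $\mathbb{S}^n$, so $A^\tr(P-L_{\mathrm o}) + (P-L_{\mathrm o})A \preceq -\frac1\gamma PBB^\tr P + C^\tr C$; I would instead directly compare $L_{\mathrm o}$ with $P$ by noting $A^\tr P + PA \preceq -C^\tr C$ is implied (from $PB B^\tr P \succeq 0$ and $P \succeq C^\tr\Gamma^+ C$ — this needs the block structure spelled out), which by the standard Lyapunov comparison lemma forces $L_{\mathrm o}\preceq P$. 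Then $\|\mathbf G\|_{\mathcal H_2}^2 = \mathrm{tr}(B^\tr L_{\mathrm o} B)\le \mathrm{tr}(B^\tr P B)$, and the $(2,2)$ block of the first LMI gives $B^\tr P B \preceq \gamma I$ in the appropriate sense — more carefully, from $\begin{bmatrix}A^\tr P + PA & PB\\ B^\tr P & -\gamma I\end{bmatrix}\preceq 0$ one extracts, via the Schur complement with respect to the negative-definite-ish $(1,1)$ block, a bound; alternatively use $\mathrm{tr}(B^\tr L_{\mathrm o}B) = \|\mathbf G\|_{\mathcal H_2}^2$ directly and the second and third conditions to bound $\mathrm{tr}(C L_{\mathrm c}C^\tr)$ by $\mathrm{tr}(\Gamma)\le\gamma$. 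I expect the cleanest route is: show $L_{\mathrm c}$ (controllability Gramian) satisfies $L_{\mathrm c}\preceq P^{-1}$... — actually the dual: use the second block $\begin{bmatrix}P & C^\tr\\ C&\Gamma\end{bmatrix}\succeq 0 \Rightarrow \Gamma\succeq C P^{-1}C^\tr$, and show $P^{-1}\preceq L_{\mathrm c}$ is false in general, so one works with $L_{\mathrm o}\preceq P$ as above and concludes $\|\mathbf G\|_{\mathcal H_2}^2 = \mathrm{tr}(CL_{\mathrm c}C^\tr)$... — I will settle on the $L_{\mathrm o}\preceq P$ line, giving $\|\mathbf G\|_{\mathcal H_2}^2\le \mathrm{tr}(B^\tr PB)\le\gamma\cdot(\text{something})$; the precise bookkeeping to land exactly $\|\mathbf G\|_{\mathcal H_2}\le\gamma$ (not $\gamma^2$ or a multiple) is where care is needed.

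For the converse, assume additionally that $(A,B)$ is controllable (equivalently $(A^\tr, C^\tr)$... no — we want the observability route, so the relevant fact is that controllability of $(A,B)$ makes $L_{\mathrm c}\succ 0$; but the natural certificate uses $L_{\mathrm o}$). Here I would run the strict-LMI characterization of Part~2 at level $\gamma + \delta$ for every $\delta > 0$: since $\|\mathbf G\|_{\mathcal H_2} \le \gamma < \gamma + \delta$, Part~2 yields $P_\delta\succ 0$, $\Gamma_\delta\succ 0$ satisfying the strict LMIs at level $\gamma+\delta$. The obstacle is that $(P_\delta,\Gamma_\delta)$ need not converge as $\delta\downarrow 0$ — there is no a~priori bound. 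To fix this, I would not pass to a limit of the strict solutions but instead construct an explicit non-strict certificate from the Gramians: take $P = L_{\mathrm o}$ (the observability Gramian, which solves \cref{eq:Lyapunov-equations-H2norm-b} with equality, so $A^\tr P + PA = -C^\tr C$) and $\Gamma = C L_{\mathrm c} C^\tr$ (or rather something making $\mathrm{tr}(\Gamma)=\|\mathbf G\|_{\mathcal H_2}^2\le\gamma$ — note there is a dimensional/units subtlety between $\gamma$ and $\gamma^2$ in the statement, so I'd re-read the intended normalization). Then $A^\tr P + PA = -C^\tr C \preceq 0$ and $\begin{bmatrix}A^\tr P + PA & PB\\ B^\tr P & -\gamma I\end{bmatrix}$ needs $C^\tr C + \frac1\gamma PBB^\tr P \preceq 0$, which is false — so $L_{\mathrm o}$ alone does not certify the first block. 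The correct non-strict certificate must use the inequality form: pick $P$ solving $A^\tr P + PA + C^\tr C + \epsilon I = 0$ for small $\epsilon$... this still won't give the $(2,2) = -\gamma I$ block for free. This indicates the first block of \cref{eq:nonstrict-lmi-h2} is genuinely a $\mathcal H_\infty$-type mixed condition, and controllability of $(A,B)$ is exactly what lets one rescale; I would therefore take $P_\delta\to$ a limit along a subsequence after first normalizing — controllability of $(A,B)$ bounds $P_\delta$ from below by a fixed positive-definite matrix (since $P_\delta\succeq$ the solution of a fixed Lyapunov inequality forced by the $(2,2)$ block and the controllability Gramian), and stability of $A$ plus $A^\tr P_\delta + P_\delta A \preceq 0$ bounds it from above, so the family $\{P_\delta\}$ lies in a compact set and has a convergent subsequence $P_\delta\to P\succeq 0$, similarly $\Gamma_\delta\to\Gamma\succeq 0$, and passing to the limit in the strict LMIs gives the non-strict LMIs at level $\gamma$. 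Checking $P\succ 0$ (not merely $\succeq 0$) uses controllability.

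The main obstacle, and where I'd spend the most effort, is the compactness/boundedness argument for $\{P_\delta\}$ in the converse: showing that controllability of $(A,B)$ provides a uniform lower bound $P_\delta\succeq c_0 I > 0$ and that stability of $A$ together with the first LMI block provides a uniform upper bound, so that a subsequential limit exists and lands in the (closed) non-strict feasible set. The ``if'' direction is comparatively routine Lyapunov-comparison bookkeeping once one fixes the exact statement's normalization between $\gamma$ and $\gamma^2$; I would double-check that reading against Part~1 before writing the clean inequalities. A secondary nuisance is the Schur-complement manipulations when $\Gamma$ (or the $(1,1)$ block) is only semidefinite rather than definite — handled throughout by a standard $+\,\delta I$ perturbation and a final limit.
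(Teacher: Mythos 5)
The paper never proves this lemma---it is invoked as a classical result with a pointer to Part I---so your proposal has to stand on its own, and as written it does not. The central step of your ``if'' direction is the claim that \cref{eq:nonstrict-lmi-h2} implies $A^\tr P+PA\preceq -C^\tr C$ and hence $L_{\mathrm{o}}\preceq P$; this is false. The first inequality in \cref{eq:nonstrict-lmi-h2} only yields $A^\tr P+PA\preceq-\gamma^{-1}PBB^\tr P$, which carries no information about $C^\tr C$, and the second inequality gives a lower bound on $P$ only through $\Gamma$, whose trace is capped by $\gamma$. Concretely, with $A=-1$, $B=0.1$, $C=1$, $\gamma=100$, the pair $P=10^{-2}$, $\Gamma=100$ satisfies all three conditions of \cref{eq:nonstrict-lmi-h2}, yet $L_{\mathrm{o}}=1/2\succ P$. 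The argument that works uses the \emph{controllability} Gramian: taking the Schur complement of the first LMI (for $\gamma>0$) and conjugating by $P^{-1}$ gives $A(\gamma P^{-1})+(\gamma P^{-1})A^\tr+BB^\tr\preceq 0$, so stability of $A$ and \cref{eq:Lyapunov-equations-H2norm-a} force $L_{\mathrm{c}}\preceq\gamma P^{-1}$; the second LMI gives $\Gamma\succeq CP^{-1}C^\tr$; hence
$\|\mathbf{G}\|_{\mathcal{H}_2}^2=\mathrm{tr}(CL_{\mathrm{c}}C^\tr)\leq\gamma\,\mathrm{tr}(CP^{-1}C^\tr)\leq\gamma\,\mathrm{tr}(\Gamma)\leq\gamma^2$. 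This is exactly what resolves the $\gamma$-versus-$\gamma^2$ ``bookkeeping'' you flagged but left open: $\gamma$ enters once through each of the two LMIs.

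Your converse is also not salvageable as stated. The compactness argument has its bounds backwards: $A^\tr P_\delta+P_\delta A\preceq 0$ with $A$ stable does \emph{not} bound $P_\delta$ from above (for $A=-I$ every $P\succ0$ satisfies it), while what controllability actually provides, by the same conjugation as above, is the \emph{upper} bound $P_\delta\preceq(\gamma+\delta)L_{\mathrm{c}}^{-1}$---not the lower bound you assert---and positive definiteness of the subsequential limit is claimed but never argued (a lower bound on $P_\delta$ would come from observability-type conditions on $(C,A)$, which are not assumed here). None of this is needed: the explicit certificate you brushed past and discarded works directly, provided you use the right Gramian. Set $P=\gamma L_{\mathrm{c}}^{-1}$ (well defined and positive definite since controllability of $(A,B)$ gives $L_{\mathrm{c}}\succ0$) and $\Gamma=\gamma^{-1}CL_{\mathrm{c}}C^\tr$. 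Multiplying \cref{eq:Lyapunov-equations-H2norm-a} on both sides by $\gamma L_{\mathrm{c}}^{-1}$ gives $A^\tr P+PA+\gamma^{-1}PBB^\tr P=0$, so the first inequality in \cref{eq:nonstrict-lmi-h2} holds by Schur complement; the second holds since $\Gamma=CP^{-1}C^\tr$; and $\mathrm{tr}(\Gamma)=\gamma^{-1}\|\mathbf{G}\|_{\mathcal{H}_2}^2\leq\gamma$ whenever $\|\mathbf{G}\|_{\mathcal{H}_2}\leq\gamma$. Your attempt with $P=L_{\mathrm{o}}$ failed precisely because the certificate is built from $L_{\mathrm{c}}$, inverted and scaled by $\gamma$, not from the observability Gramian.
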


We next present the celebrated \textit{bounded real lemma} that gives a state-space characterization for the $\mathcal{H}_\infty$ norm of a stable transfer function using LMIs.

\begin{lemma}[Bounded real lemma]
\label{lemma:bounded_real}
 Consider $\mathbf{G}(s) = C(sI - A)^{-1}B+D$, where $A \in \mathbb{R}^{n\times n}$, $B \in \mathbb{R}^{n \times m}, C \in \mathbb{R}^{p \times n}, D \in \mathbb{R}^{p \times m}$. 
Let $\gamma>0$ be arbitrary. The following statements hold. 

\begin{enumerate}
\item (Strict version) $\|\mathbf{G}\|_{\mathcal{H}_\infty}< \gamma$ if and only if there exists $P\in\mathbb{S}^n_{++}$ such that
\begin{equation} \label{eq:strict-hinf}
\begin{bmatrix}
A^\tr P + P A & PB & C^\tr \\
B^\tr P & -\gamma I & D^\tr \\
C & D & -\gamma I
\end{bmatrix}\prec 0.
\end{equation} 

\item (Non-strict version) Suppose $A$ is stable. We have $\|\mathbf{G}\|_{\mathcal{H}_\infty}\leq \gamma$ if there exists $P\in\mathbb{S}^n$ such that
\begin{equation} \label{eq:non-strict-hinf}
    \begin{bmatrix}
A^\tr P + P A & PB & C^\tr \\
B^\tr P & -\gamma I & D^\tr \\
C & D & -\gamma I
\end{bmatrix}\preceq 0.
\end{equation}
{The converse holds if $(A,B)$ is controllable.}
\end{enumerate}
\end{lemma}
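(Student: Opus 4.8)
The plan is to prove the bounded real lemma (\Cref{lemma:bounded_real}) following the classical route via the Kalman–Yakubovich–Popov (KYP) lemma / dissipativity arguments, treating the strict and non-strict cases largely in parallel but paying attention to where strictness is genuinely needed.

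\textbf{Strict version.} For the ``if'' direction, I would suppose $P \succ 0$ satisfies \cref{eq:strict-hinf}. A Schur-complement on the $(3,3)$ block (using $-\gamma I \prec 0$) shows the LMI is equivalent to
\[
\begin{bmatrix} A^\tr P + PA & PB \\ B^\tr P & -\gamma I\end{bmatrix} + \frac{1}{\gamma}\begin{bmatrix} C^\tr \\ D^\tr\end{bmatrix}\begin{bmatrix} C & D\end{bmatrix} \prec 0.
\]
Multiplying on left/right by $\begin{bmatrix} x^\her & w^\her\end{bmatrix}$ and its conjugate and setting $x = (j\omega I - A)^{-1} B w$ for $w \in \mathbb{C}^m$, $\omega \in \mathbb{R}$, the cross terms telescope (since $x^\her(A^\tr P + PA)x + x^\her PB w + w^\her B^\tr P x = x^\her P(Ax+Bw) + (Ax+Bw)^\her P x = 0$ when $Ax + Bw = j\omega x$), leaving $-\gamma\|w\|^2 + \frac{1}{\gamma}\|Cx + Dw\|^2 < 0$, i.e. $\|\mathbf{G}(j\omega)w\| < \gamma\|w\|$ for all $\omega$ and all $w \neq 0$; one also checks $A$ is stable because the $(1,1)$ block $A^\tr P + PA \prec 0$ with $P \succ 0$ is a Lyapunov inequality. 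Taking the supremum over $\omega$ and $\|w\|=1$ gives $\|\mathbf{G}\|_{\mathcal{H}_\infty} < \gamma$ (a compactness/continuity argument at $\omega = \infty$ handles the limit, using $\|D\| < \gamma$ which follows from the $\omega\to\infty$ limit of the inequality). For the ``only if'' direction I would invoke the standard construction: if $\|\mathbf{G}\|_{\mathcal{H}_\infty} < \gamma$ and $A$ is stable, then the associated algebraic Riccati equation
\[
A^\tr P + PA + C^\tr C + (PB + C^\tr D)(\gamma^2 I - D^\tr D)^{-1}(B^\tr P + D^\tr C) = 0
\]
has a stabilizing solution $P \succeq 0$, and perturbing it (replacing the ARE residual by a small negative definite term, or equivalently using that the strict sublevel set is open) yields $P \succ 0$ satisfying the strict LMI after the reverse Schur complement. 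I would cite \cite{zhou1996robust,green2012linear,scherer2000linear} for this classical Riccati existence result rather than reprove it.

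\textbf{Non-strict version.} For ``if'': assuming $A$ stable and $P \in \mathbb{S}^n$ (note: not assumed positive definite) satisfies \cref{eq:non-strict-hinf}, I would first argue $P \succeq 0$. The $(1,1)$ block gives $A^\tr P + PA \preceq 0$; since $A$ is stable this Lyapunov inequality forces $P \succeq 0$ (integrate $\frac{d}{dt}(e^{A^\tr t} P e^{At}) \preceq 0$ from $0$ to $\infty$). Then I would run the same frequency-domain computation as in the strict case: the non-strict LMI, after a Schur complement on the $(3,3)$ block (now $-\gamma I \prec 0$ still since $\gamma > 0$), gives for $x = (j\omega I - A)^{-1}Bw$ the inequality $-\gamma\|w\|^2 + \frac1\gamma\|Cx+Dw\|^2 \le 0$, hence $\sigma_{\max}(\mathbf{G}(j\omega)) \le \gamma$ for all $\omega$, so $\|\mathbf{G}\|_{\mathcal{H}_\infty} \le \gamma$. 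For the converse under controllability of $(A,B)$: when $\|\mathbf{G}\|_{\mathcal{H}_\infty} \le \gamma$ we can pick a sequence $\gamma_k \downarrow \gamma$ with $\|\mathbf{G}\|_{\mathcal{H}_\infty} < \gamma_k$, get $P_k \succ 0$ from the strict version, and need to extract a convergent subsequence. Controllability is exactly what bounds the sequence $\{P_k\}$: from the strict LMI one shows $P_k$ is bounded below away from $0$ in a direction-independent way and, more importantly, bounded above — this is where I expect to use that the controllability Gramian-type estimates keep $\operatorname{tr}(P_k)$ (or $\|P_k\|$) bounded; passing to the limit $P_k \to P \succeq 0$ yields the non-strict LMI by continuity.

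\textbf{Anticipated main obstacle.} The easy directions (LMI $\Rightarrow$ norm bound, in both strict and non-strict forms) are routine telescoping/Schur-complement manipulations. The real work is in the converse directions: the strict converse requires the existence of a stabilizing positive semidefinite solution to the $\mathcal{H}_\infty$ Riccati equation (classical but nontrivial — I would cite it), and the non-strict converse requires the compactness argument showing the family $\{P_k\}$ of strict-LMI solutions stays bounded as $\gamma_k \downarrow \gamma$, which is precisely where the controllability hypothesis on $(A,B)$ enters and cannot be dropped. I would structure the proof so that the non-strict ``if'' is self-contained and short (including the $P \succeq 0$ deduction from stability), and delegate the two converse statements to the Riccati-theory literature, only spelling out the limiting argument that derives the non-strict LMI from the strict one.
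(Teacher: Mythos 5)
The paper does not actually prove \Cref{lemma:bounded_real}: it is invoked as a classical result, with the strict/non-strict subtleties deferred to Part I \cite[Section 3.1 \& Appendix A.3]{zheng2023benign} and the standard literature. So there is no in-paper proof to match; judged on its own, your route is the standard KYP/dissipativity one and is essentially sound. The "if" directions are fine: the Schur complement on the $(3,3)$ block (valid in the non-strict case too, since $-\gamma I\prec 0$), the telescoping with $x=(j\omega I-A)^{-1}Bw$, stability from the $(1,1)$ block in the strict case, and the $\omega\to\infty$/compactness remark are all correct; the deduction $P\succeq 0$ in the non-strict "if" is true but not needed. Delegating the strict "only if" to the Riccati existence theory (with the $\epsilon$-perturbation to pass from the ARE equality to a strict LMI and to force $P\succ 0$) matches how this is normally done and how the paper itself treats the lemma.

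The one thin spot is the non-strict converse. As written, the key step --- uniform boundedness of the strict-LMI solutions $P_k$ as $\gamma_k\downarrow\gamma$ --- is asserted ("I expect to use controllability Gramian-type estimates") rather than argued, and note that \emph{arbitrary} feasible $P_k$ are not obviously the right objects unless you supply the bound. The clean way to finish is the storage-function bookkeeping: any $P$ feasible for the (strict or non-strict) LMI at level $\gamma_k$ defines a storage function, hence for every $x_0$, $x_0^\tr P x_0$ is bounded above by the required supply at level $\gamma_k$, which you can bound uniformly in $k$ by steering $0$ to $x_0$ with the minimum-energy control: $x_0^\tr P_k x_0\le \gamma_1^2\, x_0^\tr W_c(T)^{-1}x_0$, where $W_c(T)$ is the finite-horizon controllability Gramian (this is exactly where controllability enters, and where the argument fails without it, e.g.\ $B=0$, $D$ with $\sigma_{\max}(D)=\gamma$). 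Together with $P_k\succeq 0$ this gives compactness, and passing to the limit in the non-strict LMI completes the proof (the limit $P$ need only be symmetric, as the statement requires). Either spell out this estimate or cite the Willems-type non-strict KYP result directly, as Part I does; with that addition your proposal is complete.
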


Both \cref{lemma:H2norm} and \cref{lemma:bounded_real} are classical results in control, but the subtleties between strict and non-strict LMI characterizations (\cref{eq:strict-LMI-H2norm} vs. \cref{eq:nonstrict-lmi-h2}; \cref{eq:strict-hinf} vs. \cref{eq:non-strict-hinf}) have been less emphasized before. We refer the interested reader to Part I of this paper \cite[Section 3.1 \& Appendix A.3]{zheng2023benign} for more discussions. We here only remark that the non-strict LMIs in both \cref{lemma:H2norm} and \cref{lemma:bounded_real} \textit{assume that the system is stable; but when applying them to controller synthesis, stability is a design constraint}; converse results for the non-strict LMIs also require controllability of the (closed-loop) system. These subtleties motivate our inclusion in \cref{eq:epi-graph-lifting} and the notion of non-degenerate points (\Cref{definition:degenerate-points}).

\subsection{State Feedback Policy Optimization} \label{subsection:static-policies}

We consider a continuous-time\footnote{Within the scope of this paper, we only consider the continuous-time case. Unless explicitly stated otherwise, analogous results for discrete-time systems are also available.
} 
linear time-invariant (LTI) dynamical system
\begin{equation}\label{eq:Dynamic}
\begin{aligned}
\dot{x}(t) &= Ax(t)+Bu(t)+ B_ww(t), 
\end{aligned}
\end{equation}
where $x(t) \in \mathbb{R}^n$ is the vector of state variables, $u(t)\in \mathbb{R}^m$ the vector of control inputs, and $w(t) \in \mathbb{R}^n$ is the disturbance on the system process. We introduce the matrix $B_w\in\mathbb{R}^{n\times n}$ for a unified treatment of LQR and $\mathcal{H}_\infty$ control problems in this section. 
We consider the following performance signal 
\begin{equation} \label{eq:performance-signal}
    z(t) = \begin{bmatrix}
        Q^{1/2} \\ 0
    \end{bmatrix}x(t) + \begin{bmatrix}
        0 \\ R^{1/2}
    \end{bmatrix}u(t),
\end{equation}
where $Q \succeq 0$ and $R\succ 0$ are performance weight matrices. A standard assumption is: 
\begin{assumption} \label{assumption:controllability}
    $(A, B)$ is controllable and $(Q^{1/2}, A)$ is observable. 
\end{assumption}
Further assumptions will be imposed as needed in specific setups. For both LQR and $\mathcal{H}_\infty$ control, their cost values depend on $B_w$ only via $B_wB_w^\tr$. We thus define the weight~matrix
$
W \coloneqq B_w B_w^\tr,,
$
and assume $B_w=W^{1/2}$ without loss of generality. We also assume that $W \succ 0$, or equivalently that $B_w$ is of full row rank.

For policy optimization regarding state feedback control problems, we restrict ourselves to the class of static state feedback policies of the form $u(t) = Kx(t)$, with $K\in \mathbb{R}^{m \times n}$, to regulate the behavior of \cref{eq:performance-signal} under the influence of $w(t)$. This restriction has no loss of generality \cite[Theorem 14.2]{zhou1996robust}. The set of stabilizing state feedback policies is then parameterized by $K$ and can be represented by
$$
\mathcal{K}\coloneqq
\left\{K \in \mathbb{R}^{m \times n} \mid 
\max\nolimits_i \operatorname{Re}\lambda_i(A+BK)
<0\right\},
$$ 
and the closed-loop transfer function from $w$ to $z$ becomes
\begin{equation} \label{eq:transfer-function-Tzw}
\mathbf{T}_{zw}(K,s) = \begin{bmatrix}
    Q^{1/2} \\ R^{1/2}K
\end{bmatrix} (sI - A - BK)^{-1}B_w. 
\end{equation}

\subsubsection{Linear Quadratic Regulator (LQR)} \label{subsection:LQR}

For the linear quadratic regulator (LQR) problem, we consider $w(t)$ as white Gaussian noise with an identity intensity matrix, i.e., $\mathbb{E}\left[w(t)w(\tau )\right] = \delta(t - \tau )I_n$. The policy optimization formulation for LQR then reads as
\begin{equation} \label{eq:LQR-stochastic-noise}
\begin{aligned}
\min_{K \in \mathbb{R}^{m \times n}}\ \ &
\lim_{T \to \infty}\mathbb{E}\!\left[\frac{1}{T}\int_0^{T}
x^\tr(t) Q x(t) + u^\tr(t) R u(t)\,dt
\right]
\\
\text{subject to}\ \ & \dot{x}(t) = Ax(t)+Bu(t)+ B_ww(t), \\
& u(t) = Kx(t).
\end{aligned}
\end{equation}
Thanks to $W \succ 0$, the objective value is finite if and only if $A+BK$ is stable, i.e., $K \in \mathcal{K}$. Moreover, it can be shown that the objective value equals to $\|\mathbf{T}_{zw}(K,s)\|_{\mathcal{H}_2}^2$ whenever $K\in\mathcal{K}$. Thus we can equivalently reformulate \eqref{eq:LQR-stochastic-noise} as the following $\mathcal{H}_2$ optimization problem:
\begin{equation} \label{eq:LQR-H2}
\begin{aligned}
\min_{K}\ \ & J_{\mathtt{LQR}}(K) \coloneqq \|\mathbf{T}_{zw}(K,s)\|_{\mathcal{H}_2}^2  \\
\text{subject to}\ \ &
K\in\mathcal{K}.
\end{aligned}
\end{equation}

It is known that the policy optimization for LQR \cref{eq:LQR-H2} is smooth and nonconvex,  but has nice landscape properties that $J_{\mathtt{LQR}}(K)$ has a unique stationary point which is globally optimal and is gradient dominated on any sublevel set when $W \succ 0$ \cite{fazel2018global,mohammadi2019global}. These nice properties are closely related to the hidden convexity of $J_{\mathtt{LQR}}(K)$, and the aim of this subsection is to construct an \ECL{} for \cref{eq:LQR-H2}. Then \Cref{theorem:convex-equivalency,theorem:ECL-guarantee} will directly apply.

Our \ECL{} construction for LQR consists of the following three steps. 

\vspace{3pt}
\noindent \textbf{Step 1: Lifting.} We first define a lifted set with an extra Lyapunov variable $X$ as
\[
\mathcal{L}_{\mathtt{LQR}}
= \setv*{(K,\gamma,X)}{
X\succ 0,
(A+BK)X + X(A+BK)^\tr + W = 0,
\gamma \geq \operatorname{tr}\!\left(
(Q+K^\tr RK)X
\right)}.
\]
By \cref{eq:Lyapunov-equations-H2norm-a} in \cref{lemma:H2norm}, it is not difficult to verify that\footnote{This equivalence requires the assumption that $W := B_w B_w^\tr \succ 0$. In this case, we know that $A+BK$ is stable if and only if the Lyapunov equation
 $
 (A+BK) X + X(A+BK)^\tr + W=0 
 $ 
 has a unique positive definite solution $X \succ 0$.} $\gamma\geq J_{\mathtt{LQR}}(K)$ with $K \in \mathcal{K}$ if and only if there exists $X$ such that $(K,\gamma,X)\in\mathcal{L}_{\mathtt{LQR}}$. This further implies that $\pi_{K,\gamma}(\mathcal{L}_{\mathtt{LQR}})
 =\operatorname{epi}_\geq(J_{\mathtt{LQR}})$.
In fact, we further have $\operatorname{epi}_\geq(J_{\mathtt{LQR}}) = \operatorname{cl}\operatorname{epi}_\geq(J_{\mathtt{LQR}})$ thanks to the coercivity of $J_{\mathtt{LQR}}$ (see \Cref{corollary:non-strict-epigraph}).

\vspace{3pt}
\noindent \textbf{Step 2: Convex set.} We define the convex set 
\begin{align*}
\mathcal{F}_{\mathtt{LQR}}
=
\setv*{
(\gamma, Y, X)}{X \succ 0, 
AX+BY + XA^\tr + Y^\tr B^\tr + W = 0,
\gamma\geq \operatorname{tr}
\!\left(QX + X^{-1}Y^\tr RY\right)
},
\end{align*}
and let the auxiliary set be $\mathcal{G}_{\mathtt{LQR}} = \{0\}$. It is obvious that the first two constraints in $\mathcal{F}_{\mathtt{LQR}}$ are convex, and the convexity of the last inequality is due to the fact that the matrix fractional function $f(X,Y) = \operatorname{Tr}
\!\left(QX + X^{-1}Y^\tr RY\right)$ is jointly convex over $X \succ 0, Y \in \mathbb{R}^{m \times n}$ (this is a known fact, but we provide some details in \Cref{subsection:convexity-LQR} for completeness). 

\vspace{3pt}
\noindent \textbf{Step 3: Diffeomorphism.} To construct a full \ECL{}, we employ a classical change of variables $Y = KX$ \cite{boyd1994linear,bernussou1989linear} and introduce the mapping
\begin{equation*} \label{eq:Diffeomorphism-LQR}
\Phi_{\mathtt{LQR}}(K,\gamma, X) = (\gamma, \underbrace{KX, X}_{\zeta_1}),\qquad\forall (K,\gamma,X)\in\mathcal{L}_{\mathtt{LQR}}.
\end{equation*}
This mapping naturally satisfies \cref{eq:mapping-invariant}; we do not need an auxiliary variable $\zeta_2$ as no similarity transformation exists for static state feedback policies. It is straightforward to check that $\mathcal{F}_{\mathtt{LQR}} =
\Phi_{\mathtt{LQR}}(\mathcal{L}_{\mathtt{LQR}})$, and that $\Phi_{\mathtt{LQR}}$ admits an inverse on $\mathcal{F}_{\mathtt{LQR}}$ given by $\Phi_{\mathtt{LQR}}^{-1}(\gamma,Y,X)
=(YX^{-1},\gamma,X) \in \mathcal{L}_{\mathtt{LQR}}$ for any $(\gamma,Y,X)\in \mathcal{F}_{\mathtt{LQR}}$. Furthermore, $\Phi_{\mathtt{LQR}}$ is a $C^2$ (and in fact $C^\infty$) diffeomorphism between the lifted set $\mathcal{L}_{\mathtt{LQR}}$ and the convex set $\mathcal{F}_{\mathtt{LQR}}$. 

Consequently, $(\mathcal{L}_{\mathtt{LQR}}, \mathcal{F}_{\mathtt{LQR}},\{0\},\Phi_{\mathtt{LQR}})$ is an \ECL{} of $J_{\mathtt{LQR}}(K)$ in \cref{eq:LQR-H2}. One key step in the construction above is the utilization of the classical change of variables $Y = K X$ or equivalently $K = YX^{-1}$.
For this \ECL{} constructed above, we have the following nice result, which is due to  $\pi_{K,\gamma}(\mathcal{L}_{\mathtt{LQR}}) =\operatorname{epi}_\geq(J_{\mathtt{LQR}})$ by our construction  (see \Cref{fact:non-degenerate-point}).

\begin{proposition} \label{proposition:LQR-non-degenearte}
    Under \cref{assumption:controllability}, consider the LQR \cref{eq:LQR-H2} with $W \succ 0, Q \succeq 0, R \succ 0$. Then all stabilizing policies $K \in \mathcal{K}$ are non-degenerate with respect to the \ECL{} $(\mathcal{L}_{\mathtt{LQR}}, \mathcal{F}_{\mathtt{LQR}},\{0\},\Phi_{\mathtt{LQR}})$ constructed above.
\end{proposition}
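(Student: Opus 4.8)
The plan is to show that this \ECL{} in fact satisfies the stronger projection identity \cref{eq:projection-ECL-alternative}, i.e.\ $\pi_{K,\gamma}(\mathcal{L}_{\mathtt{LQR}}) = \operatorname{epi}_\geq(J_{\mathtt{LQR}})$; once this is in place, \Cref{fact:non-degenerate-point} immediately yields that every $K\in\mathcal{K}$ is non-degenerate with respect to $(\mathcal{L}_{\mathtt{LQR}},\mathcal{F}_{\mathtt{LQR}},\{0\},\Phi_{\mathtt{LQR}})$. The bulk of this identity was already sketched when constructing the \ECL{} (Step 1); I would simply spell out the two inclusions carefully.

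For $\pi_{K,\gamma}(\mathcal{L}_{\mathtt{LQR}}) \subseteq \operatorname{epi}_\geq(J_{\mathtt{LQR}})$, I would take $(K,\gamma,X)\in\mathcal{L}_{\mathtt{LQR}}$, so that $X\succ 0$ and $(A+BK)X + X(A+BK)^\tr = -W \prec 0$. The standard Lyapunov argument — a positive definite solution of $(A+BK)X + X(A+BK)^\tr \prec 0$ certifies that $A+BK$ is Hurwitz — gives $K\in\mathcal{K}$. Then $X$ is the unique solution of the closed-loop Lyapunov equation \cref{eq:Lyapunov-equations-H2norm-a}, hence the controllability Gramian, and \Cref{lemma:H2norm}(1) applied with $C=\begin{bmatrix} Q^{1/2} \\ R^{1/2}K\end{bmatrix}$ gives $J_{\mathtt{LQR}}(K)=\|\mathbf{T}_{zw}(K,\cdot)\|_{\mathcal{H}_2}^2=\operatorname{tr}\!\big((Q+K^\tr RK)X\big)\le\gamma$, i.e.\ $(K,\gamma)\in\operatorname{epi}_\geq(J_{\mathtt{LQR}})$. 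For the reverse inclusion, I would take $(K,\gamma)\in\operatorname{epi}_\geq(J_{\mathtt{LQR}})$, so $K\in\mathcal{K}$ and $\gamma\ge J_{\mathtt{LQR}}(K)$; since $A+BK$ is stable and $W=B_wB_w^\tr\succ 0$ (so $(A+BK,B_w)$ is controllable), the Lyapunov equation $(A+BK)X+X(A+BK)^\tr+W=0$ has a unique, positive definite solution $X$, and the same computation gives $J_{\mathtt{LQR}}(K)=\operatorname{tr}\!\big((Q+K^\tr RK)X\big)\le\gamma$, so $(K,\gamma,X)\in\mathcal{L}_{\mathtt{LQR}}$ and $(K,\gamma)\in\pi_{K,\gamma}(\mathcal{L}_{\mathtt{LQR}})$. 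Combining the two inclusions proves \cref{eq:projection-ECL-alternative}, and invoking \Cref{fact:non-degenerate-point} finishes the argument.

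I do not expect any real obstacle: the proposition is essentially a bookkeeping consequence of how $\mathcal{L}_{\mathtt{LQR}}$ was defined. The only points requiring a little care are the positive-definiteness of the Lyapunov solution — this is precisely where $W\succ 0$ (equivalently, full row rank of $B_w$) is used, since it forces the closed-loop pair $(A+BK,B_w)$ to be controllable and hence makes the Gramian characterization of \Cref{lemma:H2norm}(1) applicable — and the routine matching of the closed-loop output matrix $C$ to \Cref{lemma:H2norm}. I would also remark that \Cref{assumption:controllability} is not actually needed for this non-degeneracy statement; it is listed here only for uniformity with the rest of \Cref{subsection:static-policies}.
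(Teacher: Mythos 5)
Your proposal is correct and follows essentially the same route as the paper: the paper establishes $\pi_{K,\gamma}(\mathcal{L}_{\mathtt{LQR}})=\operatorname{epi}_\geq(J_{\mathtt{LQR}})$ as part of the \ECL{} construction (using $W\succ 0$ for the Lyapunov stability argument and the positive-definiteness of the Gramian) and then invokes \Cref{fact:non-degenerate-point}, exactly as you do, merely with the two inclusions spelled out. Your side remark that \Cref{assumption:controllability} is not needed for this particular statement is consistent with the paper's own reasoning, which relies only on $W\succ 0$.
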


Thus, our \ECL{} framework immediately implies the following two established results: 
\begin{enumerate}
    \item The policy optimization of LQR is a convex problem in disguise, confirmed by \Cref{theorem:convex-equivalency}, i.e., we have\footnote{
    In this case, the infima can be achieved for both formulations due to the coerciveness of $J_{\mathtt{LQR}}(K)$ and the compactness of $\setv*{(Y,X)}{(\gamma,Y,X)\in\mathcal{F}_{\mathtt{LQR}}}$ for any given $\gamma>0$, but we will not delve into the details here.
    }
    $$
    \min_{K \in \mathcal{K}}\ \  J_{\mathtt{LQR}}(K) = \min_{(\gamma, Y, X)\in \mathcal{F}_{\mathtt{LQR}}} \, \gamma,
    $$
    where the right-hand-side problem is convex and equivalent to an LMI, and their optimal solutions $K^\ast$ and $(\gamma^\ast,Y^\ast,X^\ast)$ are related by $K^\ast= Y^\ast {X^\ast}^{-1}$ and $\gamma^\ast=J_{\mathtt{LQR}}(K^\ast)$.  
    \item Any stationary point $K^\star$ of $J_{\mathtt{LQR}}$ (i.e., $\nabla J_{\mathtt{LQR}}(K^\star) = 0$) is globally optimal, confirmed by \Cref{theorem:ECL-guarantee}.
\end{enumerate}

In the global optimality, we have used the fact that $J_{\mathtt{LQR}}(K)$ is continuously differentiable (and indeed infinitely differentiable) over $\mathcal{K}$ \cite{levine1970determination}. Thus, Clarke stationarity is reduced to the normal stationarity with zero gradient. For completeness, we review the calculation of stationary points in \cref{appendix:stationary-point}. Furthermore, it is known that $J_{\mathtt{LQR}}(K)$ has a unique stationary point, is \textit{coercive}, is \textit{$L$-smooth} and \textit{gradient dominated} over any sublevel set~\cite{fazel2018global}. These properties are fundamental to establishing global convergence of direct policy search and their model-free extensions for solving LQR~\cite{malik2019derivative,mohammadi2021convergence}. We point out that some of these results cannot be obtained by our current \ECL{} framework without imposing further geometrical properties on $\mathcal{F}_{\mathtt{LQR}}$, and we left such refinements of \ECL{} to our future work.

\begin{remark}[Elimination of the lifting variable $X$] \label{remark:eliminating-lifting-variables}
 Thanks to the affine constraint $AX+BY+XA^\tr + Y^\tr B^\tr +W=0$, in the set $\mathcal{F}_{\mathrm{LQR}}$, we can express the variable $X$ as a function of $Y$ when the linear mapping $X  \to  \mathcal{A}(X)\coloneqq AX + XA^\tr$ is invertible; see \cite[Lemma 2.7]{zhou1996robust}. By this observation, with $\mathcal{A}$ being invertible, we may define
 \[
 \begin{aligned}
 & \tilde{\mathcal{L}}_{\mathtt{LQR}}
 =\setv*{(K,\gamma)}{
 \begin{aligned}
 \exists X\succ 0\ \text{s.t.}\ &
(A+BK)X + X(A+BK)^\tr + W = 0
\\
& \text{and}\ \gamma \geq \operatorname{tr}\!\left(
(Q+K^\tr RK)X\right) 
\end{aligned}}
=\operatorname{epi}_{\geq}\!\left(J_{\mathtt{LQR}}\right), \\
& \tilde{\mathcal{F}}_{\mathtt{LQR}}
 =\setv*{(\gamma,Y)}{
 \begin{aligned}
& -\mathcal{A}^{-1}(BY\!+\!Y^\tr B^\tr \!+\!W)\succ 0, \\
& \gamma\geq \operatorname{tr}
 \!\left(-Q\mathcal{A}^{-1}(BY\!+\!Y^\tr B^\tr \!+\!W)
 -\left(\mathcal{A}^{-1}(BY\!+\!Y^\tr B^\tr \!+\!W)\right)^{-1}Y^\tr R Y\right)
 \end{aligned}
 }, \\
 & \tilde{\Phi}_{\mathtt{LQR}}(K,\gamma)
  = (\gamma, KX),
 \end{aligned}
 \]
 and it can be shown that $(\tilde{\mathcal{L}}_{\mathtt{LQR}},\tilde{\mathcal{F}}_{\mathtt{LQR}},\{0\},\tilde{\Phi}_{\mathtt{LQR}})$ also gives an \ECL{} for LQR. In this \ECL{} formulation, we essentially eliminate the lifting variable $X$.
 In fact, we have employed this elimination technique for the simple LQR instance in \cref{example:LQR_ex}.
 Similar techniques have been used in the recent work \cite{mohammadi2021convergence} to derive the gradient dominance property.
 \hfill $\square$
\end{remark}

\begin{remark}[\ECL{} for deterministic LQR]
Our formulation of the LQR problem considers stochastic noise and infinite-horizon averaged cost. Another commonly studied form of LQR is the deterministic infinite-horizon LQR with a random initial state, i.e.,
\begin{equation} \label{eq:LQR-deterministic}
\begin{aligned}
\min_{K \in \mathbb{R}^{m \times n}}\ \ & \mathbb{E}_{x(0)\sim D}\!\left[\int_0^{\infty}
x^\tr(t) Q x(t) + u^\tr(t) R u(t)\,dt
\right]
\\
\text{subject to}\ \ & \dot{x}(t) = Ax(t) + Bu(t), \\
& u(t) = Kx(t),
\end{aligned}
\end{equation}
where ${D}$ is a probability distribution over $\mathbb{R}^n$ with zero mean.
It is not difficult to check that this form of LQR admits almost the same \ECL: The objective value of~\cref{eq:LQR-deterministic} has the same dependence on $K$ except that the covariance matrix $\Omega=\mathbb{E}_{x(0)\sim D}\!\left[x(0)x(0)^\tr\right]$ now plays the role of $W$, which~we~assume to be positive definite, similar to the setup in \cite{fazel2018global,mohammadi2021convergence}. Then, the same \ECL{} construction applies~here.

We note that in all existing policy optimization formulations for LQR \cite{fazel2018global,mohammadi2021convergence}, it is assumed $W \succ 0$ or $\Omega \succ 0$. However, this assumption is not needed for Riccati-based solutions.
\hfill $\square$
\end{remark}

\subsubsection{$\mathcal{H}_\infty$ State Feedback Control} \label{subsection:hinf-state-feedback}
In $\mathcal{H}_\infty$ state feedback control, we consider $w(t)$ as adversarial disturbance with bounded energy. Let $\mathcal{L}_2^{k}[0,\infty)$ be the set of square-integrable (bounded energy) signals of dimension $k$, i.e.,
$$
\mathcal{L}_2^{k}[0,\infty): =\left\{w:[0,+\infty)\rightarrow\mathbb{R}^{k}
\left|\,\|w\|_{\ell_2}^2 \coloneqq \int_0^\infty w(t)^\tr w(t)\,dt < \infty \right.\right\}.
$$
A standard form of policy optimization for $\mathcal{H}_\infty$ robust control with state feedback reads as
\begin{equation} \label{eq:Hinf-state-feedback}
\begin{aligned}
\inf_{K \in \mathbb{R}^{m \times n}}\ \ & \sup_{\|w\|_{\ell_2} \leq 1}\int_0^{\infty}
x^\tr(t) Q x(t) + u^\tr(t) R u(t)\,dt
\\
\text{subject to}\ \ & \dot{x}(t) = Ax(t)+Bu(t)+ B_ww(t), \ x(0) =0, \\
& u(t) = Kx(t).
\end{aligned}
\end{equation}
Again, when $W \succ 0$, the objective value is finite if and only if $A+BK$ is stable. 
Standard techniques in control allow us to equivalently reformulate \cref{eq:Hinf-state-feedback} as the following $\mathcal{H}_\infty$ optimization problem:
\begin{equation} \label{eq:Hinf}
\begin{aligned}
\inf_{K\in \mathbb{R}^{m \times n}}\ \ & J_{\infty}(K):= \|\mathbf{T}_{zw}(K,s)\|_{\mathcal{H}_\infty}  \\
\text{subject to}\ \ &
K\in\mathcal{K}. 
\end{aligned}
\end{equation}
Note that \cref{eq:Hinf-state-feedback,eq:Hinf} have the same feasible region but the objective value of \cref{eq:Hinf-state-feedback} is the square of that of \cref{eq:Hinf}. The infimum of \cref{eq:Hinf} might not be attainable, and there exist simple examples where the infimum can be approached only by letting the policy $K$ go to infinity (see \cref{subsection:non-coercivity}).

It is known that the policy optimization for the $\mathcal{H}_\infty$ control \cref{eq:Hinf} is nonconvex and nonsmooth, but it admits a convex reformulation \cite{scherer2000linear}. 
Very recently, it has been revealed in \cite{guo2022global} that the discrete-time version of \cref{eq:Hinf} enjoys a nice property that any Clarke stationary point is globally optimal. However, unlike the LQR, it is currently unclear when the stationary point of \cref{eq:Hinf} is unique.
Moreover, we have the following interesting fact; see \cref{example:Hinf} and another explicit example in \cref{subsection:non-coercivity}.  

\begin{fact} \label{fact:Hinf-non-coercive}
    The objective function $J_\infty:\mathcal{K}\rightarrow\mathbb{R}$ defined in~\cref{eq:Hinf} is in general not coercive, even when $(A,B)$ is controllable and $W,Q,R$ are all positive definite. 
\end{fact}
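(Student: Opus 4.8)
The plan is to prove this ``there exists a counterexample'' statement by exhibiting one concrete instance — the scalar one already met in \Cref{example:Hinf} — in which $J_\infty$ fails to blow up along a sequence of stabilizing policies whose norms diverge. So I would take $n=m=1$ with $A=-1$, $B=1$, $B_w=1$ (hence $W=B_wB_w^\tr=1\succ 0$), $Q=0.1\succ 0$ and $R=1\succ 0$. First I would verify the hypotheses: controllability of $(A,B)=(-1,1)$ and observability of $(Q^{1/2},A)=(\sqrt{0.1},-1)$ are trivial in the scalar case, so \Cref{assumption:controllability} holds, and $W,Q,R$ are plainly positive definite; here $\mathcal{K}=\{k\in\mathbb{R}\mid k<1\}$.

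Next I would make the cost explicit. By \cref{eq:transfer-function-Tzw} the closed loop is $\dot x=(k-1)x+w$ with $z=[\,\sqrt{0.1}\ \ k\,]^\tr x$, so that
\[
\mathbf{T}_{zw}(k,s)=\frac{1}{s-(k-1)}\begin{bmatrix}\sqrt{0.1}\\ k\end{bmatrix},
\qquad
J_\infty(k)=\sup_{\omega\in\mathbb{R}}\frac{\sqrt{0.1+k^2}}{\sqrt{\omega^2+(1-k)^2}}=\frac{\sqrt{0.1+k^2}}{1-k},\quad k<1,
\]
where the supremum over $\omega$ is attained at $\omega=0$ because $\omega\mapsto\omega^2+(1-k)^2$ is minimized there; this reproduces \cref{eq:Hinf-ex-2}. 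I would then pick the sequence $k_t=-t$, $t\to+\infty$: each $k_t\in\mathcal{K}$ and $|k_t|\to\infty$, yet $J_\infty(k_t)=\sqrt{0.1+t^2}/(1+t)\to 1<\infty$. Hence $J_\infty$ stays bounded along stabilizing policies of unbounded norm, which is exactly the negation of coercivity, so the proof is complete. If a sharper statement is wanted, one can further observe $J_\infty(k)<1$ for every $k\le 0$ (since $0.1+k^2<(1-k)^2$ whenever $k<0.45$), so $J_\infty$ is globally bounded on the ray $k\le 0$ with supremum $1$ unattained. For completeness I would also point the reader to \Cref{subsection:non-coercivity}, which gives a second instance where the infimum of $J_\infty$ is not merely bounded but genuinely unattained, approached only as $\|K\|\to\infty$.

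The statement is essentially a verification, so I expect no real difficulty. The only point deserving a word of care is the meaning of coercivity on the open domain $\mathcal{K}$, whose relevant boundary is $k=1$: one must ensure the escaping sequence does not creep toward that boundary. Since $k_t=-t$ runs off to $-\infty$, well away from $k=1$, and the closed loop $\dot x=(k_t-1)x$ stays exponentially stable along the whole sequence, this is immediate, and no other subtlety arises.
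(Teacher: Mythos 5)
Your proposal is correct and follows essentially the same route as the paper, which establishes \Cref{fact:Hinf-non-coercive} by pointing to the scalar instance of \Cref{example:Hinf} (where $J_\infty(k)=\sqrt{0.1+k^2}/(1-k)\to 1$ as $k\to-\infty$) together with the open-loop-unstable example in \Cref{subsection:non-coercivity}. You simply spell out the same counterexample with the hypothesis checks made explicit, which is exactly what the paper intends.
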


This fact is different from the discrete-time version of $\mathcal{H}_\infty$ control, whose corresponding objective function is shown to be coercive \cite{guo2022global}.  
Our aim here is to construct an \ECL{} for \cref{eq:Hinf}, which then allows us to draw conclusions from \Cref{theorem:convex-equivalency,theorem:ECL-guarantee} directly. The construction process is very similar to the LQR case. The only key difference is that the $\mathcal{H}_\infty$ case relies on the non-strict version of the bounded real lemma~in~\cref{eq:non-strict-hinf}. 

We have the following three steps. 

\vspace{3pt}
\noindent \textbf{Step 1: Lifting.} Based on \cref{lemma:bounded_real}, we define a lifted set with an extra Lyapunov variable $P$ as
\begin{equation} \label{eq:lifted-set-Hinf-state-feedback}
\mathcal{L}_{\infty}
= \setv*{(K,\gamma,P)}{
P\succ 0,
\begin{bmatrix}
(A+BK)^\tr P \!+\! P (A+BK) & PB_w & Q^{1/2} & K^\tr R^{1/2} \\
B_w^\tr P & -\gamma I & 0 & 0 \\
Q^{1/2} & 0 & -\gamma I & 0 \\
R^{1/2}K & 0 & 0 & -\gamma I
\end{bmatrix}\preceq 0}.
\end{equation}
We will show below that this lifted set satisfies  $
\pi_{K,\gamma}(\mathcal{L}_{\infty})
=\operatorname{epi}_\geq(J_{\infty}).$

\vspace{3pt}
\noindent \textbf{Step 2: Convex set.} We define a convex set 
\begingroup
    \setlength\arraycolsep{2pt}
\def\arraystretch{0.8} 
\begin{equation} \label{eq:convex-set-hinf-state-feedback}
\mathcal{F}_{\infty}
=
\setv*{
(\gamma, Y, X)}{  \begin{aligned} &X \succ 0, \\
     &Y \in \mathbb{R}^{m \times n},
\end{aligned} \; 
\begin{bmatrix}
    AX\!+\! BY \!+\! XA^\tr \!+\!Y^\tr B^\tr & B_w & XQ^{1/2} & Y^\tr R^{1/2} \\
    B_w^\tr & -\gamma I & 0 & 0 \\
    Q^{1/2}X & 0 & -\gamma I & 0 \\
    R^{1/2}Y & 0 & 0 & -\gamma I
\end{bmatrix} \preceq 0 },
\end{equation}
\endgroup 
and let the auxiliary set be $\mathcal{G}_{\infty} = \{0\}$. 

\vspace{3pt}
\noindent \textbf{Step 3: Diffeomorphism.} To construct a full \ECL{}, we employ the classical change of variables $Y = KP^{-1}, X = P^{-1}$ and introduce the mapping
\begin{equation} \label{eq:mapping-hinf-state-feedback}
\Phi_\infty(K,\gamma, P) = (\gamma, \underbrace{KP^{-1}, P^{-1}}_{\zeta_1}),\qquad\forall (K,\gamma,P)\in\mathcal{L}_{\infty}.
\end{equation}
Similar to the LQR case, no auxiliary variable $\zeta_2$ is needed. 

For the construction above, we have the following results, whose proof is given in \cref{subsection:proofs-state-feedback-Hinf}. 
\begin{proposition} \label{proposition:hinf-lifting}
    Under \cref{assumption:controllability}, consider the state feedback $\mathcal{H}_\infty$ policy optimization problem \cref{eq:Hinf}, where $B_w$ is full row rank and {$Q \succ 0, R \succ 0$}. The following statements hold. 
    \begin{enumerate}
        \item We have $\gamma\geq J_{\infty}(K)$ with $K \in \mathcal{K}$ if and only if there exists $P$ such that $(K,\gamma,P)\in\mathcal{L}_{\infty}$. This further implies $ \pi_{K,\gamma}(\mathcal{L}_{\infty})
=\operatorname{epi}_\geq(J_{\infty}). $
\item The mapping $\Phi_\infty$ in \cref{eq:mapping-hinf-state-feedback} is a $C^2$ (and in fact $C^\infty$) diffeomorphism between the lifted set $\mathcal{L}_{\infty}$ in \cref{eq:lifted-set-Hinf-state-feedback} and the convex set $\mathcal{F}_{\infty}$ in \cref{eq:convex-set-hinf-state-feedback}. 
    \end{enumerate}
    
\end{proposition}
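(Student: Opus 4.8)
The plan is to establish the two parts of the proposition independently: Part~1 (the epigraph identity) via the non-strict bounded real lemma together with a Schur-complement argument that reads closed-loop stability out of the LMI, and Part~2 (the diffeomorphism) via a congruence transformation.

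For Part~1, I would first treat the ``$\Leftarrow$'' direction. Given $(K,\gamma,P)\in\mathcal{L}_{\infty}$, note that $\gamma>0$ automatically: if $\gamma<0$ then a diagonal block $-\gamma I$ of the negative-semidefinite LMI would be positive definite, which is impossible, and if $\gamma=0$ then a zero diagonal block would force the corresponding off-diagonal block $Q^{1/2}$ to vanish, contradicting $Q\succ 0$. With $\gamma>0$, taking the Schur complement of the LMI with respect to the negative-definite trailing block $\operatorname{diag}(-\gamma I,-\gamma I,-\gamma I)$ yields
\[
(A+BK)^\tr P+P(A+BK)+\tfrac{1}{\gamma}\bigl(PB_wB_w^\tr P+Q+K^\tr RK\bigr)\preceq 0,
\]
hence $(A+BK)^\tr P+P(A+BK)\preceq -\tfrac{1}{\gamma}Q\prec 0$ because $Q\succ 0$. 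Since $P\succ 0$, this strict Lyapunov inequality certifies that $A+BK$ is Hurwitz, i.e.\ $K\in\mathcal{K}$, and then the forward direction of the non-strict bounded real lemma (\Cref{lemma:bounded_real}) gives $J_\infty(K)=\|\mathbf{T}_{zw}(K,s)\|_{\mathcal{H}_\infty}\leq\gamma$. For the ``$\Rightarrow$'' direction, assume $K\in\mathcal{K}$ and $\gamma\geq J_\infty(K)$; since $B_w$ has full row rank, $(A+BK,B_w)$ is controllable, so the converse part of \Cref{lemma:bounded_real} furnishes a symmetric $P$ satisfying the LMI of $\mathcal{L}_{\infty}$. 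Any such $P$ is in fact positive definite: the Schur-complement identity above shows $(A+BK)^\tr P+P(A+BK)=-Q'$ with $Q'\succeq\tfrac{1}{\gamma}Q\succ 0$, and because $A+BK$ is stable this Lyapunov equation has the unique solution $P=\int_0^\infty e^{(A+BK)^\tr t}Q'e^{(A+BK)t}\,dt\succ 0$. Hence $(K,\gamma,P)\in\mathcal{L}_{\infty}$. Combining the two directions, and using $\operatorname{dom}(J_\infty)=\mathcal{K}$, gives $\pi_{K,\gamma}(\mathcal{L}_{\infty})=\operatorname{epi}_\geq(J_\infty)$.

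For Part~2, I would show that $\Phi_\infty$ carries $\mathcal{L}_{\infty}$ onto $\mathcal{F}_{\infty}$ by congruence: for $(K,\gamma,P)\in\mathcal{L}_{\infty}$ set $X=P^{-1}\succ 0$, $Y=KP^{-1}$, and conjugate the LMI of $\mathcal{L}_{\infty}$ by the symmetric invertible matrix $\operatorname{diag}(X,I,I,I)$; using $XP=I$ and $KX=Y$, the $(1,1)$ block becomes $AX+BY+XA^\tr+Y^\tr B^\tr$ and every other block becomes exactly the corresponding block of the LMI defining $\mathcal{F}_{\infty}$, so $(\gamma,Y,X)\in\mathcal{F}_{\infty}$. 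Running the same congruence backwards by $\operatorname{diag}(X^{-1},I,I,I)$ shows the candidate inverse $\Psi(\gamma,Y,X):=(YX^{-1},\gamma,X^{-1})$ maps $\mathcal{F}_{\infty}$ into $\mathcal{L}_{\infty}$, and a one-line computation gives $\Psi\circ\Phi_\infty=\mathrm{id}$ and $\Phi_\infty\circ\Psi=\mathrm{id}$, so $\Phi_\infty$ is a bijection with inverse $\Psi$. Finally, $\Phi_\infty$ and $\Psi$ are rational maps whose only denominators are $\det P$ and $\det X$ respectively, so they extend to $C^\infty$ maps on the open sets $\mathbb{R}^{m\times n}\times\mathbb{R}\times\mathrm{GL}_n$ and $\mathbb{R}\times\mathbb{R}^{m\times n}\times\mathrm{GL}_n$; these extensions remain mutually inverse, and since $\mathcal{L}_{\infty}$ lies in the first open set, $\Phi_\infty$ is a $C^\infty$ (in particular $C^2$) diffeomorphism in the sense required by \Cref{definition:ECL}.

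I expect the main obstacle to be the stability bookkeeping in Part~1. Unlike strict-LMI characterizations, the non-strict LMI defining $\mathcal{L}_{\infty}$ does not, on its face, exclude a marginally stable or even unstable $A+BK$, nor does it obviously force $P\succ 0$; both must be recovered by exploiting $Q\succ 0$ (and the full row rank of $B_w$ in the converse direction) through the Schur-complement and Lyapunov-uniqueness computation above. Once Part~1 is in place, Part~2 is a routine congruence calculation.
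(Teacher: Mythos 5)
Your proposal is correct and follows essentially the same route as the paper's proof: Part 1 uses the non-strict bounded real lemma together with a Schur-complement/Lyapunov argument to recover both the stability of $A+BK$ (for membership in $\mathcal{L}_\infty$) and the positive definiteness of the $P$ returned by the converse direction, and Part 2 is the congruence by $\operatorname{diag}(P^{-1},I,I,I)$ with the explicit rational inverse $(\gamma,Y,X)\mapsto(YX^{-1},\gamma,X^{-1})$ and smoothness from rationality. Your explicit check that $\gamma>0$ before taking the Schur complement is a small detail the paper leaves implicit; otherwise the two arguments coincide.
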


The proof is not very difficult, but one needs to be careful about one technical subtlety:  \cref{lemma:bounded_real} already assumes that the system is stable, while the stability is a design constraint in \cref{eq:Hinf}. Therefore, the proof needs to explicitly ensure the stability of $A+BK$ for any $(K,\gamma,P)\in \mathcal{L}_\infty$, and also argue that $P$ obtained from \cref{eq:non-strict-hinf} in \cref{lemma:bounded_real} is positive definite, to show that $ \pi_{K,\gamma}(\mathcal{L}_{\infty})$ and 
$\operatorname{epi}_\geq(J_{\infty})$ are equal to each other. The details are provided in \cref{subsection:proofs-state-feedback-Hinf}.

\Cref{proposition:hinf-lifting} guarantees that $(\mathcal{L}_{\infty}, \mathcal{F}_{\infty},\{0\},\Phi_\infty)$ is an \ECL{} of $J_{\infty}(K)$ in \cref{eq:Hinf}. For this \ECL{}, we further have the following nice result, which is directly from the fact $\pi_{K,\gamma}(\mathcal{L}_{\infty})
=\operatorname{epi}_\geq(J_{\infty})$. 
\begin{proposition}\label{fact:Hinf-non-degenearte}
    Suppose \cref{assumption:controllability} holds, $B_w$ is full row rank, and {$Q \succ 0, R \succ 0$}. Then all stabilizing policies $K \in \mathcal{K}$ are non-degenerate with respect to the \ECL{} $(\mathcal{L}_{\infty}, \mathcal{F}_{\infty},\{0\},\Phi_\infty)$ constructed above. 
\end{proposition}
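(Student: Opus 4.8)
The statement follows almost immediately from the first part of \Cref{proposition:hinf-lifting} together with \Cref{fact:non-degenerate-point}, so the plan is to reduce it to those results. First I would recall \Cref{definition:degenerate-points}: a point $K \in \mathcal{K}$ is non-degenerate with respect to the \ECL{} $(\mathcal{L}_{\infty}, \mathcal{F}_{\infty},\{0\},\Phi_\infty)$ precisely when $(K, J_\infty(K)) \in \pi_{K,\gamma}(\mathcal{L}_{\infty})$. So it suffices to show that $\pi_{K,\gamma}(\mathcal{L}_{\infty})$ contains every pair of the form $(K, J_\infty(K))$ with $K \in \mathcal{K}$, which is exactly the content of the identity $\pi_{K,\gamma}(\mathcal{L}_{\infty}) = \operatorname{epi}_\geq(J_\infty)$ established in the first item of \Cref{proposition:hinf-lifting} under the stated hypotheses (\cref{assumption:controllability}, $B_w$ full row rank, $Q \succ 0$, $R \succ 0$).

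Concretely, I would argue as follows. Fix $K \in \mathcal{K}$. Then $(K, J_\infty(K)) \in \operatorname{epi}_\geq(J_\infty)$ trivially, since $J_\infty(K) \geq J_\infty(K)$. By \Cref{proposition:hinf-lifting}(1), $\operatorname{epi}_\geq(J_\infty) = \pi_{K,\gamma}(\mathcal{L}_{\infty})$, so $(K, J_\infty(K)) \in \pi_{K,\gamma}(\mathcal{L}_{\infty})$, i.e., there exists $P \succ 0$ with $(K, J_\infty(K), P) \in \mathcal{L}_{\infty}$. By \Cref{definition:degenerate-points}, this says precisely that $K$ is non-degenerate. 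Since $K \in \mathcal{K}$ was arbitrary, $\mathcal{K}_{\mathrm{nd}} = \mathcal{K}$, i.e., all stabilizing policies are non-degenerate. Equivalently, one may simply invoke \Cref{fact:non-degenerate-point}: since the construction in \cref{subsection:hinf-state-feedback} (via \Cref{proposition:hinf-lifting}(1)) yields $\pi_{K,\gamma}(\mathcal{L}_{\infty}) = \operatorname{epi}_\geq(J_\infty)$, condition \cref{eq:projection-ECL-alternative} holds, and \Cref{fact:non-degenerate-point} gives non-degeneracy of all points of the domain $\mathcal{K}$.

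There is essentially no obstacle at this level: the proposition is a bookkeeping corollary that merely re-packages \Cref{proposition:hinf-lifting}(1) through the definition of non-degeneracy. The genuine technical work — verifying $\pi_{K,\gamma}(\mathcal{L}_{\infty}) = \operatorname{epi}_\geq(J_\infty)$, which requires handling the subtlety that the non-strict bounded real lemma (\Cref{lemma:bounded_real}) presupposes stability of $A+BK$ whereas here stability is a constraint, and checking that the Lyapunov certificate $P$ obtained from \cref{eq:non-strict-hinf} is genuinely positive definite (using $B_w$ full row rank and observability from $Q^{1/2}$) — is already absorbed into \Cref{proposition:hinf-lifting}, whose proof is deferred to \cref{subsection:proofs-state-feedback-Hinf}. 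Hence the proof of this proposition is a one-line deduction, and I would present it as such rather than reproving the inclusion.
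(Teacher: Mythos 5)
Your proposal is correct and matches the paper's own reasoning: the paper states this proposition as an immediate consequence of the identity $\pi_{K,\gamma}(\mathcal{L}_{\infty})=\operatorname{epi}_\geq(J_{\infty})$ from \Cref{proposition:hinf-lifting}(1), which via \Cref{definition:degenerate-points} (or \Cref{fact:non-degenerate-point}) gives non-degeneracy of every $K\in\mathcal{K}$. Nothing further is needed.
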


Considering that $J_\infty(K)$ is sudifferentially regular,\footnote{See Part I of this paper \cite{zheng2023benign} (Lemma 5.1 and Appendix D.5) for relevant discussions.} the following result is a direct corollary of \Cref{theorem:convex-equivalency,theorem:ECL-guarantee}.
\begin{corollary}
    Suppose \cref{assumption:controllability} holds, $B_w$ is full row rank, and {$Q \succ 0, R \succ 0$}. Then, the state feedback $\mathcal{H}_\infty$ policy optimization \cref{eq:Hinf} is equivalent to a convex problem in the sense that 
    \begin{equation} \label{eq:state-feedback-Hinf-LMI}
    \inf_{K \in \mathcal{K}}\ \  J_{\infty}(K) = \inf_{(\gamma, Y, X)\in \mathcal{F}_{\infty}} \, \gamma,
    \end{equation}
    Furthermore, any Clarke stationary point of \cref{eq:Hinf} is globally optimal.
\end{corollary}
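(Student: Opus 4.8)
The plan is to obtain this corollary by assembling pieces that are already in place: \Cref{proposition:hinf-lifting}, \Cref{theorem:convex-equivalency}, \Cref{fact:Hinf-non-degenearte}, and \Cref{theorem:ECL-guarantee} (or, more compactly, \Cref{corollary:ECL-guarantee}). The first step is to certify that the tuple $(\mathcal{L}_{\infty},\mathcal{F}_{\infty},\{0\},\Phi_\infty)$ is an \ECL{} of $J_\infty$ in the sense of \Cref{definition:ECL}. \Cref{proposition:hinf-lifting} already supplies the two nontrivial ingredients: (i) $\pi_{K,\gamma}(\mathcal{L}_\infty)=\operatorname{epi}_\geq(J_\infty)$, which in particular yields the chain of inclusion $\operatorname{epi}_>(J_\infty)\subseteq\pi_{K,\gamma}(\mathcal{L}_\infty)\subseteq\operatorname{cl}\operatorname{epi}_\geq(J_\infty)$ demanded by \cref{eq:epi-graph-lifting}; and (ii) $\Phi_\infty$ is a $C^2$ diffeomorphism from $\mathcal{L}_\infty$ onto $\mathcal{F}_\infty$. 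The remaining conditions are immediate: $\mathcal{F}_\infty$ is convex because it is cut out by an LMI that is affine in $(\gamma,Y,X)$ together with $X\succ 0$; we take $\mathcal{G}_{\mathrm{aux}}=\{0\}$ and identify $\mathcal{F}_\infty\times\{0\}$ with $\mathcal{F}_\infty$; and the output structure \cref{eq:mapping-invariant} holds since, by \cref{eq:mapping-hinf-state-feedback}, $\Phi_\infty$ places $\gamma$ in its first slot for every $(K,\gamma,P)\in\mathcal{L}_\infty$.

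With the \ECL{} established, the convex-equivalence identity \cref{eq:state-feedback-Hinf-LMI} is exactly \Cref{theorem:convex-equivalency} applied to this tuple: it gives $\inf_{K\in\mathcal{K}}J_\infty(K)=\inf_{(\gamma,\zeta_1)\in\mathcal{F}_\infty}\gamma$, and since $\zeta_1=(Y,X)$ here, the right-hand side is precisely $\inf_{(\gamma,Y,X)\in\mathcal{F}_\infty}\gamma$. No separate argument is needed for this part.

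For the global-optimality claim I would combine three facts. First, $\mathcal{K}$ is an open domain, being cut out by strict inequalities on $\max_i\operatorname{Re}\lambda_i(A+BK)$, and $J_\infty$ is subdifferentially regular on $\mathcal{K}$ (see Part~I \cite[Lemma~5.1]{zheng2023benign}). Second, by \Cref{fact:Hinf-non-degenearte}, every $K\in\mathcal{K}$ is non-degenerate with respect to the \ECL{} above, so the non-degenerate set is all of $\mathcal{K}$; equivalently, \cref{eq:projection-ECL-alternative} holds for this \ECL{}. Third, \Cref{theorem:ECL-guarantee} asserts that any Clarke stationary point lying in the non-degenerate set is a global minimizer. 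Chaining these, any $K^\star\in\mathcal{K}$ with $0\in\partial J_\infty(K^\star)$ is globally optimal; alternatively one may simply invoke \Cref{corollary:ECL-guarantee}, which packages exactly this conclusion under \cref{eq:projection-ECL-alternative}.

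The corollary itself carries essentially no difficulty: it is bookkeeping — verifying the \ECL{} axioms and quoting the general theorems. The genuine technical content lives entirely in \Cref{proposition:hinf-lifting} (proved in \cref{subsection:proofs-state-feedback-Hinf}), and the delicate point there — the one I would flag as the main obstacle were that proposition not already available — is reconciling the non-strict bounded real lemma \cref{eq:non-strict-hinf}, which presupposes stability, with the fact that stability is itself a design constraint: one must show that every $(K,\gamma,P)\in\mathcal{L}_\infty$ forces $A+BK$ to be Hurwitz (so that $K\in\mathcal{K}$ and the lemma can be run in reverse) and that the matrix $P$ extracted from the non-strict LMI is strictly positive definite, both of which rely on $B_w$ having full row rank together with the controllability in \cref{assumption:controllability}.
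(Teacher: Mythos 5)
Your proposal is correct and follows essentially the same route as the paper, which presents this corollary as an immediate consequence of \Cref{proposition:hinf-lifting} (the \ECL{} construction), \Cref{fact:Hinf-non-degenearte} (non-degeneracy of all $K\in\mathcal{K}$), the subdifferential regularity of $J_\infty$, and the general guarantees in \Cref{theorem:convex-equivalency,theorem:ECL-guarantee}. Your identification of the genuinely delicate content — the stability and positive-definiteness arguments inside the proof of \Cref{proposition:hinf-lifting} reconciling the non-strict bounded real lemma with stability as a design constraint — matches the paper's own emphasis.
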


Note that the convex problem in \cref{eq:state-feedback-Hinf-LMI} is indeed an LMI, which has been widely used in state feedback $\mathcal{H}_\infty$ control \cite{scherer2000linear}. The global optimality of Clarke stationary points is the continuous-time counterpart of the discrete-time result established in \cite{guo2022global}. Either of the formulations in \cref{eq:state-feedback-Hinf-LMI} might not be solvable, i.e., the infimum may not be achieved. In this case, the Clarke stationary point does not exist. An explicit single-input and single-output example is provided in \Cref{subsection:non-coercivity}.    

{
\begin{remark}[Lifting variable]
    Unlike the LQR case, the lifting variable $P$ in \cref{eq:lifted-set-Hinf-state-feedback} comes from an LMI due to the bounded real lemma (\Cref{lemma:bounded_real}). The solution $P$ in \cref{eq:lifted-set-Hinf-state-feedback} is in general not unique.
    Thus, the lifting procedure is necessary for $\mathcal{H}_\infty$ control. A similar strategy has been used in \cite{guo2022global} for discrete-time $\mathcal{H}_\infty$ control. This is also why we used a lifting variable in \Cref{example:Hinf}. \hfill $\square$
\end{remark}
}

\begin{remark}[The diffeomorphisms in state feedback policy optimization] \label{remark:state-feedback-policies}
The diffeomorphisms \cref{eq:Diffeomorphism-LQR,eq:mapping-hinf-state-feedback} are essentially in the same form. We have used~the classical change of variable $K = YX^{-1}$ \cite{khargonekar1991mixed,boyd1994linear,bernussou1989linear}, which linearizes many bilinear matrix inequalities in centralized state feedback controller synthesis (see Notes and References in \cite[Chapter 7]{boyd1994linear} for a historical perspective). A central block in all these bilinear matrix inequalities is the standard Lyapunov inequality $(A+BK)X + X(A+BK)^\tr \prec 0, X \succ 0$ (or $P(A+BK) + (A+BK)^\tr P \prec 0, P \succ 0$). The simple change of variables $Y =KX$ (or $K = YX^{-1}$ with $P = X^{-1}$) leads to the same LMI $AX +BY + (AX+BY)^\tr \prec 0, X \succ 0$. As we will see in the next section, the linearization process for dynamic output feedback policies is much more complicated. It is noted in \cite[Chapter 7]{boyd1994linear} that ``\textit{We suspect, however, that output feedback synthesis problems have high complexity and are therefore unlikely to be recast as LMI problems}''. Fortunately, the developments in \cite{scherer1997multiobjective,scherer2000linear} proposed a linearization procedure for dynamic output feedback policies. We will utilize those techniques in  \cite{scherer1997multiobjective,scherer2000linear} to construct \ECL{}s for LQG and $\mathcal{H}_\infty$ control.   
    \hfill $\square$
\end{remark}

\subsection{Output Feedback Policy Optimization} \label{subsection:dynamic-policies}
In many cases, we may not have full observation of the system state, and we need to use partial output observation for feedback control. In this section, we show that \ECL{} is also applicable to dynamic output feedback policies, including LQG control and robust $\mathcal{H}_\infty$ control.

Consider an LTI system with partial output observations
\begin{equation}\label{eq:Dynamic-output}
\begin{aligned}
\dot{x}(t) &= Ax(t)+Bu(t)+ B_w w(t), \\
y(t) &= Cx(t)+ D_v v(t),
\end{aligned}
\end{equation}
where $x(t) \in \mathbb{R}^n$ is the vector of state variables, $u(t)\in \mathbb{R}^m$ the vector of control inputs, $y(t) \in \mathbb{R}^p$ the vector of measured outputs available for feedback, and $w(t) \in \mathbb{R}^n, v(t)  \in \mathbb{R}^p$ are the disturbances on the system process and measurement at time $t$. Here we introduce the matrices $B_w\in\mathbb{R}^{n\times n}$ and $D_v\in\mathbb{R}^{p\times p}$ for a unified treatment of LQG and $\mathcal{H}_\infty$ control problems. For both~problems, their cost values depend on $B_w$ and $D_v$ only via $B_wB_w^\tr$ and $D_v D_v^\tr$. We thus define the weight~matrices
$
W \coloneqq B_w B_w^\tr,\, V\coloneqq D_vD_v^\tr,
$
and assume $B_w=W^{1/2}$ and $D_v=V^{1/2}$.
The following assumption~is~standard. 
\begin{assumption} \label{assumption:performance-weights}
The weight matrices satisfy $Q \succeq 0, R \succ 0, W \succeq 0, V \succ 0$. Furthermore, $(A,W^{1/2})$ is controllable, and $(Q^{1/2}, A)$ is observable. Also, $(A,B)$ is controllable and $(C,A)$ is observable.
\end{assumption}

We consider the same performance signal $z(t)$ in \cref{eq:performance-signal}. To properly regulate the signal $z(t)$, we design a linear full-order dynamic feedback policy of the form
\begin{equation}\label{eq:Dynamic_Controller}
    \begin{aligned}
        \dot \xi(t) &= A_{\mK}\xi(t) + B_{\mK}y(t), \\
        u(t) &= C_{\mK}\xi(t) + D_{\mK}y(t),
    \end{aligned}
\end{equation}
where $\xi(t) \in \mathbb{R}^n$ is the internal state, and $A_{\mK},B_{\mK},C_{\mK}$ and $D_{\mK}$ are matrices of proper dimensions that specify the policy dynamics. We parameterize dynamic feedback policies of the form~\cref{eq:Dynamic_Controller} by
\begin{equation} \label{eq:dynamic-policies-Dk}
\mK =
\begin{bmatrix}
D_\mK & C_\mK \\ B_\mK & A_\mK
\end{bmatrix} \in \mathbb{R}^{(m+n)\times (p+n)}.
\end{equation}
Combining~\cref{eq:Dynamic_Controller} with~\cref{eq:Dynamic-output} via simple algebra leads to the closed-loop system
\begin{subequations}\label{eq:closed-loop}
\begin{equation}
\begin{aligned}
\frac{d}{dt} \begin{bmatrix} x \\ \xi \end{bmatrix} &=
A_{\mathrm{cl}}(\mK)
\begin{bmatrix} x \\ \xi \end{bmatrix}
+ B_{\mathrm{cl}}(\mK)
\begin{bmatrix} w \\ v \end{bmatrix}, \\
z &= C_{\mathrm{cl}}(\mK)
\begin{bmatrix} x \\ \xi \end{bmatrix}
+ D_{\mathrm{cl}}(\mK)
\begin{bmatrix} w \\ v \end{bmatrix},
\end{aligned}
\end{equation}
where we denote the closed-loop system matrices by
\begin{equation}\label{eq:closed-loop-matrices}
\begin{aligned}
A_{\mathrm{cl}}(\mK)
\coloneqq\  &
\begin{bmatrix}
A + BD_\mK C & BC_\mK \\ B_\mK C & A_\mK
\end{bmatrix}, 
& B_{\mathrm{cl}}(\mK)
\coloneqq\  & \begin{bmatrix} W^{1/2} & BD_{\mK}V^{1/2} \\ 0 & B_{\mK}V^{1/2}  \end{bmatrix}, \\
C_{\mathrm{cl}}(\mK)
\coloneqq\  &
 \begin{bmatrix}
        Q^{1/2} & 0 \\
        R^{1/2}D_{\mK}C & R^{1/2}C_{\mK}
    \end{bmatrix}, &
D_\mathrm{cl}(\mK) \coloneqq\ &
\begin{bmatrix} 0 & 0\\ 0 & R^{1/2}D_{\mK}V^{1/2}\end{bmatrix}.
\end{aligned}
\end{equation}
\end{subequations}

A dynamic feedback policy~\cref{eq:Dynamic_Controller} is said to \emph{internally stabilize} the plant~\cref{eq:Dynamic-output} if the 
the closed-loop matrix $A_{\mathrm{cl}}(\mK)$ is (Hurwitz) stable. We denote%
\begin{subequations} \label{eq:internallystabilizing}
\begin{align}
    \mathcal{C}_{n} & \coloneqq \left\{
    \left.\mK=\begin{bmatrix}
    D_{\mK} & C_{\mK} \\
    B_{\mK} & A_{\mK}
    \end{bmatrix}
    \in \mathbb{R}^{(m+n) \times (p+n)} \,\right|
    A_{\mathrm{cl}}(\mK)\text{ is stable}\right\}, \label{eq:internallystabilizing-a} \\
    \mathcal{C}_{n,0} & \coloneqq 
    \mathcal{C}_n\cap\mathcal{V}_{n,0},
    \quad\text{where}\ \ 
    \mathcal{V}_{n,0}
    \coloneqq
    \left\{\left.
    \begin{bmatrix}
    D_{\mK} & C_{\mK} \\
    B_{\mK} & A_{\mK}
    \end{bmatrix}
    \in \mathbb{R}^{(m+n)\times(p+n)}\,\right| D_{\mK} = 0_{m\times p}\right\}. \label{eq:internallystabilizing-b}
\end{align}
\end{subequations}
The policies in $\mathcal{C}_n$ are \textit{proper} and those in $\mathcal{C}_{n,0}$ are \textit{strictly proper}. It can be shown that $\mathcal{C}_n$ is an open subset of $\mathbb{R}^{(m+n)\times(p+n)}$, and $\mathcal{C}_{n,0}$ is an open subset of the linear space $\mathcal{V}_{n,0}$. The transfer matrix from the disturbance $d(t)=\begin{bmatrix}
w(t) \\ v(t)
\end{bmatrix}$ to the performance signal $z(t)$ becomes 
\begin{equation} \label{eq:transfer-function-zd}
    \mathbf{T}_{{zd}}(\mK, s) = C_{\mathrm{cl}}(\mK)
\left(sI - A_{\mathrm{cl}}(\mK)
\right)^{-1}
B_{\mathrm{cl}}(\mK)
+D_{\mathrm{cl}}(\mK).
\end{equation}
For notational simplicity, in later construction of ECLs, we define the following matrices corresponding to the general dynamics (see \cite[Appendix A.6]{zheng2023benign} for more details):
\begin{equation}
\label{eq:notations_general_H2}
    B_1=\begin{bmatrix} W^{\frac{1}{2}} & 0_{n\times p} \end{bmatrix}, \; B_2=B,  \; C_1=\begin{bmatrix} Q^{\frac{1}{2}} \\ 0_{m\times n} \end{bmatrix},\;  C_2=C,  \; 
 D_{12}=\begin{bmatrix} 0_{n\times m} \\ R^{\frac{1}{2}} \end{bmatrix}, \;D_{21}=\begin{bmatrix} 0_{p\times n} & V^{\frac{1}{2}} \end{bmatrix}.
\end{equation}

\subsubsection{Linear Quadratic Gaussian (LQG)} \label{subsection:ECL-LQG}

When $w(t)$ and $v(t)$ are white Gaussian noises with identity intensity matrices, i.e., $\mathbb{E}[w(t)w(\tau)]=\delta(t-\tau)I_n$ and $\mathbb{E}[v(t)v(\tau)]=\delta(t-\tau)I_p$, we consider an averaged mean performance
\begin{equation*}
\mathfrak{J}_{\LQG}
\coloneqq \lim_{T \rightarrow +\infty }\mathbb{E}\!\left[ \frac{1}{T} \int_{0}^T  z(t)^\tr z(t)\, dt\right]
=\lim_{T \rightarrow +\infty }\mathbb{E} \!\left[\frac{1}{T}\int_{0}^T \left(x^\tr Q x + u^\tr R u\right)dt\right].
\end{equation*}
The classical LQG control problem can be formulated as
\begin{equation*}
    \begin{aligned}
        \min_{\mK \in \mathbb{R}^{(m+n)\times (p+n)}} \quad & \mathfrak{J}_{\LQG} \\
        \text{subject to} \quad & ~\cref{eq:Dynamic-output},\cref{eq:Dynamic_Controller}, \mK \in \mathcal{C}_{n,0}.
    \end{aligned}
\end{equation*}
This problem is equivalent to an $\mathcal{H}_2$ optimization problem below (see Part I \cite{zheng2023benign} for further details)
\begin{equation}\label{eq:LQG_policy_optimization}
\begin{aligned}
\min_{\mK}\ \ & J_{\LQG,n}(\mK):= \|\mathbf{T}_{zd}(\mK,s)\|_{\mathcal{H}_2} \\
\text{subject to}\ \ &  \mK \in \mathcal{C}_{n,0}.
\end{aligned}
\end{equation}

It is known that the policy optimization for LQG \cref{eq:LQG_policy_optimization} is smooth and nonconvex, but all stationary points corresponding to minimal controllers are globally optimal \cite[Theorem 6]{zheng2021analysis}. Part I of this paper in \cite[Theorem 4.2]{zheng2023benign} has revealed that any \textit{non-degenerate stationary points are globally optimal}; see \cite{zheng2023benign} for further landscape discussions. 
This subsection aims to prove \cite[Theorem 4.2]{zheng2023benign} using our \ECL{} framework. Our strategy is to construct an \ECL{} for \cref{eq:LQG_policy_optimization} from the classical change of variables in \cite{scherer2000linear}, then \Cref{theorem:convex-equivalency,theorem:ECL-guarantee} directly apply.

Similar to \Cref{subsection:static-policies}, our construction has three main steps.

\vspace{3pt}
\noindent \textbf{Step 1: Lifting}. Motivated by \cref{eq:nonstrict-lmi-h2} in \Cref{lemma:H2norm}, we first introduce a lifted set $\mathcal{L}_{\LQG}$ as 
\begin{equation} \label{eq:LQG-lift}
\mathcal{L}_{\LQG}=  \left\{ (\mK, \gamma, \underbrace{P, \Gamma}_{\xi}) \left|
       \begin{array}{c} 
       \mK\in\mathcal{V}_{n,0},\; \gamma\in\mathbb{R},\;
         P \in \mathbb{S}^{2n}_{++}, \; P_{12} \in \mathrm{GL}_n, \;\Gamma\in\mathbb{S}_+^{n+m}, \\
         \begin{aligned}
        &\begin{bmatrix} A_{\mathrm{cl}}(\mK)^\tr P+PA_{\mathrm{cl}}(\mK) & PB_{\mathrm{cl}}(\mK) \\ B_{\mathrm{cl}}(\mK)^\tr P & -\gamma I \end{bmatrix} \preceq 0, \\
        &\begin{bmatrix} P & C_{\mathrm{cl}}(\mK)^\tr \\ C_{\mathrm{cl}}(\mK) & \Gamma \end{bmatrix} \succeq 0,\;  \mathrm{tr}(\Gamma) \leq \gamma
        \end{aligned}
        \end{array}
        \right.\right\},
\end{equation}
where $A_{\mathrm{cl}}(\mK), B_{\mathrm{cl}}(\mK), C_{\mathrm{cl}}(\mK)$ are the closed-loop matrices of the policy parameter $\mK$ defined in \Cref{eq:closed-loop-matrices}, and $P_{12}$ denotes the off-diagonal block of $P$ formed by elements in the first $n$ rows and last $n$ columns of $P$. The extra variables $(P, \Gamma)$ play the role of $\xi$ as lifting variables in \ECL.

\vspace{3pt}
\noindent \textbf{Step 2: Convex and auxiliary sets}. We let the auxiliary set $\mathcal{G}_{\LQG}$ be the set of $n \times n$ invertible matrices
$
\mathrm{GL}_n
$, which captures the effect of similarity transformations. We then let the convex set be
\begin{align*}
    \mathcal{F}_{\LQG} = \left\{  (\gamma, \underbrace{\Lambda,X, Y, \Gamma}_{\zeta_1} ) \left|
       \begin{array}{c}
       \gamma\in\mathbb{R},\;\Lambda\in\mathcal{V}_{n,0},\;
       \begin{bmatrix}
        X & I \\
        I & Y
\end{bmatrix}\in\mathbb{S}_{++}^{2n},\;\Gamma\in\mathbb{S}^{n+m}_{+}, \\
       \begin{aligned}
       & {\mathcal{A}}(\gamma, \Lambda,X,Y,\Gamma) \preceq 0, \\
       & {\mathcal{B}}(\gamma, \Lambda,X,Y,\Gamma) \succeq 0,\;
       \mathrm{tr} (\Gamma)\leq\gamma
       \end{aligned}
       \end{array}
        \right.\right\},
\end{align*}
where ${\mathcal{A}}$ and ${\mathcal{B}}$ are two  affine operators
defined as
\begingroup
    \setlength\arraycolsep{2pt}
\def\arraystretch{0.85} 
\begin{align*}
    {\mathcal{A}}\!\left(\gamma, \!\begin{bmatrix}
    0 & F \\
    H & M
    \end{bmatrix}\!,X,Y,\Gamma\right)& \!\coloneqq\!
    \begin{bmatrix} AX \!+\! B_2F \!+\! (AX \!+\! B_2F)^\tr  & M^\tr \!+\! A & B_1 \\
            *  & YA \!+\! HC_2 \!+\! (YA \!+\! HC_2)^\tr  & YB_1\!+\!HD_{21}  \\       
            *  & *  & -\gamma I\end{bmatrix}, \\
    {\mathcal{B}}\!\left(\gamma, \!\begin{bmatrix}
    0 & F \\
    H & M
    \end{bmatrix}\!,X,Y,\Gamma\right)&\!\coloneqq\!
    \begin{bmatrix} \ \ \ X\ \ \  & \ \ \ I\ \ \  & (C_1X\!+\!D_{12}F)^\tr  \\ * & \ \ \ Y\ \ \  & C_1^\tr \\ * & * &\Gamma \end{bmatrix},
\end{align*}
where $*$ denotes the symmetric part. 
These two linear operators ${\mathcal{A}}$ and ${\mathcal{B}}$ resemble the matrix inequalities in \Cref{eq:LQG-lift}. 
\endgroup 

\vspace{3pt}
\noindent \textbf{Step 3: Diffeomorphism}. We next build a smooth bijection from $\mathcal{L}_{\LQG}$ to $\mathcal{F}_{\LQG}\times\mathrm{GL}_n$. Define the mapping $\Phi_{\LQG}$ by
\begin{equation} \label{eq:diffeomorphism-LQG}
\Phi_{\LQG}(\mK,\gamma,P,\Gamma)=\Bigg(\gamma, \underbrace{\begin{bmatrix}
        0 & C_{\mK}(P^{-1})_{21} \\ P_{12}B_{\mK} & \Phi_M
    \end{bmatrix},(P^{-1})_{11},P_{11},\Gamma}_{\zeta_1},\; \underbrace{P_{12}}_{\zeta_2}\Bigg),
    \quad \forall (\mK, \gamma, P, \Gamma)\in {\mathcal{L}}_{\LQG},
\end{equation} 
where
\begin{equation*}
\begin{aligned}        
\Phi_M&= P_{12}B_{\mK}C_2(P^{-1})_{11}+P_{11}B_2C_{\mK}(P^{-1})_{21}+P_{11}A(P^{-1})_{11}+P_{12}A_{\mK}(P^{-1})_{21}.
\end{aligned}
\end{equation*}
Furthermore, we can construct its inverse explicitly (see \Cref{appendix:LQG-control} for details): Define the mapping $\Psi_{\LQG}$ by
\begin{equation} \label{eq:Psi-LQG}
    \Psi_{\LQG}(\mZ,\Xi)=\left(\Psi_{\mK},\gamma, \Psi_P,\Gamma\right),
    \qquad\forall \, \mZ = \left(\gamma, \begin{bmatrix}
        0 & F\\H & M
    \end{bmatrix}, X,Y,\Gamma \right) \in \mathcal{F}_{\LQG},\,\Xi \in \mathrm{GL}_n,
\end{equation}
where
\begin{equation*}
\begin{aligned}
    \Psi_{\mK}&= \begin{bmatrix} I & 0 \\ YB_2 & \Xi \end{bmatrix}^{-1}\begin{bmatrix} 0 & F \\ H & M-YAX \end{bmatrix}\begin{bmatrix} I & C_2X \\ 0 & -\Xi^{-1}(Y-X^{-1})X \end{bmatrix}^{-1},\\
    \Psi_P&=\begin{bmatrix} Y & \Xi \\ \Xi^\tr  & \Xi^\tr (Y-X^{-1})^{-1}\Xi \end{bmatrix}.
\end{aligned}
\end{equation*}

The constructions of $\Phi_{\LQG}$ and $\Psi_{\LQG}$ are highly nontrivial, but have become classical in control; see e.g., \cite{scherer2000linear}. Our next result shows that  $\Phi_{\LQG}$ is a diffeomorphism from $\mathcal{L}_{\LQG}$ to $\mathcal{F}_{\LQG}\times \mathrm{GL}_{n}$. 

\begin{proposition} \label{lemma:inclusion-projection-LQG}
    Under \cref{assumption:performance-weights}, the canonical projection of $\mathcal{L}_{\LQG}$ in \cref{eq:LQG-lift} onto $(\mK, \gamma)$ satisfies
    $$
    \operatorname{epi}_>(J_{\LQG,n})  \subseteq \pi_{\mK, \gamma} (\mathcal{L}_{\LQG}) \subseteq \operatorname{cl}\, \operatorname{epi}_{\geq}(J_{\LQG,n}). 
    $$
    Furthermore, the mapping $\Phi_{\LQG}$ given by \cref{eq:diffeomorphism-LQG} is a $C^2$ (and in fact $C^\infty$) diffeomorphism from $\mathcal{L}_{\LQG}$ to $\mathcal{F}_{\LQG}\times\mathrm{GL}_{n}$, and its inverse $\Phi_{\LQG}^{-1}$ is given by $\Psi_{\LQG}$.
\end{proposition}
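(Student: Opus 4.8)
I would prove the two assertions separately, and establish the diffeomorphism first, since the closure inclusion will invoke the inverse map $\Psi_{\LQG}$ together with its continuity. Smoothness of $\Phi_{\LQG}$ and $\Psi_{\LQG}$ is immediate: both are rational expressions in the matrix entries, with every inverted matrix ($P$ itself, and in $\Psi_{\LQG}$ the blocks $X$, $Y-X^{-1}$, and the two triangular factors) nonsingular on the relevant domain, and the same formulas extend these maps to $C^\infty$ maps on open supersets, as \Cref{definition:ECL} requires. The content is (i) $\Phi_{\LQG}(\mathcal{L}_{\LQG})\subseteq\mathcal{F}_{\LQG}\times\mathrm{GL}_n$, (ii) $\Psi_{\LQG}(\mathcal{F}_{\LQG}\times\mathrm{GL}_n)\subseteq\mathcal{L}_{\LQG}$, and (iii) $\Psi_{\LQG}\circ\Phi_{\LQG}=\operatorname{id}$, $\Phi_{\LQG}\circ\Psi_{\LQG}=\operatorname{id}$; together these make $\Phi_{\LQG}$ a bijection $\mathcal{L}_{\LQG}\to\mathcal{F}_{\LQG}\times\mathrm{GL}_n$ with inverse $\Psi_{\LQG}$, hence a diffeomorphism. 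Item (iii) is the classical verification that the Scherer change of variables is invertible and would be done by direct substitution. For (i), given $(\mK,\gamma,P,\Gamma)\in\mathcal{L}_{\LQG}$ I set $X=(P^{-1})_{11}$, $Y=P_{11}$ and $\Pi_1=\left[\begin{smallmatrix}X & I\\ (P^{-1})_{21} & 0\end{smallmatrix}\right]$, which is invertible since $(P^{-1})_{21}=-P_{22}^{-1}P_{12}^\tr(P^{-1})_{11}$ is a product of invertible matrices. Using $P\Pi_1=\left[\begin{smallmatrix}I & P_{11}\\ 0 & P_{12}^\tr\end{smallmatrix}\right]$, a direct computation shows that conjugating the two block inequalities defining $\mathcal{L}_{\LQG}$ by $\operatorname{diag}(\Pi_1,I)$ produces precisely $\mathcal{A}(\gamma,\Lambda,X,Y,\Gamma)$ and $\mathcal{B}(\gamma,\Lambda,X,Y,\Gamma)$, with $\Lambda$ and the auxiliary component $\Xi=P_{12}$ exactly as in \cref{eq:diffeomorphism-LQG} (in particular the $(2,1)$-entry of $\Pi_1^\tr A_{\mathrm{cl}}(\mK)\Pi_1$ is the expression $\Phi_M$). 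Congruence preserves the sign conditions $\preceq 0$ and $\succeq 0$, and $\Pi_1^\tr P\Pi_1=\left[\begin{smallmatrix}X & I\\ I & Y\end{smallmatrix}\right]$, which, given $P\succ 0$, is positive definite exactly when $P_{12}$ is invertible; hence (i). For (ii) I run the same congruence in reverse, first checking $P=\Psi_P\succ 0$ by a Schur complement — using that $\left[\begin{smallmatrix}X & I\\ I & Y\end{smallmatrix}\right]\succ 0$ forces $0\prec Y-X^{-1}\prec Y$ — and that the zero $(1,1)$-block of $\Lambda$ forces $D_\mK=0$, i.e. $\mK\in\mathcal{V}_{n,0}$.

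For the first (easy) inclusion $\operatorname{epi}_>(J_{\LQG,n})\subseteq\pi_{\mK,\gamma}(\mathcal{L}_{\LQG})$, take $\mK\in\mathcal{C}_{n,0}$ and $\gamma>J_{\LQG,n}(\mK)=\|\mathbf{T}_{zd}(\mK)\|_{\mathcal{H}_2}$. Since $A_{\mathrm{cl}}(\mK)$ is stable, \cref{lemma:H2norm}(2) applied to $(A_{\mathrm{cl}}(\mK),B_{\mathrm{cl}}(\mK),C_{\mathrm{cl}}(\mK))$ yields $P\succ 0$ and $\Gamma\succ 0$ solving the strict LMIs with $\operatorname{tr}\Gamma<\gamma$. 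Strict LMIs are an open condition on $P$, and $\{P\in\mathbb{S}^{2n}:P_{12}\in\mathrm{GL}_n\}$ is open and dense, so an arbitrarily small perturbation produces $P_\epsilon\succ 0$ with $(P_\epsilon)_{12}\in\mathrm{GL}_n$ still satisfying the (hence also the non-strict) LMIs; thus $(\mK,\gamma,P_\epsilon,\Gamma)\in\mathcal{L}_{\LQG}$ and $(\mK,\gamma)\in\pi_{\mK,\gamma}(\mathcal{L}_{\LQG})$.

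The inclusion $\pi_{\mK,\gamma}(\mathcal{L}_{\LQG})\subseteq\operatorname{cl}\operatorname{epi}_\geq(J_{\LQG,n})$ is the crux and the main obstacle. The subtlety is that for $(\mK,\gamma,P,\Gamma)\in\mathcal{L}_{\LQG}$ the $(1,1)$-block of the first inequality only gives $A_{\mathrm{cl}}(\mK)^\tr P+PA_{\mathrm{cl}}(\mK)\preceq 0$, so $A_{\mathrm{cl}}(\mK)$ may be merely marginally stable, $\mK\notin\mathcal{C}_{n,0}$ is possible, and $(A_{\mathrm{cl}}(\mK),B_{\mathrm{cl}}(\mK))$ is in general not controllable (e.g. when $B_\mK$ is rank-deficient); hence neither the converse part of \cref{lemma:H2norm}(3) nor a direct perturbation of $\mK$ applies. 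My plan is to approximate through the convex side. Under \cref{assumption:performance-weights} the set $\mathcal{C}_{n,0}$ is nonempty — take an observer-based strictly proper controller — so, applying \cref{lemma:H2norm}(2) to such a controller with a slightly enlarged $\gamma$, perturbing $P$ as above to make $P_{12}$ invertible, and pushing through $\Phi_{\LQG}$, one obtains a point $r_\star\in\mathcal{F}_{\LQG}$ at which the affine inequalities $\mathcal{A}\preceq 0$, $\mathcal{B}\succeq 0$, $\operatorname{tr}\Gamma\leq\gamma$ all hold \emph{strictly}. Writing $\Phi_{\LQG}(\mK,\gamma,P,\Gamma)=(r_0,\Xi_0)$ with $\Xi_0=P_{12}$, set $r_t=(1-t)r_0+tr_\star\in\mathcal{F}_{\LQG}$ (convexity); for every $t\in(0,1]$ the affine inequalities at $r_t$ are strict, being convex combinations of non-strict and strict ones. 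Then $(\mK_t,\gamma_t,P_t,\Gamma_t):=\Psi_{\LQG}(r_t,\Xi_0)\in\mathcal{L}_{\LQG}$ satisfies the strict first LMI, so $A_{\mathrm{cl}}(\mK_t)^\tr P_t+P_tA_{\mathrm{cl}}(\mK_t)\prec 0$ with $P_t\succ 0$, whence $A_{\mathrm{cl}}(\mK_t)$ is Hurwitz and $\mK_t\in\mathcal{C}_{n,0}$; the forward direction of \cref{lemma:H2norm}(3) then gives $J_{\LQG,n}(\mK_t)\leq\gamma_t$, i.e. $(\mK_t,\gamma_t)\in\operatorname{epi}_\geq(J_{\LQG,n})$. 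Finally, continuity of $\Psi_{\LQG}$ and $\Psi_{\LQG}(r_0,\Xi_0)=(\mK,\gamma,P,\Gamma)$ give $(\mK_t,\gamma_t)\to(\mK,\gamma)$ as $t\downarrow 0$, so $(\mK,\gamma)\in\operatorname{cl}\operatorname{epi}_\geq(J_{\LQG,n})$. I expect the two places requiring the most care to be the exact algebra of the congruence identities (so that the reverse direction really lands back in $\mathcal{L}_{\LQG}$, with the correct $\Phi_M$ and $\Psi_{\mK}$), and the observation that one must interpolate toward a \emph{strictly} feasible point of the convex set $\mathcal{F}_{\LQG}$ — perturbing the controller directly will not convert marginal stability into genuine stability.
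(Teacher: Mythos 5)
Your proposal is correct and matches the paper's own proof in all essentials: the same congruence by $T=\left[\begin{smallmatrix}(P^{-1})_{11} & I\\ (P^{-1})_{21} & 0\end{smallmatrix}\right]$ underlies both the diffeomorphism (with the explicit inverse $\Psi_{\LQG}$ verified on enlarged open sets) and the equivalence between the strict and non-strict LMIs, the first inclusion uses the strict $\mathcal{H}_2$ LMI plus a small perturbation of $P$ to make $P_{12}$ invertible, and the closure inclusion is obtained exactly as you describe, by interpolating on the convex side toward a strictly feasible point and pulling back through the continuous inverse. The only difference is organizational: the paper abstracts your interpolation step into a standalone result (\Cref{lemma:ECL_second_inclusion}) reused verbatim for the $\mathcal{H}_\infty$ case, whereas you carry it out inline and, usefully, make explicit the nonemptiness of the strictly feasible set that this step requires.
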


Unlike the state feedback cases in \Cref{subsection:static-policies}, the proof of \cref{lemma:inclusion-projection-LQG} is much more involved for the reasons highlighted in the remark below. We present the proof details in \Cref{appendix:LQG-control}. 

\begin{remark}[Internal stability] \label{remark:internal-stability}
While the lifted set $\mathcal{L}_{\LQG}$ in \cref{eq:LQG-lift} is motivated by \cref{eq:nonstrict-lmi-h2} in \Cref{lemma:H2norm}, there is one main technical subtlety: \Cref{lemma:H2norm} already assume the system to be stable, but the internal stability $\mK \in \mathcal{C}_{n,0}$ is a domain constraint in \cref{eq:LQG_policy_optimization}. Indeed, the policy  $\mK$ from $\pi_{\mK, \gamma} (\mathcal{L}_{\LQG})$ might not internally stabilize the plant (\textit{the closed-loop system $A_{\mathrm{cl}}(\mK)$ may have eigenvalues with zero real parts}), and some point in $\pi_{\mK, \gamma} (\mathcal{L}_{\LQG})$ may be only in the closure $\operatorname{cl}\, \operatorname{epi}_{\geq}(J_{\LQG,n})$ (note that $J_{\LQG,n}$ is not coercive). Another difference is that we require $P_{12} \in \mathrm{GL}_n$ which does not appear in \cref{eq:nonstrict-lmi-h2}. As highlighted in Part I, this invertibility requirement on $P_{12}$ is very important for constructing a diffeomorphism; also see \Cref{eq:diffeomorphism-LQG}. These two differences complicate the proof of \Cref{lemma:inclusion-projection-LQG}. In \Cref{appendix:dynamic-policies}, we will present a key technical lemma (\Cref{lemma:ECL_second_inclusion}) and the proof details. \hfill $\square$
\end{remark}

\Cref{lemma:inclusion-projection-LQG} guarantees that $(\mathcal{L}_{\LQG}, \mathcal{F}_{\LQG},\mathrm{GL}_n,\Phi_{\LQG})$ constructed above is an \ECL{} of the LQG policy optimization $J_{\LQG,n}(\mK)$ in \Cref{eq:LQG_policy_optimization}. For convenience, we restate the definition~of~non-degenerate LQG policies from \cite[Definition 3.1]{zheng2023benign} below, which is the same as \Cref{definition:degenerate-points}.

\begin{definition} \label{definition:non-degenerate-controller-LQG}
    A full-order dynamic policy $\mK \in {\mathcal{C}}_{n,0}$ is called \textit{non-degenerate for LQG} if there exists a $P \in \mathbb{S}^{2n}_{++}$ with $P_{12} \in \mathrm{GL}_{n}$ and $\Gamma$ such that $(\mK, J_{\LQG,n}(\mK), P, \Gamma) \in \mathcal{L}_{\LQG}$. 
\end{definition}

We refer the interested reader to Part I \cite{zheng2023benign} for more discussions on non-degenerate LQG policies. Then, \cite[Theorem 4.2]{zheng2023benign} is a direct corollary of \Cref{theorem:convex-equivalency,theorem:ECL-guarantee}.
\begin{corollary}
    Under \cref{assumption:performance-weights}, the LQG policy optimization \cref{eq:LQG_policy_optimization} is equivalent to a convex problem in the sense that 
    \begin{equation} \label{eq:LQG-LMI}
    \inf_{\mK \in \mathcal{C}_{n,0}}\ \  J_{\LQG,n}(\mK) = \inf_{(\gamma, \Lambda,X, Y, \Gamma) \in \mathcal{F}_{\LQG}} \, \gamma,
    \end{equation}
    Furthermore, for a non-degenerate policy $\mK \in \mathcal{C}_{n,0}$, if it is a stationary point, i.e. $0 \in \nabla J_{\LQG,n}(\mK)$, then it is globally optimal for \cref{eq:LQG_policy_optimization}.
\end{corollary}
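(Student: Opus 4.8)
The plan is to derive both assertions as direct applications of \Cref{theorem:convex-equivalency,theorem:ECL-guarantee}, once it is recorded that the tuple $(\mathcal{L}_{\LQG},\mathcal{F}_{\LQG},\mathrm{GL}_n,\Phi_{\LQG})$ built in Steps~1--3 is genuinely an \ECL{} of $J_{\LQG,n}$ in the sense of \Cref{definition:ECL}. Conditions (i)--(iii) of that definition have essentially been verified already: \Cref{lemma:inclusion-projection-LQG} supplies the chain of inclusions $\operatorname{epi}_>(J_{\LQG,n})\subseteq\pi_{\mK,\gamma}(\mathcal{L}_{\LQG})\subseteq\operatorname{cl}\operatorname{epi}_{\geq}(J_{\LQG,n})$ and the fact that $\Phi_{\LQG}$ is a $C^2$ diffeomorphism from $\mathcal{L}_{\LQG}$ onto $\mathcal{F}_{\LQG}\times\mathrm{GL}_n$ with inverse $\Psi_{\LQG}$; the set $\mathcal{F}_{\LQG}$ is convex since its defining relations are linear matrix inequalities in $(\gamma,\Lambda,X,Y,\Gamma)$ together with the affine constraint $\operatorname{tr}(\Gamma)\le\gamma$; and formula~\cref{eq:diffeomorphism-LQG} shows that $\Phi_{\LQG}$ returns $\gamma$ in its first component, so \cref{eq:mapping-invariant} holds with $\zeta_2 = P_{12}\in\mathrm{GL}_n = \mathcal{G}_{\LQG}$. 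Hence $J_{\LQG,n}$ is equipped with an \ECL{}, and all the hypotheses shared by the two theorems are in place.

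Granting this, the convex equivalence~\cref{eq:LQG-LMI} is exactly the conclusion of \Cref{theorem:convex-equivalency} applied with $f=J_{\LQG,n}$, domain $\mathcal{D}=\mathcal{C}_{n,0}$ (note $J_{\LQG,n}$ is continuous there), and $(\gamma,\zeta_1)=(\gamma,\Lambda,X,Y,\Gamma)$ ranging over $\mathcal{F}_{\LQG}$. For the stationarity statement I would line up the remaining hypotheses of \Cref{theorem:ECL-guarantee}: (a) $\mathcal{C}_{n,0}$ is an open subset of the linear space $\mathcal{V}_{n,0}$, which we identify with $\mathbb{R}^{d}$ for $d=(m+n)(p+n)-mp$, so the domain is open; (b) $J_{\LQG,n}$ is continuously differentiable on $\mathcal{C}_{n,0}$ (this is classical; see \cite{zheng2021analysis} and \cref{appendix:stationary-point}), hence subdifferentially regular with $\partial J_{\LQG,n}(\mK)=\{\nabla J_{\LQG,n}(\mK)\}$, so a Clarke stationary point is precisely a point with zero gradient; and (c) by \Cref{definition:non-degenerate-controller-LQG}, a policy $\mK\in\mathcal{C}_{n,0}$ that is non-degenerate for LQG satisfies $(\mK,J_{\LQG,n}(\mK),P,\Gamma)\in\mathcal{L}_{\LQG}$ for some admissible $(P,\Gamma)$, i.e. $(\mK,J_{\LQG,n}(\mK))\in\pi_{\mK,\gamma}(\mathcal{L}_{\LQG})$, which is exactly the membership $\mK\in\mathcal{D}_{\mathrm{nd}}$ of \Cref{definition:degenerate-points} for the \ECL{} above. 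With (a)--(c) in hand, \Cref{theorem:ECL-guarantee} yields at once that every non-degenerate stationary $\mK$ is a global minimizer of $J_{\LQG,n}$ over $\mathcal{C}_{n,0}$, i.e.\ of~\cref{eq:LQG_policy_optimization}.

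There is essentially no hard work left at the level of this corollary: the entire technical burden is carried by \Cref{lemma:inclusion-projection-LQG} --- in particular the right-hand inclusion $\pi_{\mK,\gamma}(\mathcal{L}_{\LQG})\subseteq\operatorname{cl}\operatorname{epi}_{\geq}(J_{\LQG,n})$, which must absorb the subtlety that points of $\mathcal{L}_{\LQG}$ need not correspond to internally stabilizing policies, and the bijectivity of $\Phi_{\LQG}$, which forces the requirement $P_{12}\in\mathrm{GL}_n$ --- and that proposition is assumed here. Accordingly, the only points deserving a sentence of care are the bookkeeping items in (a)--(c): fixing the ambient Euclidean space so that $\mathcal{D}=\mathcal{C}_{n,0}$ literally qualifies as an open domain, and invoking smoothness of $J_{\LQG,n}$ to reduce Clarke stationarity $0\in\partial J_{\LQG,n}(\mK)$ to ordinary stationarity $\nabla J_{\LQG,n}(\mK)=0$, so that the statement matches the hypotheses of \Cref{theorem:ECL-guarantee} verbatim.
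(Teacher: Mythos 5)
Your proposal is correct and follows essentially the same route as the paper: the paper likewise treats this corollary as an immediate consequence of \Cref{theorem:convex-equivalency,theorem:ECL-guarantee} once \Cref{lemma:inclusion-projection-LQG} certifies that $(\mathcal{L}_{\LQG},\mathcal{F}_{\LQG},\mathrm{GL}_n,\Phi_{\LQG})$ is an \ECL{}, with non-degeneracy per \Cref{definition:non-degenerate-controller-LQG} matching \Cref{definition:degenerate-points} and smoothness of $J_{\LQG,n}$ reducing Clarke stationarity to a vanishing gradient. Your explicit bookkeeping of the open domain and the convexity of $\mathcal{F}_{\LQG}$ only spells out what the paper leaves implicit.
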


We remark that the equivalence in \cref{eq:LQG-LMI} is essentially the same as \cite[Corollary 4.7]{scherer2000linear}, but with one main difference that all the LMIs in \cite[Corollary 4.7]{scherer2000linear} are strict (which naturally avoids the issue of internal stability in \Cref{remark:internal-stability}) while we used non-strict LMIs in \cref{eq:LQG-lift}. This allows us to derive the global optimality of non-degenerate stationary points. A direct application of \cite[Corollary 4.7]{scherer2000linear} can only characterize the upper bound of $J_{\LQG,n}(\mK)$, but not the function $J_{\LQG,n}(\mK)$ itself. Thus, the global optimality of non-degenerate stationary points cannot be derived from \cite[Corollary 4.7]{scherer2000linear}.

\subsubsection{$\mathcal{H}_\infty$  Output Feedback Control} \label{subsection:ECL-Hinf}

When $w(t)$ and $v(t)$ are deterministic disturbances, we consider the worst-case performance in an adversarial setup. Assume that the system starts from a zero initial state $x(0)=0$. 
Denote $
d(t) = \begin{bmatrix}
w(t) \\ v(t)
\end{bmatrix} \in \mathbb{R}^{p + n}
$, and consider the worst-case performance when the disturbance signal $d(t)$ has bounded energy less than or equal to $1$:
\begin{equation*}
    \mathfrak{J}_{\infty} := \sup_{\|d\|_{\ell_2} \leq 1} \; \int_0^\infty z(t)^\tr z(t)\, dt = \sup_{\|d\|_{\ell_2} \leq 1} \int_{0}^\infty \left(x^\tr Q x + u^\tr R u\right)dt. 
\end{equation*}
The $\mathcal{H}_\infty$ output feedback control problem can be formulated as
\begin{equation*}
    \begin{aligned}
        \inf_{\mK} \quad & \mathfrak{J}_{\infty} \\
        \text{subject to} \quad & ~\cref{eq:Dynamic-output},\cref{eq:Dynamic_Controller}, \;
        x(0)=0, \mK \in \mathcal{C}_n
    \end{aligned}
\end{equation*}
This problem is equivalent to the following $\mathcal{H}_\infty$ policy optimization problem (see Part I \cite{zheng2023benign} for further details):
\begin{equation}\label{eq:Hinf_policy_optimization}
\begin{aligned}
\inf_{\mK}\ \ & J_{\infty,n}(\mK):= \|\mathbf{T}_{zd}(\mK,s)\|_{\mathcal{H}_\infty} \\
\text{subject to}\ \ &  \mK \in \mathcal{C}_{n}.
\end{aligned}
\end{equation}
It is known that the $\mathcal{H}_\infty$ policy optimization \cref{eq:Hinf_policy_optimization} is nonsmooth and nonconvex. Part I of this paper has revealed that all \textit{non-degenerate Clarke stationary points} are globally optimal to  \cref{eq:Hinf_policy_optimization} \cite[Theorem 5.2]{zheng2023benign}.  This subsection aims to prove \cite[Theorem 5.2]{zheng2023benign} using our \ECL{} framework.

Similar to the LQG case, our \ECL{} construction has three main steps.

\vspace{3pt}
\noindent \textbf{Step 1: Lifting}. Motivated by \Cref{eq:non-strict-hinf} in \cref{lemma:bounded_real}, we first introduce a lifted set 
\begin{equation} \label{eq:Hinf-lift}
\mathcal{L}_{\infty,\mathrm{d}}=  \left\{ (\mK, \gamma, \underbrace{P}_{\xi}) \left|\,
       \begin{aligned} 
       & \mK \in\mathbb{R}^{(m+n)\times(p+n)}, \gamma\in\mathbb{R}, P
\in\mathbb{S}_{++}^{2n},\ 
P_{12}\in\mathrm{GL}_{n}, \\
         & \begin{bmatrix}
A_{\mathrm{cl}}(\mK)^\tr P \!+\! P A_{\mathrm{cl}}(\mK) & PB_{\mathrm{cl}}(\mK) & C_{\mathrm{cl}}(\mK)^\tr \\
B_{\mathrm{cl}}(\mK)^\tr P & -\gamma I & D_{\mathrm{cl}}(\mK)^\tr \\
C_{\mathrm{cl}}(\mK) & D_{\mathrm{cl}}(\mK) & -\gamma I
\end{bmatrix} \preceq 0
        \end{aligned}
        \right.\right\}.
\end{equation}
where $A_{\mathrm{cl}}(\mK), B_{\mathrm{cl}}(\mK), C_{\mathrm{cl}}(\mK)$ are the closed-loop matrices of the policy parameter $\mK$ defined in \Cref{eq:closed-loop-matrices}, and $P_{12}$ denotes the off-diagonal block of $P$ formed by elements in the first $n$ rows and last $n$ columns of $P$. The extra variable $P$ plays the role of $\xi$ as lifting variables in \ECL.

\vspace{3pt}
\noindent \textbf{Step 2: Convex and auxiliary sets}. We let the auxiliary set be the set of $n \times n$ invertible matrices
$
\mathrm{GL}_n,
$
and let the convex set be given by
\begin{align*}
    \mathcal{F}_{\infty,\mathrm{d}}= \left\{  ( \gamma, \underbrace{\Lambda,X, Y}_{\zeta_1} ) \left| \,
       \begin{aligned}
       &\gamma\in\mathbb{R},\Lambda\in\mathbb{R}^{(m+n)\times(p+n)},\begin{bmatrix}
X & I \\
I & Y
\end{bmatrix}\in\mathbb{S}_{++}^{2n}, \\
& \mathscr{M}(\gamma,\Lambda,X,Y)\preceq 0
        \end{aligned}
        \right.\right\},
\end{align*}
where $\mathscr{M} (\gamma,\Lambda,X,Y)$ is an affine operator defined as (we have denoted $\Lambda = \begin{bmatrix}
G & F \\
H & M
\end{bmatrix}$)
\begingroup
    \setlength\arraycolsep{3pt}
\def\arraystretch{0.9} \begin{equation*}
\begin{aligned}
& \mathscr{M}\left(\gamma,\begin{bmatrix}
G & F \\
H & M
\end{bmatrix},X,Y\right) \\
\coloneqq &
\begin{bmatrix}
AX + B_2F + (AX + B_2F)^\tr & M^\tr + A + B_2GC_2 & B_1 + B_2GD_{21} & (C_1 X \!+\! D_{12}F)^\tr \\
* & YA \!+\! HC_2 \!+\! (YA \!+\! HC_2)^\tr & YB_1 \!+\! HD_{21} & (C_1 \!+\! D_{12}GC_2)^\tr \\
* & * & -\gamma I & (D_{11} \!+\! D_{12}GD_{21})^\tr \\
* & * & * & -\gamma I
\end{bmatrix}. 
\end{aligned}
  \end{equation*}  \endgroup 

\vspace{3pt}

\noindent \textbf{Step 3: Diffeomorphism}. 
We define the mapping $\Phi_{\infty,\mathrm{d}}$ by
\begin{subequations} \label{eq:diffeomorphism-Hinf}
\begin{equation} \label{eq:Hinf-Phi}
\Phi_{\infty,\mathrm{d}}(\mK,\gamma,P) = 
\Bigg(\gamma, \underbrace{\begin{bmatrix}
        D_\mK & \Phi_F\\\Phi_H & \Phi_M
    \end{bmatrix}, (P^{-1})_{11},P_{11}}_{\zeta_1},\; \underbrace{P_{12}}_{\zeta_2}
\Bigg),
\quad(\mK,\gamma,P)\in\mathcal{L}_{\infty,\mathrm{d}},
\end{equation}
where 
\begin{equation}
\begin{aligned}
\Phi_M ={} &
P_{12}B_\mK C_2 (P^{-1})_{11}
+ P_{11} B_2 C_\mK (P^{-1})_{21} 
 + P_{11}(A+B_2 D_\mK C_2)(P^{-1})_{11} 
 + P_{12}A_\mK (P^{-1})_{21},\\
 \Phi_{H} ={} &
P_{11}B_2 D_\mK + P_{12} B_\mK,\\
\Phi_{F} ={} &
D_\mK C_2 (P^{-1})_{11} + C_\mK (P^{-1})_{21}.
\end{aligned}
\end{equation}
Furthermore, we can construct its inverse explicitly (see \Cref{appendix:Hinf-control} for details): Define the mapping $\Psi_{\infty,\mathrm{d}}$ by
\begin{equation} \label{eq:Hinf-Psi}
    \Psi_{\infty,\mathrm{d}}(\mZ,\Xi):=\left(\Psi_{\mK},\gamma, \Psi_P\right)
\end{equation}
for any $\mZ = \left(\gamma, \begin{bmatrix}
        G & F\\H & M
    \end{bmatrix}, X,Y\right) \in \mathcal{F}_{\infty,\mathrm{d}}$ and $\Xi \in \mathrm{GL}_n$,
where the components are given by
\begin{equation}
\begin{aligned}
    \Psi_{\mK}&= \begin{bmatrix} I & 0 \\ YB & \Xi \end{bmatrix}^{-1}\begin{bmatrix} G & F \\ H & M-YAX \end{bmatrix}\begin{bmatrix} I & CX \\ 0 & -\Xi^{-1}(Y-X^{-1})X  \end{bmatrix}^{-1}, \\ 
    \Psi_P&=\begin{bmatrix} Y & \Xi \\ \Xi^\tr  & \Xi^\tr (Y-X^{-1})^{-1}\Xi \end{bmatrix}.
\end{aligned}
\end{equation}
\end{subequations}

The following result shows that  $(\mathcal{L}_{\infty,\mathrm{d}},\mathcal{F}_{\infty,\mathrm{d}},\mathrm{GL}_n,\Phi_{\infty,\mathrm{d}})$ constructed above is an \ECL{} for $J_{\infty,n}$.

\begin{proposition} \label{lemma:inclusion-projection-Hinf}
    Under \cref{assumption:performance-weights}, the canonical projection of $\mathcal{L}_{\infty,\mathrm{d}}$ in \cref{eq:Hinf-lift} onto $(\mK, \gamma)$ satisfies
    $$
    \operatorname{epi}_>(J_{\infty,n})  \subseteq \pi_{\mK, \gamma} (\mathcal{L}_{\infty,\mathrm{d}}) \subseteq \operatorname{cl}\, \operatorname{epi}_{\geq}(J_{\infty,n}). 
    $$
    Furthermore, the mapping $\Phi_{\infty,\mathrm{d}}$ \cref{eq:Hinf-Phi} is a $C^2$ (and in fact $C^\infty$) diffeomorphism from $\mathcal{L}_{\infty,\mathrm{d}}$ to $\mathcal{F}_{\infty,\mathrm{d}}\times\mathrm{GL}_{n}$, and its inverse $\Phi_{\infty,\mathrm{d}}^{-1}$ is given by $\Psi_{\infty,\mathrm{d}}$.
\end{proposition}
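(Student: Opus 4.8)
The statement has two parts — that $\Phi_{\infty,\mathrm{d}}$ is a $C^\infty$ diffeomorphism onto $\mathcal{F}_{\infty,\mathrm{d}}\times\mathrm{GL}_n$ with inverse $\Psi_{\infty,\mathrm{d}}$, and the chain of inclusions for $\pi_{\mK,\gamma}(\mathcal{L}_{\infty,\mathrm{d}})$ — and I would handle them in that order, closely paralleling the LQG construction of \Cref{lemma:inclusion-projection-LQG}. For the diffeomorphism, given $(\mK,\gamma,P)\in\mathcal{L}_{\infty,\mathrm{d}}$ I would write $P$ and $P^{-1}$ in $n\times n$ blocks; since $P\succ 0$ and $P_{12}\in\mathrm{GL}_n$, the block $(P^{-1})_{21}$ is invertible and the Schur complement gives $(P^{-1})_{11}=(P_{11}-P_{12}P_{22}^{-1}P_{12}^\tr)^{-1}\succ P_{11}^{-1}$, i.e. $\left[\begin{smallmatrix}(P^{-1})_{11} & I\\ I & P_{11}\end{smallmatrix}\right]\succ 0$. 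Applying the congruence $\operatorname{diag}(\Pi,I,I)$ with $\Pi=\left[\begin{smallmatrix}(P^{-1})_{11} & I\\ (P^{-1})_{21} & 0\end{smallmatrix}\right]$ to the block inequality in \cref{eq:Hinf-lift} turns it exactly into $\mathscr{M}(\gamma,\Lambda,X,Y)\preceq 0$ with $(X,Y)=((P^{-1})_{11},P_{11})$ and $\Lambda$ as in \cref{eq:Hinf-Phi}; this shows $\Phi_{\infty,\mathrm{d}}$ maps into $\mathcal{F}_{\infty,\mathrm{d}}\times\mathrm{GL}_n$. A direct (tedious but classical, cf.\ \cite{scherer2000linear}) computation verifies $\Psi_{\infty,\mathrm{d}}\circ\Phi_{\infty,\mathrm{d}}=\mathrm{id}$ and $\Phi_{\infty,\mathrm{d}}\circ\Psi_{\infty,\mathrm{d}}=\mathrm{id}$, and that $\Psi_{\infty,\mathrm{d}}$ lands in $\mathcal{L}_{\infty,\mathrm{d}}$: $\Psi_P\succ 0$ and $(\Psi_P)_{12}=\Xi$ follow from the Schur complement applied to $\left[\begin{smallmatrix}X & I\\ I & Y\end{smallmatrix}\right]\succ 0$ (so $Y-X^{-1}\succ 0$), and the block LMI for $\Psi_{\mK}$ is the inverse congruence of $\mathscr{M}\preceq 0$. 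Smoothness is immediate because every block of $\Phi_{\infty,\mathrm{d}}$ and $\Psi_{\infty,\mathrm{d}}$ is a polynomial in the matrix entries and in the inverses of $P$, $P_{12}$, $Y-X^{-1}$ and $\Xi$, all of which are invertible on the respective open domains.

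For the first inclusion $\operatorname{epi}_>(J_{\infty,n})\subseteq\pi_{\mK,\gamma}(\mathcal{L}_{\infty,\mathrm{d}})$, suppose $\mK\in\mathcal{C}_n$ and $\gamma>J_{\infty,n}(\mK)=\|\mathbf{T}_{zd}(\mK,s)\|_{\mathcal{H}_\infty}$. Then $A_{\mathrm{cl}}(\mK)$ is stable, and the strict bounded real lemma (\Cref{lemma:bounded_real}, item~1) applied to $\mathbf{T}_{zd}(\mK,s)$ gives $P\succ 0$ satisfying the strict version of the block inequality in \cref{eq:Hinf-lift}. The set of such $P$ is open, while $\{P\in\mathbb{S}^{2n}:P_{12}\in\mathrm{GL}_n\}$ is open and dense, so an arbitrarily small perturbation produces $P'\succ 0$ with $P'_{12}\in\mathrm{GL}_n$ still satisfying the strict (hence the non-strict) inequality; thus $(\mK,\gamma,P')\in\mathcal{L}_{\infty,\mathrm{d}}$.

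The main obstacle is the second inclusion $\pi_{\mK,\gamma}(\mathcal{L}_{\infty,\mathrm{d}})\subseteq\operatorname{cl}\operatorname{epi}_{\geq}(J_{\infty,n})$, because a point $(\mK,\gamma,P)\in\mathcal{L}_{\infty,\mathrm{d}}$ need not have $A_{\mathrm{cl}}(\mK)$ strictly stable — the non-strict LMI only confines the closed-loop spectrum to the closed left half-plane — so $\mK$ may fail to lie in $\mathcal{C}_n$ and $J_{\infty,n}(\mK)$ may be undefined. The plan is to perturb into the strict regime using the convex coordinates. Let $(\gamma,\Lambda,X,Y)$ and $\Xi=P_{12}$ be the components of $\Phi_{\infty,\mathrm{d}}(\mK,\gamma,P)$, so $\left[\begin{smallmatrix}X & I\\ I & Y\end{smallmatrix}\right]\succ 0$ and $\mathscr{M}(\gamma,\Lambda,X,Y)\preceq 0$. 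By \Cref{assumption:performance-weights}, $(A,B)$ is controllable and $(C,A)$ observable, so one can choose $K_1,L_1$ with $A+BK_1$ and $A+L_1C$ Hurwitz and solve the Lyapunov equations $(A+BK_1)X_1+X_1(A+BK_1)^\tr+I=0$, $Y_1(A+L_1C)+(A+L_1C)^\tr Y_1+I=0$ for $X_1,Y_1\succ 0$; set $F_1=K_1X_1$, $H_1=Y_1L_1$, $\gamma_1=c$ and $\Lambda_1=\left[\begin{smallmatrix}0 & F_1\\ H_1 & 0\end{smallmatrix}\right]$. Since $\mathscr{M}$ is affine, $\mathscr{M}(\gamma+\delta c,\Lambda+\delta\Lambda_1,X+\delta X_1,Y+\delta Y_1)=\mathscr{M}(\gamma,\Lambda,X,Y)+\delta D$ with $D$ the induced direction; with the choices above the $(1,1)$ and $(2,2)$ blocks of $D$ are $-I$, the $(3,3)$ and $(4,4)$ blocks are $-cI$, and the only nonzero off-diagonal blocks are $(1,4)=(C_1X_1+D_{12}F_1)^\tr$ and $(2,3)=Y_1B_1+H_1D_{21}$ (using the matrices in \cref{eq:notations_general_H2}), so after regrouping the coordinates as $\{1,4\}\cup\{2,3\}$ the matrix $D$ is block diagonal and, by the Schur complement, $D\prec 0$ once $c>\max\{\|C_1X_1+D_{12}F_1\|^2,\|Y_1B_1+H_1D_{21}\|^2\}$. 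Hence for all small $\delta>0$ the perturbed point lies in $\mathcal{F}_{\infty,\mathrm{d}}$ with $\mathscr{M}\prec 0$ strict and $\left[\begin{smallmatrix}X+\delta X_1 & I\\ I & Y+\delta Y_1\end{smallmatrix}\right]\succ 0$ by openness.

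Finally, I would pull each perturbed point back through $\Psi_{\infty,\mathrm{d}}(\cdot,\Xi)$ to obtain $(\mK_\delta,\gamma_\delta,P_\delta)\in\mathcal{L}_{\infty,\mathrm{d}}$ with $P_\delta\succ 0$, $(P_\delta)_{12}=\Xi\in\mathrm{GL}_n$, and — being the inverse congruence of $\mathscr{M}\prec 0$ — block inequality $\prec 0$; its $(1,1)$ block gives $A_{\mathrm{cl}}(\mK_\delta)^\tr P_\delta+P_\delta A_{\mathrm{cl}}(\mK_\delta)\prec 0$, so $\mK_\delta\in\mathcal{C}_n$, and then the strict bounded real lemma yields $J_{\infty,n}(\mK_\delta)<\gamma_\delta$. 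Continuity of $\Psi_{\infty,\mathrm{d}}$ on its domain (where $X\succ 0$, $Y-X^{-1}\succ 0$, $\Xi$ invertible — all stable under the perturbation) gives $(\mK_\delta,\gamma_\delta)\to(\mK,\gamma)$ as $\delta\downarrow 0$, placing $(\mK,\gamma)$ in $\operatorname{cl}\operatorname{epi}_{\geq}(J_{\infty,n})$. The delicate points are the exact algebra of $\Psi_{\infty,\mathrm{d}}$ (in particular that the two blockwise-inverse factors in $\Psi_{\mK}$ remain well-defined and continuous up to the limit) and the sign verification of the perturbation direction $D$; these are exactly the places where \Cref{assumption:performance-weights} is used, and they mirror the technical lemma invoked in the LQG case.
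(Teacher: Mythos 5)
Your diffeomorphism argument and your proof of the first inclusion match the paper's: the same congruence matrix $T=\left[\begin{smallmatrix}(P^{-1})_{11} & I\\ (P^{-1})_{21} & 0\end{smallmatrix}\right]$ turning the lifted LMI into $\mathscr{M}\preceq 0$, the same Schur-complement facts about $P_{11}$, $(P^{-1})_{11}$ and $Y-X^{-1}$, the same mutual-inverse computations (the paper packages these as \Cref{lemma:Psi_P_basic_properties,lemma:Phi_Psi_identity,lemma:Psi_Phi_identity,lemma:Hinf_ECL_basic_lemma}), and the same strict-BRL-plus-small-perturbation-of-$P_{12}$ step for $\operatorname{epi}_>(J_{\infty,n})\subseteq\pi_{\mK,\gamma}(\mathcal{L}_{\infty,\mathrm{d}})$. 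Where you genuinely diverge is the second inclusion. The paper handles it abstractly via \Cref{lemma:ECL_second_inclusion}: it writes $\mathcal{F}_{\infty,\mathrm{d}}=h^{-1}(C)$ with $h$ affine and $C=\mathbb{S}^{4n+p+m}_+$, moves any feasible point along the chord toward a fixed strictly feasible point $y^\circ$, and pulls the resulting strictly feasible sequence back through $\Psi_{\infty,\mathrm{d}}(\cdot,\Xi)$; this lemma is reused verbatim for LQG, but it presupposes (and the paper does not belabor) that $h^{-1}(\operatorname{int}C)\neq\varnothing$. You instead build an explicit strictly decreasing direction: stabilizing gains $K_1,L_1$ from controllability/observability, Lyapunov solutions $X_1,Y_1\succ 0$, $F_1=K_1X_1$, $H_1=Y_1L_1$, and a large $\gamma$-increment $c$, so that the linear increment $D$ of $\mathscr{M}$ is, after the $\{1,4\}\cup\{2,3\}$ regrouping, block diagonal and negative definite once $c$ exceeds the squared norms of $C_1X_1+D_{12}F_1$ and $Y_1B_1+H_1D_{21}$ — this computation checks out ($G_1=0$, $M_1=0$ kill all other off-diagonal contributions), and the positive semidefinite perturbation of $\operatorname{diag}(X,Y)$ keeps $\left[\begin{smallmatrix}X&I\\I&Y\end{smallmatrix}\right]\succ 0$ automatically. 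The finish (inverse congruence gives a strict lifted LMI, hence $\mK_\delta\in\mathcal{C}_n$ and $J_{\infty,n}(\mK_\delta)<\gamma_\delta$ by the strict bounded real lemma, then continuity of $\Psi_{\infty,\mathrm{d}}(\cdot,\Xi)$ as $\delta\downarrow 0$) coincides with the paper's. The trade-off: the paper's route is shorter and reusable across LQG and $\mathcal{H}_\infty$, while yours is more self-contained — your explicit direction simultaneously certifies the strict feasibility that the paper's lemma takes as a hypothesis, and it makes transparent exactly where \Cref{assumption:performance-weights} enters. Both are correct.
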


The proof of \cref{lemma:inclusion-projection-Hinf} is technically involved due to a similar issue of internal stability as \Cref{remark:internal-stability}: A policy $\mK$ in \cref{eq:Hinf-lift} may not internally stabilize the plant \cref{eq:Dynamic-output}, i.e., we may have $\mK \notin \mathcal{C}_n$. We present the proof details in \Cref{appendix:Hinf-control}.

\Cref{lemma:inclusion-projection-Hinf} guarantees that $(\mathcal{L}_{\infty,\mathrm{d}}, \mathcal{F}_{\infty,\mathrm{d}},\mathrm{GL}_n,\Phi)$ constructed above is an \ECL{} of  $J_{\infty,n}(\mK)$ in \Cref{eq:Hinf_policy_optimization}. For convenience, we restate the definition~of~non-degenerate $\mathcal{H}_\infty$ policies from \cite[Definition 3.2]{zheng2023benign} below, which is the same as \Cref{definition:degenerate-points}.

\begin{definition} \label{definition:non-degenerate-controller-Hinf}
    A full-order dynamic policy $\mK \in {\mathcal{C}}_{n}$ is called \textit{non-degenerate} for $\mathcal{H}_\infty$ control if there exists a $P \in \mathbb{S}^{2n}_{++}$ with $P_{12} \in \mathrm{GL}_{n}$ such that $(\mK, J_{\infty,n}(\mK), P) \in \mathcal{L}_{\infty,\mathrm{d}}$ in \Cref{eq:Hinf-lift}. 
\end{definition}

Then,  \cite[Theorem 5.2]{zheng2023benign} is a direct corollary of \Cref{theorem:convex-equivalency,theorem:ECL-guarantee}.
\begin{corollary}
    Under \cref{assumption:performance-weights}, the $\mathcal{H}_\infty$ policy optimization \cref{eq:Hinf_policy_optimization} is equivalent to a convex problem in the sense that 
    \begin{equation} \label{eq:Hinf-LMI}
    \inf_{\mK \in \mathcal{C}_{n}}\ \  J_{\infty,n}(\mK) = \inf_{(\gamma, \Lambda, X, Y) \in \mathcal{F}_{_{\infty,\mathrm{d}}}} \, \gamma,
    \end{equation}
    Furthermore, for a non-degenerate policy $\mK \in \mathcal{C}_{n}$, if it is a Clarke stationary point, i.e. $0 \in \partial J_{\infty,n}(\mK)$, then it is globally optimal for \cref{eq:Hinf_policy_optimization}.
\end{corollary}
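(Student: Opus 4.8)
The plan is to observe that this corollary is a direct instance of the general machinery once we have recorded that the tuple $(\mathcal{L}_{\infty,\mathrm{d}},\mathcal{F}_{\infty,\mathrm{d}},\mathrm{GL}_n,\Phi_{\infty,\mathrm{d}})$ is an \ECL{} of $J_{\infty,n}$ in the sense of \Cref{definition:ECL}. First I would check the three defining conditions. Condition~1, the chain of inclusion $\operatorname{epi}_>(J_{\infty,n})\subseteq\pi_{\mK,\gamma}(\mathcal{L}_{\infty,\mathrm{d}})\subseteq\operatorname{cl}\operatorname{epi}_{\geq}(J_{\infty,n})$, is precisely the first assertion of \Cref{lemma:inclusion-projection-Hinf}. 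Condition~2 requires $\mathcal{F}_{\infty,\mathrm{d}}$ to be convex and $\Phi_{\infty,\mathrm{d}}$ to be a $C^2$ diffeomorphism onto $\mathcal{F}_{\infty,\mathrm{d}}\times\mathrm{GL}_n$: the set $\mathcal{F}_{\infty,\mathrm{d}}$ is cut out by an unconstrained affine block $\Lambda$, one strict LMI in $(X,Y)$, and one non-strict LMI $\mathscr{M}(\gamma,\Lambda,X,Y)\preceq 0$ with $\mathscr{M}$ affine, hence is convex, while the diffeomorphism property (with explicit inverse $\Psi_{\infty,\mathrm{d}}$) is the second assertion of \Cref{lemma:inclusion-projection-Hinf}; note the auxiliary set $\mathrm{GL}_n$ is allowed to be nonconvex and disconnected. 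Condition~3, that $\Phi_{\infty,\mathrm{d}}$ outputs $\gamma$ in its first component, is immediate from the form of \cref{eq:Hinf-Phi}.

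With the \ECL{} established, I would invoke \Cref{theorem:convex-equivalency} with the identification $\zeta_1=(\Lambda,X,Y)$ to conclude
\[
\inf_{\mK\in\mathcal{C}_{n}}J_{\infty,n}(\mK)=\inf_{(\gamma,\Lambda,X,Y)\in\mathcal{F}_{\infty,\mathrm{d}}}\gamma,
\]
which is the claimed convex reformulation (an LMI). For the second part, $\mathcal{C}_n$ is an open subset of $\mathbb{R}^{(m+n)\times(p+n)}$ and $J_{\infty,n}$ is subdifferentially regular there (see Part~I \cite{zheng2023benign}), so \Cref{theorem:ECL-guarantee} applies and every Clarke stationary point $\mK^\ast\in(\mathcal{C}_n)_{\mathrm{nd}}$ is a global minimizer of $J_{\infty,n}$ over $\mathcal{C}_n$. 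The last step is to match the vocabulary: for this particular \ECL{}, a policy $\mK\in\mathcal{C}_n$ lies in $(\mathcal{C}_n)_{\mathrm{nd}}$ in the sense of \Cref{definition:degenerate-points} exactly when $(\mK,J_{\infty,n}(\mK))\in\pi_{\mK,\gamma}(\mathcal{L}_{\infty,\mathrm{d}})$, i.e.\ exactly when there is some $P\in\mathbb{S}^{2n}_{++}$ with $P_{12}\in\mathrm{GL}_n$ and $(\mK,J_{\infty,n}(\mK),P)\in\mathcal{L}_{\infty,\mathrm{d}}$, which is the notion of non-degeneracy in \Cref{definition:non-degenerate-controller-Hinf}. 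This yields the statement as phrased.

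The only substantive content sits inside \Cref{lemma:inclusion-projection-Hinf}, whose proof is deferred to \Cref{appendix:Hinf-control}; that is where I expect the main obstacle, namely resolving the internal-stability subtlety of \Cref{remark:internal-stability}. Concretely, a pair $(\mK,\gamma)$ in $\pi_{\mK,\gamma}(\mathcal{L}_{\infty,\mathrm{d}})$ need not satisfy $\mK\in\mathcal{C}_n$ (the closed-loop matrix $A_{\mathrm{cl}}(\mK)$ may have eigenvalues on the imaginary axis), so only the closure of the non-strict epigraph can be guaranteed on the right; one also has to verify separately that the explicit formulas $\Phi_{\infty,\mathrm{d}}$ and $\Psi_{\infty,\mathrm{d}}$ are mutually inverse and $C^\infty$, and that they preserve the advertised sets --- in particular that $\Psi_P\succ 0$ with $(\Psi_P)_{12}\in\mathrm{GL}_n$ and that $\left[\begin{smallmatrix}X & I\\ I & Y\end{smallmatrix}\right]\succ 0$ is maintained. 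Granting that lemma, the corollary itself is a short, routine application of \Cref{theorem:convex-equivalency,theorem:ECL-guarantee}.
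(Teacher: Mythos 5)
Your proposal is correct and follows essentially the same route as the paper: the corollary is obtained by verifying that $(\mathcal{L}_{\infty,\mathrm{d}},\mathcal{F}_{\infty,\mathrm{d}},\mathrm{GL}_n,\Phi_{\infty,\mathrm{d}})$ is an \ECL{} via \Cref{lemma:inclusion-projection-Hinf}, then applying \Cref{theorem:convex-equivalency,theorem:ECL-guarantee} together with the subdifferential regularity of $J_{\infty,n}$ and the observation that \Cref{definition:non-degenerate-controller-Hinf} is exactly \Cref{definition:degenerate-points} for this \ECL{}. The technical burden indeed sits in the deferred proof of \Cref{lemma:inclusion-projection-Hinf} (\Cref{appendix:Hinf-control}), just as you anticipate.
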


We have used the fact that the $\mathcal{H}_\infty$ cost function $J_{\infty,n}$ is subdifferentially regular (see Part I \cite{zheng2023benign} for more discussions). The equivalency in \cref{eq:Hinf-LMI} is essentially the same as \cite[Chapter 4.2.3]{scherer2000linear}. However, the global optimality of non-degenerate $\mathcal{H}_\infty$ policies cannot be derived from \cite[Chapter 4.2.3]{scherer2000linear} due to its use of strict LMIs. 

\begin{remark}[The diffeomorphisms in output feedback policy optimization]
    The diffeomorphisms \cref{eq:diffeomorphism-LQG,eq:diffeomorphism-Hinf} are essentially the same, which are derived from \cite{scherer2000linear}. They are much more complicated than the state feedback cases in \cref{subsection:static-policies}. This complexity arises even when we deal with the internal stability constraint using Lyapunov inequality:
\begin{equation*}
    \begin{aligned}
    &\begin{bmatrix}
    A+BD_{\mK}C & BC_{\mK} \\ B_{\mK}C & A_{\mK}
    \end{bmatrix}\; \text{is stable }  \\
    \Longleftrightarrow \quad &\exists P \succ 0, \, P\begin{bmatrix}
    A+BD_{\mK}C & BC_{\mK} \\ B_{\mK}C & A_{\mK}
    \end{bmatrix} + \begin{bmatrix}
    A+BD_{\mK}C & BC_{\mK} \\ B_{\mK}C & A_{\mK}
    \end{bmatrix}^\tr P \prec 0, 
    \end{aligned}
    \end{equation*}
    where the coupling between the auxiliary variable $P$ and the controller parameters $A_{\mK}, B_{\mK}, C_{\mK}, D_{\mK}$ are much more involved. 
    Note that this is also a bilinear matrix inequality  (as the matrix depends on $P$ and $\mK$ bilinearly), but its linearization procedure for this bilinear matrix inequality requires the complicated change of variables in \cref{eq:diffeomorphism-LQG,eq:diffeomorphism-Hinf} (see \cite[Lemma 4.1]{scherer2000linear} for more details).
    \hfill $\square$    
\end{remark}

\subsection{Policy Optimization for Distributed Control with Quadratic Invariance} \label{subsection:distributed-policies}

  All discussions in \cref{subsection:static-policies,subsection:dynamic-policies} so far focus on centralized policies, which admit equivalent convex reformulations using the classical change of variables. Applications arising from the control of networked systems e.g., smart grid \cite{dorfler2014sparsity} or automated highways \cite{li2017dynamical}, may not have centralized information available for feedback, due to privacy concerns, geographic distance, or limited communication. The lack of centralized information can enormously complicate the design of optimal distributed policies. The seminal work \cite{rotkowitz2005characterization} has introduced the notion of \textit{Quadratic Invariance} (QI) to quantify an {algebraic} relation that allows for an equivalent convex reformulation in the frequency domain; see also \cite{furieri2020sparsity} for related discussions.   

Here, we show that our \ECL{} framework is also applicable to a class of distributed control problems when QI holds. This class of problems was first discussed in \cite{furieri2020learning}, where a model-free learning-based algorithm was also introduced. 
In particular, we focus on a finite-time horizon distributed optimal control problem. In this case, we may allow the linear system to be time-varying, and the stability constraint is not necessary. 
Consider a linear time-varying system in discrete-time
	\begin{equation}
	\begin{aligned}
	\label{eq:sys_disc}
	x_{t+1}&=A_tx_t+B_tu_t+w_t\,, \\
	\quad y_t&=C_tx_t+v_t \,,
	\end{aligned}
	\end{equation}
	where $x_t \in \mathbb{R}^n$ is the system state at time $t$ affected by process noise $w_t \sim \mathcal{D}_w$ with $x_0=\mu_0+\delta_0$, $\delta_0 \sim \mathcal{D}_{\delta_0}$,  $y_t\in \mathbb{R}^p$ is the  observed output at time $t$ affected by  measurement noise	$v_t\sim \mathcal{D}_v$, 
	and  $u_t \in \mathbb{R}^m$ is the control input at time $t$. We assume that the distributions $\mathcal{D}_w,\mathcal{D}_{\delta_0}$ $\mathcal{D}_v$ are bounded, have zero mean and variances  $\Sigma_w,\Sigma_{\delta_0},\Sigma_v \succ 0$ respectively.  
 
 We consider the following optimal control problem in a finite horizon of length $N$ 
 \begin{equation}
 \begin{aligned} \label{eq:optimal-control-discrete-time}
     \min_{u_0, \ldots, u_{N-1}} \quad &\mathbb{E}_{{w}_t,{v}_t} \left[\sum_{t=0}^{N-1} \left(y_t^\mathsf{T}M_ty_t +  u_t^\mathsf{T}R_tu_t\right)+y_N^\mathsf{T}M_N y_N \right]\, \\
     \text{subject to}\quad & \cref{eq:sys_disc},
 \end{aligned}
\end{equation}
where  $M_t \succeq 0$ and $R_t \succ 0, t = 0, \ldots, N$ are performance weight matrices.
In \cref{eq:optimal-control-discrete-time}, at each time $t$, the input $u_t$ can use the information available in $(y_0, y_1, \ldots, y_t)$ with an additional information constraint that will be imposed below. In particular, we consider linear feedback policies of the form
\begin{equation} \label{eq:distributed-policies}
u_t = K_{t,0}y_0 + K_{t,1}y_1, + \cdots + K_{t,t}y_t, \qquad t = 0, 1, \ldots, N-1,
\end{equation}
where $K_{t,i} \in \mathcal{S}_{t,i}, i = 1, \ldots, t$ and $ \mathcal{S}_{t,i} \subseteq \mathbb{R}^{m \times p}$ is a subspace constraint encoding communications. 

To derive a compact form of \cref{eq:optimal-control-discrete-time}, let us collect all signals in the finite horizon $N$ into a single vector as follows: 
 $$
 \begin{aligned}
 \mathbf{x}&=\begin{bmatrix}x_0 \\ \vdots \\ x_N \end{bmatrix},  \mathbf{y} =\begin{bmatrix}y_0 \\ \vdots \\ y_{N} \end{bmatrix}, \; \mathbf{u} =\begin{bmatrix}u_0 \\ \vdots \\ u_{N-1} \end{bmatrix},   \mathbf{w}=\begin{bmatrix}x_0 \\ w_0 \\ \vdots \\ w_{N-1}\end{bmatrix},
 \mathbf{v}=\begin{bmatrix}v_0 \\ \vdots \\ v_{N} \end{bmatrix}.
 \end{aligned}
 $$ 
We also define the matrices
\begin{equation*}
\mathbf{A}=\text{blkdg}(A_0,\ldots,A_{N}), \quad  \mathbf{B}\hspace{-0.1cm}=\hspace{-0.1cm}\begin{bmatrix}
 \text{blkdg}(B_0,\ldots,B_{N{-}1})\\
 0_{n \times mN}
 \end{bmatrix}, \quad \mathbf{C}=\text{blkdg}(C_0,\ldots,C_{N})
\,,
\end{equation*}
where $\text{blkdg}(\cdot)$ means a block-diagonal matrix, and defines a block down-shift matrix $$
\mathbf{Z}=\begin{bmatrix}
0_{1 \times N}&0\\
I_{N}&0_{N \times 1} 
\end{bmatrix}\otimes I_n\,.
$$
Then, we can write the evaluation of~\eqref{eq:sys_disc} over the horizon $N$ compactly as $\mathbf{x}= \mathbf{Z}\mathbf{A}\mathbf{x}+\mathbf{Z}\mathbf{B}\mathbf{u}+\mathbf{w}$,  $\mathbf{y}=\mathbf{Cx}+\mathbf{v},$ leading to 
$\mathbf{x}=\mathbf{P}_{11}\mathbf{w}+\mathbf{P}_{12}\mathbf{u}, \, \mathbf{y}=\mathbf{Cx}+\mathbf{v},$ 
where  $\mathbf{P}_{11}=(I-\mathbf{Z}\mathbf{A})^{-1}$ and $\mathbf{P}_{12}=(I-\mathbf{Z}\mathbf{A})^{-1}\mathbf{Z}\mathbf{B}$. 
The distributed policies in \cref{eq:distributed-policies} can be written compactly as
$ \mathbf{u}=\mathbf{Ky}, \, \mathbf{K} \in \mathcal{S}\,,$ 
where $\mathcal{S}$ is a subspace in $\mathbb{R}^{mN \times p(N+1)}$ which ensures causality of $\mathbf{K}$ by setting to $0$ those entries corresponding to future outputs, and also enforces the time-varying spatio-temporal information structure $\mathcal{S}_{t,i}$ for distributed policies. 
 
 Then, the problem \cref{eq:optimal-control-discrete-time} with a distributed linear policy \cref{eq:distributed-policies} can be written as 
 \begin{equation} \label{eq:LQfinite}
 \begin{aligned}
\min_{\mathbf{K} \in \mathbb{R}^{mN \times p(N+1)}} \quad & J(\mathbf{K})\,, \\
\text{subject to} \quad & \mathbf{K} \in \mathcal{S},  
 \end{aligned}
 \end{equation}
 where the cost $J(\mathbf{K})$ correspond to the cost in \cref{eq:optimal-control-discrete-time} for the policy $\mathbf{K}$. Note that \cref{eq:LQfinite} does not have a stability constraint since we focus on a finite horizon. Yet, due to the information constraint $\mathbf{K} \in \mathcal{S}$, this nonconvex distributed control problem \cref{eq:LQfinite} is in general hard to solve.

Thanks to the seminal work of \cite{rotkowitz2005characterization}, it is known that problem~\eqref{eq:LQfinite} can be equivalently transformed into a  convex program if and only if quadratic invariance (QI) holds, i.e., 
\begin{equation}
\label{eq:QI}
\mathbf{KCP}_{12}\mathbf{K} \in \mathcal{S},\quad \forall \mathbf{K} \in \mathcal{S}\,. 
\end{equation} 
One key observation from \cite[Lemma 5 \& Proposition 7]{furieri2020learning} is that the following smooth and invertible mapping $\mathcal{H}: \mathbb{R}^{mN \times p(N+1)} \to \mathbb{R}^{mN \times p(N+1)}$
\begin{equation*}
    \mathcal{H}(\mathbf{Q}) := (I + \mathbf{Q}\mathbf{C}\mathbf{P}_{12})^{-1}\mathbf{Q}, \qquad \mathcal{H}^{-1}(\mathbf{K}) = \mathbf{K}(I - \mathbf{C}\mathbf{P}_{12}\mathbf{K})^{-1}
\end{equation*}
can convexify \cref{eq:LQfinite}, and that the QI property \cref{eq:QI} ensures the information constraint on $\mathbf{K}$ can be transferred to $\mathbf{Q}$, i.e., $\mathbf{K} \in \mathcal{S} \Leftrightarrow \mathbf{Q} \in \mathcal{S}$. 
In particular, let us consider the epigraph of \cref{eq:LQfinite}
$$
\operatorname{epi}_\geq(J):= \{(\mathbf{K},\gamma) \in \mathcal{S} \times \mathbb{R}\mid J({\mathbf{K}})  \leq \gamma\},
$$
which is nonconvex. We can further show the following set is convex (see \cite{furieri2020learning}) 
$$
\mathcal{F}_{\mathrm{QI}} :=  \{(\gamma,\mathbf{Q}) \in \mathbb{R}\times\mathcal{S}\mid g(\mathbf{Q}):=J(\mathcal{H}({\mathbf{Q}}))  \leq \gamma\}.
$$
\begin{lemma} \label{lemma:distributed-control}
    Suppose the set $\mathcal{S}$ is QI with respect to $\mathbf{CP}_{12}$, i.e., \cref{eq:QI} holds. Then, the mapping $\Phi: (\mathbf{K},\gamma)\mapsto (\gamma,\mathcal{H}^{-1}(\mathbf{K}))$ is a $C^2$ (and in fact $C^\infty$)-diffeomorphism from $\operatorname{epi}_\geq(J)$ to $\mathcal{F}_{\mathrm{QI}}$.
\end{lemma}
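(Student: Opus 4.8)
The plan is to verify that $\Phi:(\mathbf{K},\gamma)\mapsto(\gamma,\mathcal{H}^{-1}(\mathbf{K}))$ is a well-defined bijection between $\operatorname{epi}_\geq(J)$ and $\mathcal{F}_{\mathrm{QI}}$, and then that both $\Phi$ and $\Phi^{-1}$ extend to $C^\infty$ maps on open neighborhoods. Since $\Phi$ and $\Phi^{-1}$ act as the identity on the $\gamma$-coordinate, everything reduces to understanding the map $\mathcal{H}^{-1}(\mathbf{K})=\mathbf{K}(I-\mathbf{C}\mathbf{P}_{12}\mathbf{K})^{-1}$ and its inverse $\mathcal{H}(\mathbf{Q})=(I+\mathbf{Q}\mathbf{C}\mathbf{P}_{12})^{-1}\mathbf{Q}$, which are exactly the maps from \cite{furieri2020learning}.

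First I would record the algebraic facts: that $\mathcal{H}$ and $\mathcal{H}^{-1}$ are mutual inverses as rational maps wherever the relevant matrix inverses exist, and that $g(\mathbf{Q})=J(\mathcal{H}(\mathbf{Q}))$ so that $J(\mathbf{K})=g(\mathcal{H}^{-1}(\mathbf{K}))$ whenever $\mathbf{K}$ lies in the domain of $\mathcal{H}^{-1}$. The key structural input is the QI property \cref{eq:QI}: by \cite[Lemma 5 \& Proposition 7]{furieri2020learning}, QI guarantees that $I-\mathbf{C}\mathbf{P}_{12}\mathbf{K}$ is invertible for every $\mathbf{K}\in\mathcal{S}$ (this is where one uses that in the finite-horizon setting $\mathbf{CP}_{12}\mathbf{K}$ is strictly block-lower-triangular, hence nilpotent, so $I-\mathbf{C}\mathbf{P}_{12}\mathbf{K}$ is always invertible — in fact this holds for all $\mathbf{K}$, not just those in $\mathcal{S}$), and symmetrically $I+\mathbf{Q}\mathbf{C}\mathbf{P}_{12}$ is invertible for every $\mathbf{Q}$. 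Moreover QI gives $\mathbf{K}\in\mathcal{S}\iff\mathcal{H}^{-1}(\mathbf{K})\in\mathcal{S}$. Combining these: $\mathcal{H}^{-1}$ maps $\mathcal{S}$ bijectively onto $\mathcal{S}$ with inverse $\mathcal{H}$, and $J(\mathbf{K})=g(\mathcal{H}^{-1}(\mathbf{K}))$, so $(\mathbf{K},\gamma)\in\operatorname{epi}_\geq(J)\iff J(\mathbf{K})\le\gamma\iff g(\mathcal{H}^{-1}(\mathbf{K}))\le\gamma\iff(\gamma,\mathcal{H}^{-1}(\mathbf{K}))\in\mathcal{F}_{\mathrm{QI}}$, establishing that $\Phi$ is a bijection from $\operatorname{epi}_\geq(J)$ onto $\mathcal{F}_{\mathrm{QI}}$ with inverse $(\gamma,\mathbf{Q})\mapsto(\mathcal{H}(\mathbf{Q}),\gamma)$.

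For smoothness, I would invoke the nilpotency once more: because $\mathbf{Z}$ is a strict down-shift, $\mathbf{CP}_{12}$ has a block-strictly-lower-triangular structure that makes $\mathbf{CP}_{12}\mathbf{K}$ nilpotent for \emph{every} matrix $\mathbf{K}\in\mathbb{R}^{mN\times p(N+1)}$, so $(I-\mathbf{CP}_{12}\mathbf{K})^{-1}=\sum_{j\ge 0}(\mathbf{CP}_{12}\mathbf{K})^j$ is a polynomial in the entries of $\mathbf{K}$; hence $\mathcal{H}^{-1}$ is a polynomial (in particular $C^\infty$) map on all of $\mathbb{R}^{mN\times p(N+1)}$, and likewise $\mathcal{H}$ is polynomial on all of $\mathbb{R}^{mN\times p(N+1)}$. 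Therefore $\Phi$ extends to the $C^\infty$ map $(\mathbf{K},\gamma)\mapsto(\gamma,\mathcal{H}^{-1}(\mathbf{K}))$ on the open set $\mathbb{R}^{mN\times p(N+1)}\times\mathbb{R}$, its inverse extends to the $C^\infty$ map $(\gamma,\mathbf{Q})\mapsto(\mathcal{H}(\mathbf{Q}),\gamma)$ on the open set $\mathbb{R}\times\mathbb{R}^{mN\times p(N+1)}$, and these extensions are mutually inverse on those open sets, which is exactly the $C^2$-diffeomorphism requirement in \Cref{definition:ECL}.

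The main obstacle — really the only nontrivial point — is justifying the invertibility and the constraint-transfer claims ($I-\mathbf{CP}_{12}\mathbf{K}$ invertible, $\mathbf{K}\in\mathcal{S}\iff\mathcal{H}^{-1}(\mathbf{K})\in\mathcal{S}$, and $g$ well-defined with $J=g\circ\mathcal{H}^{-1}$), all of which rest on the QI hypothesis \cref{eq:QI} together with the finite-horizon nilpotent structure of $\mathbf{CP}_{12}$; these are precisely \cite[Lemma 5 \& Proposition 7]{furieri2020learning}, so I would cite them rather than reprove them, and the remaining verification that $\Phi$ respects the $\gamma$-coordinate and is globally polynomial is routine. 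One should also note $\mathcal{F}_{\mathrm{QI}}$ is genuinely convex (also from \cite{furieri2020learning}), so that the pair indeed yields an \ECL{} with $\mathcal{L}_{\mathrm{lft}}=\operatorname{epi}_\geq(J)$ (no lifting variable) and $\mathcal{G}_{\mathrm{aux}}=\{0\}$, and \cref{eq:projection-ECL-alternative} holds trivially.
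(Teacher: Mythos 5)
Your proposal follows essentially the same route as the paper: the paper's own proof is a one-line deferral to \cite[Lemma 5 \& Proposition 7]{furieri2020learning} for exactly the facts you cite (invertibility of $I-\mathbf{C}\mathbf{P}_{12}\mathbf{K}$ on $\mathcal{S}$, the QI constraint transfer $\mathbf{K}\in\mathcal{S}\Leftrightarrow\mathcal{H}^{-1}(\mathbf{K})\in\mathcal{S}$, and $J=g\circ\mathcal{H}^{-1}$), with the remaining bookkeeping about the $\gamma$-coordinate and smooth extensions being routine, as you say.

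One claim in your smoothness step is inaccurate, though it does not break the argument: $\mathbf{C}\mathbf{P}_{12}\mathbf{K}$ is \emph{not} nilpotent for every $\mathbf{K}\in\mathbb{R}^{mN\times p(N+1)}$. Nilpotency comes from the product of the strictly block-lower-triangular (causal, one-step-delayed) $\mathbf{C}\mathbf{P}_{12}$ with a \emph{causal} (block-lower-triangular) $\mathbf{K}$, which is what membership in $\mathcal{S}$ enforces; for a non-causal $\mathbf{K}$ the product can have nonzero eigenvalues (already for $N=1$ with $u_0$ fed back from $y_1$), so $\mathcal{H}^{-1}$ is not a global polynomial map. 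The fix is immediate: on $\mathcal{S}$ the inverse $(I-\mathbf{C}\mathbf{P}_{12}\mathbf{K})^{-1}$ exists (by nilpotency there), and since $\det(I-\mathbf{C}\mathbf{P}_{12}\mathbf{K})\neq 0$ is an open condition, $\mathcal{H}^{-1}$ (and symmetrically $\mathcal{H}$) is a rational, hence $C^\infty$, map on an open neighborhood of $\operatorname{epi}_\geq(J)$ (resp.\ of $\mathcal{F}_{\mathrm{QI}}$), and the two extensions are mutually inverse there by the standard push-through identity; this is all that the diffeomorphism requirement in \Cref{definition:ECL} asks for, so your conclusion stands.
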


The lemma is simply a restatement of the results in \cite[Lemma 5 \& Proposition 7]{furieri2020learning} using our \ECL{} terminologies. Thus, the distributed control \cref{eq:LQfinite} is equivalent to the following convex problem
    \begin{equation*}
        \begin{aligned}
            \min_{\mathbf{Q} \in \mathbb{R}^{mN \times p(N+1)}} \quad& g(\mathbf{Q}) \\
            \text{subject to}\quad & \mathbf{Q} \in \mathcal{S}, 
        \end{aligned}
    \end{equation*}
and any stationary point of \cref{eq:LQfinite} in the subspace $\mathcal{S}$ is globally optimal. These two results of hidden convexity and global optimality for \cref{eq:LQfinite} are indeed covered by \Cref{fact:epigraph-lift} (a special case of \ECL{}), once \Cref{lemma:distributed-control} is established. We note that no lifting procedure is required in this distributed control problem \cref{eq:LQfinite}, which is similar to \cref{example:academic,example:LQR_ex}. 

{
\begin{remark}[Hidden convexity in distributed policies] In many control instances (either with static polices or dynamic polices), a change of variables in the form of $K = YX^{-1}$ is one key step to convexify the problems and get a convex reformulation in terms of the new variables $X$ and $Y$. In static state feedback cases, the variables $X$ and $Y$ often arise from Lyapunov theory (see \Cref{subsection:static-policies} and \Cref{remark:state-feedback-policies}). In dynamic output feedback cases, the variables $X$ and $Y$ can be related to certain closed-loop responses in the frequency domain (such as Youla parameterization \cite{youla1976modern}, system-level synthesis \cite{wang2019system}, and input-output parameterization \cite{furieri2019input,zheng2021equivalence,zheng2022system}). In the case of distributed control, this seemingly innocent constraint $K \in \mathcal{S}$ becomes nonconvex in $X$ and $Y$ as it requires $YX^{-1} \in \mathcal{S}$. Thus, the change of variables $K = YX^{-1}$ cannot lead to a convex reformulation for distributed control. The notion of QI \cite{rotkowitz2005characterization} can translate the constraint $K \in \mathcal{S}$ into convex constraints on the new variables $X$ and $Y$; see \cite{furieri2020sparsity,zheng2021equivalence} for related discussions on sparsity invariance. Thus, together with classical changes of variables, the notion of QI reveals the hidden convexity in distributed control.      
\hfill $\square$
\end{remark}
}

\section{Conclusion} \label{section:conclusion}

In this work, we have introduced a unified \texttt{Extended Convex Lifting} (\ECL{}) framework to reveal hidden convexity in nonconvex and potentially nonsmooth optimization problems. Our \ECL{} can be viewed as a bridge to connect nonconvex optimization problems with their convex reformulations, thus enabling convex analysis for benign nonconvex landscapes. Despite non-convexity and non-smoothness, the existence of an \ECL{} guarantees that minimizing the original function is equivalent to a convex problem (\Cref{theorem:convex-equivalency}), and also certifies a class of first-order {non-degenerate} stationary points to be globally optimal (\Cref{theorem:ECL-guarantee}). 

We have shown that \ECL{} allows for a unified analysis of the hidden convexity and global optimality in benchmark optimal and robust control problems from a modern optimization perspective. We have built explicit \ECL{}s for LQR, state feedback $\mathcal{H}_\infty$ control, LQG, dynamic output feedback $\mathcal{H}_\infty$ control, as well as a class of distributed control problems, many of which were analyzed on a case-by-case basis in the literature. The guarantees for LQG and dynamic output feedback $\mathcal{H}_\infty$ control are new and only reported in Part I of this paper \cite{zheng2023benign}. We believe this \ECL{} framework will be helpful to reveal hidden convexity for other control problems where LMI-based solutions have been revealed~\cite{scherer2000linear,dullerud2013course}. We hope the \ECL{} framework will be useful for analyzing nonconvex optimization problems in other areas beyond control.

\section*{Acknowledgement}
The authors would like to thank Chengkai Yao for his assistance with creating some illustrative figures, particularly \Cref{fig:non-convexity-illustration}(b), \Cref{fig:local-geometry,fig:simple-example-LQR-main-paper}, in this paper.

\addcontentsline{toc}{section}{References}
\bibliographystyle{unsrt}
\bibliography{ref.bib}

\newpage

\appendix

\numberwithin{equation}{section}
\numberwithin{example}{section}
\numberwithin{remark}{section}
\numberwithin{assumption}{section}

\vspace{10mm}
\noindent\textbf{\Large Appendix}
\vspace{5mm}

This appendix contains auxiliary results, additional discussions, and technical proofs. We divide it into four parts:
\begin{itemize}
 \item \cref{appendix:computational-details} presents some computational details for \cref{fig:non-convexity-illustration} and \Cref{example:LQR_ex}.
 \item \cref{appendix:stationary-point} reviews stationary points in the benchmark control problems in \Cref{section:applications}.
 \item \Cref{appendix:state-feedback} presents technical proofs for state feedback control problems (\cref{subsection:static-policies}).
 \item \Cref{appendix:dynamic-policies} presents technical proofs for output feedback control problems (\cref{subsection:dynamic-policies}).
\end{itemize}

\vspace{5mm}

\section{Computational Details} \label{appendix:computational-details}

\subsection{Details of \Cref{fig:non-convexity-illustration}} \label{appendix:details-of-figure-1}

For \Cref{fig:non-convexity-illustration}(a), we consider $A=0$ and $B=I_2$. To illustrate the non-convexity of the set
\[
\mathcal{K} = \left\{K\mid A+BK\text{ is stable}\right\},
\]
let us define $\mathcal{K}'$ as the intersection of $\mathcal{K}$ and a linear subspace, i.e.,
\[
\mathcal{K}':=\mathcal{K}\cap\left\{K=\begin{bmatrix}
    -1 & k_1 \\ k_2 & -1
\end{bmatrix}: k_1,k_2\in\mathbb{R}\right\}.
\]
Using the Routh-Hurwitz criterion, it is straightforward to derive that 
\[
\mathcal{K}'=\left\{K=\begin{bmatrix}
    -1 & k_1 \\ k_2 & -1
\end{bmatrix}: k_1k_2<1\right\}
\]
and thus $\mathcal{K}'$ is a nonconvex set, which implies that $\mathcal{K}$ is also nonconvex. \Cref{fig:non-convexity-illustration}(a) plots the projection of $\mathcal{K}'$ onto $(k_1,k_2)$.

\Cref{fig:non-convexity-illustration}(b) illustrates the nonconvex landscape of an LQR example. We consider
\[
A = 0, \quad B=B_w=Q=R=I_2
\]
in the problem formulation \cref{eq:LQR-stochastic-noise}.
The feasible set has been demonstrated in \Cref{fig:non-convexity-illustration}(a). By fixing the diagonal entries of $K$ to be $-1$ (i.e., $K\in\mathcal{K}'$), the LQR cost around the optimal policy $K=-I_2$ is shown in \Cref{fig:non-convexity-illustration}(b), where the red dot corresponds to the optimal policy $K^\star$.

\Cref{fig:non-convexity-illustration}(c) illustrates the nonconvex and nonsmooth landscape of an $\mathcal{H}_\infty$ instance. We consider
\[A=B=C=Q=R=W=V=1\] in the problem formulation \cref{eq:Hinf_policy_optimization}. We consider the set $\mathcal{C}_1$ of full-order dynamic output feedback policies defined in \cref{eq:internallystabilizing-a}. Using the Routh-Hurwitz criterion, $\mathcal{C}_1$ is given by
\[
\mathcal{C}_1=\left\{\mK=\begin{bmatrix}
    D_\mK & C_\mK \\ B_\mK & A_\mK
\end{bmatrix}: A_\mK+D_\mK<-1, B_\mK C_\mK<A_\mK+A_\mK D_\mK\right\}
\]
By fixing $A_\mK=-1$ and $D_\mK=-1-\sqrt{3}$, the $\mathcal{H}_\infty$ cost around the dynamic policy $\mK=\begin{bmatrix}
    -1-\sqrt{3} & 0 \\ 0 & -1 
\end{bmatrix}$ is shown in \Cref{fig:non-convexity-illustration}(c), where a set of nonsmooth points are highlighted by red lines.
\vspace{20mm}

\subsection{Details of \Cref{example:LQR_ex}} \label{appendix:examples-of-ECL}

\Cref{example:LQR_ex} is a particular application of \Cref{remark:eliminating-lifting-variables}. Using the problem formulation \cref{eq:LQR-stochastic-noise}, this is an LQR example with matrices 
    $$A = \begin{bmatrix}
        -2 & 0 \\ 0 & 1
    \end{bmatrix},\quad B = \begin{bmatrix}
        0 \\ 1
    \end{bmatrix}, \quad B_w = 2I_2, \quad Q = I_2, \quad R = 1.
    $$ 
Let $K=\begin{bmatrix}
    k_1 & k_2
\end{bmatrix}\in\mathbb{R}^{1\times2}$. The set of stabilizing policies is given by 
\[
\mathcal{K}:=\{K
: A+BK \text{ is Hurwitz}\}=\{K
:k_1\in \mathbb{R}, \ k_2 < -1\}.
\]
For $K\in\mathcal{K}$,
the LQR cost function can be evaluated by
\begin{equation}\label{eq:LQR-cost-K}
    J_{\mathtt{LQR}}(k_1,k_2) =\operatorname{tr}\left((Q+K^\tr RK)X\right)=\operatorname{tr}\left((I+K^\tr K)X\right),
\end{equation}
where $X$ is the unique positive definite solution to the Lyapunov equation below, and can be explicitly expressed in terms of $k_1$ and $k_2$ as follows 
\begin{equation}\label{eq:LQR-Lyapunov-sol-X(K)}
    (A+BK)X+X(A+BK)^\tr+B_wB_w^\tr=0,\quad X = \frac{1}{k_2^2 - 1} \begin{bmatrix}
    k_2^2-1 & -k_1(k_2+1) \\ -k_1(k_2+1) & k_1^2-2k_2+2
    \end{bmatrix}.
\end{equation}
Substituting $X$ in \Cref{eq:LQR-Lyapunov-sol-X(K)} into \Cref{eq:LQR-cost-K} gives
\[
J_{\mathtt{LQR}}(k_1,k_2) = \frac{1-2k_2+3k_2^2 - 2k_2^3 - 2k_1^2k_2}{k_2^2 - 1}, \qquad \forall k_1\in \mathbb{R},\  k_2<-1. 
\]
Let $Y=\begin{bmatrix}y_1 & y_2 \end{bmatrix}$. Now define the change of variables $Y=g(K):=KX,$ i.e.,
\begin{equation}\label{eq:LQR-change-of-variables}
    \begin{aligned}
    y_1=\frac{k_1}{1-k_2}, \qquad 
   y_2 = \frac{2k_2-2k_2^2-k_1^2}{k_2^2-1} , \qquad \forall k_1\in \mathbb{R},\  k_2 < - 1, 
    \end{aligned}
\end{equation}
and note that $g$ is invertible with its inverse $K=g^{-1}(Y)=YX^{-1}$. We then define
\begin{equation} \label{eq:LQR-cost-Y}
    h(y_1,y_2):=J_{\mathtt{LQR}}\left(g^{-1}(y_1,y_2)\right)=\operatorname{tr}\left(QX + X^{-1}Y^\tr R Y \right)=\operatorname{tr}\left(X + X^{-1}Y^\tr  Y\right),
\end{equation}
where $X$ is now viewed as the unique positive definite solution to the Lyapunov equation below, and can be explicitly solved in terms of $y_1$ and $y_2$ as follows
\begin{equation} \label{eq:LQR-Lyapunov-sol-X(Y)}
AX+XA^\tr+(B_wB_w^\tr+BY+Y^\tr B^\tr)=0, \quad
    X=\begin{bmatrix}
    1 & y_1 \\
    y_1 & -y_2-2
\end{bmatrix}\succ0.
\end{equation}
Substituting $X$ in \Cref{eq:LQR-Lyapunov-sol-X(Y)} into \Cref{eq:LQR-cost-Y} gives
\[
h(y_1,y_2) = -y_2-1+\begin{bmatrix}
    y_1 & y_2
\end{bmatrix}\begin{bmatrix}
    1 & y_1 \\
    y_1 & -y_2-2
\end{bmatrix}^{-1}\begin{bmatrix}
    y_1 \\ y_2
\end{bmatrix}, \quad {\forall \begin{bmatrix}
    1 & y_1 \\
    y_1 & -y_2-2
\end{bmatrix}\succ0}. 
\]
Note that the inverse mapping $g^{-1}(Y)=YX^{-1}$ is given by
\begin{equation*} 
    \begin{aligned}
    k_1=\frac{2y_1(y_2+1)}{y_1^2+y_2+2}, \qquad 
   k_2=\frac{y_1^2-y_2}{y_1^2+y_2+2},
    \end{aligned}
\end{equation*}
and one can verify both $g \circ g^{-1}$ and $g^{-1} \circ g$ are the identity functions. 
We next show that the epigraph of $h$, $\operatorname{epi}_\geq(h) := \{(y_1,y_2,\gamma) \in \mathbb{R}^{3} \mid \gamma \geq h(y_1,y_2)\}$, is a convex set.
From the Schur complement, we have
\begin{equation*}
    \gamma \geq h(y_1,y_2)=\operatorname{tr}\left(X + X^{-1}Y^\tr  Y\right) \Leftrightarrow \begin{bmatrix}
        \gamma-\operatorname{tr}(X) & Y \\ Y^\tr & X
    \end{bmatrix}\succeq0
\end{equation*}
and thus
\[
\operatorname{epi}_\geq(h)=\left\{(y,\gamma) \in \mathbb{R}^3  \left|   \begin{bmatrix}
        \gamma +y_2 +1 & y^\tr \\
        y & \mathrm{aff}(y)
    \end{bmatrix} \succeq 0, \ 
    \mathrm{aff}(y) \!:=\! \begin{bmatrix}
         1 &  y_1 \\
y_1& -y_2 - 2
    \end{bmatrix} \!\succ\! 0\right.\right\}.
\]

\section{Stationary Points in Optimal and Robust Control} \label{appendix:stationary-point}

In this section, we review (sub)-gradient computations of the benchmark control problems in \Cref{section:applications}. To further illustrate our \ECL{} framework, we also provide examples of stationary points in the benchmark control problems. 

\subsection{State Feedback Policy Optimization}

For convenience, we recall that the set of stabilizing static state feedback policies is 
$$
\mathcal{K}\coloneqq
\left\{K \in \mathbb{R}^{m \times n} \mid 
\max\nolimits_i \operatorname{Re}\lambda_i(A+BK)
<0\right\},
$$ 
and the closed-loop transfer function from $w$ to~$z$ is
\begin{equation} \label{eq:app-Tzw}
\mathbf{T}_{zw}(K,s) = \begin{bmatrix}
    Q^{1/2} \\ R^{1/2}K
\end{bmatrix} (sI - A - BK)^{-1}B_w. 
\end{equation}

\subsubsection{The LQR Problem \cref{eq:LQR-H2}}
It is well known that 
    for $K\in\mathcal{K}$, the LQR cost $J_\mathtt{LQR}(K)$ in \cref{eq:LQR-H2} can be evaluated by
    \begin{equation} \label{eq:LQR-cost}
    J_\mathtt{LQR}(K)=\operatorname{tr}\left((Q+K^\tr R K){X_K}\right)=\operatorname{tr}({P_K}W),
    \end{equation}
    where $X_K$ and $P_K$ are the unique positive semidefinite solutions to the Lyapunov equations 
    \begin{subequations} \label{eq:Lyapunov-LQR}
    \begin{align}
        (A+BK)X_K+X_K (A+BK)^\tr+W&=0, \label{eq:Lyapunov-LQR-a} \\
        (A+BK)^\tr P_K+P_K (A+BK)+Q+K^\tr R K&=0. \label{eq:Lyapunov-LQR-b}
    \end{align}
    \end{subequations}

The computation in \cref{eq:LQR-cost} can be seen from \cref{eq:Lyapunov-equations-H2norm} in \cref{lemma:H2norm}. Note that we can also see the equality of $\operatorname{tr}\left((Q+K^\tr R K){X_K}\right)=\operatorname{tr}({P_K}W)$ from the manipulation of the Lyapunov equations \cref{eq:Lyapunov-LQR} and the trace operator, as shown below  
\begin{align*}
    \operatorname{tr}\left((Q+K^\tr R K)X_K\right)&=-\operatorname{tr}\left(((A+BK)^\tr P_K+P_K (A+BK))X_K\right)\\
    &=-\operatorname{tr}\left(P_K(X_K(A+BK)^\tr+(A+BK)X_K)\right)=\operatorname{tr}(P_KW), 
\end{align*}
where the first equality uses the fact $-Q-K^\tr R K = (A+BK)^\tr P_K+P_K (A+BK)$ from \cref{eq:Lyapunov-LQR-b}, the second equality is from the trace property, and the last equality applies $-W = (A+BK)X_K+X_K (A+BK)^\tr$ from \cref{eq:Lyapunov-LQR-a}. 

From \cref{eq:LQR-cost,eq:Lyapunov-LQR}, it is clear that $J_\mathtt{LQR}(K)$ is a rational function (i.e., a ratio of two polynomials) with respect to the elements of $K$, thus it is infinitely differentiable over $\mathcal{K}$. Indeed, we have a closed-form formula to compute its gradient at any point $K \in \mathcal{K}$.  

\begin{lemma}[{\cite[Section IV]{levine1970determination}}] \label{lemma:LQR-cost-gradient}
    The LQR cost in \cref{eq:LQR-H2} is infinitely differentiable over $\mathcal{K}$. Its gradient is given by
    \[
    \nabla{J_\mathtt{LQR}(K)}=2(RK+B^\tr P_K)X_K, \quad \forall K \in \mathcal{K}. 
    \]
where $X_K$ and $P_K$ are the unique positive semidefinite solutions to Lyapunov equations \cref{eq:Lyapunov-LQR}. 
\end{lemma}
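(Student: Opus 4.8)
The plan is to first establish that $J_{\mathtt{LQR}}$ is $C^\infty$ on $\mathcal{K}$, and then compute its gradient by an adjoint (Lagrangian) argument that never requires the derivative of the Gramian $X_K$. \textbf{Step 1 (Smoothness).} Write $A_K := A+BK$. For $K \in \mathcal{K}$ the matrix $A_K$ is Hurwitz, so the Lyapunov operator $X \mapsto A_K X + X A_K^\tr$ is an invertible linear map on $\mathbb{S}^n$, and $X_K$ is the image of $-W$ under its inverse. Since $K$ enters this operator polynomially and operator inversion is rational (hence $C^\infty$) on the open set of invertible operators, $K \mapsto X_K$ is $C^\infty$ on $\mathcal{K}$; likewise $K \mapsto P_K$ via \cref{eq:Lyapunov-LQR-b}. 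Therefore $J_{\mathtt{LQR}}(K) = \operatorname{tr}\!\big((Q + K^\tr R K)X_K\big)$ from \cref{eq:LQR-cost} is $C^\infty$ on $\mathcal{K}$.

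\textbf{Step 2 (Directional derivative and the linearized Lyapunov equation).} Fix $K \in \mathcal{K}$ and a direction $E \in \mathbb{R}^{m \times n}$, and set $X' := \mathrm{D}X_K[E]$. Differentiating \cref{eq:Lyapunov-LQR-a} along $E$ yields the linearized Lyapunov equation
\[
A_K X' + X' A_K^\tr + \big(B E X_K + X_K E^\tr B^\tr\big) = 0 ,
\]
and differentiating the cost gives
\[
\mathrm{D}J_{\mathtt{LQR}}(K)[E] = \operatorname{tr}\!\big((Q + K^\tr R K)X'\big) + \operatorname{tr}\!\big((E^\tr R K + K^\tr R E)X_K\big) .
\]
By symmetry of $R$ and $X_K$, the second summand equals $2\operatorname{tr}(R K X_K E^\tr)$.

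\textbf{Step 3 (Eliminating $X'$ via the adjoint, and conclusion).} Using \cref{eq:Lyapunov-LQR-b} to substitute $Q + K^\tr R K = -(A_K^\tr P_K + P_K A_K)$, then applying trace cyclicity and the linearized Lyapunov equation from Step 2,
\[
\operatorname{tr}\!\big((Q + K^\tr R K)X'\big) = -\operatorname{tr}\!\big(P_K (A_K X' + X' A_K^\tr)\big) = \operatorname{tr}\!\big(P_K(B E X_K + X_K E^\tr B^\tr)\big) ,
\]
which, by symmetry of $P_K$ and $X_K$, equals $2\operatorname{tr}(B^\tr P_K X_K E^\tr)$. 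Adding the two contributions,
\[
\mathrm{D}J_{\mathtt{LQR}}(K)[E] = 2\operatorname{tr}\!\big((R K + B^\tr P_K)X_K E^\tr\big) = \big\langle 2(R K + B^\tr P_K)X_K,\, E \big\rangle ,
\]
so $\nabla J_{\mathtt{LQR}}(K) = 2(R K + B^\tr P_K)X_K$, as claimed.

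\textbf{Expected main obstacle.} The only subtle step is Step 3: the explicit expression for $X'$ is cumbersome, and the crux is the observation that it never has to be computed — pairing the cost perturbation with the adjoint variable $P_K$ (defined via \cref{eq:Lyapunov-LQR-b}) and then invoking the \emph{linearized} Lyapunov equation makes all $X'$-dependence cancel. Everything else is bookkeeping with transposes, the symmetry of $R$, $X_K$, $P_K$, and trace cyclicity. As a consistency check, the identical formula drops out by instead starting from the dual expression $J_{\mathtt{LQR}}(K) = \operatorname{tr}(P_K W)$ and differentiating $P_K$, with the roles of \cref{eq:Lyapunov-LQR-a,eq:Lyapunov-LQR-b} swapped.
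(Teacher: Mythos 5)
Your proof is correct and complete; the paper itself does not prove this lemma but simply cites \cite{levine1970determination}, and your argument (smoothness because $X_K,P_K$ depend rationally on $K$ through the invertible Lyapunov operator, then the adjoint pairing of the linearized Lyapunov equation with $P_K$ to eliminate $\mathrm{D}X_K[E]$) is exactly the standard derivation underlying that citation and the paper's remark that $J_{\mathtt{LQR}}$ is a rational function of $K$. No gaps to report.
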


If $(A+BK,B_w)$ is controllable, then $X_K$ from \cref{eq:Lyapunov-LQR-a} is always positive definite. Then, the only case for the stationary condition $\nabla{J_\mathrm{LQR}(K)} =0 $ is $RK+B^\tr P_K = 0$, which implies 
$$
K = -R^{-1}B^\tr P_K. 
$$
Plugging this into \cref{eq:Lyapunov-LQR-b} leads to the famous Riccati equation $A^\tr P_K+P_K A+Q-P_K BR^{-1}B^\tr P_K = 0$. We summarize this fact into the following lemma. 

\begin{lemma} \label{lemma:LQR-stationary-optimality}
    Suppose $(A+BK,B_w)$ is controllable for all $K\in\mathcal{K}$ (which is implied by $B_w$ being full row rank), then the LQR cost \cref{eq:LQR-H2} has a unique stationary point, which is in the form of 
    $
    K^\star=-R^{-1}B^\tr {P^\star} \in \mathcal{K},
    $ 
    where ${P^\star}$ is the unique stabilizing solution to the following Riccati equation 
    \begin{equation} \label{eq:ARE-LQR}
    A^\tr {P^\star}+{P^\star}A+Q-{P^\star} BR^{-1}B^\tr {P^\star}=0.
    \end{equation}
\end{lemma}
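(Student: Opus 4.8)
The plan is to reduce the stationarity condition to an algebraic Riccati identity via the gradient formula of \Cref{lemma:LQR-cost-gradient}, and then invoke the classical uniqueness of the stabilizing solution of \cref{eq:ARE-LQR}. First I would observe that for every $K\in\mathcal{K}$ the controllability hypothesis on $(A+BK,B_w)$ makes the Gramian $X_K=\int_0^\infty e^{(A+BK)t}W e^{(A+BK)^\tr t}\,dt$ — i.e.\ the unique solution of \cref{eq:Lyapunov-LQR-a} — positive definite. Consequently, by \Cref{lemma:LQR-cost-gradient}, the equation $\nabla J_{\mathtt{LQR}}(K)=2(RK+B^\tr P_K)X_K=0$ holds if and only if $RK+B^\tr P_K=0$, which, since $R\succ 0$, is equivalent to $K=-R^{-1}B^\tr P_K$.

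Next I would substitute $K=-R^{-1}B^\tr P_K$ into \cref{eq:Lyapunov-LQR-b}. Using $A+BK=A-BR^{-1}B^\tr P_K$ and $K^\tr RK=P_KBR^{-1}B^\tr P_K$, a short expansion in which the two cross terms $-P_KBR^{-1}B^\tr P_K$ combine with $+P_KBR^{-1}B^\tr P_K$ collapses \cref{eq:Lyapunov-LQR-b} exactly into $A^\tr P_K+P_KA+Q-P_KBR^{-1}B^\tr P_K=0$; that is, $P_K$ solves the Riccati equation \cref{eq:ARE-LQR}. Moreover $K\in\mathcal{K}$ means $A-BR^{-1}B^\tr P_K=A+BK$ is Hurwitz, so $P_K$ is a \emph{stabilizing} solution of \cref{eq:ARE-LQR}. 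Under \Cref{assumption:controllability} ($(A,B)$ controllable, hence stabilizable, and $(Q^{1/2},A)$ observable, hence detectable) the stabilizing solution $P^\star\succeq 0$ of \cref{eq:ARE-LQR} is unique by classical algebraic Riccati theory (cf.\ \cite{zhou1996robust}); therefore $P_K=P^\star$ and any stationary point must equal $K^\star=-R^{-1}B^\tr P^\star$, which establishes uniqueness. For existence I would check $K^\star$ is stationary: $A+BK^\star=A-BR^{-1}B^\tr P^\star$ is Hurwitz so $K^\star\in\mathcal{K}$; rewriting \cref{eq:ARE-LQR} as $(A+BK^\star)^\tr P^\star+P^\star(A+BK^\star)+Q+(K^\star)^\tr RK^\star=0$ and using uniqueness of the Lyapunov solution for a Hurwitz matrix gives $P_{K^\star}=P^\star$, whence $RK^\star+B^\tr P_{K^\star}=-B^\tr P^\star+B^\tr P^\star=0$ and $\nabla J_{\mathtt{LQR}}(K^\star)=0$.

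The only genuine obstacle is the uniqueness of the stabilizing solution of \cref{eq:ARE-LQR}, which I would not reprove but cite from standard references; the rest is a routine computation chaining the two Lyapunov equations. As an aside, one could also bypass the Riccati machinery entirely: \Cref{proposition:LQR-non-degenearte} together with \Cref{theorem:ECL-guarantee} already shows every stationary point of $J_{\mathtt{LQR}}$ is a global minimizer, and strict convexity of the lifted cost over $\mathcal{F}_{\mathtt{LQR}}$ (equivalently, positive definiteness of $X_K$ forcing the matrix-fractional term to pin down the minimizer) yields uniqueness; the classical argument above, however, keeps the appendix self-contained and additionally identifies the stationary point in closed form.
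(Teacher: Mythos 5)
Your proposal is correct and follows essentially the same route as the paper: use positive definiteness of $X_K$ (from controllability of $(A+BK,B_w)$) together with the gradient formula of \Cref{lemma:LQR-cost-gradient} to reduce stationarity to $K=-R^{-1}B^\tr P_K$, substitute into \cref{eq:Lyapunov-LQR-b} to obtain the Riccati equation \cref{eq:ARE-LQR}, and invoke the classical uniqueness of its stabilizing solution from \cite{zhou1996robust}. Your additional verification that $K^\star$ is indeed stationary is a harmless (and welcome) completion of the existence direction that the paper leaves implicit.
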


It is well-known that the Riccati equation \cref{eq:ARE-LQR} has a unique positive semidefinite solution $P^\star$ when $(A,B)$ is stabilizable and $(Q^{1/2}, A)$ is detectable \cite[Corollary 13.8]{zhou1996robust}. This solution is also stabilizing (i.e., $A - BR^{-1}B^\tr {P^\star}$ is stable, implying $K^\star=-R^{-1}B^\tr {P^\star} \in \mathcal{K}$). From the classical result in \cite[Theorem 14.2]{zhou1996robust}, the unique stationary point in \cref{lemma:LQR-stationary-optimality} is indeed globally optimal, which is consistent with our \ECL{}  guarantee in \Cref{theorem:ECL-guarantee}.

\begin{example} \label{example:LQR-stationary-point-K-1by2}
    We here provide a simple example to illustrate \Cref{lemma:LQR-cost-gradient,lemma:LQR-stationary-optimality}. Consider LQR with problem data from \cref{example:LQR_ex}, i.e., 
    \[
    A=\begin{bmatrix}
        -2 & 0 \\ 0 & 1
    \end{bmatrix},\quad B=\begin{bmatrix}
        0 \\ 1
    \end{bmatrix},\quad  B_w=2I_2, \quad Q=I_2,\quad R=1.
    \]
    Consider for instance $K=\begin{bmatrix}
        1 & -2
    \end{bmatrix}\in\mathcal{K}$, i.e., $A+BK$ is Hurwitz. It is straightforward to solve the Lyapunov equations \cref{eq:Lyapunov-LQR} and get 
    $$
     X_{K}=\frac{1}{3}\begin{bmatrix}
        3 & 1 \\ 1 & 7
    \end{bmatrix}, \qquad P_{K}=\frac{1}{12}\begin{bmatrix}7 & 2\\2 & 30
    \end{bmatrix},
    $$
    which are both positive definite.  
    Then, its cost and policy gradient can be evaluated via \cref{eq:LQR-cost} and \Cref{lemma:LQR-cost-gradient} as follows 
    \begin{align*}
       &J_\mathrm{LQR}(K)=\operatorname{tr}\left((Q+K^\tr R K){X_K}\right)=\operatorname{tr}({P_K}W)=12\frac{1}{3},\\
    &\nabla{J_\mathrm{LQR}(K)}=2(RK+B^\tr P_K)X_{K}=\frac{1}{9}\begin{bmatrix}24 & 28
    \end{bmatrix}. 
    \end{align*}
    
    From \cref{lemma:LQR-stationary-optimality}, by solving the Riccati equation \cref{eq:ARE-LQR}, we obtain the unique stabilizing solution $P^\star$
    and the unique stationary point as follows
    \[
    P^\star=\begin{bmatrix}
        0.25 & 0 \\ 0 & 1\!+\!\sqrt{2}
    \end{bmatrix}, \qquad K^\star=-R^{-1}B^\tr P^{\star}=\begin{bmatrix}
        0 & -1\!-\!\sqrt{2}
    \end{bmatrix}.
    \]
    {It is easy to verify that $\nabla J_\mathrm{LQR}(K^\star)=0$ and $J_\mathrm{LQR}(K^\star)=5+4\sqrt{2}$.} This controller is the globally optimal policy, which is consistent with our \ECL{} guarantee in \Cref{theorem:ECL-guarantee}.
\end{example}

\subsubsection{The State Feedback $\mathcal{H}_\infty$ Control Problem \cref{eq:Hinf}}
The $\mathcal{H}_\infty$ cost $J_{\infty}$ in \cref{eq:Hinf} is in general nonconvex and also nonsmooth. A nice property is that $J_{\infty}$ is subdifferential regular \cite[Proposition 3.1]{apkarian2006nonsmooth}, \cite[Lemma 5.1]{zheng2023benign}. We here briefly discuss the computation of the Clarke subdifferential of $J_{\infty}(K)$. 

Let $j\mathbb{R}$ denote the imaginary axis in $\mathbb{C}$. Fix $K \in \mathcal{K}$, and define the set of frequencies achieving the $\mathcal{H}_\infty$ norm as  
\[
\mathcal{Z} := \{s\in j\mathbb{\mathbb{R}}\cup \{\infty\}
\mid \sigma_{\max}(\mathbf{T}_{zw}(K,s)) = J_{\infty}(K)\},
\]
where $\mathbf{T}_{zw}(K,s)$ is the closed-loop transfer function from $w$ to~$z$ in \Cref{eq:app-Tzw}. 
For each $s\in\mathcal{Z}$, let $Q_s$ be a complex matrix whose columns form an orthonormal basis of the eigenspace of $\mathbf{T}_{zw}(K,s)\mathbf{T}_{zw}(K,s)^\her$ associated with its maximal eigenvalue $J_{\infty}^2(K)$, where $(\cdot)^\her$ denotes the Hermitian transpose. 

The following lemma characterizes the subdifferential $\partial J_{\infty}(K)$, which is a special case in \cite[Lemma 5.2]{zheng2023benign}. 

\begin{lemma}[{\cite[Lemma 5.2]{zheng2023benign}}]\label{lemma:subdifferential-Hinf-SF}
Fix $K \in \mathcal{K}$. A matrix $\Phi \in \mathbb{R}^{m\times n}$ is a member of $\partial J_{\infty}(K)$ if and only if there exist finitely many $s_1,\ldots,s_q\in \mathcal{Z}$ and positive semidefinite Hermitian matrices $Y_1,\ldots,Y_q$ of compatible dimensions with $\sum_{\kappa=1}^q\operatorname{tr}(Y_\kappa)=1$ such that
\begin{align*}
\Phi = \frac{1}{J_{\infty}(K)}
\sum_{\kappa=1}^q
\operatorname{Re}\bigg\{\!
& 
(s_\kappa I-A-BK)^{-1}B_w\cdot
\mathbf{T}_{zw}(K,s_\kappa)^\her Q_{s_\kappa} Y_\kappa Q_{s_\kappa}^\her \\
& \qquad \qquad \cdot
\left(
\begin{bmatrix}
    Q^{1/2} \\ R^{1/2}K
\end{bmatrix}(s_\kappa I-A-BK)^{-1}B_w+\begin{bmatrix}
    0 \\ R^{1/2}
\end{bmatrix}
\right)
\!\bigg\}^{\!\tr}.
\end{align*}
\end{lemma}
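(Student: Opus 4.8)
The plan is to obtain this characterization as a specialization of the general output‑feedback subdifferential formula \cite[Lemma 5.2]{zheng2023benign}: restricting the dynamic controller $\mK$ to a static gain $K$ (so that $A_{\mathrm{cl}}=A+BK$, $B_{\mathrm{cl}}=B_w$, $C_{\mathrm{cl}}=\begin{bmatrix}Q^{1/2}\\ R^{1/2}K\end{bmatrix}$, $D_{\mathrm{cl}}=0$) and collecting terms yields exactly the displayed expression. Below I also sketch a self‑contained derivation that explains where each factor originates. The starting point is the representation $J_\infty(K)=\sup_{\omega\in\mathbb{R}\cup\{\infty\}}\sigma_{\max}\!\big(\mathbf{T}_{zw}(K,j\omega)\big)$, which exhibits $J_\infty$ as the composition of the rational (hence $C^\infty$ on $\mathcal{K}$) map $K\mapsto\mathbf{T}_{zw}(K,\cdot)$ with the $\mathcal{H}_\infty$ norm. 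Since $J_\infty$ is subdifferentially regular on $\mathcal{K}$ (Part I \cite[Lemma 5.1]{zheng2023benign}, following \cite{apkarian2006nonsmooth}), the Clarke calculus — the chain rule and the rule for subdifferentials of pointwise maxima over a compact family — applies with equality rather than inclusion, which is what makes an exact formula available.

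The key steps, in order, are: (i) for fixed $s\in j\mathbb{R}\cup\{\infty\}$, compute the differential of $K\mapsto\mathbf{T}_{zw}(K,s)$; by the product rule and the resolvent identity this is $D_K\mathbf{T}_{zw}(K,s)[E]=\begin{bmatrix}0\\ R^{1/2}E\end{bmatrix}\Theta_sB_w+\begin{bmatrix}Q^{1/2}\\ R^{1/2}K\end{bmatrix}\Theta_s BE\,\Theta_sB_w$, where $\Theta_s=(sI-A-BK)^{-1}$; (ii) recall that for a matrix $M$ with $\sigma_{\max}(M)=\mu>0$ and $Q$ an orthonormal basis of the top eigenspace of $MM^\her$, one has $\partial\sigma_{\max}(M)=\{\mu^{-1}QYQ^\her M : Y\succeq0,\ \operatorname{tr}(Y)=1\}$ (this set is already convex), which supplies the middle factor $\mathbf{T}_{zw}(K,s)^\her Q_sYQ_s^\her$ together with the prefactor $1/J_\infty(K)$; (iii) apply Clarke's theorem on subdifferentials of pointwise suprema over the compact index set $j\mathbb{R}\cup\{\infty\}$, whose active set is precisely $\mathcal{Z}$, giving $\partial J_\infty(K)=\operatorname{cl}\operatorname{conv}\bigcup_{s\in\mathcal{Z}}\partial_K\big[\sigma_{\max}(\mathbf{T}_{zw}(K,s))\big]$; (iv) on each active $s$, combine (i)–(ii) through the chain rule, pass to real parts since $E$ ranges over real matrices (so $\operatorname{Re}\operatorname{tr}(XE)=\operatorname{tr}((\operatorname{Re}X)E)$), take the transpose to express the gradient as a matrix, and rearrange the trace pairing $\operatorname{Re}\operatorname{tr}\big((\mathbf{T}_{zw}(K,s)^\her Q_sYQ_s^\her)\,D_K\mathbf{T}_{zw}(K,s)[E]\big)$ into the product form displayed in the statement; (v) invoke Carathéodory's theorem in the $(mn)$‑dimensional space of gains $K$ to collapse the closed convex hull to a finite sum $\sum_{\kappa=1}^q$, absorbing the convex‑combination weights and the normalizations $\operatorname{tr}(Y)=1$ into the single constraint $\sum_{\kappa=1}^q\operatorname{tr}(Y_\kappa)=1$.

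The delicate point — and the main obstacle — is step (iii): one must verify the hypotheses of the max‑rule, namely finiteness of the supremum (immediate for $K\in\mathcal{K}$), the appropriate upper semicontinuity and uniform compactness of the set‑valued map $s\mapsto\partial_K[\sigma_{\max}(\mathbf{T}_{zw}(K,s))]$ on the \emph{compactified} frequency line (so that $s=\infty$ and the behavior of $\mathbf{T}_{zw}(K,j\omega)$ as $\omega\to\infty$ are treated uniformly), and the fact that only the genuinely active frequencies $\mathcal{Z}$ contribute. These are exactly the technical facts established in \cite[Section 5 and Appendix D]{zheng2023benign}; granting them, both the specialization argument and the direct derivation proceed routinely, and the algebra left in step (iv) — matching the adjoint of $D_K\mathbf{T}_{zw}(K,s)$ against $\mathbf{T}_{zw}(K,s)^\her Q_sYQ_s^\her$ via the resolvent $\Theta_s$ — is a bookkeeping exercise.
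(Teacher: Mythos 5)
Your proposal matches the paper's own treatment: Part II offers no independent proof and simply invokes \cite[Lemma 5.2]{zheng2023benign}, obtaining the state-feedback formula by specializing the output-feedback closed-loop data exactly as you do (with the genuinely hard ingredient --- the exact max-rule/subdifferential characterization at possibly infinitely many active frequencies --- correctly deferred to Part I, where it is proved). As a minor observation, your specialization and trace bookkeeping produce $B$, not $B_w$, as the final factor, which agrees with the worked example in the appendix and suggests the $B_w$ in the printed lemma is a typo.
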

This result is stronger than previous subdifferential calculations in \cite{apkarian2006nonsmooth}, as it presents sufficient and necessary conditions for the subdifferential even when the $\mathcal{H}_\infty$ norm is attained at infinitely many frequencies; see a technical comparison in \cite[Remark B.1]{zheng2023benign}. 
As shown in \cref{lemma:subdifferential-Hinf-SF}, the analytical computation of $\partial J_{\infty}(K)$ for the $\mathcal{H}_\infty$ cost in \cref{eq:Hinf} is much more complicated than the smooth LQR case in \Cref{lemma:LQR-cost-gradient}. 

We here present arguably the simplest $\mathcal{H}_\infty$ instance in \Cref{example:Hinf}, which involves an open-loop stable system with one scalar state. As we have seen, this $\mathcal{H}_\infty$ example is already nonconvex.  

\begin{example}[Nonconvex and smooth $\mathcal{H}_\infty$ instance] \label{example:Hinf-SF-stationary-point-K-1by1}
    Consider the $\mathcal{H}_\infty$ state feedback policy optimization \cref{eq:Hinf} with problem data
    \[
    A=-1,\quad B=1,\quad B_w=1,\quad Q=0.1,\quad R=1.
    \]
    It turns out that this simple example has a nonconvex differentiable cost function 
    \begin{align}\label{eq:Hinf-cost-1by1}
        J_\infty(K)=\sup_{\omega \in \mathbb{R}} \sqrt{\frac{0.1+K^2}{(1-K)^2 + \omega^2}} =\frac{\sqrt{0.1+K^2}}{1-K},\quad \forall K<1,
    \end{align}
    whose gradient is given by 
    \begin{align}\label{eq:Hinf-grad-1by1}
        \nabla J_\infty(K)=\frac{0.1+K}{(1-K)^2\sqrt{0.1+K^2}},\quad \forall K<1.
    \end{align}
    The optimal policy occurs at the stationary point $K^\star=-0.1$ with $J_\infty(K^\star)=0.3015$. 
    
    Note that for this differentiable case, the subgradient computation using \cref{lemma:subdifferential-Hinf-SF} yields the same value as the gradient.  
       Indeed from \cref{eq:Hinf-cost-1by1}, one can see that for all $K<1$, we have 
    \[
    \mathcal{Z}=\{0\},\quad \mathbf{T}_{zw}(K,0)^\tr=\frac{1}{1-K}\begin{bmatrix}
        {\sqrt{0.1}} & {K}
    \end{bmatrix},\quad Q_0^\tr=\frac{1}{\sqrt{0.1+K^2}}\begin{bmatrix}
        {\sqrt{0.1}}&  {K}
    \end{bmatrix},
    \]
    where $Q_0$ is an orthonormal eigenvector of $\mathbf{T}_{zw}(K,0)\mathbf{T}_{zw}(K,0)^\tr$ associated with the maximal eigenvalue $J_\infty^2(K)=(0.1\!+\!K^2)/(1\!-\!K)^2$.
    The only choice for $Y_1$ is $Y_1=1$ to satisfy $\mathrm{tr}(Y_1) = 1$. Therefore, the subgradient at $K$ only contains one element given by
    \begin{align*}
         \Phi &= \frac{1}{J_\infty(K)}
    \left(
    (-A-BK)^{-1}B_w\cdot
    \mathbf{T}_{zw}(K,0)^\tr Q_{0}Y_1Q_{0}^\tr \cdot
    \left(
    \begin{bmatrix}
        Q^{1/2} \\ R^{1/2}K
    \end{bmatrix}(-A-BK)^{-1}B+\begin{bmatrix}
        0 \\ R^{1/2}
    \end{bmatrix}
    \right)
    \right)^{\!\tr}\\
    &=\frac{1-K}{\sqrt{0.1+K^2}}\left(\frac{1}{(1-K)^3(0.1+K^2)}\begin{bmatrix}
        {\sqrt{0.1}} & {K}
    \end{bmatrix}\begin{bmatrix}
        {0.1} & {\sqrt{0.1}K} \\ {\sqrt{0.1}K} & K^2
    \end{bmatrix}\begin{bmatrix}
        {\sqrt{0.1}} \\ {1}
    \end{bmatrix}\right)\\
    &=\frac{0.1+K}{(1-K)^2\sqrt{0.1+K^2}}=\nabla J_\infty(K).
    \end{align*}
The fact that $\Phi=\nabla J_\infty$ also confirms the differentiability of $J_\infty$ in this instance.
    The nonconvex landscape is illustrated in \cref{fig:Hinf-SF-stationary-point-K-1by1}. \hfill $\qed$
\end{example}

\begin{figure}[t]
    \centering
    \setlength{\abovecaptionskip}{4pt}
    \begin{subfigure}{0.35\textwidth}
        \includegraphics[width=0.8\textwidth]{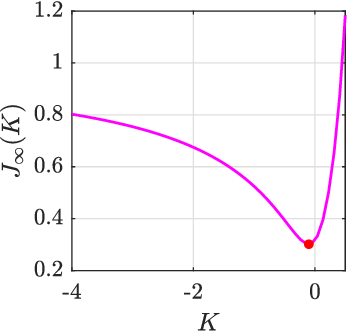}
        \caption{}     \label{fig:Hinf-SF-stationary-point-K-1by1}
    \end{subfigure}
    \hspace{10mm}
    \begin{subfigure}{0.35\textwidth}
        \includegraphics[width=1
\textwidth]{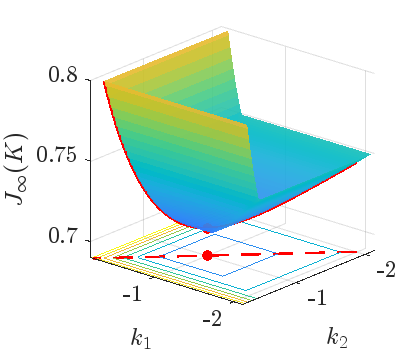}
        \caption{}     \label{fig:Hinf-SF-stationary-point-K-1by2}
    \end{subfigure}
    \caption{Optimization landscape of the state feedback $\mathcal{H}_\infty$ instances: (a) nonconvex and smooth $J_\infty$ in \cref{example:Hinf-SF-stationary-point-K-1by1}; (b) nonconvex and nonsmooth $J_\infty$ in \cref{example:Hinf-SF-stationary-point-K-1by2}. 
    }
    \label{fig:simple-example-Hinf_state}
\end{figure}

\begin{example}[Nonconvex and nonsmooth $\mathcal{H}_\infty$ instance] \label{example:Hinf-SF-stationary-point-K-1by2}
    Consider the $\mathcal{H}_\infty$ state feedback policy optimization \cref{eq:Hinf} with problem data
    \[
    A=-I_2,\quad B=B_w=Q=R=I_2.
    \]
   The $\mathcal{H}_\infty$ problem is nonconvex since its feasible set $\mathcal{K}$ is already nonconvex. Indeed, we can find $K_1=\begin{bmatrix}
        0 & 0.1 \\ 9.9 & 0
    \end{bmatrix}\!\in\!\mathcal{K}$ and $K_2=\begin{bmatrix}
        0 & 9.9 \\ 0.1 & 0
    \end{bmatrix}\!\in\!\mathcal{K}$ but $(K_1\!+\!K_2)/2\notin\mathcal{K}$.

   By solving the equivalent convex problem in \cref{eq:state-feedback-Hinf-LMI}, the optimal policy is given by $K^\star=-I_2$, which is a Clarke stationary point, i.e.,  $0\in\partial J_\infty(K^\star)$. In addition, we can check that there exist multiple subgradients for all $K_\ell=\begin{bmatrix}
        k & 0 \\ 0 & k
    \end{bmatrix}$ where $k<0$, justifying the nonsmoothness of $J_\infty$. Indeed, the cost at $K_\ell$ can be evaluated by
    \[
    J_\infty(K_\ell)=\sup_{\omega\in\mathbb{R}} \lambda^{1/2}_\mathrm{max}(\mathbf{T}_{zw}(K_\ell,i\omega)^\her \mathbf{T}_{zw}(K_\ell,i\omega))=\frac{\sqrt{k^2+1}}{1-k}, \quad \forall K_\ell=\begin{bmatrix}
        k & 0 \\ 0 & k
    \end{bmatrix},\ k<0,
    \]
    where the supremum occurs at $\omega=0$.
    Since the maximal eigenvalue $J_{\infty}^2(K_\ell)$ of $\mathbf{T}_{zw}(K_\ell,0)\mathbf{T}_{zw}(K_\ell,0)^\her$ has algebraic multiplicities equal to $2$, the corresponding eigenspace has dimension equal to 2. Therefore, in \Cref{lemma:subdifferential-Hinf-SF}, one has the freedom to choose different $Y_1\in\mathbb{S}_{+}^2$ to find multiple subgradients at $K_\ell$ as long as $\operatorname{tr}(Y_1)=1$.

    By fixing the off-diagonal entries of $K$ to be $0$, i.e., $K=\begin{bmatrix}
        k_1 & 0 \\ 0 & k_2
    \end{bmatrix}$, we illustrate the nonsmooth $\mathcal{H}_\infty$ cost $J_\infty(K)$ around the optimal policy $K^\star$ in \Cref{fig:Hinf-SF-stationary-point-K-1by2}, where the red dot corresponds to $K^\star$ and the red dash line corresponds to $K_\ell$. It is clear that $J_\infty(K)$ is nonsmooth on the red dash line. 
    \hfill $\qed$
\end{example}

\subsection{Output Feedback Policy Optimization} 

We briefly review the gradient computation for LQG cost and the Clarke subdifferential characterization for the $\mathcal{H}_\infty$ cost. More detailed discussions on the structure of stationary points can be found in \cite{zheng2021analysis} and Part I of this paper \cite{zheng2023benign}.

\subsubsection{The LQG Problem \cref{eq:LQG_policy_optimization}}
It is known that the LQG cost function $J_{\LQG,q}(\mK)$ can be computed by solving a Lyapunov equation, which is summarized in the lemma below.

\begin{lemma}\label{lemma:LQG_cost_formulation1}
Fix $q\in\mathbb{N}$ such that $\mathcal{C}_{q,0}\neq\varnothing$. Given any $\mK\in\mathcal{C}_{q,0}$, we have
\begin{equation}\label{eq:LQG_cost_formulation1}
\begin{aligned}
J_{\LQG,q}(\mK)
={} &
\sqrt{\operatorname{tr}\!
\left(C_{\mathrm{cl}}(\mK)X_\mK C_{\mathrm{cl}}(\mK)^\tr \right)}
=
\sqrt{
\operatorname{tr}\!
\left(B_{\mathrm{cl}}(\mK)^\tr Y_\mK B_{\mathrm{cl}}(\mK)\right)},
\end{aligned}
\end{equation}
where $X_{\mK}$ and $Y_{\mK}$ are the unique positive semidefinite 
solutions to the following Lyapunov equations
\begin{subequations} \label{eq:Lyapunov-LQG}
\begin{align}
\begin{bmatrix} A &  BC_{\mK} \\ B_{\mK} C & A_{\mK} \end{bmatrix}X_{\mK} + X_{\mK}\begin{bmatrix} A &  BC_{\mK} \\ B_{\mK} C & A_{\mK} \end{bmatrix}^\tr +  \begin{bmatrix} W & 0 \\ 0 & B_{\mK}VB_{\mK}^\tr  \end{bmatrix}
& = 0, \label{eq:LyapunovX}
\\
\begin{bmatrix} A &  BC_{\mK} \\ B_{\mK} C & A_{\mK} \end{bmatrix}^\tr Y_{\mK} +  Y_{\mK}\begin{bmatrix} A &  BC_{\mK} \\ B_{\mK} C & A_{\mK} \end{bmatrix} +   \begin{bmatrix} Q & 0 \\ 0 & C_{\mK}^\tr R C_{\mK} \end{bmatrix}
& = 0. \label{eq:LyapunovY}
\end{align}
\end{subequations}
\end{lemma}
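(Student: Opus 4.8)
The plan is to recognize \Cref{lemma:LQG_cost_formulation1} as a direct specialization of the Lyapunov-equation characterization of the $\mathcal{H}_2$ norm in \Cref{lemma:H2norm}(1), applied to the closed-loop data $(A_{\mathrm{cl}}(\mK),B_{\mathrm{cl}}(\mK),C_{\mathrm{cl}}(\mK))$. First I would observe that $\mK\in\mathcal{C}_{q,0}$ forces $D_\mK=0$, so by \cref{eq:closed-loop-matrices} we have $D_{\mathrm{cl}}(\mK)=0$; hence the closed-loop transfer function $\mathbf{T}_{zd}(\mK,s)=C_{\mathrm{cl}}(\mK)(sI-A_{\mathrm{cl}}(\mK))^{-1}B_{\mathrm{cl}}(\mK)$ is strictly proper, matching the form of the transfer function treated in \Cref{lemma:H2norm}. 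Moreover $\mK\in\mathcal{C}_{q,0}\subseteq\mathcal{C}_q$ means $A_{\mathrm{cl}}(\mK)$ is Hurwitz, so the standing stability hypothesis of \Cref{lemma:H2norm}(1) is met.

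Next I would apply \Cref{lemma:H2norm}(1) to obtain $J_{\LQG,q}(\mK)^2=\|\mathbf{T}_{zd}(\mK,s)\|_{\mathcal{H}_2}^2=\operatorname{tr}(C_{\mathrm{cl}}(\mK)L_{\mathrm c}C_{\mathrm{cl}}(\mK)^\tr)=\operatorname{tr}(B_{\mathrm{cl}}(\mK)^\tr L_{\mathrm o}B_{\mathrm{cl}}(\mK))$, where $L_{\mathrm c},L_{\mathrm o}$ are the unique solutions of the controllability and observability Lyapunov equations \cref{eq:Lyapunov-equations-H2norm-a,eq:Lyapunov-equations-H2norm-b} written for the closed loop, and are positive semidefinite since $A_{\mathrm{cl}}(\mK)$ is stable (they are the reachability and observability Gramians). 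It then remains to identify those Lyapunov equations with \cref{eq:LyapunovX,eq:LyapunovY}: substituting $D_\mK=0$ into \cref{eq:closed-loop-matrices} gives $A_{\mathrm{cl}}(\mK)=\left[\begin{smallmatrix}A & BC_\mK\\ B_\mK C & A_\mK\end{smallmatrix}\right]$, $B_{\mathrm{cl}}(\mK)B_{\mathrm{cl}}(\mK)^\tr=\left[\begin{smallmatrix}W & 0\\ 0 & B_\mK VB_\mK^\tr\end{smallmatrix}\right]$, and $C_{\mathrm{cl}}(\mK)^\tr C_{\mathrm{cl}}(\mK)=\left[\begin{smallmatrix}Q & 0\\ 0 & C_\mK^\tr RC_\mK\end{smallmatrix}\right]$, so that \cref{eq:Lyapunov-equations-H2norm-a,eq:Lyapunov-equations-H2norm-b} for the closed-loop data are exactly \cref{eq:LyapunovX,eq:LyapunovY}. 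By uniqueness of solutions to a stable Lyapunov equation this forces $L_{\mathrm c}=X_\mK$ and $L_{\mathrm o}=Y_\mK$, and taking square roots yields the two claimed identities, recalling $J_{\LQG,q}(\mK)=\|\mathbf{T}_{zd}(\mK,s)\|_{\mathcal{H}_2}$ (cf.\ \cref{eq:LQG_policy_optimization}).

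I do not expect a substantial obstacle here; the only points needing care are bookkeeping ones — confirming that strict properness of the controller ($D_\mK=0$) is precisely what makes the no-feedthrough $\mathcal{H}_2$ formula of \Cref{lemma:H2norm} applicable, and correctly matching $B_{\mathrm{cl}}(\mK)B_{\mathrm{cl}}(\mK)^\tr$ and $C_{\mathrm{cl}}(\mK)^\tr C_{\mathrm{cl}}(\mK)$ with the block-diagonal right-hand sides in \cref{eq:Lyapunov-LQG}. As an alternative route for the second equality that avoids re-invoking \Cref{lemma:H2norm}, I could instead derive $\operatorname{tr}(C_{\mathrm{cl}}(\mK)X_\mK C_{\mathrm{cl}}(\mK)^\tr)=\operatorname{tr}(B_{\mathrm{cl}}(\mK)^\tr Y_\mK B_{\mathrm{cl}}(\mK))$ directly from \cref{eq:LyapunovX,eq:LyapunovY} together with cyclicity of the trace, mirroring the LQR computation carried out in \Cref{appendix:stationary-point}.
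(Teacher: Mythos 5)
Your proposal is correct and is exactly the intended justification: the paper states this lemma as a classical fact without writing out a proof, and the standard argument is precisely your specialization of \Cref{lemma:H2norm}(1) to the closed-loop realization $(A_{\mathrm{cl}}(\mK),B_{\mathrm{cl}}(\mK),C_{\mathrm{cl}}(\mK))$, using $D_\mK=0$ to get $D_{\mathrm{cl}}(\mK)=0$ and the block-diagonal forms of $B_{\mathrm{cl}}B_{\mathrm{cl}}^\tr$ and $C_{\mathrm{cl}}^\tr C_{\mathrm{cl}}$ from \cref{eq:closed-loop-matrices}. Your identifications of the closed-loop Gramian equations with \cref{eq:LyapunovX,eq:LyapunovY} and the uniqueness/positive-semidefiniteness claims (from stability of $A_{\mathrm{cl}}(\mK)$) are all accurate, so no gap remains.
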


Recall that the LQG cost $J_{\LQG,q}(\mK)$ is a real analytical function over its domain ${\mathcal{C}}_{q,0}$ {\cite[Lemma 2.3]{zheng2021analysis}}, which implies that $J_{\LQG,q}(\mK)$ is infinitely differentiable. 
The following lemma gives the closed-form expression to compute the gradient of $J_{\LQG,q}(\mK)$.

\begin{lemma}[{\cite[Lemma 4.2]{zheng2021analysis}}] \label{lemma:gradient_LQG_Jn}
    {Fix $q \geq 1$ such that ${\mathcal{C}}_{q,0} \neq \varnothing$.}
    For every $\mK = \begin{bmatrix} 0 & C_{\mK} \\B_{\mK} & A_{\mK} \end{bmatrix} \in {\mathcal{C}}_{q,0}$, the gradient of $J_{\LQG,q}(\mK)$ is given by
    \begin{subequations} \label{eq:gradient_Jn}
        \begin{align}
         \frac{\partial J_{\LQG,q}(\mK)}{\partial A_{\mK}} &= \frac{1}{J_{\LQG,q}(\mK)}\left(Y_{12}^\tr X_{12} + Y_{22}X_{22}\right), \label{eq:partial_Ak}\\
        \frac{\partial J_{\LQG,q}(\mK)}{\partial B_{\mK}} &= \frac{1}{J_{\LQG,q}(\mK)}\left(Y_{22}B_{\mK}V + Y_{22}X_{12}^\tr C^\tr + Y_{12}^\tr X_{11} C^\tr\right), \label{eq:partial_Bk}\\
        \frac{\partial J_{\LQG,q}(\mK)}{\partial C_{\mK}} &= \frac{1}{J_{\LQG,q}(\mK)}\left(RC_{\mK}X_{22} + B^\tr Y_{11}X_{12} + B^\tr Y_{12} X_{22}\right), \label{eq:partial_Ck}
        \end{align}
    \end{subequations}
    where $X_{\mK}$ and $Y_{\mK}$, partitioned as 
    \begin{equation} \label{eq:LyapunovXY_block}
        X_{\mK} = \begin{bmatrix}
        X_{11} & X_{12} \\ X_{12}^\tr & X_{22}
        \end{bmatrix},  \qquad Y_{\mK} = \begin{bmatrix}
        Y_{11} & Y_{12} \\ Y_{12}^\tr & Y_{22}
        \end{bmatrix}
    \end{equation}
    are the unique positive semidefinite 
    solutions to~\cref{eq:LyapunovX} and~\cref{eq:LyapunovY}, respectively.
\end{lemma}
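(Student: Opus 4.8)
The plan is to run the classical adjoint (dual Lyapunov) gradient computation for $\mathcal{H}_2$-type costs, exactly parallel to the LQR case recorded in \Cref{lemma:LQR-cost-gradient} and \Cref{appendix:stationary-point}, but now carrying the $2\times 2$ block structure of the augmented closed loop through the algebra. Differentiability is not at issue here: $J_{\LQG,q}$ is real analytic on $\mathcal{C}_{q,0}$ \cite[Lemma 2.3]{zheng2021analysis}, hence $C^1$ there, so it suffices to compute directional derivatives. Fix $\mK = \begin{bmatrix} 0 & C_\mK \\ B_\mK & A_\mK\end{bmatrix} \in \mathcal{C}_{q,0}$, write $A_{\mathrm{cl}}(\mK), B_{\mathrm{cl}}(\mK), C_{\mathrm{cl}}(\mK)$ as in \cref{eq:closed-loop-matrices}, and abbreviate $\widehat{W} \coloneqq B_{\mathrm{cl}}(\mK)B_{\mathrm{cl}}(\mK)^\tr = \begin{bmatrix} W & 0 \\ 0 & B_\mK V B_\mK^\tr\end{bmatrix}$ and $\widehat{Q} \coloneqq C_{\mathrm{cl}}(\mK)^\tr C_{\mathrm{cl}}(\mK) = \begin{bmatrix} Q & 0 \\ 0 & C_\mK^\tr R C_\mK\end{bmatrix}$. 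By \Cref{lemma:LQG_cost_formulation1} and trace cyclicity, $J_{\LQG,q}(\mK)^2 = \operatorname{tr}(\widehat{Q}\,X_\mK) = \operatorname{tr}(\widehat{W}\,Y_\mK)$, where $X_\mK$ and $Y_\mK$ are the unique positive semidefinite solutions of the primal and dual Lyapunov equations \cref{eq:LyapunovX} and \cref{eq:LyapunovY}, namely $A_{\mathrm{cl}}X_\mK + X_\mK A_{\mathrm{cl}}^\tr + \widehat{W} = 0$ and $A_{\mathrm{cl}}^\tr Y_\mK + Y_\mK A_{\mathrm{cl}} + \widehat{Q} = 0$ (unique because $A_{\mathrm{cl}}$ is Hurwitz for $\mK \in \mathcal{C}_{q,0}$).

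I would then perturb $\mK \mapsto \mK + t\Delta$ with $\Delta = \begin{bmatrix} 0 & \Delta_C \\ \Delta_B & \Delta_A\end{bmatrix}$ and differentiate at $t = 0$, writing $\delta(\cdot)$ for first-order variations. Differentiating $J_{\LQG,q}^2 = \operatorname{tr}(\widehat{Q}\,X_\mK)$ gives $2J_{\LQG,q}\,\delta J_{\LQG,q} = \operatorname{tr}(\delta\widehat{Q}\,X_\mK) + \operatorname{tr}(\widehat{Q}\,\delta X)$. Differentiating the primal equation yields $A_{\mathrm{cl}}\,\delta X + \delta X\,A_{\mathrm{cl}}^\tr = -\bigl(\delta A_{\mathrm{cl}}\,X_\mK + X_\mK\,\delta A_{\mathrm{cl}}^\tr + \delta\widehat{W}\bigr)$; substituting $\widehat{Q} = -(A_{\mathrm{cl}}^\tr Y_\mK + Y_\mK A_{\mathrm{cl}})$ into $\operatorname{tr}(\widehat{Q}\,\delta X)$ and using trace cyclicity together with the symmetry of $X_\mK$ and $Y_\mK$ collapses it to $\operatorname{tr}(\widehat{Q}\,\delta X) = 2\operatorname{tr}(Y_\mK\,\delta A_{\mathrm{cl}}\,X_\mK) + \operatorname{tr}(Y_\mK\,\delta\widehat{W})$, so that
\[
2J_{\LQG,q}\,\delta J_{\LQG,q} = \operatorname{tr}(\delta\widehat{Q}\,X_\mK) + 2\operatorname{tr}(Y_\mK\,\delta A_{\mathrm{cl}}\,X_\mK) + \operatorname{tr}(Y_\mK\,\delta\widehat{W}).
\]

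It then remains to expand the three traces in terms of $\Delta_A, \Delta_B, \Delta_C$. From the block forms one has $\delta A_{\mathrm{cl}} = \begin{bmatrix} 0 & B\Delta_C \\ \Delta_B C & \Delta_A\end{bmatrix}$, $\delta\widehat{W} = \begin{bmatrix} 0 & 0 \\ 0 & \Delta_B V B_\mK^\tr + B_\mK V\Delta_B^\tr\end{bmatrix}$, and $\delta\widehat{Q} = \begin{bmatrix} 0 & 0 \\ 0 & \Delta_C^\tr R C_\mK + C_\mK^\tr R\Delta_C\end{bmatrix}$; partitioning $X_\mK, Y_\mK$ into the $n\times n$ blocks of \cref{eq:LyapunovXY_block}, expanding the $2\times 2$ block products, and collecting each trace into the form $\operatorname{tr}(G_A^\tr\Delta_A) + \operatorname{tr}(G_B^\tr\Delta_B) + \operatorname{tr}(G_C^\tr\Delta_C)$ identifies $G_A, G_B, G_C$; dividing by $J_{\LQG,q}$ then produces \cref{eq:partial_Ak}, \cref{eq:partial_Bk}, \cref{eq:partial_Ck} respectively. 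For instance, the $\Delta_A$-contribution comes solely from $2\operatorname{tr}(Y_\mK\,\delta A_{\mathrm{cl}}\,X_\mK) = 2\operatorname{tr}(X_\mK Y_\mK\,\delta A_{\mathrm{cl}})$ and reduces to $2\operatorname{tr}\bigl((X_{12}^\tr Y_{12} + X_{22}Y_{22})\Delta_A\bigr)$, whence $\partial J_{\LQG,q}/\partial A_\mK = J_{\LQG,q}^{-1}(Y_{12}^\tr X_{12} + Y_{22}X_{22})$. I do not expect a genuine obstacle; the one point demanding care is the bookkeeping — perturbing $\mK$ moves not only $A_{\mathrm{cl}}$ but also $\widehat{W}$ (through $B_\mK V B_\mK^\tr$) and $\widehat{Q}$ (through $C_\mK^\tr R C_\mK$), so all three traces genuinely contribute, and the symmetry of $X_\mK$ and $Y_\mK$ must be invoked repeatedly to merge paired terms and to transpose blocks into the stated form.
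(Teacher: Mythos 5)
Your proposal is correct. Note that the paper itself does not prove this lemma at all -- it is imported verbatim by citation to \cite[Lemma 4.2]{zheng2021analysis} -- so there is no in-paper argument to compare against; what you give is the standard self-contained adjoint (dual Lyapunov) derivation, exactly parallel to the LQR gradient computation the paper recalls in \Cref{lemma:LQR-cost-gradient}. I checked the key identities: differentiating the primal Lyapunov equation and substituting $\widehat{Q}=-(A_{\mathrm{cl}}^\tr Y_\mK+Y_\mK A_{\mathrm{cl}})$ does give $\operatorname{tr}(\widehat{Q}\,\delta X)=2\operatorname{tr}(Y_\mK\,\delta A_{\mathrm{cl}}\,X_\mK)+\operatorname{tr}(Y_\mK\,\delta\widehat{W})$, and the block expansions come out right: the $\Delta_A$ part of $2\operatorname{tr}(X_\mK Y_\mK\,\delta A_{\mathrm{cl}})$ is $2\operatorname{tr}\bigl((X_{12}^\tr Y_{12}+X_{22}Y_{22})\Delta_A\bigr)$, the $\Delta_B$ terms collect to $2\operatorname{tr}\bigl(\bigl[C(X_{11}Y_{12}+X_{12}Y_{22})+VB_\mK^\tr Y_{22}\bigr]\Delta_B\bigr)$, and the $\Delta_C$ terms to $2\operatorname{tr}\bigl(\bigl[(X_{12}^\tr Y_{11}+X_{22}Y_{12}^\tr)B+X_{22}C_\mK^\tr R\bigr]\Delta_C\bigr)$; transposing and dividing by $2J_{\LQG,q}$ reproduces \cref{eq:partial_Ak}, \cref{eq:partial_Bk}, \cref{eq:partial_Ck} exactly. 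Two small points worth making explicit if you write this up in full: the formula presupposes $J_{\LQG,q}(\mK)>0$ (guaranteed under \Cref{assumption:performance-weights}, and needed when you pass from $\delta(J^2)$ to $\delta J$), and the differentiability of $\mK\mapsto X_\mK$ follows from invertibility of the Lyapunov operator at a Hurwitz $A_{\mathrm{cl}}(\mK)$ on the open set $\mathcal{C}_{q,0}$, which justifies the term $\operatorname{tr}(\widehat{Q}\,\delta X)$ rather than only appealing to analyticity of $J$ itself.
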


\subsubsection{The Output Feedback $\mathcal{H}_\infty$ Control Problem \cref{eq:Hinf_policy_optimization}}
The $\mathcal{H}_\infty$ cost $J_{\infty,n}(\mK)$ in \cref{eq:Hinf_policy_optimization} is in general nonconvex and also nonsmooth. A nice property is that $J_{\infty,n}$ is subdifferential regular \cite[Proposition 3.1]{apkarian2006nonsmooth}, \cite[Lemma 5.1]{zheng2023benign}. We here briefly discuss the computation of the Clarke subdifferential of $J_{\infty,n}(\mK)$ at any feasible point $\mK \in \mathcal{C}_n$. 

Let $j\mathbb{R}$ denote the imaginary axis in $\mathbb{C}$. Fix $\mK\in\mathcal{C}_q$, and define
\[
\mathcal{Z} = \{s\in j\mathbb{\mathbb{R}}\cup \{\infty\}
\mid \sigma_{\max}(\mathbf{T}_{zd}(\mK,s)) = J_{\infty,n}(\mK)\}.
\]
For each $s\in\mathcal{Z}$, let $Q_s$ be a complex matrix whose columns form an orthonormal basis of the eigenspace of $\mathbf{T}_{zd}(\mK,s)\mathbf{T}_{zd}(\mK,s)^\her$ associated with its maximal eigenvalue $J_{\infty,n}^2(\mK)$. The following lemma characterizes the subdifferential $\partial J_{\infty,n}(\mK)$, whose proof is technically involved and presented in \cite[Lemma 5.2]{zheng2023benign}.

\begin{lemma}\label{lemma:subdifferential-Hinf}
A matrix $\Phi \in \mathbb{R}^{(m+n)\times(p+n)}$ is a member of $\partial J_{\infty,n}(\mK)$ if and only if there exist finitely many $s_1,\ldots,s_K\in \mathcal{Z}$ and associated positive semidefinite Hermitian matrices $Y_1,\ldots,Y_K$ with $\sum_{\kappa=1}^K\operatorname{tr}Y_\kappa=1$ such that
\begin{align*}
\Phi = \frac{1}{J_{\infty,n}(\mK)}
\sum_{\kappa=1}^K
\operatorname{Re}\bigg\{\!
& \left(
\begin{bmatrix}
0 & V^{1/2} \\ 0 & 0
\end{bmatrix}
+
\begin{bmatrix}
C & 0 \\
0 & I
\end{bmatrix}
(s_\kappa I-A_{\mathrm{cl}}(\mK))^{-1}B_{\mathrm{cl}}(\mK)
\right)
\mathbf{T}_{zd}(\mK,s_\kappa)^\her Q_{s_\kappa} Y_\kappa Q_{s_\kappa}^\her \\
&\quad \cdot
\left(
\begin{bmatrix}
0 & 0 \\ R^{1/2} & 0
\end{bmatrix}
+C_{\mathrm{cl}}(\mK)(s_\kappa I-A_{\mathrm{cl}}(\mK))^{-1}
\begin{bmatrix}
B & 0 \\ 0 & I
\end{bmatrix}
\right)
\!\bigg\}^{\!\tr}.
\end{align*}
\end{lemma}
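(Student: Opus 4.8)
The plan is to view $J_{\infty,n}$ as the composition of the Hardy-space sup-norm with the rational closed-loop map $\mK\mapsto\mathbf{T}_{zd}(\mK,\cdot)$ and to push a chain rule through this composition, taking care that the set $\mathcal{Z}$ of maximizing frequencies may be infinite. On the open set $\mathcal{C}_n$ the closed-loop matrices $A_{\mathrm{cl}}(\mK),B_{\mathrm{cl}}(\mK),C_{\mathrm{cl}}(\mK),D_{\mathrm{cl}}(\mK)$ in \cref{eq:closed-loop-matrices} are affine in the entries of $\mK=\begin{bmatrix}D_\mK & C_\mK \\ B_\mK & A_\mK\end{bmatrix}$, and since $\mK\in\mathcal{C}_n$ means $A_{\mathrm{cl}}(\mK)$ is Hurwitz, $\mathbf{T}_{zd}(\mK,s)=C_{\mathrm{cl}}(\mK)(sI-A_{\mathrm{cl}}(\mK))^{-1}B_{\mathrm{cl}}(\mK)+D_{\mathrm{cl}}(\mK)$ is a proper stable rational function of $s$, continuous on the one-point compactification $j\mathbb{R}\cup\{\infty\}$ (the value at $s=\infty$ being $D_{\mathrm{cl}}(\mK)$), so the supremum defining $J_{\infty,n}(\mK)$ is attained and $\mathcal{Z}\neq\varnothing$. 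First I would compute, for each fixed active $s$, the derivative of $\mK\mapsto\mathbf{T}_{zd}(\mK,s)$ along a structured perturbation $\delta\mK$, using the resolvent identity $\delta(sI-A_{\mathrm{cl}})^{-1}=(sI-A_{\mathrm{cl}})^{-1}(\delta A_{\mathrm{cl}})(sI-A_{\mathrm{cl}})^{-1}$; collecting the left and right resolvent factors and the weight blocks coming from $B_1,C_1,D_{12},D_{21}$ in \cref{eq:notations_general_H2} produces exactly the two bracketed factors
\[
\begin{bmatrix} 0 & V^{1/2} \\ 0 & 0 \end{bmatrix}+\begin{bmatrix} C & 0 \\ 0 & I \end{bmatrix}(sI-A_{\mathrm{cl}}(\mK))^{-1}B_{\mathrm{cl}}(\mK)
\quad\text{and}\quad
\begin{bmatrix} 0 & 0 \\ R^{1/2} & 0 \end{bmatrix}+C_{\mathrm{cl}}(\mK)(sI-A_{\mathrm{cl}}(\mK))^{-1}\begin{bmatrix} B & 0 \\ 0 & I \end{bmatrix}
\]
appearing in the statement, because all of $\delta A_{\mathrm{cl}},\delta B_{\mathrm{cl}},\delta C_{\mathrm{cl}},\delta D_{\mathrm{cl}}$ factor through the single block perturbation $\delta\mK$ sandwiched between the fixed selectors $\begin{bmatrix} B & 0 \\ 0 & I \end{bmatrix}$ and $\begin{bmatrix} C & 0 \\ 0 & I \end{bmatrix}$.

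Next I would handle the outer norm. Using $\sigma_{\max}(M)^2=\lambda_{\max}(MM^\her)$ and the classical subdifferential of the largest eigenvalue of a Hermitian matrix, $\partial\lambda_{\max}(H)=\{QYQ^\her:Y\text{ positive semidefinite Hermitian},\ \operatorname{tr}Y=1\}$ with $Q$ an orthonormal basis of the top eigenspace, the smooth chain rule gives the subdifferential of $\mK\mapsto\sigma_{\max}(\mathbf{T}_{zd}(\mK,s))$ for each fixed $s\in\mathcal{Z}$: a generic element is $\tfrac{1}{J_{\infty,n}(\mK)}\operatorname{Re}\{\cdots\}^{\tr}$ with a single weight $Q_s Y Q_s^\her$. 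Since $J_{\infty,n}(\mK)=\max_{s\in j\mathbb{R}\cup\{\infty\}}\sigma_{\max}(\mathbf{T}_{zd}(\mK,s))$ is a maximum of these smooth functions over a compact index set, Clarke/Danskin calculus yields that $\partial J_{\infty,n}(\mK)$ is the closed convex hull of $\bigcup_{s\in\mathcal{Z}}\partial_{\mK}\sigma_{\max}(\mathbf{T}_{zd}(\mK,s))$; a Carathéodory argument in the space of symmetric matrices then rewrites any element of this hull as a finite convex combination over finitely many active frequencies $s_1,\dots,s_K$, and absorbing the convex weights into the positive semidefinite matrices $Y_\kappa$ produces the normalization $\sum_\kappa\operatorname{tr}Y_\kappa=1$. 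To upgrade the resulting two one-sided inclusions to the stated equality, I would invoke subdifferential (Clarke) regularity of $J_{\infty,n}$ — already recorded in the paper via \cite[Proposition 3.1]{apkarian2006nonsmooth} and \cite[Lemma 5.1]{zheng2023benign} — which forces the chain-rule and max-rule inclusions to hold with equality, as in \cite[Lemma 5.2]{zheng2023benign}; specializing $\mK$ to a static state feedback $K$ recovers \cref{lemma:subdifferential-Hinf-SF}.

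The main obstacle is the case in which the $\mathcal{H}_\infty$ norm is attained at infinitely many frequencies, or at a frequency where $\sigma_{\max}(\mathbf{T}_{zd}(\mK,s))$ has multiplicity larger than one — then $\mathcal{Z}$ is not finite and the finite-max chain rule is not directly applicable. I would address this by establishing an outer-semicontinuity (closedness) property of the set-valued map $s\mapsto\partial_{\mK}\sigma_{\max}(\mathbf{T}_{zd}(\mK,s))$ on the compact set $\mathcal{Z}$ and then applying Carathéodory's theorem to reduce any subgradient to a sum over finitely many frequencies; obtaining this finite-sum-over-finitely-many-frequencies representation (rather than an integral of a measure over $\mathcal{Z}$) is precisely the technical strengthening over the earlier calculation in \cite{apkarian2006nonsmooth} pointed out in \cite[Remark B.1]{zheng2023benign}. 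A secondary but non-trivial bookkeeping task is verifying that the single transpose and real-part operation in the statement correctly aggregates the contributions of $\delta A_{\mathrm{cl}},\delta B_{\mathrm{cl}},\delta C_{\mathrm{cl}},\delta D_{\mathrm{cl}}$ into one $(m+n)\times(p+n)$ block matrix matching the layout of $\mK$; this follows block by block from the fact that $A_{\mathrm{cl}},B_{\mathrm{cl}},C_{\mathrm{cl}},D_{\mathrm{cl}}$ are built from $\mK$ via the fixed selectors in \cref{eq:notations_general_H2,eq:closed-loop-matrices}.
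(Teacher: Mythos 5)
Your proposal follows essentially the same route as the actual proof, which this paper does not reproduce but defers to Part I \cite[Lemma 5.2]{zheng2023benign}: differentiate the closed-loop LFT $\mK\mapsto\mathbf{T}_{zd}(\mK,s)$ via the resolvent identity to obtain the two selector-weighted factors, compose with the $\lambda_{\max}/\sigma_{\max}$ subdifferential, apply the Clarke max-rule over the compact frequency set, and use outer semicontinuity plus Carathéodory in the finite-dimensional space of $\Phi$'s (together with subdifferential regularity for the reverse inclusion) to obtain the exact finite-sum characterization even when $\mathcal{Z}$ is infinite. The only slip is calling the per-frequency maps $\mK\mapsto\sigma_{\max}(\mathbf{T}_{zd}(\mK,s))$ smooth — they are merely regular when the top singular value is multiple — but your treatment via the eigenvalue subdifferential already accounts for this, so it does not affect the argument.
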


We refer the interested reader to Part I of this paper \cite[Section 5]{zheng2023benign} for analytical examples of Clarke stationary points in $\mathcal{H}_\infty$ control. 

\section{Proofs for State Feedback Policy Optimization} \label{appendix:state-feedback}
In this section, we provide some extra discussions and missing proofs for \ECL{} applied to the policy optimization problems for state feedback control in \cref{subsection:static-policies}. 

\subsection{Convexity of $\mathcal{F}_{\mathrm{LQR}}$} \label{subsection:convexity-LQR}

We provide some details to show that the following matrix fractional function arising in LQR
$$
f(X,Y) = \operatorname{tr} \!\left(QX + X^{-1}Y^\tr RY\right), \quad \mathrm{dom}(f):=\{(X, Y) \in \mathbb{S}^n_{++} \times \mathbb{R}^{m \times n}\}
$$
is convex. 
This is a known fact; see e.g., \cite[Lemma 4.4]{khargonekar1991mixed} and \cite[Section IV.A]{mohammadi2021convergence}.  

First, $\mathrm{dom}(f)$ is evidently a convex set. Second, note that
$$ 
f(X,Y) = \operatorname{tr}(QX) + \operatorname{tr}\!\left(
(Y^\tr R^{1/2})^\tr X^{-1} Y^\tr R^{1/2}\right)
= \operatorname{tr}(QX)
+\sum_{i=1}^n g\big(X,(Y^\tr R^{1/2})_i\big),
$$ 
where $(Y^\tr R^{1/2})_i$ denotes the $i$-th column of $Y^\tr R^{1/2}$, and
\[
g(X,w) = w^\tr X^{-1}w,
\qquad X \in \mathbb{S}^n_{++}, w \in \mathbb{R}^{n}.
\]
Since $X\mapsto \operatorname{tr}(QX)$ is linear and $Y\mapsto (Y^\tr R^{1/2})_i$ is a linear transformation, it suffices to show that $g(X,w)$ is jointly convex in $(X,w)$, or that the epigraph $\operatorname{epi}_{\geq}(g) = \setv*{(X,w,\gamma)}{X\succ 0,\gamma\geq g(X,w)}$ is convex. By using Schur complement, we have
\[
\begin{aligned}
\operatorname{epi}_{\geq}(g)
& =\setv*{(X,w,\gamma)}{X\succ 0, \gamma\geq w^\tr X^{-1} w} \\
& =\setv*{(X,w,\gamma)}
{
X\succ 0,
\begin{bmatrix}
\gamma & w^\tr \\
w & X
\end{bmatrix}\succeq 0}
\end{aligned}
\]
which is a convex set. The proof is now complete.

\subsection{Proofs for State Feedback $\mathcal{H}_\infty$ Control} 
\label{subsection:proofs-state-feedback-Hinf}

\subsubsection{A Non-coercive Example of State Feedback $\mathcal{H}_\infty$ Policy Optimization} \label{subsection:non-coercivity}

\Cref{example:Hinf} has already shown the $\mathcal{H}_\infty$ cost function is not coercive, but the globally optimal policy is achieved. In \Cref{example:Hinf}, the system is open-loop stable. 
Here, we show another simple instance where the  $\mathcal{H}_\infty$ cost function is not coercive, and the optimal $\mathcal{H}_\infty$ policy is not attained. 

Consider a state feedback $\mathcal{H}_\infty$ instance \cref{eq:Hinf-state-feedback} with the following problem data (which is open-loop unstable):
$$
A = 1, \; B = -1, \; B_w = 1,  \; Q = 1,\; R = 1. 
$$
This $\mathcal{H}_\infty$ instance satisfies all the assumptions in \Cref{fact:Hinf-non-coercive}, i.e. $(A,B)$ is controllable, $B_w$ has full row rank, and $Q\succ 0, R\succ 0$. Let us consider a linear state feedback policy $u(t) = k x(t)$ with stabilizing $k >1$. According to \cref{eq:transfer-function-Tzw} and after some simple calculations, we can compute the $\mathcal{H}_\infty$ cost function analytically below 
$$
\begin{aligned}
J_\infty(k) = \left\|\begin{bmatrix}
    1 \\ k
\end{bmatrix}(s - (1-k))^{-1}\right\|_\infty &= \sup_{\omega \in \mathbb{R}} \ \lambda_{\max}^{1/2}\left(\left(-i\omega - (1-k)\right)^{-1}\begin{bmatrix}
    1 & k
\end{bmatrix} \begin{bmatrix}
    1 \\ k
\end{bmatrix}\left(i\omega - (1-k)\right)^{-1}\right) \\
&=     \sup_{\omega \in \mathbb{R}} \sqrt{\frac{1+k^2}{(k-1)^2 + \omega^2}}  =  \frac{\sqrt{1+k^2}}{k-1},\qquad  \forall k>1.
\end{aligned}
$$
It is clear that we have $\lim_{k \to \infty } J_\infty(k) = 1$, and this function is not coercive. The infimum of $J_\infty(k)$ is not attained (we have $\inf_{k>1} J_\infty(k) = 1$, requiring $k \to \infty$). In this example, there exists no finite Clarke stationary point. 

Moreover, for this instance, we will see that the corresponding LMI in \cref{eq:state-feedback-Hinf-LMI} is not solvable. Indeed, the problem $\inf_{(\gamma, y, x) \in \mathcal{F}_\infty} \;\gamma$  reads as 
\begin{equation} \label{eq:LMI-hinf-nont-attained}
\begin{aligned}
\inf_{\gamma, y, x} & \; \gamma \\
\text{subject to}& \; x > 0, \ \begin{bmatrix}
2(x-y)& 1 & x &-y \\
1 & -\gamma  & 0&0 \\
x & 0 &  -\gamma &0\\
-y& 0 & 0 & -\gamma
\end{bmatrix}\preceq 0,   
\end{aligned}
\end{equation}
which, via the Shur complement, is equivalent to (note that it is easy to see that $\gamma >0$)
$$ 
\begin{aligned}
\inf_{\gamma, y, x} & \; \gamma \\
\text{subject to}& \; x > 0, \ 2(x-y) + \gamma^{-1} (1 + x^2 + y^2) \leq 0.  
\end{aligned}
$$
It is not difficult to argue that achieving the infimum requires $(\gamma,y,x)=(1,1,0)$. However, this point $(1, 1, 0)$ is on the boundary but outside $\mathcal{F}_\infty$, and we have $k = yx^{-1} = \infty$. The feasible region of $\mathcal{F}_\infty$ in \cref{eq:LMI-hinf-nont-attained} is shown in \Cref{fig:feasible-hinf-example}.

\begin{figure}[t]
    \centering
    \includegraphics[width=0.25 \textwidth]{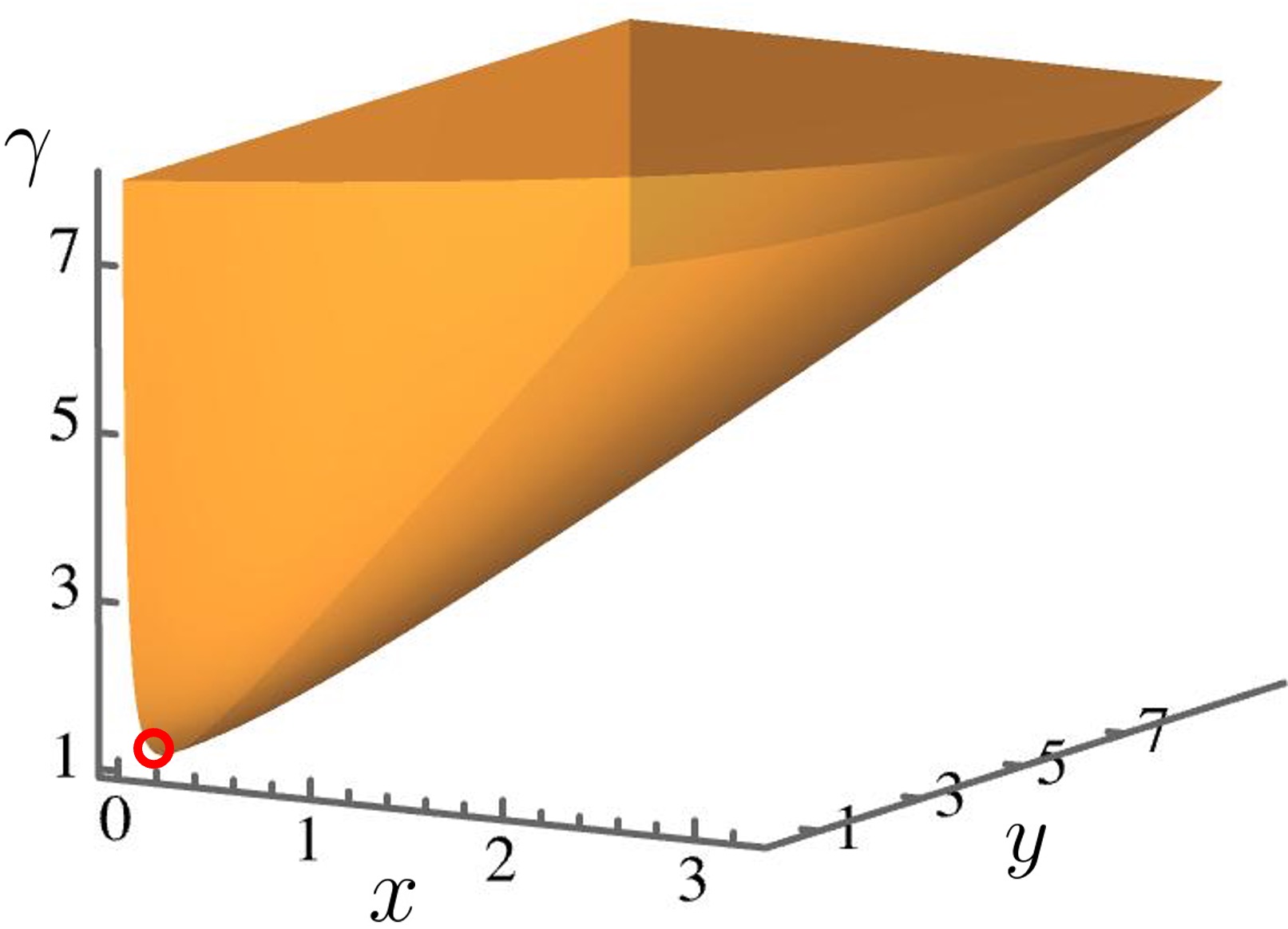}
    \caption{The LMI feasible region $\mathcal{F}_\infty$ in \cref{eq:LMI-hinf-nont-attained} of the 1-dim $\mathcal{H}_\infty$ example. The infimum of \cref{eq:LMI-hinf-nont-attained} is achieved on the boundary point $(\gamma,y,x)=(1,1,0) \notin \mathcal{F}_\infty$ (highlighted as the red circle). In this case, the corresponding controller $k = yx^{-1} \to \infty$.    
    }
    \label{fig:feasible-hinf-example}
\end{figure}

\subsubsection{Proof of \Cref{proposition:hinf-lifting}} \label{subsection:hinf-lifting}

We divide the proof of \Cref{proposition:hinf-lifting} into two parts. 

\vspace{2mm}
\noindent\textbf{Part I: Proving $\operatorname{epi}_{\geq} (J_{\infty})=\pi_{K,\gamma}(\mathcal{L}_{\infty})$.}

\vspace{2mm}

We first prove the inclusion $\operatorname{epi}_{\geq} (J_{\infty})\subseteq \pi_{K,\gamma}(\mathcal{L}_{\infty})$. Let $(K,\gamma)\in\operatorname{epi}_{\geq}(J_\infty)$ be arbitrary. We then have $K \in \mathcal{K}$ and $\gamma\geq J_{\infty}(K)$. 
With $A+BK$ stable and $(A+BK,B_w)$ controllable (since $B_w$ has full row rank), it follows from the non-strict version of \Cref{lemma:bounded_real} that there exists a symmetric matrix $P$ satisfying the LMI in \Cref{eq:lifted-set-Hinf-state-feedback}. 
We now argue that $P\succ0$. Indeed by Schur complement, \Cref{eq:lifted-set-Hinf-state-feedback} is equivalent to 
$$
(A+BK)^\tr P + P(A+BK) + \gamma^{-1} \left(Q+K^\tr RK + PB_wB_w^\tr P\right) \preceq 0, 
$$
which implies that $P$ is a solution to the Lyapunov equation
\begin{equation}
\label{eq:Riccati-Lyapunov}
    (A+BK)^\tr P + P(A+BK) + Q' = 0
\end{equation}
for some $Q'\succ0$ since $Q\succ0$.
With $A+BK$ stable and $Q'\succ0$, the standard Lyapunov argument \cite[Lemma 3.18 (ii)]{zhou1996robust} guarantees $P\succ0$. Thus, we have $(K,\gamma,P) \in \mathcal{L}_\infty$. By the arbitrariness of $(K,\gamma) \in \operatorname{epi}_{\geq} (J_{\infty})$, we know $\operatorname{epi}_{\geq} (J_{\infty})\subseteq \pi_{K,\gamma}(\mathcal{L}_{\infty})$.

Next we show the inclusion $\pi_{\mK,\gamma}(\mathcal{L}_\infty)\subseteq \operatorname{epi}_{\geq} (J_\infty)$. Let $(K,\gamma,P)\in\mathcal{L}_{\infty}$. Using the same argument as above, we know that $P\succ 0$ is a solution to \cref{eq:Riccati-Lyapunov} for some $Q'\succ0$, and thus the standard Lyapunov argument \cite[Lemma 3.19 (ii)]{zhou1996robust} guarantees that $A+BK$ is stable, i.e., $K \in \mathcal{K}$. With $A+BK$ stable, it follows from the non-strict version of \Cref{lemma:bounded_real} that $\gamma \geq J_{\infty}(K)$. Thus $(K,\gamma)\in\operatorname{epi}_{\geq}(J_\infty)$, and by the arbitrariness of $(K,\gamma,P)\in\mathcal{L}_\infty$ we have $\pi_{\mK,\gamma}(\mathcal{L}_\infty)\subseteq \operatorname{epi}_{\geq} (J_\infty)$. The proof of Part I is now complete.

\vspace{2mm}

\noindent\textbf{Part II: Proving $\Phi_\infty$ is a diffeomorphism between $\mathcal{L}_{\infty}$ and $\mathcal{F}_{\infty}$.}
\vspace{2mm}

Consider the mapping $\Phi_\infty$ given by $\Phi_\infty(K,\gamma,P)=(\gamma,KP^{-1},P^{-1})$ in \Cref{eq:mapping-hinf-state-feedback}. We will first show that $\Phi_\infty(\mathcal{L}_\infty)\subseteq\mathcal{F}_\infty$, i.e.,
$$
\Phi_\infty(K,\gamma, P) \in \mathcal{F}_{\infty}, \qquad \forall (K,\gamma,P)\in\mathcal{L}_{\infty}. 
$$
Let $(K,\gamma,P)\in\mathcal{L}_\infty$ be arbitrary, and let $X = P^{-1} \succ 0$ and $Y = K P^{-1}$ so that $\Phi_\infty(K,\gamma, P) = (\gamma, Y, X)$. We observe that 
$$
\begin{aligned}
& P\succ 0\ \text{and}\ 
\begin{bmatrix}
(A+BK)^\tr P \!+\! P (A+BK) & PB_w & Q^{1/2} & K^\tr R^{1/2} \\
B_w^\tr P & -\gamma I & 0 & 0 \\
Q^{1/2} & 0 & -\gamma I & 0 \\
R^{1/2}K & 0 & 0 & -\gamma I
\end{bmatrix}\preceq 0 \\
\Longleftrightarrow\quad
& X\succ 0\ \text{and}\ \begin{bmatrix}
X  & 0 & 0 \\
0 & I & 0 \\
0 & 0 & I
\end{bmatrix} \begin{bmatrix}
(A+BK)^\tr P \!+\! P (A+BK) & PB_w & C^\tr \\
B_w^\tr P & -\gamma I & 0 \\
C & 0 & -\gamma I
\end{bmatrix} \begin{bmatrix}
X  & 0 & 0 \\
0 & I & 0 \\
0 & 0 & I
\end{bmatrix} \preceq 0 \\
\Longleftrightarrow \quad &
X\succ 0\ \text{and}\ 
\begin{bmatrix}
    AX\!+\! BY \!+\! XA^\tr \!+\!Y^\tr B^\tr & B_w & XQ^{1/2} & Y^\tr R^{1/2} \\
    B_w^\tr & -\gamma I & 0 & 0 \\
    Q^{1/2}X & 0 & -\gamma I & 0 \\
    R^{1/2}Y & 0 & 0 & -\gamma I
\end{bmatrix} \preceq 0,
\end{aligned}
$$
which implies $(\gamma,Y,X)\in\mathcal{F}_\infty$. By the definitions of $\mathcal{L}_\infty$ and $\mathcal{F}_\infty$ we see that $\Phi_\infty(\mathcal{L}_\infty)\subseteq\mathcal{F}_\infty$.

Then, we define the mapping 
\begin{equation} \label{eq:inverse-Phi-Hinf}
\Psi_\infty(\gamma, Y, X) = (YX^{-1}, \gamma, X^{-1}), \qquad \forall (\gamma, Y, X) \in \mathcal{F}_\infty. 
\end{equation}
By reversing the derivation above, we can easily verify that $\Psi_\infty(\mathcal{F}_\infty) \subseteq \mathcal{L}_\infty$. 

Finally, it is straightforward to check that 
$$
\begin{aligned}
    \Phi_\infty \circ \Psi_\infty(\gamma, Y, X) &= (\gamma, Y, X), \quad \forall (\gamma, Y, X) \in \mathcal{L}_\infty \\
    \Psi_\infty \circ \Phi_\infty(K, \gamma, P) &= (K, \gamma, P), \quad  \forall (K, \gamma, P) \in \mathcal{F}_\infty,
\end{aligned}
$$
i.e., we have $\Phi_\infty^{-1} = \Psi_\infty$. It is clear that $\Phi_\infty$ in \Cref{eq:mapping-hinf-state-feedback} and its inverse $\Psi_\infty$ in \cref{eq:inverse-Phi-Hinf} are both in fact $C^\infty$ over their domains, since each element of $\Phi_\infty$ or $\Psi_\infty$ is a rational function.

\section{Proofs for Output Feedback Policy Optimization} \label{appendix:dynamic-policies}
In this section, we provide some extra discussions and missing proofs for \ECL{} applied to output feedback control problems (\cref{subsection:dynamic-policies}).

\subsection{Auxiliary Notations and Results}

Here we present some auxiliary notations and results for subsequent analysis. Note that with the notations in~\eqref{eq:notations_general_H2}, the closed-loop matrices in \Cref{eq:closed-loop-matrices} can be written as 
\[
\begin{aligned}
A_{\mathrm{cl}}(\mK)
={} &
\begin{bmatrix}
A+B_2 D_\mK C_2 & B_2C_\mK \\ B_\mK C_2 & A_\mK
\end{bmatrix}, 
& B_{\mathrm{cl}}(\mK)
={} & \begin{bmatrix} B_1 + B_2D_\mK D_{21} \\ B_{\mK}D_{21}  \end{bmatrix}, \\
C_{\mathrm{cl}}(\mK)
={} &
 \begin{bmatrix}
        C_1 + D_{12}D_\mK C_2 & D_{12}C_\mK
    \end{bmatrix}, &
D_\mathrm{cl}(\mK) ={} & D_{12}D_\mK D_{21} .
\end{aligned}
\]

We will extensively use the inverse identity for a $2$ by $2$ block matrix 
\begin{equation}\label{eq:block_matrix_inversion_formula}
\begin{bmatrix}
M_{11} & M_{12} \\ M_{21} & M_{22}
\end{bmatrix}^{-1}
=\begin{bmatrix}
\left(M_{11}-M_{12}M_{22}^{-1}M_{21}\right)^{-1} & 
-\left(M_{11}-M_{12}M_{22}^{-1}M_{21}\right)^{-1}M_{12}M_{22}^{-1} \\
-M_{22}^{-1}M_{21}\left(M_{11}-M_{12}M_{22}^{-1}M_{21}\right)^{-1}
&
\left(M_{22}-M_{21}M_{11}^{-1}M_{12}\right)^{-1}
\end{bmatrix},
\end{equation}
where $M=\begin{bmatrix}
M_{11} & M_{12} \\ M_{21} & M_{22}
\end{bmatrix}$ is any invertible matrix. 
It turns out that it will be convenient to introduce the mappings $\Phi_M$ and $\Phi_\Lambda$ defined on $\mathbb{R}^{(m+n)\times(p+n)}\times\mathbb{S}^{2n}_{++}$ by
\begin{subequations}
\label{eq:Phi_M_V_def}
\begin{align}
\Phi_M(\mK,P)
\coloneqq{} & P_{12}B_\mK C_2 (P^{-1})_{11}
+ P_{11} B_2 C_\mK (P^{-1})_{21} 
 + P_{12}A_\mK (P^{-1})_{21} \nonumber \\
 & + P_{11}(A+B_2 D_\mK C_2)(P^{-1})_{11}, \\
\Phi_\Lambda(\mK,P)
\coloneqq{} &
\begin{bmatrix}
D_{\mK} & D_\mK C_2 (P^{-1})_{11} + C_\mK (P^{-1})_{21} \\
P_{11}B_2 D_\mK + P_{12} B_\mK &
\Phi_M(\mK, P)
\end{bmatrix},
\end{align}
\end{subequations}
where the parameter $\mK$ is partitioned as \cref{eq:dynamic-policies-Dk}. We also define the mappings $\Psi_{\mK}$ and $\Psi_P$ by
\begin{subequations}
\label{eq:Psi_K_P_def}
\begin{align}
\Psi_{\mK}\left(
\Lambda,X,Y,\Xi\right)
\coloneqq{} &
\begin{bmatrix} I & 0 \\ YB_2 & \Xi \end{bmatrix}^{-1}
\left(\Lambda-\begin{bmatrix} 0_{m\times p} & 0_{m\times n} \\ 0_{n\times p} & YAX \end{bmatrix}\right)
\begin{bmatrix} I & C_2X \\ 0 & -\Xi^{-1}(Y-X^{-1})X  \end{bmatrix}^{-1}, \\ 
\Psi_P(X,Y,\Xi)\coloneqq{} &
\begin{bmatrix} Y & \Xi \\ \Xi^\tr  & \Xi^\tr (Y-X^{-1})^{-1}\Xi \end{bmatrix}
\end{align}
\end{subequations}
for any $\Lambda\in\mathbb{R}^{(m+n)\times(p+n)}$, $\Xi\in\mathrm{GL}_n$ and $X,Y\in\mathbb{S}^n_{++}$ satisfying $\begin{bmatrix} X & I \\ I & Y \end{bmatrix}\succ 0$. Note that $\begin{bmatrix} X & I \\ I & Y \end{bmatrix}\succ 0$ implies $X\succ 0$ and $Y-X^{-1}\succ 0$ by Schur complement, which is why we can invert $Y-X^{-1}$ and $-\Xi^{-1}(Y-X^{-1})X$ in the definitions of $\Psi_P$ and $\Psi_\mK$. By straightforward algebraic calculation, it can be checked that
\begin{equation}
\label{eq:Psi_K_expanded}
\begin{aligned}
& \Psi_{\mK}\!\left(\begin{bmatrix}
G & F \\ H & M
\end{bmatrix},X,Y,\Xi\right) \\
={} &
\begin{bmatrix}
I & 0 \\
-\Xi^{-1}YB_2 & \Xi^{-1}
\end{bmatrix}
\begin{bmatrix} G & F \\ H & M-YAX \end{bmatrix}
\begin{bmatrix}
I & -C_2X\left(-\Xi^{-1}(Y\!-\!X^{-1})X\right)^{-1} \\
0 & \left(-\Xi^{-1}(Y\!-\!X^{-1})X\right)^{-1}
\end{bmatrix} \\
={} &
\begin{bmatrix}
G & (F-GC_2X)\left(-\Xi^{-1}(Y\!-\!X^{-1})X\right)^{-1} \\
\Xi^{-1}(H-YB_2G) &
\Xi^{-1}(M-Y(A\!-\!B_2GC_2)X-HC_2 X-YB_2 F)
\left(-\Xi^{-1}(Y\!-\!X^{-1})X\right)^{-1}
\end{bmatrix}.
\end{aligned}
\end{equation}
The mappings $\Phi_M,\Phi_\Lambda$ and $\Psi_{\mK},\Psi_P$ will be used for establishing the \ECL{}s for both LQG and $\mathcal{H}_\infty$ output feedback control. 

The following three lemmas will be useful for later analysis.
\begin{lemma}
\label{lemma:Psi_P_basic_properties}
\begin{enumerate}
\item Let $X,Y\in\mathbb{S}^{n}_{++}$ satisfy $\begin{bmatrix} X & I \\ I & Y \end{bmatrix}\succ 0$, and let $\Xi\in\mathrm{GL}_n$ be arbitrary. Then
\[
\Psi_P(X,Y,\Xi)
\succ 0.
\]
\item Let $P\in\mathbb{S}^{2n}_{++}$ be arbitrary with $P_{12}\in\mathrm{GL}_n$. Then
\[
\begin{bmatrix}
(P^{-1})_{11} & I \\
I & P_{11}
\end{bmatrix} \succ 0.
\]
\end{enumerate}
\end{lemma}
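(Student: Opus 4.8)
\textbf{Proof plan for \Cref{lemma:Psi_P_basic_properties}.}

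The two claims are elementary consequences of the Schur complement, but the first one requires a careful congruence argument. For part (1), the plan is to exhibit an explicit invertible matrix that brings $\Psi_P(X,Y,\Xi)$ into a manifestly positive-definite form via congruence. Writing $S \coloneqq Y - X^{-1}$, which satisfies $S \succ 0$ by the Schur complement applied to $\begin{bmatrix} X & I \\ I & Y \end{bmatrix} \succ 0$, we have
\[
\Psi_P(X,Y,\Xi) = \begin{bmatrix} Y & \Xi \\ \Xi^\tr & \Xi^\tr S^{-1} \Xi \end{bmatrix}.
\]
I would compute the Schur complement of the $(2,2)$-block $\Xi^\tr S^{-1}\Xi$ (which is positive definite since $\Xi \in \mathrm{GL}_n$ and $S^{-1} \succ 0$): it equals $Y - \Xi (\Xi^\tr S^{-1}\Xi)^{-1}\Xi^\tr = Y - \Xi \Xi^{-1} S \Xi^{-\tr}\Xi^\tr = Y - S = X^{-1} \succ 0$. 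Since both the $(2,2)$-block and its Schur complement are positive definite, the block matrix $\Psi_P(X,Y,\Xi)$ is positive definite. This is the cleanest route; the only thing to be careful about is correctly simplifying $(\Xi^\tr S^{-1}\Xi)^{-1} = \Xi^{-1} S \Xi^{-\tr}$ and tracking transposes, so that the Schur complement collapses exactly to $X^{-1}$.

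For part (2), the plan is to apply the block-matrix inversion formula \cref{eq:block_matrix_inversion_formula} to identify $(P^{-1})_{11}$. With $P = \begin{bmatrix} P_{11} & P_{12} \\ P_{12}^\tr & P_{22} \end{bmatrix} \succ 0$, the formula gives $(P^{-1})_{11} = (P_{11} - P_{12} P_{22}^{-1} P_{12}^\tr)^{-1}$. Hence $(P^{-1})_{11} \succ 0$ (it is the inverse of the Schur complement of $P_{22}$ in $P$, which is positive definite), and moreover $P_{11} - \big((P^{-1})_{11}\big)^{-1} = P_{12}P_{22}^{-1}P_{12}^\tr \succeq 0$; in fact it is $\succ 0$ because $P_{12} \in \mathrm{GL}_n$ and $P_{22}^{-1} \succ 0$. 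Therefore $P_{11} \succ \big((P^{-1})_{11}\big)^{-1} \succ 0$, and by the Schur complement characterization (taking the $(1,1)$-block $(P^{-1})_{11}$, whose Schur complement in $\begin{bmatrix} (P^{-1})_{11} & I \\ I & P_{11}\end{bmatrix}$ is $P_{11} - \big((P^{-1})_{11}\big)^{-1} \succ 0$) we conclude $\begin{bmatrix} (P^{-1})_{11} & I \\ I & P_{11}\end{bmatrix} \succ 0$.

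I do not anticipate a genuine obstacle here — both parts are standard linear-algebra manipulations. The one place where a small slip is easy is in part (1): one must use the hypothesis $\Xi \in \mathrm{GL}_n$ essentially (to invert $\Xi^\tr S^{-1}\Xi$) and must not confuse $S = Y - X^{-1}$ with $Y$ itself. In part (2) the key structural input is again that $P_{12}$ is invertible, which is exactly what upgrades the inequality $P_{11} - \big((P^{-1})_{11}\big)^{-1} \succeq 0$ to strict positivity and makes the final Schur-complement argument go through. I would present both parts in two or three lines each, invoking \cref{eq:block_matrix_inversion_formula} explicitly for part (2).
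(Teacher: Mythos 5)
Your proposal is correct and follows essentially the same route as the paper: in part (1) you take the Schur complement of the $(2,2)$-block $\Xi^\tr(Y-X^{-1})^{-1}\Xi$ and simplify it to $X^{-1}\succ 0$, and in part (2) you use the block-inversion identity \cref{eq:block_matrix_inversion_formula} to get $P_{11}-\big((P^{-1})_{11}\big)^{-1}=P_{12}P_{22}^{-1}P_{12}^\tr\succ 0$ and conclude by Schur complement, exactly as in the paper's proof. No gaps; your attention to the invertibility of $\Xi$ and $P_{12}$ is precisely where the hypotheses enter in the paper as well.
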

\begin{proof}[Proof of \Cref{lemma:Psi_P_basic_properties}]
\begin{enumerate}
\item
Note that $\begin{bmatrix} X & I \\ I & Y\end{bmatrix}\succ 0$ implies $X\succ 0$ and $Y\succ X^{-1}$. From the invertibility of $\Xi$, we see that $\Xi^\tr(Y-X^{-1})^{-1}\Xi\succ 0$ and
\[
Y - \Xi\left(
\Xi^\tr(Y-X^{-1})^{-1}\Xi\right)^{-1}\Xi^\tr
=X^{-1}\succ 0.
\]
By Schur complement, we get $\begin{bmatrix} Y & \Xi \\ \Xi^\tr  & \Xi^\tr (Y-X^{-1})^{-1}\Xi \end{bmatrix} \succ 0$.

\item Given an arbitrary $P\in\mathbb{S}^{2n}_{++}$ satisfying $P_{12}\in\mathrm{GL}_n$, by~\eqref{eq:block_matrix_inversion_formula}, we have
\[
\left(P^{-1}\right)_{11}
=\left(
P_{11} - P_{12}P_{22}^{-1}P_{12}^\tr
\right)^{-1},
\]
and thus
\[
P_{11} - \left(P^{-1}\right)_{11}^{-1}
=P_{12}P_{22}^{-1}P_{12}^\tr\succ 0,
\]
where the last inequality follows from $P_{22}\succ 0$ and $P_{12}\in\mathrm{GL}_n$. Together with $P_{11}\succ 0$, we can apply Schur complement to get
$
\begin{bmatrix}
\left(P^{-1}\right)_{11} & I \\ I & P_{11}
\end{bmatrix}\succ 0
$.\qedhere
\end{enumerate}
\end{proof}

\begin{lemma}
\label{lemma:Phi_Psi_identity}
Let $\Lambda\in\mathbb{R}^{(m+n)\times(p+n)}$, $\Xi\in \mathrm{GL}_n$ be arbitrary, and let $X,Y\in\mathbb{S}^{2n}_{++}$ satisfy $\begin{bmatrix} X & I \\ I & Y \end{bmatrix}\succ 0$. Then
\[
\left(\Psi_P(X,Y,\Xi)^{-1}\right)_{11} = X,
\]
and
\[
\Phi_\Lambda(\Psi_\mK(\Lambda,X,Y,\Xi),\Psi_P(X,Y,\Xi))
=\Lambda.
\]
\end{lemma}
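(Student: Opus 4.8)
The plan is to verify \Cref{lemma:Phi_Psi_identity} by direct substitution, exploiting the block-triangular factorizations already built into the definitions \cref{eq:Psi_K_P_def} and \cref{eq:Psi_K_expanded}. First I would establish the claim $\left(\Psi_P(X,Y,\Xi)^{-1}\right)_{11} = X$. Writing $\Psi_P = \begin{bmatrix} Y & \Xi \\ \Xi^\tr & \Xi^\tr(Y-X^{-1})^{-1}\Xi \end{bmatrix}$ and applying the block inversion formula \cref{eq:block_matrix_inversion_formula} to the $(1,1)$ block, I get $\left(\Psi_P^{-1}\right)_{11} = \bigl(Y - \Xi\bigl(\Xi^\tr(Y-X^{-1})^{-1}\Xi\bigr)^{-1}\Xi^\tr\bigr)^{-1}$. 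The inner expression simplifies via cancellation of $\Xi$ and $\Xi^\tr$ (both invertible) to $Y - (Y-X^{-1}) = X^{-1}$, hence the $(1,1)$ block is $(X^{-1})^{-1} = X$. This also confirms consistency with the $\zeta_1$-component $X$ appearing in $\Phi_{\LQG}$ and $\Phi_{\infty,\mathrm{d}}$ in \cref{eq:diffeomorphism-LQG,eq:diffeomorphism-Hinf}, namely $(P^{-1})_{11}$ with $P = \Psi_P$.

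Next I would tackle the identity $\Phi_\Lambda(\Psi_\mK(\Lambda,X,Y,\Xi),\Psi_P(X,Y,\Xi)) = \Lambda$, which is the substantive part. The key observation is that for $P = \Psi_P(X,Y,\Xi)$ we have explicit formulas for the needed blocks of $P$ and $P^{-1}$: $P_{11} = Y$, $P_{12} = \Xi$, and by the previous step (and a parallel computation for the off-diagonal block of $P^{-1}$) we get $(P^{-1})_{11} = X$ and $(P^{-1})_{21} = -\bigl(\Xi^\tr(Y-X^{-1})^{-1}\Xi\bigr)^{-1}\Xi^\tr X^{-1}$ — equivalently $(P^{-1})_{21} = -\Xi^{-1}(Y-X^{-1})X$, after simplification. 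Plugging these into the definition of $\Phi_\Lambda$ in \cref{eq:Phi_M_V_def}, the factorized form \cref{eq:Psi_K_expanded} of $\Psi_\mK$ becomes directly comparable: write $\Psi_\mK(\Lambda,X,Y,\Xi) = \begin{bmatrix} L & 0 \\ * & \Xi^{-1} \end{bmatrix}\bigl(\Lambda - \begin{bmatrix} 0 & 0 \\ 0 & YAX \end{bmatrix}\bigr)\begin{bmatrix} R & * \\ 0 & S \end{bmatrix}$ for the appropriate left/right block-triangular factors $L = I$, $S = (-\Xi^{-1}(Y-X^{-1})X)^{-1}$, etc. Then $\Phi_\Lambda$ applied to this is designed precisely to invert these two multiplications and re-add the $YAX$ term, so the composition telescopes back to $\Lambda$. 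I would verify this block by block: the $(1,1)$ block $D_\mK$ of $\Psi_\mK$ equals the $G$-entry of $\Lambda$ trivially; the $(2,1)$ and $(1,2)$ blocks match $H$ and $F$ after multiplying out the triangular factors; and the $(2,2)$ block is the only one requiring the $\Phi_M$ formula, where the $P_{11}A(P^{-1})_{11} = YAX$ correction term exactly cancels the subtraction in $\Psi_\mK$.

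The main obstacle I anticipate is bookkeeping in the $(2,2)$-block computation of $\Phi_M$. Expanding $\Phi_M(\Psi_\mK, \Psi_P)$ requires substituting the four controller blocks $D_\mK, C_\mK, B_\mK, A_\mK$ of $\Psi_\mK$ — each itself a product of three matrices per \cref{eq:Psi_K_expanded} — into the four-term sum defining $\Phi_M$, and then collecting terms so that the $YAX$, $YB_2GC_2X$, $HC_2X$, and $YB_2F$ cross-terms cancel against the correction $-Y(A-B_2GC_2)X - HC_2X - YB_2F$ sitting inside the $(2,2)$ entry of $\Psi_\mK$. Careful tracking of which $\Xi$, $\Xi^{-1}$, $Y-X^{-1}$, and $X$ factors appear on the left versus right is essential; I would organize this by factoring $\Phi_M = P_{11}(A+B_2D_\mK C_2)(P^{-1})_{11} + (\text{three remaining terms})$ and handling the leading term $YAX + YB_2 D_\mK C_2 X$ first, since $D_\mK = G$ is already identified. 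A secondary (minor) point to be careful about: one must check that $\Psi_\mK, \Psi_P$ are well-defined under the stated hypotheses, i.e., that $Y - X^{-1} \succ 0$ and $\Xi$ invertible make $-\Xi^{-1}(Y-X^{-1})X$ invertible — but this is exactly the parenthetical remark already made after \cref{eq:Psi_K_P_def}, so I may cite it. No deep ideas are needed beyond the block inversion formula \cref{eq:block_matrix_inversion_formula} and patience; the statement is essentially a verification that the explicitly constructed $\Psi$ maps invert the explicitly constructed $\Phi$ maps.
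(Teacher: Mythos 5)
Your proposal is correct and follows essentially the same route as the paper: extract $P_{11}=Y$, $P_{12}=\Xi$, $(P^{-1})_{11}=X$, $(P^{-1})_{21}=-\Xi^{-1}(Y-X^{-1})X$ from the explicit form of $\Psi_P$, then substitute the expanded $\Psi_{\mK}$ from \cref{eq:Psi_K_expanded} into $\Phi_\Lambda$ and $\Phi_M$ and verify block by block that the $YAX$, $YB_2GC_2X$, $HC_2X$, $YB_2F$ cross-terms cancel, exactly as in the paper (which merely exhibits the full inverse of $\Psi_P$ and multiplies instead of invoking \cref{eq:block_matrix_inversion_formula}). One minor slip: your intermediate expression $-\bigl(\Xi^\tr(Y-X^{-1})^{-1}\Xi\bigr)^{-1}\Xi^\tr X^{-1}$ for $(P^{-1})_{21}$ should end in $X$ rather than $X^{-1}$, though the simplified value $-\Xi^{-1}(Y-X^{-1})X$ you actually use is correct.
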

\begin{proof}[Proof of \Cref{lemma:Phi_Psi_identity}]

By straightforward calculation, it is not hard to check that
\[
\begin{bmatrix}
X & -X(Y-X^{-1})\Xi^{-\tr} \\
-\Xi^{-1}(Y-X^{-1})X &
\Xi^{-1}(YXY-Y)\Xi^{-\tr}
\end{bmatrix}
\begin{bmatrix}
Y & \Xi \\ \Xi^\tr &
\Xi^\tr(Y-X^{-1})^{-1}\Xi
\end{bmatrix}
=I,
\]
which justifies that
\[
\Psi_{P}(X,Y,\Xi)^{-1}
=\begin{bmatrix}
X & -X(Y-X^{-1})\Xi^{-\tr} \\
-\Xi^{-1}(Y-X^{-1})X &
\Xi^{-1}(YXY-Y)\Xi^{-\tr}
\end{bmatrix}
=\begin{bmatrix}
X & \Pi^\tr \\ \Pi & \Xi^{-1}(YXY-Y)\Xi^{-\tr}
\end{bmatrix},
\]
where we denote $\Pi = -\Xi^{-1}(Y-X^{-1})X$. Consequently,
\[
\left(\Psi_P(X,Y,\Xi)^{-1}\right)_{11}=X,
\qquad \left(\Psi_P(X,Y,\Xi)^{-1}\right)_{12}
=\Pi^\tr,
\qquad
\left(\Psi_P(X,Y,\Xi)^{-1}\right)_{21}
=\Pi.
\]
Next, we write $\Lambda$ as
$
\Lambda=\begin{bmatrix}
G & F \\ H & M
\end{bmatrix}$
where $G\in\mathbb{R}^{m\times p}$ and $F, H, M$ are real matrices with proper dimensions. From~\eqref{eq:Psi_K_expanded} we have
\[
\begin{aligned}
\Psi_{\mK}(\Lambda,X,Y,\Xi)
={} &
\begin{bmatrix}
G & (F-GC_2X)\Pi^{-1} \\
\Xi^{-1}(H-YB_2G) &
\Xi^{-1}(M-Y(A-B_2GC_2)X-HC_2X-YB_2F)
\Pi^{-1}
\end{bmatrix}.
\end{aligned}
\]
By plugging these identities into the definitions of $\Phi_M$ and $\Phi_\Lambda$, we get
\[
\begin{aligned}
\Phi_M(\Psi_{\mK}(\Lambda,X,Y,\Xi),\Psi_P(X,Y,\Xi))
={} & (H-YB_2G)C_2X
+YB_2(F-GC_2X) + Y(A+B_2GC_2)X \\
& +(M-Y(A-B_2GC_2)X-HC_2X-YB_2F) \\
={} & M,
\end{aligned}
\]
and
\begin{align*}
\Phi_\Lambda(\Psi_{\mK}(\Lambda,X,Y,\Xi),\Psi_P(X,Y,\Xi))
={} &
\begin{bmatrix}
G & GC_2X + (F-GC_2X) \\
YB_2G + (H-YB_2G) & \Phi_M(\Psi_{\mK}(\Lambda,X,Y,\Xi),\Psi_P(X,Y,\Xi))
\end{bmatrix} \\
={} &
\begin{bmatrix}
G & F \\ H & M
\end{bmatrix} = \Lambda,
\end{align*}
which completes the proof.
\end{proof}

\begin{lemma}
\label{lemma:Psi_Phi_identity}
Let $\mK\in\mathbb{R}^{(m+n)\times(p+n)}$ and $P\in\mathbb{S}^{2n}_{++}$ be arbitrary. Then
\[
\Psi_P\!\left(
(P^{-1})_{11},P_{11},P_{12}
\right)=P,
\]
and
\[
\Psi_{\mK}\!\left(\Phi_\Lambda(\mK,P),
(P^{-1})_{11},
P_{11},P_{12}
\right)
=\mK.
\]
\end{lemma}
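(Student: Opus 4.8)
\textbf{Proof proposal for \Cref{lemma:Psi_Phi_identity}.}

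The plan is to verify the two identities by direct algebraic substitution, relying on the block-matrix inverse formula \cref{eq:block_matrix_inversion_formula} and the structure of the constituent mappings. First I would prove $\Psi_P\!\left((P^{-1})_{11},P_{11},P_{12}\right)=P$. Setting $X=(P^{-1})_{11}$, $Y=P_{11}$, $\Xi=P_{12}$ in the definition \cref{eq:Psi_K_P_def} of $\Psi_P$, the $(1,1)$ and off-diagonal blocks of $\Psi_P(X,Y,\Xi)$ are immediately $P_{11}$ and $P_{12}$ (resp.\ $P_{12}^\tr$), so the only thing to check is that the $(2,2)$ block satisfies $P_{12}^\tr(P_{11}-(P^{-1})_{11}^{-1})^{-1}P_{12}=P_{22}$. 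By \cref{eq:block_matrix_inversion_formula} applied to $P$, we have $(P^{-1})_{11}=(P_{11}-P_{12}P_{22}^{-1}P_{12}^\tr)^{-1}$, hence $P_{11}-(P^{-1})_{11}^{-1}=P_{12}P_{22}^{-1}P_{12}^\tr$; inverting (using $P_{12}\in\mathrm{GL}_n$, which holds since $P\succ 0$ has invertible off-diagonal block when we are in the relevant domain — or more carefully, we should note $\Psi_P$ is only evaluated where $P_{12}$ is invertible) and conjugating by $P_{12}^\tr$ and $P_{12}$ yields exactly $P_{22}$. This establishes the first identity.

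Next I would prove $\Psi_{\mK}\!\left(\Phi_\Lambda(\mK,P),(P^{-1})_{11},P_{11},P_{12}\right)=\mK$. Here the cleanest route is to combine the first identity with \Cref{lemma:Phi_Psi_identity}. Indeed, write $\Lambda=\Phi_\Lambda(\mK,P)$, $X=(P^{-1})_{11}$, $Y=P_{11}$, $\Xi=P_{12}$; then by the first identity just proved, $\Psi_P(X,Y,\Xi)=P$, and by \Cref{lemma:Phi_Psi_identity} we also have $(\Psi_P(X,Y,\Xi)^{-1})_{11}=X$, which is consistent with $X=(P^{-1})_{11}$. It remains to show $\Psi_\mK(\Lambda,X,Y,\Xi)=\mK$. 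The most direct way is to observe that $\Phi_\Lambda(\cdot,P)$ and $\Psi_\mK(\cdot,X,Y,\Xi)$ are, for fixed $P$ (equivalently fixed $X,Y,\Xi$ with $\Xi=P_{12}$ invertible), affine bijections between $\mathbb{R}^{(m+n)\times(p+n)}$ and itself, and \Cref{lemma:Phi_Psi_identity} already shows $\Phi_\Lambda\circ\Psi_\mK=\mathrm{id}$; since a one-sided inverse of a bijection is a two-sided inverse, we get $\Psi_\mK\circ\Phi_\Lambda=\mathrm{id}$ as well, applied at $P$ with $\Xi=P_{12}$. Alternatively — and this is the more self-contained option I would actually write out — one substitutes $\Lambda=\Phi_\Lambda(\mK,P)$ into the expanded form \cref{eq:Psi_K_expanded} of $\Psi_\mK$ and checks block by block that the four blocks collapse to $D_\mK$, $C_\mK$, $B_\mK$, $A_\mK$ respectively, using $\big(P^{-1}\big)_{21}=-P_{22}^{-1}P_{12}^\tr(P_{11}-P_{12}P_{22}^{-1}P_{12}^\tr)^{-1}$ and the relation $-\Xi^{-1}(Y-X^{-1})X = (P^{-1})_{21}P_{12}^{-\tr}\cdot(\text{something})$ — in any case reducing everything to the identity $-P_{12}^{-1}(P_{11}-(P^{-1})_{11}^{-1})(P^{-1})_{11}=(P^{-1})_{21}^{-1}$ obtained from \cref{eq:block_matrix_inversion_formula}.

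The main obstacle I anticipate is purely bookkeeping: keeping the block-matrix inversion identities \cref{eq:block_matrix_inversion_formula} straight across the several substitutions, in particular correctly relating the off-diagonal block $(P^{-1})_{21}$ of the inverse to the factor $\Pi=-\Xi^{-1}(Y-X^{-1})X$ that appears in \cref{eq:Psi_K_expanded} and in the proof of \Cref{lemma:Phi_Psi_identity}. There is also a mild domain subtlety — $\Psi_P$ and $\Psi_\mK$ are defined only when $\begin{bmatrix} X & I \\ I & Y \end{bmatrix}\succ 0$ and $\Xi\in\mathrm{GL}_n$, so I should first remark that for $P\in\mathbb{S}^{2n}_{++}$ with $P_{12}\in\mathrm{GL}_n$ the triple $(X,Y,\Xi)=((P^{-1})_{11},P_{11},P_{12})$ does lie in this domain, which is exactly the content of \Cref{lemma:Psi_P_basic_properties}(2). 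Granting that, the computation is routine and no genuinely hard step remains; the appeal to the two-sided-inverse argument via \Cref{lemma:Phi_Psi_identity} sidesteps most of the grinding.
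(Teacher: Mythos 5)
Your proof of the first identity coincides with the paper's: direct substitution into $\Psi_P$ plus the block-inverse formula \cref{eq:block_matrix_inversion_formula} to identify the $(2,2)$ block with $P_{22}$, and your remark that one should first check $\bigl((P^{-1})_{11},P_{11},P_{12}\bigr)$ lies in the domain of $\Psi_P$ and $\Psi_\mK$ via \Cref{lemma:Psi_P_basic_properties}(2) (so $P_{12}\in\mathrm{GL}_n$ is implicitly assumed) is a legitimate point that the paper's statement also glosses over. For the second identity your primary route is genuinely different from the paper's: the paper substitutes $\Lambda=\Phi_\Lambda(\mK,P)$ into the expanded form \cref{eq:Psi_K_expanded} and verifies block by block that the result is $\mK$, the key computation being $(P^{-1})_{21}=-P_{12}^{-1}\bigl(P_{11}-(P^{-1})_{11}^{-1}\bigr)(P^{-1})_{11}$, which identifies $\Pi=-\Xi^{-1}(Y-X^{-1})X$ with $(P^{-1})_{21}$; you instead note that, once $\Psi_P\bigl((P^{-1})_{11},P_{11},P_{12}\bigr)=P$ is proved, \Cref{lemma:Phi_Psi_identity} gives $\Phi_\Lambda(\cdot,P)\circ\Psi_\mK(\cdot,X,Y,\Xi)=\mathrm{id}$, and since $\Psi_\mK(\cdot,X,Y,\Xi)$ is of the form $\Lambda\mapsto T_1^{-1}(\Lambda-c)T_2^{-1}$ with $T_1,T_2$ invertible (so in fact only its surjectivity is needed, which is immediate), a one-sided inverse of this map is two-sided, yielding $\Psi_\mK(\Phi_\Lambda(\mK,P),X,Y,\Xi)=\mK$. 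This is correct and avoids the grinding; what the paper's computation buys in exchange is the explicit formula for $(P^{-1})_{21}$, which is reused implicitly in later \ECL{} arguments. One small slip in your sketched fallback (the direct-substitution route): the identity you quote should read $-P_{12}^{-1}\bigl(P_{11}-(P^{-1})_{11}^{-1}\bigr)(P^{-1})_{11}=(P^{-1})_{21}$, not $(P^{-1})_{21}^{-1}$; the inverse $(P^{-1})_{21}^{-1}$ only appears afterwards as $\Pi^{-1}$ when cancelling the factor $\Pi$ in \cref{eq:Psi_K_expanded}.
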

\begin{proof}[Proof of \Cref{lemma:Psi_Phi_identity}]

For the first identity, straightforward calculation leads to
\[
\Psi_P\!\left(
(P^{-1})_{11},P_{11},P_{12}
\right)
=
\begin{bmatrix}
P_{11} & P_{12} \\
P_{12}^\tr &
P_{12}^\tr \left(P_{11}-(P^{-1})_{11}^{-1}\right)^{-1}P_{12}
\end{bmatrix}.
\]
On the other hand, the identity~\eqref{eq:block_matrix_inversion_formula} gives
$
(P^{-1})_{11}
=\left(P_{11}-P_{12}P_{22}^{-1}P_{12}^\tr\right)^{-1}$, and so,
\begin{equation}\label{eq:LQG_P22_identity}
P_{12}^\tr\left(P_{11}-(P^{-1})_{11}^{-1}\right)^{-1}P_{12}
=P_{12}^\tr\left(P_{11}-\left(P_{11}-P_{12}P_{22}^{-1}P_{12}^\tr\right)\right)^{-1}P_{12}
=P_{22},
\end{equation}
which justifies $\Psi_P\!\left(
(P^{-1})_{11},P_{11},P_{12}
\right)=P$.

To show the second identity, we note that
\[
\begin{aligned}
(P^{-1})_{21}
={} &
-P_{22}^{-1}P_{12}^\tr (P^{-1})_{11} \\
={} &
-P_{12}^{-1}
\left(P_{11}-(P^{-1})_{11}^{-1}\right) P_{12}^{-\tr}
\cdot P_{12}^\tr (P^{-1})_{11}, \\
={} &
-P_{12}^{-1}
\left(P_{11}-(P^{-1})_{11}^{-1}\right) (P^{-1})_{11},
\end{aligned}
\]
where the first step follows from~\eqref{eq:block_matrix_inversion_formula}, and the second step uses the identity~\eqref{eq:LQG_P22_identity}.
As a result, by~\eqref{eq:Psi_K_expanded},
\[
\begin{aligned}
& \Psi_{\mK}\!\left(\Phi_\Lambda(\mK,P),
(P^{-1})_{11},
P_{11},P_{12}
\right) \\
={} &
\begin{bmatrix}
D_{\mK} & (D_{\mK}C_2(P^{-1})_{11} \!+\! C_\mK(P^{-1})_{21} \!-\! D_{\mK}C_2(P^{-1})_{11})(P^{-1})_{21}^{-1} \\
P_{12}^{-1}(P_{11}B_2D_\mK
\!+\! P_{12}B_\mK \!-\! P_{11}B_2D_\mK) &
(1)
\end{bmatrix} \\
={} &
\begin{bmatrix}
D_\mK & C_\mK \\
B_\mK & (1)
\end{bmatrix},
\end{aligned}
\]
in which
\[
\begin{aligned}
(1) ={} &
P_{12}^{-1}
\left(\Phi_M(\mK,\gamma,P,\Gamma)
-P_{11}(A-B_2D_\mK C_2)(P^{-1})_{11}\right)(P^{-1})_{21}^{-1} \\
&
-P_{12}^{-1}
\left(P_{11}B_2D_\mK C_2(P^{-1})_{11}+P_{12}B_\mK C_2(P^{-1})_{11}
\right) (P^{-1})_{21}^{-1} \\
& - P_{12}^{-1}\left(
P_{11}B_2D_\mK C_2(P^{-1})_{11}
+P_{11}B_2C_\mK(P^{-1})_{21}
\right)(P^{-1})_{21}^{-1}
\\
={} & A_\mK,
\end{aligned}
\]
justifying that $\Psi_{\mK}\!\left(\Phi_\Lambda(\mK,P),
(P^{-1})_{11},
P_{11},P_{12}
\right)=\mK$.
\end{proof}

\subsection{A Critical Lemma for Establishing \ECL{}} 

For LQG and $\mathcal{H}_\infty$ output feedback control, one of the main difficulties in establishing the corresponding \ECL{}s is showing the inclusion
\[
\operatorname{\pi}_{x,\lambda}(\mathcal{L}_{\mathrm{lft}})
\subseteq \operatorname{cl}\operatorname{epi}_{\geq}(f),
\]
which is mostly due to the intricacy between the strict and non-strict versions of the LMIs used in constructing the lifted set $\mathcal{L}_{\mathrm{lft}}$.
Here we provide a technical lemma that will be useful for establishing the second inclusion in the definition of \ECL{} \cref{eq:epi-graph-lifting} for LQG and $\mathcal{H}_\infty$ output feedback control.

\begin{lemma}\label{lemma:ECL_second_inclusion}
Suppose there exist sets $C$, $F_0$, $G$ and functions $h:F_0\rightarrow\mathbb{R}^{q}$, $\Psi:F_0\times G\rightarrow\mathbb{R}^k$ such that the following conditions hold:
\begin{enumerate}
\item $C$ is a convex set in $\mathbb{R}^q$ with a nonempty interior, and $F_0$ is a finite-dimensional convex set.
\item $h$ is an affine function, and $h^{-1}(\operatorname{int} C)$ is nonempty.
\item $\Psi(\cdot, z)$ is continuous on $F_0$ for any $z\in G$.
\end{enumerate}
Then
\[
\Psi(h^{-1}(C)\times G)\subseteq\operatorname{cl}
\Psi(h^{-1}(\operatorname{int}C)\times G).
\]
\end{lemma}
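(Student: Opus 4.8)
\textbf{Proof strategy for \Cref{lemma:ECL_second_inclusion}.}
The plan is to take an arbitrary point in $\Psi(h^{-1}(C)\times G)$, write it as $\Psi(\theta, z)$ with $\theta\in h^{-1}(C)$ and $z\in G$, and approximate it by a sequence $\Psi(\theta_t, z)$ with $\theta_t\in h^{-1}(\operatorname{int}C)$ and $\theta_t\to\theta$; the continuity of $\Psi(\cdot, z)$ on $F_0$ then gives $\Psi(\theta_t,z)\to\Psi(\theta,z)$, placing $\Psi(\theta,z)$ in $\operatorname{cl}\Psi(h^{-1}(\operatorname{int}C)\times G)$. Since $z$ is held fixed throughout, the whole argument reduces to a purely convex-geometric claim about the affine map $h$: namely that $h^{-1}(\operatorname{int}C)$ is dense in $h^{-1}(C)$ within $F_0$.

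\textbf{Key steps.}
First I would fix $\theta_0\in h^{-1}(\operatorname{int}C)$, which exists by hypothesis (2). Given any $\theta\in h^{-1}(C)$, consider the segment $\theta_t \coloneqq (1-t)\theta + t\theta_0$ for $t\in(0,1]$. Since $F_0$ is convex and both endpoints lie in $F_0$, we have $\theta_t\in F_0$. Because $h$ is affine, $h(\theta_t) = (1-t)h(\theta) + t\,h(\theta_0)$ with $h(\theta)\in C$ and $h(\theta_0)\in\operatorname{int}C$; by the standard line-segment principle for convex sets (see, e.g., \cite[Section 2.1.3]{boyd2004convex}), any proper convex combination of a point in $\operatorname{cl}C\supseteq C$ with a point in $\operatorname{int}C$ lies in $\operatorname{int}C$, so $h(\theta_t)\in\operatorname{int}C$ for all $t\in(0,1]$, i.e. $\theta_t\in h^{-1}(\operatorname{int}C)$. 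Next, letting $t\downarrow 0$ gives $\theta_t\to\theta$ in the finite-dimensional ambient space. Since $\theta_t,\theta\in F_0$ and $\Psi(\cdot,z)$ is continuous on $F_0$ by hypothesis (3), we conclude $\Psi(\theta_t,z)\to\Psi(\theta,z)$. As each $\Psi(\theta_t,z)\in\Psi(h^{-1}(\operatorname{int}C)\times G)$, the limit $\Psi(\theta,z)$ lies in the closure of that set. Finally, since the chosen point $\Psi(\theta,z)$ was an arbitrary element of $\Psi(h^{-1}(C)\times G)$, the desired inclusion follows.

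\textbf{Main obstacle.}
The only genuine subtlety is the line-segment principle invoked above: it requires $\operatorname{int}C\neq\varnothing$, which is supplied by hypothesis (1), and it is applied to $h(\theta)\in C\subseteq\operatorname{cl}C$ rather than to an interior point, so one must use the version of the principle valid for a boundary-or-interior point combined with an interior point. If one wishes to avoid citing a named result, this can be proved directly: pick a ball $B(h(\theta_0),r)\subseteq\operatorname{int}C$; then for $t\in(0,1]$ the set $(1-t)h(\theta) + tB(h(\theta_0),r)$ is a ball of radius $tr$ around $h(\theta_t)$, and convexity of $C$ together with $h(\theta)\in C$ shows this ball is contained in $C$, whence $h(\theta_t)\in\operatorname{int}C$. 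Everything else is routine: the affinity of $h$ makes $h(\theta_t)$ an explicit convex combination, and the continuity hypothesis on $\Psi(\cdot,z)$ is used verbatim. No compactness, properness, or regularity of $\Psi$ beyond sectional continuity is needed, and the set $G$ plays no active role — it is simply carried along.
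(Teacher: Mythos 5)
Your proposal is correct and follows essentially the same route as the paper's proof: both approximate the given point $\theta\in h^{-1}(C)$ along the segment toward a fixed element of $h^{-1}(\operatorname{int}C)$, use the affinity of $h$ together with the ball argument (your direct verification in the ``Main obstacle'' paragraph is exactly the paper's computation $h(y_n)+\tfrac{\epsilon}{n}\mathbb{B}_q\subseteq C$) to place the perturbed points in $h^{-1}(\operatorname{int}C)$, and then invoke continuity of $\Psi(\cdot,z)$ with $z$ fixed. No gaps.
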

\begin{proof}
Let $x\in \Psi(h^{-1}(C)\times G)$ be arbitrary, and find $y\in h^{-1}(C)$, $z\in G$ such that $\Psi(y,z)=x$. Moreover, let $y^\circ\in h^{-1}(\operatorname{int}C)$ be arbitrary. We now define
\[
y_n = \frac{n-1}{n}y + \frac{1}{n}y^\circ,
\qquad n=1,2,\ldots
\]
By the convexity of $h^{-1}(C)$, we can see that $y_n\in h^{-1}(C)$ for all $n\geq 1$. Moreover, $y^\circ\in h^{-1}(\operatorname{int}C)$ implies that $h(y^\circ)+\epsilon\mathbb{B}_q\subseteq C$ for all sufficiently small $\epsilon>0$, where $\mathbb{B}_q$ denotes the open unit ball in $\mathbb{R}^q$. This further leads to
\begin{align*}
h(y_n) + \frac{\epsilon}{n}\mathbb{B}_{q}
={} &
\frac{n-1}{n}h(y) + \frac{1}{n}\left(
h(y^\circ) + \epsilon \mathbb{B}_q
\right) \subseteq C
\end{align*}
for all sufficiently small $\epsilon$. Therefore $h(y_n)\in\operatorname{int}C$, i.e., $y_n\in h^{-1}(\operatorname{int}C)$.

Now let $x_n=\Psi(y_n,z)\in\Psi(h^{-1}(\operatorname{int}C)\times G)$. By the continuity of $\Psi(\cdot,z)$ on $F_0$, we can infer from $\lim_{n\rightarrow\infty}y_n = y$ that $\lim_{n\rightarrow\infty} x_n = 
\lim_{n\rightarrow\infty}\Phi(y_n,z)
=\Phi(y,z)=x$. By the arbitariness of $x\in\Psi(h^{-1}(C)\times G)$, we get the desired conclusion.
\end{proof}

\subsection{Proofs for LQG Control} \label{appendix:LQG-control}

We here provide the proof details for \Cref{lemma:inclusion-projection-LQG}, which include two parts:
\begin{itemize}
    \item The mapping $\Phi_{\LQG}$ given by \cref{eq:diffeomorphism-LQG} is a $C^2$ diffeomorphism from $\mathcal{L}_{\LQG}$ to $\mathcal{F}_{\LQG}\times\mathrm{GL}_{n}$, and its inverse $\Phi_{\LQG}^{-1}$ is given by $\Psi_{\LQG}$ defined in \cref{eq:Psi-LQG}. This is proved in \cref{appendix:diffeomorphsim-Phi-LQG}. 
    \item The following inclusion relationship holds
    \begin{equation} \label{eq:LQG-inclusion}
    \operatorname{epi}_>(J_{\LQG,n})  \subseteq \pi_{\mK, \gamma} (\mathcal{L}_{\LQG}) \subseteq \operatorname{cl}\, \operatorname{epi}_{\geq}(J_{\LQG,n}). 
    \end{equation}
    This is proved in \cref{Appendix:proof-inclusion-LQG}. 
\end{itemize}

\subsubsection{Diffeomorphism from $\mathcal{L}_{\LQG}$ to $\mathcal{F}_{\LQG}\times\mathrm{GL}_{n}$} \label{appendix:diffeomorphsim-Phi-LQG}

Denote
\begin{align*}
\mathcal{L}^0_{\LQG}
& =\left\{ (\mK, \gamma, P, \Gamma) \left|\,
       \mK\in \mathcal{V}_{n,0}, \;\gamma\in\mathbb{R},\;
         P \in \mathbb{S}^{2n}_{++}, \; P_{12} \in \mathrm{GL}_n,\;
         \Gamma\in\mathbb{S}^{n+m}
        \right.\right\}, \\
\mathcal{F}^0_{\LQG}
& =\left\{
\left(\gamma, \Lambda,X, Y, \Gamma \right) \left|\,
\gamma\in\mathbb{R},\;
\Lambda\in\mathcal{V}_{n,0},\;
       \begin{bmatrix}
        X & I \\
        I & Y
\end{bmatrix}\in\mathbb{S}_{++}^{2n},\;
\Gamma\in\mathbb{S}^{n+m}
        \right.\right\}.
\end{align*}
Evidently $\mathcal{L}^0_{\LQG}$ and $\mathcal{F}^0_{\LQG}$ can be identified with certain open subsets of some Euclidean spaces, and $\mathcal{F}^0_{\LQG}$ is convex. Moreover,
\begin{align*}
\mathcal{L}_{\LQG}
&=\left\{(\mK,\gamma,P,\Gamma)\in\mathcal{L}^0_{\LQG}
\left|\,
\begin{aligned} 
        -\begin{bmatrix} A_{\mathrm{cl}}(\mK)^\tr P+PA_{\mathrm{cl}}(\mK) & PB_{\mathrm{cl}}(\mK) \\ B_{\mathrm{cl}}(\mK)^\tr P & -\gamma I \end{bmatrix} 
        \in\mathbb{S}^{3n+p}_{+}, \\
        \begin{bmatrix} P & C_{\mathrm{cl}}(\mK)^\tr \\ C_{\mathrm{cl}}(\mK) & \Gamma \end{bmatrix} \in\mathbb{S}^{3n+m}_+,\;  \mathrm{tr}(\Gamma) \leq \gamma
        \end{aligned}
\right.\right\}, \\
\mathcal{F}_{\LQG}
&=\left\{\left(\gamma, \Lambda,X, Y, \Gamma \right) \in\mathcal{F}^0_{\LQG}
\left|\,
\begin{aligned}
& -\mathcal{A}(\gamma,\Lambda,X,Y,\Gamma)
\in \mathbb{S}_{+}^{3n+p}, \\
& \mathcal{B}(\gamma,\Lambda,X,Y,\Gamma)
\in \mathbb{S}_{+}^{3n+m},
\; \operatorname{tr}(\Gamma)\leq\gamma
\end{aligned}
\right.\right\},
\end{align*}
where we remind the readers of the notations
\begin{align*}
    {\mathcal{A}}\!\left(\gamma, \!\begin{bmatrix}
    0 & F \\
    H & M
    \end{bmatrix}\!,X,Y,\Gamma\right)& \!=\!
    \begin{bmatrix} AX \!+\! B_2F \!+\! (AX \!+\! B_2F)^\tr  & M^\tr \!+\! A & B_1 \\
            *  & YA \!+\! HC_2 \!+\! (YA \!+\! HC_2)^\tr  & YB_1\!+\!HD_{21}  \\       
            *  & *  & -\gamma I\end{bmatrix}\!\!, \\
    {\mathcal{B}}\!\left(\gamma, \!\begin{bmatrix}
    0 & F \\
    H & M
    \end{bmatrix}\!,X,Y,\Gamma\right)&\!=\!
    \begin{bmatrix} \ \ \ X\ \ \  & \ \ \ I\ \ \  & (C_1X\!+\!D_{12}F)^\tr  \\ * & \ \ \ Y\ \ \  & C_1^\tr \\ * & * &\Gamma \end{bmatrix}.
\end{align*}
We extend the domain of $\Phi_{\LQG}$ so that it is now defined on the larger set $\mathcal{L}^0_{\LQG}$:
\[
\Phi_{\LQG}(\mK,\gamma,P,\Gamma)=\left(\gamma,
\Phi_\Lambda(\mK,P),(P^{-1})_{11},P_{11},\Gamma, P_{12}\right),
\quad
\forall (\mK,\gamma,P,\Gamma)\in\mathcal{L}^0_{\LQG},
\]
where $\Phi_\Lambda(\mK,P)$ is defined in \eqref{eq:Phi_M_V_def}.
We also extend the domain of $\Psi_{\LQG}$ so that it is now defined on $\mathcal{F}^0_{\LQG}\times\mathrm{GL}_n$:
\[
\Psi_{\LQG}(\mZ,\Xi)=\left(\Psi_{\mK}(\Lambda,X,Y,\Xi),\gamma, \Psi_P(X,Y,\Xi),\Gamma\right),
\qquad\forall \mZ = \left(\gamma, 
\Lambda, X,Y,\Gamma \right) \in \mathcal{F}^{0}_{\LQG},\ \Xi\in\mathrm{GL}_n,
\]
where $\Psi_{\mK}(\Lambda,X,Y,\Xi)$ and $\Psi_P(X,Y,\Xi)$ are defined in~\eqref{eq:Psi_K_P_def}.

\vspace{3pt}
Our subsequent proof consists of the following parts:
\begin{enumerate}
\item Showing that $\Phi_{\LQG}$ is a $C^2$ diffeomorphism from $\mathcal{L}^0_{\LQG}$ to $\mathcal{F}^0_{\LQG}\times\mathrm{GL}_n$, with $\Phi_{\LQG}^{-1}=\Psi_{\LQG}$.

\item Showing that for any $(\mK,\gamma,P,\Gamma)\in\mathcal{L}^0_{\LQG}$, we have $(\mK,\gamma,P,\Gamma)\in\mathcal{L}_{\LQG}$ if and only if 
$\Phi_{\LQG}(\mK,\gamma,P,\Gamma)\in\mathcal{F}_{\LQG}\times\mathrm{GL}_n$.
\end{enumerate}

\vspace{3pt}
\noindent\textbf{Part I.} We first show that
\[
\Psi_{\LQG}(\mathcal{F}^0_{\LQG}\times\mathrm{GL}_n)\subseteq\mathcal{L}^0_{\LQG}.
\]
By the definitions of $\mathcal{L}^0_{\LQG}$, $\mathcal{F}^0_{\LQG}$ and $\Psi_{\LQG}$, it is not hard to see that this inclusion holds as long as $\Psi_\mK(\Lambda,X,Y,\Xi)\in\mathcal{V}_{n,0}$ and $\Psi_P(X,Y,\Xi)\in\mathbb{S}^{2n}_{++}$ for any $(\gamma,\Lambda,X,Y,\Gamma)\in\mathcal{F}^0_{\LQG}$ and $\Xi\in\mathrm{GL}_n$, which can be directly justified by~\eqref{eq:Psi_K_expanded} and the first part of \Cref{lemma:Psi_P_basic_properties} respectively.

Next, we show that
\[
\Phi(\mathcal{L}^0_{\LQG})\subseteq\mathcal{F}^0_{\LQG}\times\mathrm{GL}_n.
\]
By definition, we only need to show
\[
\begin{bmatrix}
(P^{-1})_{11} & I \\
I & P_{11}
\end{bmatrix} \succ 0
\qquad\forall P\in\mathbb{S}^{2n}_{++} \text{ with }P_{12}\in\mathrm{GL}_n,
\]
which can be directly justified by the second part of \Cref{lemma:Psi_P_basic_properties}.

We are now allowed to form the compositions $\Psi_{\LQG}\circ\Phi_{\LQG}:\mathcal{L}^0_{\LQG}\rightarrow\mathcal{L}^0_{\LQG}$ and $\Phi\circ\Psi:\mathcal{F}^0_{\LQG}\times\mathrm{GL}_n\rightarrow \mathcal{F}^0_{\LQG}\times\mathrm{GL}_n$. We will now show that these two compositions are the identity functions on their domains:
\begin{enumerate}
\item Let $\mZ=\left(\gamma,\Lambda,X,Y,\Gamma\right)\in\mathcal{F}^0_{\LQG}$ and $\Xi\in\mathrm{GL}_n$ be arbitrary. We then have
\[
\begin{aligned}
\Phi_{\LQG}\circ\Psi_{\LQG}(\mZ,\Xi)
={} & \!\left(
\gamma,
\Phi_\Lambda(\Psi_{\mK}(\Lambda,X,Y,\Xi),\Psi_P(X,Y,\Xi)),
(\Psi_P(X,Y,\Xi)^{-1})_{11},
Y,\Gamma,\Xi
\right) \\
={} &
(\gamma,\Lambda,X,Y,\Gamma,\Xi)
=(\mZ,\Xi),
\end{aligned}
\]
where the second step follows from \Cref{lemma:Phi_Psi_identity}. By the arbitrariness of $(\mZ,\Xi)\in \mathcal{F}^0_{\LQG}\times\mathrm{GL}_n$, we see that $\Phi_{\LQG}\circ\Psi_{\LQG}$ is the identity function on $\mathcal{F}^0_{\LQG}\times\mathrm{GL}_n$.

\item Let $(\mK,\gamma,P,\Gamma)\in\mathcal{L}^0_{\LQG}$ be arbitrary. Then by definition,
\[
\begin{aligned}
\Psi_{\LQG}\circ\Phi_{\LQG}(\mK,\gamma,P,\Gamma)
={} &
\!\left(\Psi_{\mK}\!\left(\Phi_\Lambda(\mK,P),
(P^{-1})_{11},
P_{11},P_{12}
\right),\gamma,
\Psi_P\!\left((P^{-1})_{11},
P_{11},P_{12}\right),\Gamma\right) \\
={} &
\left(\mK,\gamma,P,\Gamma\right),
\end{aligned}
\]
where we used \Cref{lemma:Psi_Phi_identity}. We see that $\Psi_{\LQG}\circ\Phi_{\LQG}$ is indeed the identity function on $\mathcal{L}^0_{\LQG}$.
\end{enumerate}

Summarizing the previous results, we can conclude that $\Phi_{\LQG}$ is a bijection from $\mathcal{L}^0_{\LQG}$ to $\mathcal{F}^0_{\LQG}\times\mathrm{GL}_n$ and $\Psi_{\LQG}$ is its inverse. The fact that $\Phi_{\LQG}$ and $\Psi_{\LQG}$ are both $C^2$ functions follows directly from their explicit expressions.

\vspace{6pt}
\noindent\textbf{Part II.} We next show that for any $(\mK,\gamma,P,\Gamma)\in\mathcal{L}^0_{\LQG}$, we have $(\mK,\gamma,P,\Gamma)\in\mathcal{L}_{\LQG}$ if and only if $\Phi_{\LQG}(\mK,\gamma,P,\Gamma)\in\mathcal{F}_{\LQG}\times\mathrm{GL}_n$. 
We shall prove a stronger result that will be used in later proofs:

\begin{lemma}
\label{lemma:LQG_ECL_basic_lemma}
Suppose $\mathscr{C}_1\subseteq\mathbb{S}^{3n+p}$ and $\mathscr{C}_2\subseteq\mathbb{S}^{3n+m}$ satisfy
\[
\begin{aligned}
& T_1^\tr M_1 T_1 \in \mathscr{C}_1,\qquad\forall M_1\in \mathscr{C}_1,\,T_1\in\mathrm{GL}_{3n+p}, \\
& T_2^\tr M_2 T_2 \in \mathscr{C}_2,\qquad\forall M_2\in \mathscr{C}_2,\,T_2\in\mathrm{GL}_{3n+m}.
\end{aligned}
\]
Let $(\mK,\gamma,P,\Gamma)\in\mathcal{L}^0_{\LQG}$ be arbitrary, and denote $(\gamma,\Lambda,X,Y,\Gamma,\Xi)=\Phi_{\LQG}(\mK,\gamma,P,\Gamma)$. Then
\[
-\begin{bmatrix} A_{\mathrm{cl}}(\mK)^\tr P+PA_{\mathrm{cl}}(\mK) & PB_{\mathrm{cl}}(\mK) \\ B_{\mathrm{cl}}(\mK)^\tr P & -\gamma I \end{bmatrix} 
        \in \mathscr{C}_1
        \quad\text{and}\quad
        \begin{bmatrix} P & C_{\mathrm{cl}}(\mK)^\tr \\ C_{\mathrm{cl}}(\mK) & \Gamma \end{bmatrix} \in \mathscr{C}_2
\]
if and only if
\[
-\mathcal{A}(\gamma,\Lambda,X,Y,\Gamma)
\in \mathscr{C}_1\quad\text{and}\quad
\mathcal{B}(\gamma,\Lambda,X,Y,\Gamma)
\in
\mathscr{C}_2.
\]
\end{lemma}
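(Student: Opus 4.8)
The plan is to reduce the biconditional to a pair of explicit \emph{congruence identities}. First I would construct an invertible matrix $\Pi_1\in\mathrm{GL}_{2n}$ and set $T_1=\operatorname{diag}(\Pi_1,I_{n+p})\in\mathrm{GL}_{3n+p}$ and $T_2=\operatorname{diag}(\Pi_1,I_{n+m})\in\mathrm{GL}_{3n+m}$, and then establish
\[
T_1^\tr \begin{bmatrix} A_{\mathrm{cl}}(\mK)^\tr P+PA_{\mathrm{cl}}(\mK) & PB_{\mathrm{cl}}(\mK) \\ B_{\mathrm{cl}}(\mK)^\tr P & -\gamma I \end{bmatrix} T_1 = \mathcal{A}(\gamma,\Lambda,X,Y,\Gamma),
\quad
T_2^\tr \begin{bmatrix} P & C_{\mathrm{cl}}(\mK)^\tr \\ C_{\mathrm{cl}}(\mK) & \Gamma \end{bmatrix} T_2 = \mathcal{B}(\gamma,\Lambda,X,Y,\Gamma).
\]
Once these are in hand, the lemma is immediate: writing $M_1$ for the first bracketed matrix, if $-M_1\in\mathscr{C}_1$ then the congruence hypothesis applied with $T_1$ gives $-\mathcal{A}=T_1^\tr(-M_1)T_1\in\mathscr{C}_1$, while applying it with $T_1^{-1}$ recovers the converse; the same reasoning with $T_2$ and $\mathscr{C}_2$ handles the second pair.

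To build $\Pi_1$, I would write $U=P_{12}$ and $V=(P^{-1})_{12}$, both invertible (for $V$, combine the block-inversion identity \eqref{eq:block_matrix_inversion_formula} with $P_{22}\succ 0$ and $(P^{-1})_{11}\succ 0$, which hold since $P\succ 0$), and put
\[
\Pi_1=\begin{bmatrix} (P^{-1})_{11} & I \\ V^\tr & 0 \end{bmatrix},
\qquad
\Pi_2=\begin{bmatrix} I & P_{11} \\ 0 & U^\tr \end{bmatrix}.
\]
Reading $PP^{-1}=I$ column-blockwise shows $P\Pi_1=\Pi_2$, hence $\Pi_1^\tr P\Pi_1=\Pi_1^\tr\Pi_2=\begin{bmatrix} (P^{-1})_{11} & I \\ I & P_{11} \end{bmatrix}$ (the $(1,2)$-block uses $P^{-1}P=I$). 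Since the right-hand side is positive definite by \Cref{lemma:Psi_P_basic_properties}(2), $\Pi_1$ is injective, so it (and therefore $T_1,T_2$) is invertible.

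The core of the argument is the block computation. Using that $(\mK,\gamma,P,\Gamma)\in\mathcal{L}^0_{\LQG}$ forces $D_\mK=0$, I would expand the closed-loop matrices and check
\[
\Pi_2^\tr A_{\mathrm{cl}}(\mK)\Pi_1=\begin{bmatrix} AX+B_2F & A \\ M & YA+HC_2 \end{bmatrix},
\quad
\Pi_2^\tr B_{\mathrm{cl}}(\mK)=\begin{bmatrix} B_1 \\ YB_1+HD_{21} \end{bmatrix},
\quad
C_{\mathrm{cl}}(\mK)\Pi_1=\begin{bmatrix} C_1X+D_{12}F & C_1 \end{bmatrix},
\]
where $X=(P^{-1})_{11}$, $Y=P_{11}$, and $F,H,M$ are the $(1,2)$-, $(2,1)$-, and $(2,2)$-blocks of $\Lambda=\Phi_\Lambda(\mK,P)$; indeed the change of variables $\Phi_\Lambda$ in \eqref{eq:Phi_M_V_def} is engineered exactly so that the $(2,1)$-block above equals $\Phi_M(\mK,P)$, with $H=P_{12}B_\mK$ and $F=C_\mK(P^{-1})_{21}$. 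Plugging these into $\Pi_1^\tr(A_{\mathrm{cl}}^\tr P+PA_{\mathrm{cl}})\Pi_1=\Pi_2^\tr A_{\mathrm{cl}}\Pi_1+(\Pi_2^\tr A_{\mathrm{cl}}\Pi_1)^\tr$, into $\Pi_1^\tr PB_{\mathrm{cl}}=\Pi_2^\tr B_{\mathrm{cl}}$, and into $\Pi_1^\tr P\Pi_1=\begin{bmatrix} X & I \\ I & Y \end{bmatrix}$, and comparing term by term with the definitions of $\mathcal{A}$ and $\mathcal{B}$ (the symmetrization produces precisely the $M^\tr+A$ and $YA+HC_2+(YA+HC_2)^\tr$ entries, while the $-\gamma I$ and $\Gamma$ blocks are passed through unchanged) yields the two congruence identities.

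I expect the only real obstacle to be the bookkeeping in that last step: verifying entry by entry that $\Pi_2^\tr A_{\mathrm{cl}}(\mK)\Pi_1$ reproduces the defining formula for $\Phi_M(\mK,P)$ and that $B_\mK,C_\mK$ get absorbed into $H$ and $F$, and then lining up the symmetrized off-diagonal blocks with the $*$-entries in $\mathcal{A}$ and $\mathcal{B}$. No conceptual difficulty arises here; one simply has to keep the invertibility of $V^\tr=(P^{-1})_{21}$ (hence of $\Pi_1$) and the block dimensions ($T_1\in\mathrm{GL}_{3n+p}$, $T_2\in\mathrm{GL}_{3n+m}$) straight.
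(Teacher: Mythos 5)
Your proposal is correct and follows essentially the same route as the paper: your $\Pi_1$ is exactly the paper's congruence matrix $T=\begin{bmatrix}(P^{-1})_{11} & I\\ (P^{-1})_{21} & 0\end{bmatrix}$ (with $\Pi_2=PT$), and the lemma is obtained from the same two congruence identities together with the invariance hypothesis on $\mathscr{C}_1,\mathscr{C}_2$. The only cosmetic difference is how invertibility of $\Pi_1$ is argued (you via $\Pi_1^\tr P\Pi_1\succ 0$ from \Cref{lemma:Psi_P_basic_properties}, the paper via $PT$ being block-triangular with $P_{12}\in\mathrm{GL}_n$), both of which are valid.
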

\begin{proof}[Proof of \Cref{lemma:LQG_ECL_basic_lemma}]
Denote $\Lambda=\begin{bmatrix}
0 & F \\ H & M
\end{bmatrix}$, i.e.,
\[
F = C_\mK(P^{-1})_{21},\quad
H = P_{12}B_\mK,\quad
M = \Phi_M(\mK,P).
\]
Let
\begin{equation}\label{eq:ECL_LQG_Tmat}
T = \begin{bmatrix}
(P^{-1})_{11} & I \\ (P^{-1})_{21} & 0
\end{bmatrix}\in\mathbb{R}^{2n\times 2n}.
\end{equation}
We can infer from $PP^{-1}=I$ that
\[
PT = \begin{bmatrix}
I & P_{11} \\ 0 & P_{12}^\tr
\end{bmatrix}.
\]
Since $P_{12}\in\mathrm{GL}_n$, we know that $PT$ is invertible, which implies that $T$ is also invertible.

Then, we note that
\begin{equation}
\label{eq:proof-diffeomorphism-TPAT}
\begin{aligned}
    & T^\tr  PA_\mathrm{cl}(\mK) T = \begin{bmatrix} I & 0 \\ P_{11} & P_{12} \end{bmatrix} \begin{bmatrix} A & B_2C_\mK \\ B_\mK C_2 & A_\mK \end{bmatrix} \begin{bmatrix} (P^{-1})_{11} & I \\ (P^{-1})_{21}  & 0 \end{bmatrix} \\
    ={} &\begin{bmatrix} A (P^{-1})_{11}+ B_2C_\mK (P^{-1})_{21} & A  \\ P_{11}A (P^{-1})_{11}+P_{12}B_\mK C_2 (P^{-1})_{11}+P_{11}B_2C_\mK (P^{-1})_{21}+P_{12}A _\mK(P^{-1})_{21} & P_{11}A +P_{12}B_{\mK}C_2 \end{bmatrix}\\
    ={} & \begin{bmatrix} AX+B_2F & A \\ M & YA+HC_2\end{bmatrix}.
\end{aligned}
\end{equation}
Similarly, it can be verified that
\begin{align}
    &T^\tr  PB_\mathrm{cl}(\mK)=\begin{bmatrix}
        B_1 \\ YB_1+HD_{21}
    \end{bmatrix}, \quad
    T^\tr C_\mathrm{cl}(\mK)^\tr=\begin{bmatrix}
        (C_1X+D_{12}F)^\tr \\ C_1^\tr
    \end{bmatrix}
    \label{eq:proof-diffeomorphism-TC}
\end{align}
Summarizing these identities \cref{eq:proof-diffeomorphism-TPAT} and \cref{eq:proof-diffeomorphism-TC}, we can show that      
\begin{align*}
    &\mathcal{A}(\gamma,\Lambda,X,Y,\Gamma)=\begin{bmatrix} T & 0 \\ 0 & I_{n+p} \end{bmatrix}^\tr  \begin{bmatrix} A_{\mathrm{cl}}(\mK)^\tr P+PA_{\mathrm{cl}}(\mK) & PB_{\mathrm{cl}}(\mK) \\ B_{\mathrm{cl}}(\mK)^\tr P & -\gamma I \end{bmatrix} \begin{bmatrix} T & 0 \\ 0 & I_{n+p} \end{bmatrix}, \\
    &\mathcal{B}(\gamma,\Lambda,X,Y,\Gamma)=\begin{bmatrix} T & 0 \\ 0 & I_{n+m} \end{bmatrix}^\tr  \begin{bmatrix} P & C_{\mathrm{cl}}(\mK)^\tr \\ C_{\mathrm{cl}}(\mK) & \Gamma \end{bmatrix}
    \begin{bmatrix} T & 0 \\ 0 & I_{n+m} \end{bmatrix}
\end{align*}
Since $\begin{bmatrix} T & 0 \\ 0 & I_{n+p} \end{bmatrix}$ and $\begin{bmatrix} T & 0 \\ 0 & I_{n+m} \end{bmatrix}$ are invertible, by the conditions on $\mathscr{C}_1$ and $\mathscr{C}_2$, we see that
\[
-\mathcal{A}(\gamma,\Lambda,X,Y,\Gamma)\in \mathscr{C}_1\quad\text{and}\quad
\mathcal{B}(\gamma,\Lambda,X,Y,\Gamma)
\in\mathscr{C}_2
\]
is equivalent to
\[
-\begin{bmatrix} A_{\mathrm{cl}}(\mK)^\tr P+PA_{\mathrm{cl}}(\mK) & PB_{\mathrm{cl}}(\mK) \\ B_{\mathrm{cl}}(\mK)^\tr P & -\gamma I \end{bmatrix}\in \mathscr{C}_1\quad\text{and}\quad
\begin{bmatrix} P & C_{\mathrm{cl}}(\mK)^\tr \\ C_{\mathrm{cl}}(\mK) & \Gamma \end{bmatrix}\in \mathscr{C}_2,
\]
which completes the proof.
\end{proof}

As a corollary of \Cref{lemma:LQG_ECL_basic_lemma}, by letting $\mathscr{C}_1=\mathbb{S}_+^{3n+p}$ and $\mathscr{C}_2=\mathbb{S}_+^{3n+m}$ and noting that $\gamma$ and $\Gamma$ remain unchanged under the mapping $\Phi_{\LQG}$, we see that
\[
\Phi_{\LQG}(\mathcal{L}_{\LQG})
=\mathcal{F}_{\LQG}\times\mathrm{GL}_n.
\]
We can now conclude that $\Phi_{\LQG}$ is a $C^2$ diffeomorphism from $\mathcal{L}_{\LQG}$ to $\mathcal{F}_{\LQG}\times\mathrm{GL}_n$.

\subsubsection{Proof of the Inclusion \cref{eq:LQG-inclusion} in LQG} \label{Appendix:proof-inclusion-LQG}

\noindent\textbf{Part I.} We first prove the inclusion
\[
\operatorname{epi}_> (J_{\LQG,n})\subseteq \pi_{\mK,\gamma}(\mathcal{L}_{\LQG}).
\]
Let $(\mK,\gamma)\in\operatorname{epi}_>(J_{\LQG,n})$ be arbitrary. We then have $\mK\in {\mathcal{C}}_{n,0}$ and $\gamma> J_{\LQG,n}(\mK)$. Consequently, $A_{\mathrm{cl}}(\mK)$ is stable, and the transfer matrix
$C_{\mathrm{cl}}(\mK)
(sI-A_{\mathrm{cl}}(\mK))^{-1}B_{\mathrm{cl}}(\mK)$
has an $\mathcal{H}_2$ norm that is strictly upper bounded by $\gamma$. By applying the strict LMI in \Cref{lemma:H2norm}, we see that there exist $P\in\mathbb{S}^{2n}_{++}$ and $\Gamma\in \mathbb{S}^{n+m}_{++}$ such that
\begin{align*}
\begin{bmatrix}
A_{\mathrm{cl}}(\mK)^\tr P + PA_{\mathrm{cl}}(\mK) & PB_{\mathrm{cl}}(\mK) \\
B_{\mathrm{cl}}(\mK)^\tr P & -\gamma I
\end{bmatrix}
\prec 0,
\quad
\begin{bmatrix}
P & C_{\mathrm{cl}}(\mK)^\tr \\
C_{\mathrm{cl}}(\mK) & \Gamma
\end{bmatrix}\succ 0,
\quad \operatorname{trace}(\Gamma) < \gamma.
\end{align*}
Since the above inequalities are all strict, we can add a sufficiently small perturbation on the matrix $P$ to ensure that $P_{12}$ is invertible without violating the above inequalities. By comparing the obtained properties of $\mK$, $\gamma$, $P$ and $\Gamma$ with the definition of $\mathcal{L}_{\LQG}$, we see that $(\mK, \gamma, P, \Gamma)\in \mathcal{L}_{\LQG}$. By the arbitrariness of $(\mK,\gamma)\in \operatorname{epi}_> (J_{\LQG,n})$, we get the inclusion $\operatorname{epi}_> (J_{\LQG,n})\subseteq \pi_{\mK,\gamma}(\mathcal{L}_{\LQG})$.

\vspace{6pt}
\noindent\textbf{Part II.} We next prove the other inclusion 
\[
\pi_{\mK,\gamma}(\mathcal{L}_{\LQG})\subseteq \operatorname{cl}\operatorname{epi}_\geq (J_{\LQG,n}).
\]
This part is more involved compared to Part I. In particular, the proof will rely on \Cref{lemma:ECL_second_inclusion}.
Define the affine function $h:\mathcal{F}^0_{\LQG}\rightarrow\mathbb{S}^{3n+p}\times\mathbb{S}^{3n+m}\times\mathbb{R}$ by
\[
h( \gamma, \Lambda,X, Y, \Gamma)
=
\left(
-{\mathcal{A}}(\gamma, \Lambda,X,Y,\Gamma),
{\mathcal{B}}(\gamma, \Lambda,X,Y,\Gamma),
\gamma-\operatorname{tr}(\Gamma)\right),
\qquad(\gamma,\Lambda,X,Y,\Gamma)\in\mathcal{F}^0_{\LQG}.
\]
Also, define
\[
C= \mathbb{S}^{3n+p}_{+} \times \mathbb{S}^{3n+m}_{+} \times [0,+\infty).
\]
It can be seen from the definition of $\mathcal{F}_{\LQG}$ that $\mathcal{F}_{\LQG}=h^{-1}(C)$, and consequently,
\[
\mathcal{L}_{\LQG}
=\Psi(h^{-1}(C)\times\mathrm{GL}_n).
\]
Now let $(\gamma,\Lambda,X,Y,\Gamma)\in \mathcal{F}^0_{\LQG}$ and $\Xi\in \mathrm{GL}_n$ be arbitrary, and denote $\mK=\Phi_{\mK}(\Lambda,X,Y,\Xi),P=\Phi_{P}(X,Y,\Xi)$. Since
\[
\operatorname{int}C
=\mathbb{S}^{3n+p}_{++}\times\mathbb{S}^{3n+m}_{++}\times (0,+\infty),
\]
by applying \Cref{lemma:LQG_ECL_basic_lemma} with $\mathscr{C}_1=\mathbb{S}^{3n+p}_{++}$, $\mathscr{C}_2=\mathbb{S}^{3n+m}_{++}$, we get
\[
\begin{aligned}
(\gamma,\Lambda,X,Y,\Gamma)
\in h^{-1}(\operatorname{int}C)
\quad\Longleftrightarrow\quad
& \begin{cases}
-\mathcal{A}(\gamma,\Lambda,X,Y,\Gamma)
\in\mathbb{S}^{3n+p}_{++}, \\
\mathcal{B}(\gamma,\Lambda,X,Y,\Gamma)
\in\mathbb{S}^{3n+m}_{++},\;
\operatorname{tr}(\Gamma)<\gamma,
\end{cases} \\
\quad\Longleftrightarrow\quad & \begin{cases}
-\begin{bmatrix} A_{\mathrm{cl}}(\mK)^\tr P+PA_{\mathrm{cl}}(\mK) & PB_{\mathrm{cl}}(\mK) \\ B_{\mathrm{cl}}(\mK)^\tr P & -\gamma I \end{bmatrix}\in \mathbb{S}^{3n+p}_{++}, \\[10pt]
\begin{bmatrix} P & C_{\mathrm{cl}}(\mK)^\tr \\ C_{\mathrm{cl}}(\mK) & \Gamma \end{bmatrix}
\in \mathbb{S}^{3n+m}_{++},\;
\operatorname{tr}(\Gamma) < \gamma,
\end{cases}
\end{aligned}
\]
which implies that
\[
\begin{aligned}
\tilde{\mathcal{L}}_{\LQG}\coloneqq{} & 
\Psi(h^{-1}(\operatorname{int}C)\times\mathrm{GL}_n) \\
={} &
\left\{
(\mK,\gamma,P,\Gamma)\in\mathcal{L}^0_{\LQG}
\left|\,
         \begin{aligned}
        &\begin{bmatrix} A_{\mathrm{cl}}(\mK)^\tr P+PA_{\mathrm{cl}}(\mK) & PB_{\mathrm{cl}}(\mK) \\ B_{\mathrm{cl}}(\mK)^\tr P & -\gamma I \end{bmatrix} \prec 0, \\
        &\begin{bmatrix} P & C_{\mathrm{cl}}(\mK)^\tr \\ C_{\mathrm{cl}}(\mK) & \Gamma \end{bmatrix} \succ 0,\;  \mathrm{tr}(\Gamma) < \gamma
        \end{aligned}
\right.
\right\} \\
={} & 
\left\{
(\mK,\gamma,P,\Gamma)
\left|\begin{array}{c} 
       \mK\in\mathcal{V}_{n,0},\; \gamma\in\mathbb{R},\;
         P \in \mathbb{S}^{2n}_{++}, \; P_{12} \in \mathrm{GL}_n, \;\Gamma\in\mathbb{S}^{n+m}, \\
         \begin{aligned}
        &\begin{bmatrix} A_{\mathrm{cl}}(\mK)^\tr P+PA_{\mathrm{cl}}(\mK) & PB_{\mathrm{cl}}(\mK) \\ B_{\mathrm{cl}}(\mK)^\tr P & -\gamma I \end{bmatrix} \prec 0, \\
        &\begin{bmatrix} P & C_{\mathrm{cl}}(\mK)^\tr \\ C_{\mathrm{cl}}(\mK) & \Gamma \end{bmatrix} \succ 0,\;  \mathrm{tr}(\Gamma) < \gamma
        \end{aligned}
        \end{array}
\right.
\right\}.
\end{aligned}
\]
By using the strict LMI in \Cref{lemma:H2norm}, we see that $(\mK,\gamma,P,\Gamma)\in \tilde{\mathcal{L}}_{\LQG}$ implies $(\mK,\gamma)\in \operatorname{epi}_>(J_{\LQG,n})$, i.e.,
\[
\pi_{\mK,\gamma}(\tilde{\mathcal{L}}_{\LQG})
\subseteq \operatorname{epi}_>(J_{\LQG,n}).
\]
On the other hand, \Cref{lemma:ECL_second_inclusion} shows that
\[
\mathcal{L}_{\LQG}
\subseteq \operatorname{cl}\tilde{\mathcal{L}}_{\LQG}.
\]
As a result, we have
\[
\pi_{\mK,\gamma}(\mathcal{L}_{\LQG})
\subseteq\pi_{\mK,\gamma}
(\operatorname{cl}\tilde{\mathcal{L}}_{\LQG})
\subseteq \operatorname{cl}
\pi_{\mK,\gamma}(\tilde{\mathcal{L}}_{\LQG})
\subseteq\operatorname{cl}\operatorname{epi}_>(J_{\LQG,n})
=\operatorname{cl}\operatorname{epi}_{\geq}(J_{\LQG,n}),
\]
and the proof is complete.

\subsection{Proofs for $\mathcal{H}_\infty$ Output Feedback Control} \label{appendix:Hinf-control}

We here provide the proof details for \Cref{lemma:inclusion-projection-Hinf}, which include two parts:
\begin{itemize}
    \item The mapping $\Phi_{\infty,\mathrm{d}}$ defined in \cref{eq:Hinf-Phi} is a $C^2$ diffeomorphism from $\mathcal{L}_{\infty,\mathrm{d}}$ to $\mathcal{F}_{\infty,\mathrm{d}}\times\mathrm{GL}_n$, and its inverse $\Phi^{-1}_{\infty,\mathrm{d}}$ is given by $\Psi_{\infty,\mathrm{d}}$ defined in \cref{eq:Hinf-Psi}. This is proved in \cref{appendix:diffeomorphsim-Phi-Hinf}. 
    \item The following inclusion relationship holds
    \begin{equation} \label{eq:Hinf-inclusion}
    \operatorname{epi}_>(J_{\infty,n})  \subseteq \pi_{\mK, \gamma} (\mathcal{L}_{\infty,\mathrm{d}}) \subseteq \operatorname{cl}\, \operatorname{epi}_{\geq}(J_{\infty,n}). 
    \end{equation}
    This is proved in \cref{Appendix:proof-inclusion-Hinf}. 
\end{itemize}
For both of them, the proofs follow similarly as those in  
in \Cref{appendix:LQG-control}.

\subsubsection{Diffeomorphism from $\mathcal{L}_{\infty,\mathrm{d}}$ to $\mathcal{F}_{\infty,\mathrm{d}}\times\mathrm{GL}_{n}$} \label{appendix:diffeomorphsim-Phi-Hinf}

Denote
\begin{align*}
\mathcal{L}^0_{\infty,\mathrm{d}}
& =\left\{ (\mK, \gamma, P) \left|\,
       \mK\in \mathbb{R}^{(m+n)\times(p+n)}, \;\gamma\in\mathbb{R},\;
         P \in \mathbb{S}^{2n}_{++}, \; P_{12} \in \mathrm{GL}_n
        \right.\right\}, \\
\mathcal{F}^0_{\infty,\mathrm{d}}
& =\left\{
\left(\gamma, \Lambda,X, Y \right) \left|\,
\gamma\in\mathbb{R},\;
\Lambda\in\mathbb{R}^{(m+n)\times(p+n)},\;
       \begin{bmatrix}
        X & I \\
        I & Y
\end{bmatrix}\in\mathbb{S}_{++}^{2n}
        \right.\right\}.
\end{align*}
Evidently $\mathcal{L}^0_{\infty,\mathrm{d}}$ and $\mathcal{F}^0_{\infty,\mathrm{d}}$ can be identified with certain open subsets of some Euclidean spaces, and $\mathcal{F}^0_{\infty,\mathrm{d}}$ is convex. Moreover,
\begin{align*}
\mathcal{L}_{\infty,\mathrm{d}}
&=\left\{(\mK,\gamma,P,\Gamma)\in\mathcal{L}^0_{\infty,\mathrm{d}}
\left|
\begin{bmatrix}
    A_{\mathrm{cl}}(\mK)^\tr P \!+\! P A_{\mathrm{cl}}(\mK) & PB_{\mathrm{cl}}(\mK) & C_{\mathrm{cl}}(\mK)^\tr \\
    B_{\mathrm{cl}}(\mK)^\tr P & -\gamma I & D_{\mathrm{cl}}(\mK)^\tr \\
    C_{\mathrm{cl}}(\mK) & D_{\mathrm{cl}}(\mK) & -\gamma I
    \end{bmatrix} \preceq 0
\right.\right\}, \\
\mathcal{F}_{\infty,\mathrm{d}}
&=\left\{\left(\gamma, \Lambda,X, Y, \Gamma \right) \in\mathcal{F}^0_{\infty,\mathrm{d}}
\left|\,
-\mathscr{M}(\gamma,\Lambda,X,Y)\in
\mathbb{S}_{+}^{4n+p+m}
\right.\right\},
\end{align*}
where we remind the readers of the notation
\[
    \begin{aligned}
& \mathscr{M}\left(\gamma,\begin{bmatrix}
G & F \\
H & M
\end{bmatrix},X,Y\right) \\
\coloneqq &\!
\begin{bmatrix}
AX + B_2F + (AX + B_2F)^\tr & M^\tr + A + B_2GC_2 & B_1 + B_2GD_{21} & (C_1 X \!+\! D_{12}F)^\tr \\
* & YA \!+\! HC_2 \!+\! (YA \!+\! HC_2)^\tr & YB_1 \!+\! HD_{21} & (C_1 \!+\! D_{12}GC_2)^\tr \\
* & * & -\gamma I & (D_{11} \!+\! D_{12}GD_{21})^\tr \\
* & * & * & -\gamma I
\end{bmatrix}\!. 
\end{aligned}
\]
We extend the domain of $\Phi_{\infty,\mathrm{d}}$ so that it is now defined on the larger set $\mathcal{L}^0_{\infty,\mathrm{d}}$:
\[
\Phi_{\infty,\mathrm{d}}(\mK,\gamma,P)=\left(\gamma,
\Phi_\Lambda(\mK,P),(P^{-1})_{11},P_{11}, P_{12}\right),
\quad
\forall (\mK,\gamma,P)\in\mathcal{L}^0_{\infty,\mathrm{d}},
\]
where $\Phi_\Lambda(\mK,P)$ is defined in \eqref{eq:Phi_M_V_def}.
We also extend the domain of $\Psi_{\infty,\mathrm{d}}$ so that it is now defined on $\mathcal{F}^0_{\infty,\mathrm{d}}\times\mathrm{GL}_n$:
\[
\Psi_{\infty,\mathrm{d}}(\mZ,\Xi)=\left(\Psi_{\mK}(\Lambda,X,Y,\Xi),\gamma, \Psi_P(X,Y,\Xi)\right),
\qquad\forall \mZ = \left(\gamma, 
\Lambda, X,Y \right) \in \mathcal{F}^{0}_{\infty,\mathrm{d}},\ \Xi\in\mathrm{GL}_n,
\]
where $\Psi_{\mK}(\Lambda,X,Y,\Xi)$ and $\Psi_P(X,Y,\Xi)$ are defined in~\eqref{eq:Psi_K_P_def}.

\vspace{3pt}
Our subsequent proof consists of the following parts:
\begin{enumerate}
\item Showing that $\Phi_{\infty,\mathrm{d}}$ is a $C^2$ diffeomorphism from $\mathcal{L}^0_{\infty,\mathrm{d}}$ to $\mathcal{F}^0_{\infty,\mathrm{d}}\times\mathrm{GL}_n$, with $\Phi_{\infty,\mathrm{d}}^{-1}=\Psi_{\infty,\mathrm{d}}$.

\item Showing that for any $(\mK,\gamma,P)\in\mathcal{L}^0_{\infty,\mathrm{d}}$, we have $(\mK,\gamma,P)\in\mathcal{L}_{\infty,\mathrm{d}}$ if and only if 
$\Phi_{\infty,\mathrm{d}}(\mK,\gamma,P)\in\mathcal{F}_{\infty,\mathrm{d}}\times\mathrm{GL}_n$.
\end{enumerate}

\vspace{3pt}
\noindent\textbf{Part I.} We first show that
\[
\Psi_{\infty,\mathrm{d}}(\mathcal{F}^0_{\infty,\mathrm{d}}\times\mathrm{GL}_n)\subseteq\mathcal{L}^0_{\infty,\mathrm{d}}.
\]
By the definitions of $\mathcal{L}^0_{\infty,\mathrm{d}}$, $\mathcal{F}^0_{\infty,\mathrm{d}}$ and $\Psi_{\infty,\mathrm{d}}$, it is not hard to see that this inclusion holds as long as $\Psi_P(X,Y,\Xi)\in\mathbb{S}^{2n}_{++}$ for any $(\gamma,\Lambda,X,Y)\in\mathcal{F}^0_{\infty,\mathrm{d}}$ and $\Xi\in\mathrm{GL}_n$, which can be directly justified by the first part of \Cref{lemma:Psi_P_basic_properties}.

Next, we show that
\[
\Phi(\mathcal{L}^0_{\infty,\mathrm{d}})\subseteq\mathcal{F}^0_{\infty,\mathrm{d}}\times\mathrm{GL}_n.
\]
By definition, we only need to show
\[
\begin{bmatrix}
(P^{-1})_{11} & I \\
I & P_{11}
\end{bmatrix} \succ 0
\qquad\forall P\in\mathbb{S}^{2n}_{++} \text{ with }P_{12}\in\mathrm{GL}_n,
\]
which can be directly justified by the second part of \Cref{lemma:Psi_P_basic_properties}.

We are now allowed to form the compositions $\Psi_{\infty,\mathrm{d}}\circ\Phi_{\infty,\mathrm{d}}:\mathcal{L}^0_{\infty,\mathrm{d}}\rightarrow\mathcal{L}^0_{\infty,\mathrm{d}}$ and $\Phi_{\infty,\mathrm{d}}\circ\Psi_{\infty,\mathrm{d}}:\mathcal{F}^0_{\infty,\mathrm{d}}\times\mathrm{GL}_n\rightarrow \mathcal{F}^0_{\infty,\mathrm{d}}\times\mathrm{GL}_n$. We will now show that these two compositions are the identity functions on their domains:
\begin{enumerate}
\item Let $\mZ=\left(\gamma,\Lambda,X,Y\right)\in\mathcal{F}^0_{\infty,\mathrm{d}}$ and $\Xi\in\mathrm{GL}_n$ be arbitrary. We then have
\[
\begin{aligned}
\Phi_{\infty,\mathrm{d}}\circ\Psi_{\infty,\mathrm{d}}(\mZ,\Xi)
={} &\! \left(
\gamma,
\Phi_\Lambda(\Psi_{\mK}(\Lambda,X,Y,\Xi),\Psi_P(X,Y,\Xi)),
(\Psi_P(X,Y,\Xi)^{-1})_{11},
Y,\Xi
\right) \\
={} &
(\gamma,\Lambda,X,Y,\Xi)
=(\mZ,\Xi),
\end{aligned}
\]
where the second step follows from \Cref{lemma:Phi_Psi_identity}. By the arbitrariness of $(\mZ,\Xi)\in \mathcal{F}^0_{\infty,\mathrm{d}}\times\mathrm{GL}_n$, we see that $\Phi_{\infty,\mathrm{d}}\circ\Psi_{\infty,\mathrm{d}}$ is the identity function on $\mathcal{F}^0_{\infty,\mathrm{d}}\times\mathrm{GL}_n$.

\item Let $(\mK,\gamma,P,\Gamma)\in\mathcal{L}^0_{\infty,\mathrm{d}}$ be arbitrary. Then by definition,
\[
\begin{aligned}
\Psi_{\infty,\mathrm{d}}\circ\Phi_{\infty,\mathrm{d}}(\mK,\gamma,P)
={} &
\!\left(\Psi_{\mK}\!\left(\Phi_\Lambda(\mK,P),
(P^{-1})_{11},
P_{11},P_{12}
\right),\gamma,
\Psi_P\!\left((P^{-1})_{11},
P_{11},P_{12}\right)\right) \\
={} &
\left(\mK,\gamma,P\right),
\end{aligned}
\]
where we used \Cref{lemma:Psi_Phi_identity}. We see that $\Psi_{\infty,\mathrm{d}}\circ\Phi_{\infty,\mathrm{d}}$ is indeed the identity function on $\mathcal{L}^0_{\infty,\mathrm{d}}$.
\end{enumerate}

Summarizing the previous results, we can conclude that $\Phi_{\infty,\mathrm{d}}$ is a bijection from $\mathcal{L}^0_{\infty,\mathrm{d}}$ to $\mathcal{F}^0_{\infty,\mathrm{d}}\times\mathrm{GL}_n$ and $\Psi_{\infty,\mathrm{d}}$ is its inverse. The fact that $\Phi_{\infty,\mathrm{d}}$ and $\Psi_{\infty,\mathrm{d}}$ are both $C^2$ functions follow directly from their explicit expressions.

\vspace{6pt}
\noindent\textbf{Part II.} We next show that for any $(\mK,\gamma,P)\in\mathcal{L}^0_{\infty,\mathrm{d}}$, we have $(\mK,\gamma,P)\in\mathcal{L}_{\infty,\mathrm{d}}$ if and only if $\Phi_{\infty,\mathrm{d}}(\mK,\gamma,P)\in\mathcal{F}_{\infty,\mathrm{d}}\times\mathrm{GL}_n$. 
We shall prove a stronger result that will be used in later proofs:

\begin{lemma}
\label{lemma:Hinf_ECL_basic_lemma}
Suppose $\mathscr{C}\subseteq\mathbb{S}^{4n+p+m}$ satisfy
\[
\begin{aligned}
& T^\tr M T \in \mathscr{C},\qquad\forall M\in \mathscr{C},\,T\in\mathrm{GL}_{4n+p+m}.
\end{aligned}
\]
Let $(\mK,\gamma,P)\in\mathcal{L}^0_{\infty,\mathrm{d}}$ be arbitrary, and denote $(\gamma,\Lambda,X,Y,\Xi)=\Phi_{\infty,\mathrm{d}}(\mK,\gamma,P)$. Then
\[
-\begin{bmatrix}
    A_{\mathrm{cl}}(\mK)^\tr P \!+\! P A_{\mathrm{cl}}(\mK) & PB_{\mathrm{cl}}(\mK) & C_{\mathrm{cl}}(\mK)^\tr \\
    B_{\mathrm{cl}}(\mK)^\tr P & -\gamma I & D_{\mathrm{cl}}(\mK)^\tr \\
    C_{\mathrm{cl}}(\mK) & D_{\mathrm{cl}}(\mK) & -\gamma I
    \end{bmatrix} 
        \in \mathscr{C}
\]
if and only if
\[
-\mathscr{M}(\gamma,\Lambda,X,Y)
\in \mathscr{C}.
\]
\end{lemma}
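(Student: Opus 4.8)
\textbf{Proof plan for \Cref{lemma:Hinf_ECL_basic_lemma}.}
The plan is to mirror the proof of \Cref{lemma:LQG_ECL_basic_lemma}, adapting the congruence-transformation argument to the single bounded-real-type LMI. The key object is again the matrix
\[
T = \begin{bmatrix}
(P^{-1})_{11} & I \\ (P^{-1})_{21} & 0
\end{bmatrix}\in\mathbb{R}^{2n\times 2n},
\]
which, as shown in the LQG proof, is invertible whenever $P\in\mathbb{S}^{2n}_{++}$ and $P_{12}\in\mathrm{GL}_n$, since $PT = \begin{bmatrix} I & P_{11} \\ 0 & P_{12}^\tr \end{bmatrix}$ is invertible. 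First I would write $\Lambda=\Phi_\Lambda(\mK,P)=\begin{bmatrix} G & F \\ H & M\end{bmatrix}$ with $G=D_\mK$, $H=P_{11}B_2 D_\mK + P_{12}B_\mK$, $F = D_\mK C_2(P^{-1})_{11}+C_\mK(P^{-1})_{21}$, $M=\Phi_M(\mK,P)$, matching \cref{eq:Hinf-Phi}.

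The computational core is to verify the congruence identity
\[
\mathscr{M}(\gamma,\Lambda,X,Y)
=\operatorname{diag}(T,I_{p},I_{m})^\tr
\begin{bmatrix}
    A_{\mathrm{cl}}(\mK)^\tr P \!+\! P A_{\mathrm{cl}}(\mK) & PB_{\mathrm{cl}}(\mK) & C_{\mathrm{cl}}(\mK)^\tr \\
    B_{\mathrm{cl}}(\mK)^\tr P & -\gamma I & D_{\mathrm{cl}}(\mK)^\tr \\
    C_{\mathrm{cl}}(\mK) & D_{\mathrm{cl}}(\mK) & -\gamma I
\end{bmatrix}
\operatorname{diag}(T,I_{p},I_{m}).
\]
Here I would reuse exactly the block computations already carried out in the LQG proof: \cref{eq:proof-diffeomorphism-TPAT} gives $T^\tr P A_{\mathrm{cl}}(\mK) T$, but now with the general (not strictly proper) closed-loop matrices from \Cref{appendix:dynamic-policies}, so the $(1,2)$ block becomes $A+B_2 D_\mK C_2 = A+B_2 G C_2$ rather than $A$, and the $(1,1)$ block is $AX+B_2F$ (the $D_\mK$ contribution folds into $F$ exactly as in $\Phi_\Lambda$). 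The analogues of \cref{eq:proof-diffeomorphism-TC} give $T^\tr P B_{\mathrm{cl}}(\mK) = \begin{bmatrix} B_1+B_2GD_{21} \\ YB_1+HD_{21}\end{bmatrix}$ and $T^\tr C_{\mathrm{cl}}(\mK)^\tr = \begin{bmatrix}(C_1X+D_{12}F)^\tr \\ (C_1+D_{12}GC_2)^\tr\end{bmatrix}$, while the untouched blocks $D_{\mathrm{cl}}(\mK)=D_{12}GD_{21}$ (note $D_{11}=0$ in \cref{eq:notations_general_H2}, so the $(D_{11}+D_{12}GD_{21})^\tr$ entry of $\mathscr{M}$ matches) and $-\gamma I$ pass through the identity blocks of the congruence. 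Collecting these four verified block identities reproduces $\mathscr{M}(\gamma,\Lambda,X,Y)$ entry by entry. Since $\operatorname{diag}(T,I_p,I_m)$ is invertible, the assumed closure property $T'^\tr M T'\in\mathscr{C}$ for all $M\in\mathscr{C}$, $T'\in\mathrm{GL}_{4n+p+m}$ (applied in both directions, using that the inverse is also in $\mathrm{GL}$) yields the claimed equivalence.

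The step I expect to require the most care is the careful bookkeeping of the $D_\mK=G$ terms across the $(1,1)$, $(1,2)$, $(1,3)$, $(2,3)$ and $(3,3)$ blocks: unlike the LQG case (where $D_\mK=0$ killed many cross terms), here the $D_{12}D_\mK C_2$ and $B_2D_\mK C_2$ pieces must be checked to redistribute precisely into the $F$, $G$, and $M$ slots of $\Phi_\Lambda$ and $\Psi_\mK$. This is a longer but entirely mechanical verification; I would present it compactly by pointing to the corresponding LQG identities and noting only the three or four extra $G$-terms. No genuinely new idea is needed beyond \Cref{lemma:LQG_ECL_basic_lemma}.
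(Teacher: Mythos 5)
Your proposal follows the paper's own proof essentially verbatim: the same matrix $T=\begin{bmatrix}(P^{-1})_{11} & I\\ (P^{-1})_{21} & 0\end{bmatrix}$ (invertible because $PT=\begin{bmatrix}I & P_{11}\\ 0 & P_{12}^\tr\end{bmatrix}$ is), the same block identities for $T^\tr PA_{\mathrm{cl}}(\mK)T$, $T^\tr PB_{\mathrm{cl}}(\mK)$, $T^\tr C_{\mathrm{cl}}(\mK)^\tr$ with the extra $G=D_\mK$ terms, and the same two-way application of the closure property of $\mathscr{C}$ under congruence. The only slip is the size of the identity blocks in the congruence: they should be $I_{n+p}$ and $I_{n+m}$ (the disturbance and performance channel dimensions), i.e. $\mathrm{diag}(T,I_{n+p},I_{n+m})$, not $\mathrm{diag}(T,I_p,I_m)$; with that correction the argument is exactly the paper's.
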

\begin{proof}[Proof of \Cref{lemma:Hinf_ECL_basic_lemma}]
Denote $\Lambda=\begin{bmatrix}
D_{\mK} & F \\ H & M
\end{bmatrix}$, i.e.,
\[
F = D_\mK C_2(P^{-1})_{11} + C_\mK(P^{-1})_{21},\quad
H = P_{11}B_2D_\mK + P_{12}B_\mK,\quad
M = \Phi_M(\mK,P).
\]
We again introduce the matrix
\begin{equation}
T = \begin{bmatrix}
(P^{-1})_{11} & I \\ (P^{-1})_{21} & 0
\end{bmatrix}\in\mathbb{R}^{2n\times 2n},
\tag{\ref{eq:ECL_LQG_Tmat}}
\end{equation}
and $PP^{-1}=I$ implies that
\[
PT = \begin{bmatrix}
I & P_{11} \\ 0 & P_{12}^\tr
\end{bmatrix}.
\]
Since $P_{12}\in\mathrm{GL}_n$, we know that $PT$ is invertible, which implies that $T$ is also invertible.

Then, it can be checked by straightforward calculation that the following equality holds:
\begin{equation}
\label{eq:proof-diffeomorphism-TPAT_hinf}
\begin{aligned}
    T^\tr  PA_\mathrm{cl}(\mK) T ={} & \begin{bmatrix} I & 0 \\ P_{11} & P_{12} \end{bmatrix} \begin{bmatrix} A + B_2D_\mK C_2 & B_2C_\mK \\ B_\mK C_2 & A_\mK \end{bmatrix} \begin{bmatrix} (P^{-1})_{11} & I \\ (P^{-1})_{21}  & 0 \end{bmatrix} \\
    ={} & \begin{bmatrix} AX+B_2F & A+B_2GC_2 \\ M & YA+HC_2\end{bmatrix}.
\end{aligned}
\end{equation}
Similarly, it can be verified that
\begin{align}
    &T^\tr  PB_\mathrm{cl}(\mK)=\begin{bmatrix}
        B_1 + B_2 GD_{21} \\ YB_1+HD_{21}
    \end{bmatrix}, \quad
    T^\tr C_\mathrm{cl}(\mK)^\tr=\begin{bmatrix}
        (C_1X+D_{12}F)^\tr \\ (C_1+D_{12}GC_2)^\tr
    \end{bmatrix}
    \label{eq:proof-diffeomorphism-TC_hinf}
\end{align}
Summarizing these identities \cref{eq:proof-diffeomorphism-TPAT_hinf} and \cref{eq:proof-diffeomorphism-TC_hinf}, we can show that      
\begin{align*}
\mathscr{M}(\gamma,\Lambda,X,Y) =\!
    \begin{bmatrix}
    T & 0 & 0 \\
    0 & I_{n+p} & 0 \\
    0 & 0 & I_{n+m}
    \end{bmatrix}^{\!\tr}\!\!
    \begin{bmatrix}
    A_{\mathrm{cl}}(\mK)^\tr P \!+\! P A_{\mathrm{cl}}(\mK) & PB_{\mathrm{cl}}(\mK) & C_{\mathrm{cl}}(\mK)^\tr \\
    B_{\mathrm{cl}}(\mK)^\tr P & -\gamma I & D_{\mathrm{cl}}(\mK)^\tr \\
    C_{\mathrm{cl}}(\mK) & D_{\mathrm{cl}}(\mK) & -\gamma I
    \end{bmatrix}\!\!
    \begin{bmatrix}
    T & 0 & 0 \\
    0 & I_{n+p} & 0 \\
    0 & 0 & I_{n+m}
    \end{bmatrix}\!\!.
\end{align*}
Since $\begin{bmatrix}
    T & 0 & 0 \\
    0 & I_{n+p} & 0 \\
    0 & 0 & I_{n+m}
    \end{bmatrix}$ is invertible, by the condition on $\mathscr{C}$, we see that
\[
-\mathscr{M}(\gamma,\Lambda,X,Y)\in \mathscr{C}
\]
is equivalent to
\[
-\begin{bmatrix}
    A_{\mathrm{cl}}(\mK)^\tr P \!+\! P A_{\mathrm{cl}}(\mK) & PB_{\mathrm{cl}}(\mK) & C_{\mathrm{cl}}(\mK)^\tr \\
    B_{\mathrm{cl}}(\mK)^\tr P & -\gamma I & D_{\mathrm{cl}}(\mK)^\tr \\
    C_{\mathrm{cl}}(\mK) & D_{\mathrm{cl}}(\mK) & -\gamma I
    \end{bmatrix}\in \mathscr{C},
\]
which completes the proof.
\end{proof}

As a corollary of \Cref{lemma:Hinf_ECL_basic_lemma}, by letting $\mathscr{C}=\mathbb{S}_+^{4n+p+m}$, we see that
\[
\Phi_{\infty,\mathrm{d}}(\mathcal{L}_{\infty,\mathrm{d}})
=\mathcal{F}_{\infty,\mathrm{d}}\times\mathrm{GL}_n.
\]
We can now conclude that $\Phi_{\infty,\mathrm{d}}$ is a $C^2$ diffeomorphism from $\mathcal{L}_{\infty,\mathrm{d}}$ to $\mathcal{F}_{\infty,\mathrm{d}}\times\mathrm{GL}_n$.

\subsubsection{Proof of the Inclusion \cref{eq:Hinf-inclusion} in $\mathcal{H}_\infty$ Output Feedback Control} \label{Appendix:proof-inclusion-Hinf}

The proof follows almost the same outline as the proof 
in \Cref{Appendix:proof-inclusion-LQG}.

\vspace{12pt}
\noindent\textbf{Part I.} We first prove the inclusion $\operatorname{epi}_> (J_{\infty,n})\subseteq \pi_{\mK,\gamma}(\mathcal{L}_{\infty,\mathrm{d}})$. Let $(\mK,\gamma)\in\operatorname{epi}_>(J_{\infty,n})$ be arbitrary. We then have $\mK\in \mathcal{C}_n$ and $\gamma> J_{\infty,n}(\mK)$. Consequently, $A_{\mathrm{cl}}(\mK)$ is stable, and the transfer matrix
$C_{\mathrm{cl}}(\mK)
(sI-A_{\mathrm{cl}}(\mK))^{-1}B_{\mathrm{cl}}(\mK)+D_{\mathrm{cl}}(\mK)$
has an $\mathcal{H}_\infty$ norm that is strictly upper bounded by $\gamma$. By applying the strict LMI in \Cref{lemma:bounded_real}, we see that there exists a $P\in\mathbb{S}^{2n}_{++}$ such that
\begin{align*}
\begin{bmatrix}
A_{\mathrm{cl}}(\mK)^\tr P + PA_{\mathrm{cl}}(\mK) & PB_{\mathrm{cl}}(\mK) & C_{\mathrm{cl}}(\mK)^\tr \\
B_{\mathrm{cl}}(\mK)^\tr P & -\gamma I & D_{\mathrm{cl}}(\mK)^\tr \\
C_{\mathrm{cl}}(\mK) & D_{\mathrm{cl}}(\mK) & -\gamma I
\end{bmatrix}
\prec 0.
\end{align*}
Since the above inequalities are all strict, we can add a sufficiently small perturbation on the matrix $P$ to ensure that $P_{12}$ is invertible without violating the above inequalities. By comparing the obtained properties of $\mK$, $\gamma$ and $P$ with the definition of $\mathcal{L}_{\infty,\mathrm{d}}$, we see that $(\mK, \gamma, P)\in \mathcal{L}_{\infty,\mathrm{d}}$. By the arbitrariness of $(\mK,\gamma)\in \operatorname{epi}_> (J_{\infty,n})$, we get the inclusion $\operatorname{epi}_> (J_{\infty,n})\subseteq \pi_{\mK,\gamma}(\mathcal{L}_{\infty,\mathrm{d}})$.

\vspace{6pt}
\noindent\textbf{Part II.} We next prove the other inclusion $\pi_{\mK,\gamma}(\mathcal{L}_{\infty,\mathrm{d}})\subseteq \operatorname{cl}\operatorname{epi}_> (J_{\infty,n})$. Define $h:\mathcal{F}^0_{\infty,\mathrm{d}}\rightarrow\mathbb{S}^{4n+m+p}$ by
\[
h(\gamma,\Lambda,X,Y)=-\mathscr{M}(\gamma,\Lambda,X,Y),
\qquad\forall(\gamma,\Lambda,X,Y)\in\mathcal{F}^0_{\infty,\mathrm{d}},
\]
and let
\[
C= \mathbb{S}^{4n+m+p}_{+}.
\]
It can be seen from the definition of $\mathcal{F}_{\infty,\mathrm{d}}$ that $\mathcal{F}_{\infty,\mathrm{d}}=h^{-1}(C)$, and consequently,
\[
\mathcal{L}_{\infty,\mathrm{d}}
=\Psi_{\infty,\mathrm{d}}(h^{-1}(C)\times\mathrm{GL}_n).
\]
Now let $(\gamma,\Lambda,X,Y)\in \mathcal{F}^0_{\infty,\mathrm{d}}$ and $\Xi\in \mathrm{GL}_n$ be arbitrary, and denote $\mK=\Phi_{\mK}(\Lambda,X,Y,\Xi),P=\Phi_{P}(X,Y,\Xi)$. Since
\[
\operatorname{int}C
=\mathbb{S}^{4n+p+m}_{++},
\]
by applying \Cref{lemma:Hinf_ECL_basic_lemma} with $\mathscr{C}=\mathbb{S}^{4n+p+m}_{++}$, we get
\[
\begin{aligned}
(\gamma,\Lambda,X,Y)
\in h^{-1}(\operatorname{int}C)
\quad\Longleftrightarrow\quad
& 
-\mathscr{M}(\gamma,\Lambda,X,Y)
\in\mathbb{S}^{4n+p+m}_{++} \\
\quad\Longleftrightarrow\quad &
-\begin{bmatrix}
A_{\mathrm{cl}}(\mK)^\tr P + PA_{\mathrm{cl}}(\mK) & PB_{\mathrm{cl}}(\mK) & C_{\mathrm{cl}}(\mK)^\tr \\
B_{\mathrm{cl}}(\mK)^\tr P & -\gamma I & D_{\mathrm{cl}}(\mK)^\tr \\
C_{\mathrm{cl}}(\mK) & D_{\mathrm{cl}}(\mK) & -\gamma I
\end{bmatrix}\in \mathbb{S}^{4n+p+m}_{++},
\end{aligned}
\]
which implies that
\[
\begin{aligned}
\tilde{\mathcal{L}}_{\infty,\mathrm{d}}\coloneqq{} & 
\Psi_{\infty,\mathrm{d}}(h^{-1}(\operatorname{int}C)\times\mathrm{GL}_n) \\
={} &
\left\{
(\mK,\gamma,P)\in\mathcal{L}^0_{\infty,\mathrm{d}}
\left|\,
\begin{bmatrix}
A_{\mathrm{cl}}(\mK)^\tr P + PA_{\mathrm{cl}}(\mK) & PB_{\mathrm{cl}}(\mK) & C_{\mathrm{cl}}(\mK)^\tr \\
B_{\mathrm{cl}}(\mK)^\tr P & -\gamma I & D_{\mathrm{cl}}(\mK)^\tr \\
C_{\mathrm{cl}}(\mK) & D_{\mathrm{cl}}(\mK) & -\gamma I
\end{bmatrix}\preceq 0
\right.
\right\} \\
={} & 
\left\{
(\mK,\gamma,P)
\left|\begin{array}{c} 
       \mK\in\mathbb{R}^{(m+n)\times(p+n)},\; \gamma\in\mathbb{R},\;
         P \in \mathbb{S}^{2n}_{++}, \; P_{12} \in \mathrm{GL}_n, \\
        \begin{bmatrix}
A_{\mathrm{cl}}(\mK)^\tr P + PA_{\mathrm{cl}}(\mK) & PB_{\mathrm{cl}}(\mK) & C_{\mathrm{cl}}(\mK)^\tr \\
B_{\mathrm{cl}}(\mK)^\tr P & -\gamma I & D_{\mathrm{cl}}(\mK)^\tr \\
C_{\mathrm{cl}}(\mK) & D_{\mathrm{cl}}(\mK) & -\gamma I
\end{bmatrix}\preceq 0
        \end{array}
\right.
\right\}.
\end{aligned}
\]
By using the strict LMI in \Cref{lemma:bounded_real}, we see that $(\mK,\gamma,P)\in \tilde{\mathcal{L}}_{\infty,\mathrm{d}}$ implies $(\mK,\gamma)\in \operatorname{epi}_>(J_{\infty,n})$, i.e.,
\[
\pi_{\mK,\gamma}(\tilde{\mathcal{L}}_{\infty,\mathrm{d}})
\subseteq \operatorname{epi}_>(J_{\infty,n}).
\]
On the other hand, \Cref{lemma:ECL_second_inclusion} shows that
\[
\mathcal{L}_{\infty,\mathrm{d}}
\subseteq \operatorname{cl}\tilde{\mathcal{L}}_{\infty,\mathrm{d}}.
\]
As a result, we have
\[
\pi_{\mK,\gamma}(\mathcal{L}_{\infty,\mathrm{d}})
\subseteq\pi_{\mK,\gamma}
(\operatorname{cl}\tilde{\mathcal{L}}_{\infty,\mathrm{d}})
\subseteq \operatorname{cl}
\pi_{\mK,\gamma}(\tilde{\mathcal{L}}_{\infty,\mathrm{d}})
\subseteq\operatorname{cl}\operatorname{epi}_>(J_{\infty,n})
=\operatorname{cl}\operatorname{epi}_{\geq}(J_{\infty,n}),
\]
and the proof is complete.

\end{document}